\documentclass[a4paper,11pt,reqno]{amsart}
\usepackage{latexsym}
\usepackage{amsmath, amsfonts, amsthm, amssymb, color}
\textwidth=15.4cm \textheight=21.5cm \hoffset=-1.2cm \voffset=-0.7cm
\usepackage{amsmath,amsthm,amsfonts,amssymb,mathrsfs}
\usepackage{graphicx,xcolor}
\usepackage[mathscr]{euscript}
\usepackage{hyperref}
\hypersetup{pdfstartview={XYZ null null 1.00}}
\allowdisplaybreaks

 \theoremstyle{definition}

 \numberwithin{equation}{section}

\def\Dv{\mathrm{div}}
\def\n{{\bf n}}
\def\proj{\mathbb{P}}

\newcommand{\LA}{\left\langle}
\newcommand{\RA}{\right\rangle}

\def\Xint#1{\mathchoice
{\XXint\displaystyle\textstyle{#1}}%
{\XXint\textstyle\scriptstyle{#1}}%
{\XXint\scriptstyle\scriptscriptstyle{#1}}%
{\XXint\scriptscriptstyle\scriptscriptstyle{#1}}%
\!\int}
\def\XXint#1#2#3{{\setbox0=\hbox{$#1{#2#3}{\int}$ }
\vcenter{\hbox{$#2#3$ }}\kern-.6\wd0}}

\def\dashint{\Xint-}
\providecommand{\norm}[1]{\left\Vert#1\right\Vert}
\newcommand{\ess}{{\rm ess\sup}}
\newtheorem{theorem}{Theorem}[section]
\newtheorem{lemma}[theorem]{Lemma}

\newtheorem{proposition}[theorem]{Proposition}

\theoremstyle{definition}
\newtheorem{definition}[theorem]{Definition}

\theoremstyle{remark}
\newtheorem{remark}[theorem]{Remark}

\author[H. Wang]{Huaqiao Wang}
\address{College of Mathematics and Statistics, Chongqing University, Chongqing
400044, China.} \email{wanghuaqiao@cqu.edu.cn}

\title[stochastic MHD equations]{Martingale solutions for the compressible MHD systems with stochastic external forces}

\keywords{ Stochastic compressible MHD systems, martingale
solutions, stochastic compactness method.}

\subjclass[2010]{35Q35, 76D05, 76W05, 60H15, 76M35}
\date{\today}
\begin{document}

\begin{abstract}
In this paper we consider the three-dimensional
compressible MHD system with stochastic external forces in a
bounded domain. We obtain the existence of martingale solution which is a weak solution for
the fluid variables, the Brownian motion on a probability space. The
construction of the solution is based on the Galerkin approximation
method, stopping time, the compactness method and Jakubowski
Skorokhod theorem, etc.
%A martingale solution is a weak solution for
%the fluid variables, the Brownian motion on a probability space.
\end{abstract}

\maketitle

%%%%%%%%%%%%%%%%%%%%%%%%%%%%%%%%%%%%%%%%%%%%%%%%%%%%%%%%%%%%%%%%%%%%%%%%%%%%%%%%%%%%%%%%%%%%%%%%%%%%%%%%%%%%
\section{Introduction}\label{section1}%----------------------------introduction
%%%%%%%%%%%%%%%%%%%%%%%%%%%%%%%%%%%%%%%%%%%%%%%%%%%%%%%%%%%%%%%%%%%%%%%%%%%%%%%%%%%%%%%%%%%%%%%%%%%%%%%%%%%%

In this paper, we consider the following three-dimensional
stochastic compressible Magneto-hydrodynamic (MHD) system:
\begin{equation}\label{MHD}
\begin{cases}
d\rho+\Dv(\rho u)dt=0,\\[-3mm]\\
d(\rho u)\! +[\Dv(\rho u\otimes u )\!+\!\nabla p-\mu\Delta u-\!(\lambda+\mu)\nabla\Dv u-\!(\nabla\times B)\times B]dt\!=\!\sum\! f_k(\rho,\rho u,x)d\beta^1_k(t),\\[-3mm]\\
dB-[\nabla\times(u\times B)+\nu\Delta B]dt=\sum
g_k(B,x)d\beta_k^2(t),\quad \Dv B=0,
\end{cases}
\end{equation}
where $\rho$ is the density, $u$ is the velocity field of the fluid,
$p$ is the scalar pressure and $p=a\rho^\gamma$ with a positive
constant $a$ and adiabatic index $\gamma\ge 1$, $B$ is the magnetic
field induced by the charged fluid, $\mu$ and $\lambda$ are two
viscosity constants satisfying $2\mu+3\lambda\ge 0$, $\nu > 0$ is
the magnetic diffusivity acting as a magnetic diffusion coefficient
of the magnetic field, and all these kinetic coefficients and the
magnetic diffusivity are independent of the magnitude and direction
of the magnetic field, the symbol $\otimes$ denotes the tensor
product, $f_k(\rho,\rho u,x)d\beta^1_k$ and $g_k(B,x)d\beta_k^2$ are
stochastic external forces, where $\beta_k^1,\beta_k^2,
k=1,2,\cdots$ are two sequences of independent one-dimensional
$\mathbb{R}$-valued Brownian motions. Denote
$\beta_k=(\beta_k^1,\beta_k^2)$.\if false $H$-valued Wiener process
with positive symmetric trace class covariance operator $Q$.\fi

System \eqref{MHD} models the interaction between a conductor fluid and the magnetic field in the presence of random perturbations. The inclusion of the stochastic terms in the governing equations is widely used to account for random fluctuations and past history of the system.

Notice that we have followed the usual convention of including the divergence-free condition on the magnetic field $B$. This is really a condition on the initial data,  as this property is preserved by the dynamics of \eqref{MHD}. To be more precise, we impose the assumptions on the initial data
\begin{equation}\label{initialdata}
\rho|_{t=0}=\rho_0,\; \rho u|_{t=0}= m_0,\; B|_{t=0}=B_0,\; \Dv
B_0=0,
\end{equation}
and the boundary conditions
\begin{equation}\label{boundary}
%\nabla\rho\cdot \n|_{\partial D}=0, \quad
u|_{\partial D}=0,\quad B|_{\partial D}=0,
\end{equation}
where $D \subset \mathbb R^3$ is a bounded domain with smooth boundary $\partial D$, and $\n$ is the unit outer normal at the boundary.

The deterministic version of system \eqref{MHD} was extensively studied in the
literature. For instance, the existence of local
strong solutions with large initial data has proved by Fan-Yu
\cite{FY}; the existence of global weak and variational solution
have obtained by Ducomet-Feireisl\cite{DF}, Fan-Yu\cite{FY1},
Hu-Wang \cite{HW,HW2}; the time decay of the compressible MHD
systems with small initial data has established in
\cite{CT,LY,PG,ZW,ZXW}; the low Mach number limit problem was
studied in \cite{HW3,JJL}.

When $\rho=1$, system \eqref{MHD} reduces to the stochastic
incompressible MHD system, which has also received a lot of attention
\cite{BD-2006,SM,SP}. On the other hand, when $B=0$ and $\rho$ is not a constant, system \eqref{MHD} becomes
the stochastic compressible Navier-Stokes equations, to which, to the best of our knowledge, much less is known. Feireisl-Maslowski-Novotn\'y in \cite{FMN} considered the
three-dimensional compressible Navier-Stokes equations driven by the
stochastic external forces in Sobolev spaces and obtained the
existence of solutions by using the abstract measurability theorem
(\cite{BT-1973}) to prove that the weak solution generates a random
variable. For the existence of martingale solutions to the
three-dimensional compressible Navier-Stokes equations driven by the
multiplicative noise, see \cite{BH-2014,SSA,WW}. Recently,
Breit-Feireisl-Hofmanov\'{a} considered the low Mach number limit
problem in \cite{BFH}.

\subsection{Definition of solutions and assumptions}\label{s1.1}

We will seek a particular type of solution to system \eqref{MHD}, namely the {\it martingale solution}. The definition of such solutions is given as follows.
%%%%%%%%%%%%%%%%%%%%%%%%%%%%%%%%%%%%%%%%%%%%%%%%%%%%%%%%%%%%%%%%%%%%%%%%%%%%%%%%%%%%%
\begin{definition}\label{martingale-solution}
A martingale solution of \eqref{MHD}-\eqref{boundary} is a system
$((\Omega,\mathscr{F},$ $\mathscr{F}_t, \textrm{P}), \beta_k, \rho,
u, B)$
 with the following properties:

$(1)$ $(\Omega,\mathscr{F},\mathscr{F}_t,\textrm{P})$ is a
stochastic basis, where $\mathscr{F}_t$ is a filtration on the
probability space $(\Omega,\mathscr{F},\textrm{P})$,  i.e., a
nondecreasing family $\{\mathscr{F}_t: t\ge0\}$ of sub
$\sigma$-fields of
$\mathscr{F}:\mathscr{F}_s\subset\mathscr{F}_t\subset\mathscr{F}$
for $0\le s<t<\infty$;
\\
\indent $(2)$ $\{\beta_k^1\}_{k\ge1},\{\beta_k^2\}_{k\ge1}$\! are two sequences of independent
$\mathbb{R}$-valued Brownian motions; \if false an
$\mathscr{F}_t$-cylindrical Wiener process;\fi
\\
\indent $(3)$ for any $\varphi,\psi\in C_0^\infty(D)$, $\langle\rho,\varphi\rangle,
\langle\rho u,\psi\rangle$ and
$\langle B,\psi\rangle$ are progressively measurable;%(progressively measurable)
\\
\indent $(4)$ for all $1\le p<\infty$, $\rho\ge 0,\; \rho\in L^p(\Omega,
L^\infty(0,T;L^\gamma(D)))$, $u\in L^p(\Omega,L^2(0,T;H_0^1(D)))$,
$B\in L^p(\Omega,L^\infty(0,T; L^2(D)))\cap
L^p(\Omega,L^2(0,T;H_0^1(D)))$ such that for all $t\in[0,T]$,
$\varphi\in C_0^\infty([0,T]\times D)$ and $\psi\in
C_0^\infty([0,T]\times D)$, it holds that $\textrm{P}$-a.s.
\begin{align*}&
\qquad\qquad\qquad\int_{D}\rho(t)\varphi dx-\int_{D} \rho_0\varphi
dx=\int_0^t\!\!\int_{D}\rho u\cdot\nabla\varphi dxdt,\\
& \int_{D}(\rho u)(t)\cdot\psi dx-\!\int_0^t\! \! \int_{D}\left[\rho
u u \cdot\nabla\psi-\mu\nabla u\cdot\nabla\psi-(\lambda+\mu)\Dv
u\cdot\Dv\psi+a\rho^\gamma\cdot\Dv\psi\right]dxdt \notag\\
&=\int_{D}\rho_0u_0\cdot\psi dx+\int_0^t\!\!\int_D(\nabla\times
B)\times B\cdot \psi dxdt
+\sum_{k=1}^\infty\int_0^t\!\!\int_{D} f_k(\rho,\rho u,x)\cdot\psi dxd\beta_k^1,\\
&\int_ D\! B(t)\cdot\psi
dx\!-\!\!\int_0^t\!\!\int_D[\nabla\times(u\times B)+\nu\Delta
B]\cdot\psi dxdt\!=\!\!\int_{D}\!B_0\cdot\psi
dx\!+\!\sum_{k=1}^\infty\! \int_0^t\!\!\int_{D}g_k(B,x)\cdot\psi
d\beta_k^2;
\end{align*}

$(5)$ the equation $\eqref{MHD}_1$ is satisfied in sense of
renormalized solutions, that is,
\begin{align}\label{r-equation}
b(\rho)_t+\Dv(b(\rho)u)+(b^\prime(\rho)\rho-b(\rho))\Dv u=0
\end{align}
 holds
in $\mathcal{D}^\prime((0,T)\times \mathbb{R}^3)$ $\textrm{P}$-a.s.
for any $b\in C^1(\mathbb{R})$ with $b^\prime(z)\equiv0$ for all
$z\in \mathbb{R}$ large enough.
\end{definition}

\noindent {\it Notations.} Throughout this paper, we drop the parameter
 $\omega\in\Omega$ for simplicity of notations.
$\{\mathscr{F}_t\}$ is a right continuous filtration over the
probability space $(\Omega,\mathscr{F},\textrm{P})$ such that
$\mathscr{F}_0$ contains all $\textrm{P}$-negligible subsets of
$\Omega$. We use ${\rm E}X=\int_{\Omega}Xd\textrm{P}$ to denote the
expectation of the random variable $X(\omega,t)$ for fixed $t$. All stochastic
integrals are defined in the sense of It\^{o}; see
\cite{PZ-1992,KS-1991}. Moreover, $C$ denotes a generic constant
which may vary in different estimates. For simplicity, we will write
$A\lesssim B$ if $A\leq C B$.

We use $C([0,T];V_w)$ to denote the subspace of
$L^\infty([0,T];V)$ consisting of those functions which are a.e.
equal to weakly continuous functions with values in $V$. We denote $L_2(U,Y)$
the set of all Hilbert-Schmidt operators from $U$ to $Y$.

%\medskip

\subsection*{Assumptions on the stochastic forces.} We assume that the stochastic
forces $f_k,g_k$ satisfy the following conditions:

(A)\quad $f_k(\rho,\rho u,x), g_k(B,x)$ are progressively measurable and $C^1$ continuous in
$\rho,\rho u,x$; $B,x$, respectively, and
\begin{equation}\label{assumption}
\left\{\quad\begin{aligned}
&f_k(\rho,\rho u,x)=f_{k,1}(x,\rho)+f_{k,2}(x)\rho u,\\
&\sum_{k=1}^\infty|f_{k,1}|^2\le C|\rho|^{\gamma+1},\quad
\sum_{k=1}^\infty|\partial_{\rho}f_{k,1}|^2\le
C|\rho|^{\gamma-1},\quad \sum_{k=1}^\infty|f_{k,2}|^2\le
C,\\
&\sum_{k=1}^\infty |g_k(B,x)|^2\le C|B|^2,\quad
\sum_{k=1}^\infty|\partial_B g_k|\le C.
\end{aligned}\right.\end{equation}

\medskip

%Formally,
%multiplying $\eqref{MHD}_2, \eqref{MHD}_3$ by $u$ and $B$
%respectively, integrating by parts and taking the mathematical
%expectation, we can apply the continuity equation $\eqref{MHD}_1$ to
%obtain the energy inequality:
%Formally, we have the following energy inequality
%\begin{align}\label{e-inequality}
%{d\over dt} {\rm E}\mathscr{E}(t)+\int_\Omega\int_D \left(\mu|\nabla
%u|^2+(\lambda+\mu)|\Dv u|^2+\nu |\nabla B|^2\right)dxd\textrm{P}\le
%0,
%\end{align}
%where
%\begin{align*}%\label{energy}
%\mathscr{E}(t)=\mathscr{E}(\rho, u)(t)=\int_D \left({1\over 2}\rho
%|u|^2+{a\over {\gamma-1}}\rho^\gamma+\frac{1}{2}|B|^2 \right) dx.
%\end{align*}
%
%It follows from \eqref{e-inequality} that any finite
%energy martingale solution belongs to the class:
%\begin{align*}%\label{continuity}
%\rho\in C([0,T]; L_w^\gamma(D)), \; \rho u\in C([0,T];
%L_w^{2\gamma\over\gamma+1}(D)),\; B\in C([0,T]; L_w^2(D))\ \
%\textrm{P}-\text{a.s.},
%\end{align*}
%and consequently the initial condition \eqref{initialdata} makes
%sense. Therefore,
The data $\rho_0, m_0, B_0$ are assumed to satisfy the following
conditions: %for some $1<p<\infty$
\begin{equation}
\label{G-initialdata}
\begin{split}
&\!\!\! \rho_0\in L^{\gamma
p}(\Omega,L^\gamma(D)), \; \rho_0\ge0; \quad
m_0=0 \ \ \mbox{if} \ \ \rho_0=0, \\
{|m_0|^2\over\rho_0}&\in L^p(\Omega,L^1(D));\; B_0\in
L^{2p}(\Omega, L^2(D)),\;\Dv B_0=0.
\end{split}
\end{equation}
%TODO: what is p?

%%%%%%%%%%%%%%%%%%%%%%%%%%%%%%%%%%%%%%%%%%%%%%%%%%%%%%%%%%%%%%%%%%%%%
\if false Let $U$ be a separable Hilbert spaces, and $Q\in L(U)$ be
nonnegative definite and symmetric. Note that the $Q$-Wiener process
with finite trace has the following representation:
$$W(t)=\sum_{k\in\mathbb{N}}\beta_k e_k,\; t\in [0,T],$$
where $e_k, \beta_k$ are an orthonormal basis of $Q^{1\over
2}(U)=U_0$ and a family of independent real-valued Brownian motions,
respectively. The series converges in $L^2(\Omega; C([0,T]; U))$,
because $U_0$ defines a Hilbert-Schmidt embedding from
$(U_0,\langle\cdot,\cdot\rangle_0)$ to
$(U,\langle\cdot,\cdot\rangle)$. In the case that $Q$ is no longer
of finite trace one looses this convergence. But, it is possible to
define the Wiener process. To this end, we need a further Hilbert
space $(U_1,\langle\cdot,\cdot\rangle_1)$ and a Hilbert-Schmidt
embedding:
$J:(U_0,\langle\cdot,\cdot\rangle_0)\to(U_1,\langle\cdot,\cdot\rangle_1)
$.

Note that $(U_1,\langle\cdot,\cdot\rangle_1)$ and $J$ always exist,
for example, choose $U_1:=U$ and $\alpha_k\in [0,\infty],
k\in\mathbb{N}$ such that $\sum_{k=1}^\infty \alpha_k^2<\infty$.
Define $J: U_0\to U$ by $J(u):=\sum_{k=1}^\infty\alpha_k\langle
u,e_k\rangle_0 e_k,\; u\in U_0$. Then $J$ is one-to-one and
Hilbert-Schmidt.
%Then the process given by the following proposition
%is called a cylindrical $Q$-Wiener process in $U$.

Denote by $\mathscr{M}^2_T(U)$ the space of all $U$-valued
continuous, square integrable martingales $M(t), t\in [0,T]$. Let
$e_k$ is an orthonormal basis of $Q^{1\over 2}(U)=U_0$ and $\beta_k$
is a family of independent real-valued Brownian motions. Define
$Q_1=JJ^*$, where $J^*$ is the adjoint of $J$. Then $Q_1\in L(U_1)$,
$Q_1$ is nonnegative definite and symmetric with finite trace and
the series $$W(t)=\sum_{k=1}^\infty\beta_k Je_k,\; t\in [0,T],$$
converges in $\mathscr{M}^2_T(U_1)$ and defines a $Q_1$-Wiener
process on $U_1$.  Moreover, we can prove that
$Q_1^{1\over2}(U_1)=J(U_0)$ and for all $u_0\in U_0$,
$\norm{u_0}_0=\norm{Q_1^{1\over2}Ju_0}_1=\norm{Ju_0}_{Q_1^{1\over2}(U_1)}$,
i.e. $J: U_0\to Q_1^{1\over2}(U_1)$ is an isometry. We fix $Q\in
L(U)$ nonnegative, symmetric but not necessarily of finite trace.
After the above preparations, we can define the stochastic integral
with respect to a cylindrical $Q$-Wiener process $W(t), t\in[0,T]$
(Basically, with respect to the above standard $U_1$-valued
$Q_1$-Wiener process). If $\Phi\in L_2(Q_1^{1\over2}(U_1),H)$ is
predictable and
$\mathrm{P}\left(\int_0^T\norm{\Phi(s)}^2_{L_2(Q_1^{1\over2}(U_1),H)}ds<\infty\right)=1$,
then we first get that a process $\Phi(t)$ is integrable with
respect to $W(t)$. Note that $Q_1^{1\over2}(U_1)=J(U_0)$, then
$\langle Ju_0, Jv_0\rangle_{Q_1^{1\over2}(U_1)}=\langle
Q_1^{-{1\over2}}Ju_0,Q_1^{-{1\over2}}Ju_0\rangle_1=\langle
u_0,v_0\rangle_0 $ for all $u_0,v_0\in U_0$. In particular, it
follows from $Je_k, k\in\mathbb{N}$ is an orthonomal basis of
$Q_1^{1\over2}(U_1)$ that
$\norm{\Phi}^2_{L_2(Q^{1\over2}(U),H)}=\sum_{k=1}\langle \Phi e_k,
\Phi e_k\rangle=\sum_{k=1}\langle\Phi\circ J^{-1}(J e_k),\Phi\circ
J^{-1}(J e_k)\rangle=\norm{\Phi\circ
J^{-1}}^2_{L_2(Q^{1\over2}(U),H)}$ . Then $\Phi\in
L_2(Q^{1\over2}(U),H)\Leftrightarrow\Phi\circ J^{-1}\in
L_2(Q_1^{1\over2}(U_1),H)$. Now we define
$$\int_0^t \Phi(s) d W(s):=\int_0^t \Phi(s)\circ J^{-1} d W(s),\; t\in [0,T],$$
where $W(t), t\in[0,T]$ is a standard $Q$-Wiener process in $U$.
Then the class of all integrable processes is given by
$$\left\{\Phi:\Omega\times[0,T]\to L_2(Q^{1\over2}(U),H)|~ \Phi \ \text{predictable and }\ \!
\mathrm{P}\left(\int_0^T\norm{\Phi(s)}^2_{L_2(Q^{1\over2}(U),H)}ds<\infty\right)=1
\right\}.$$

Next, we will define the It\^{o} integral with respect to a
cylindrical Wiener process.
 Let $\mathscr{E}(L(U,H))$ denote the class of $L(U,H)$-valued elementary processes
adapted to the filtration $\{\mathscr{F}_t\}_{t\le T}$ with the
following form:
$$\Phi(t)=\phi 1_{\{0\}}(t)+\sum_{j=1}^{n-1}\phi_j 1_{(t_j,~ t_{j+1}]}(t),$$
where $0 =t_0\le t_1 \le \cdots\le t_n = T $, and $\phi, \phi_j$ are
respectively $\mathscr{F}_0$-measurable and
$\mathscr{F}_{t_j}$-measurable $L_2(Q^{1\over 2}(U),H)$-valued
random variables such that $\phi, \phi_j\in L(U,H)$ (recall that
$L(U,H)\subset L_2(Q^{1\over2}(U),H)$). Note that if $Q= I_U$, then
the random variables $\phi_j$ are $L_2(U,H)$-valued.

Now, we proceed with the definition of the stochastic integral with
respect to a cylindrical Wiener process. Define the It\^{o} integral
of an elementary process $\Phi\in \mathscr{E} (L(U,H))$ with respect
to a cylindrical Wiener process $\tilde{W}$ by $$\left(\int_0^T
\Phi(s)d \tilde{W}(s)\right)(h)=\sum_{j=0}^n
\left(\tilde{W}_{t_{j+1}\wedge t}( \phi^*_j(h))-\tilde{W}_{t_j\wedge
t}( \phi^*_j(h))\right),$$ for $t\in[0,T]$ and $h\in H$.

We will restrict ourselves to the case where the integrand $\Phi(s)$
is a process taking values in $L_2(U,H)$. If $\Phi(s)$ is an
elementary process, $\Phi(s)\in\mathscr{E}(L(U,H))$, then
$\Phi(s)\in L_2(U,H)$, since $Q=I_U$. Assume that $\Phi$ is bounded
in the norm of $L_2(U,H)$. It follows from $E\left(\left(\int_0^t
\Phi(s)d \tilde{W}(s)\right)(h)\right)^2=\int_0^t
E\norm{\Phi^*(s)h}^2_U ds<\infty$ that
\begin{align*}
&E\sum_{i=1}^\infty \left(\left(\int_0^t \Phi(s)d \tilde{W}(s)
\right)(e_i)\right)^2= \sum_{i=1}^\infty\int_0^t
E\norm{\Phi^*(s)e_i}^2_U
ds\\
&=E\int_0^t \sum_{i=1}^\infty\norm{\Phi^*(s)e_i}^2_U
ds=E\int_0^t \norm{\Phi^*(s)}^2_{L_2(H,U)}\\
&=E\int_0^t \norm{\Phi(s)}^2_{L_2(U,H)},
\end{align*}
where $\{e_i\}_{i=1}^\infty$ is an orthonormal basis of $H$. Then we
define the stochastic integral $\int_0^t \Psi(s)d\tilde{W}(s)$ of a
bounded elementary process $\Phi(s)$ as follows:
$$\int_0^t\Psi(s)d\tilde{W}(s):=\sum_{i=1}^\infty\left(\left(\int_0^t\Psi(s)d\tilde{W}_s\right)e_i\right)e_i.$$
By the above calculations, $\int_0^t \Psi(s)d\tilde{W}(s)\in
L^2(\Omega,H)$ and is adapted to the filtration $\mathscr{F}_t$. The
equality
$$\norm{\int_0^t\Psi(s)d\tilde{W}(s)}_{L^2(\Omega,H)}=E\int_0^t \norm{\Phi(s)}^2_{L_2(U,H)}ds$$
establishes the isometry property of the stochastic integral
transformation.
\fi
%%%%%%%%%%%%%%%%%%%%%%%%%%%%%%%%%%%%%%%%%%%%%%%%%%%%%%%%%%%%%%%%%%%%%%%%%%%%
 Denote
 \[
 F(\rho,\rho u)dW_1:=\sum_{k=1}^\infty
f_k(\rho,\rho u,x)d\beta_k^1, \quad  G(B)dW_2:=\sum_{k=1}^\infty
g_k(B,x)d\beta_k^2.
\]
By \eqref{assumption} and H\"{o}lder's inequality, we have for
$\ell> \frac{3}{2}$,
\begin{align*}
\norm{F(\rho,\rho u)}^2_{L_2(U,
W^{-\ell,2}(D))} & =\sum_{k=1}^\infty\norm{f_k(\rho,\rho
u,x)}^2_{W^{-\ell,2}(D)}\lesssim \sum_{k=1}^\infty\norm{f_k(\rho,\rho
u,x)}^2_{L^1(D)}\\
&\lesssim \left(\int_D \sqrt{\rho}\sum_{k=1}^\infty
{f_{k,1}(\rho,x) \over \sqrt{\rho}}+\sqrt{\rho}\sqrt{\rho}
u\sum_{k=1}^\infty
f_{k,2}(x)dx\right)^2\\
&\lesssim \norm{\rho}_{L^1}(\norm{\rho}^{\gamma}_{L^\gamma(D)}+\norm{\sqrt{\rho}u}^2_{L^2(D)})\lesssim 1,\\
\norm{G(B)}^2_{L_2(U,
W^{-1,2}(D))} & =\sum_{k=1}^\infty\norm{g_k(B,x)}^2_{W^{-1,2}(D)}\le\sum_{k=1}^\infty\norm{g_k(B,x)}^2_{L^2(D)}\\
&\lesssim \norm{B}^2_{L^2(D)}\lesssim 1.
\end{align*}
Then the It\^{o}'s integrals $\int_0^T F(\rho,\rho u)dW_1$ and
$\int_0^T G(B)dW_2$ are well-defined martingales in $W^{-\ell,2}(D)$
and $W^{-1,2}(D)$ respectively. Since $W_1$ and $W_2$ don't converge
on $U$, we define $U_0\supset U$ by
$U_0=\{v=\sum_{k\ge1}\alpha_ke_k:\ \sum_{k\ge 1}{{\alpha_k^2}\over
k^2}<\infty \}$ with the norm %in $U_0$ is given by
$\|v\|^2_{U_0}=\sum_{k\ge 1}{\alpha_k^2}/k^2$, $v=\sum_{k\ge
1}\alpha_ke_k$. Note that the embedding $U\hookrightarrow U_0$ is
Hilbert-Schmidt and $\{e_k\}_{k\ge 1}$ is a complete
 orthonormal basis of $U$. Moreover,  we have $W_1,W_2\in C([0,T],U_0)\ \ \textrm{P}-\text{a.s.}$.
 % TODO: what is \ell? what are the spaces U and Y?

%%%%%%%%%%%%%%%%%%%%%%%%%%%%%%%%%%%%%%%%%%%%%%%%%%%%%%%%%

\subsection{Main results}\label{s1.2}

Our main result is stated in the following theorem:

\begin{theorem}\label{theorem1.1}
Suppose that the initial data $(\rho_0, m_0,B_0)$ satisfies
\eqref{G-initialdata} and \eqref{assumption}
 holds. Let $\gamma>3/2$. If
$D\subset\mathbb{R}^3$ is a bounded domain of class $C^{2+\alpha}$ with $
\alpha>0$, then there exists a martingale solution
$\left((\Omega,\mathscr{F},\textrm{P}),\mathscr{F}_t,\beta_k,\rho,u,B\right)$
in the sense of Definition \ref{martingale-solution} with
\begin{align*}
{\rm E }\Big [ \sup\limits_{0\le t\le
T}\mathscr{E}(t)+\int_0^T \int_D \left(\mu|\nabla
u|^2+(\lambda+\mu)|\Dv u|^2+\nu |\nabla B|^2\right) dxdt \Big ]^p
\lesssim 1
\end{align*}
to the problem \eqref{MHD}-\eqref{boundary} for any given $T>0$ and
for all $1\le p<\infty$.
\end{theorem}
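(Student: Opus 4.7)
The plan is to construct a martingale solution via a multi-level approximation scheme following the Feireisl--Novotn\'y framework adapted to the stochastic setting. First, I would introduce a finite-dimensional Galerkin projection for the momentum and magnetic equations onto eigenspaces of the Stokes operator (and an analogous operator for $B$), coupled with a regularized continuity equation carrying an artificial viscosity $\varepsilon \Delta \rho$ and an artificial pressure term $\delta \rho^\beta$ with $\beta$ sufficiently large. On a fixed stochastic basis one obtains a system of SDEs whose local-in-time solvability follows from classical theory once the density is solved via a deterministic quasilinear parabolic equation path-by-path; a stopping time argument $\tau_R \uparrow T$, justified by energy bounds below, extends the solution to $[0,T]$.

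Second, I would derive uniform-in-$(n,\varepsilon,\delta)$ a priori estimates by applying It\^o's formula to the total energy $\mathscr{E}(t) = \int_D\!\bigl(\tfrac{1}{2}\rho|u|^2 + \tfrac{a}{\gamma-1}\rho^\gamma + \tfrac{1}{2}|B|^2\bigr)dx$. The coupling terms cancel nicely: using $\Dv B=0$, the magnetic Lorentz work $\int_D (\nabla\times B)\times B\cdot u\,dx$ balances $\int_D \nabla\times(u\times B)\cdot B\,dx$ modulo the dissipation $\nu|\nabla B|^2$. The It\^o correction contributes the quadratic variation of the stochastic forcing, which assumption~(A) controls by the energy itself; Gronwall and the Burkholder--Davis--Gundy inequality then yield the desired $L^p(\Omega)$ bound uniformly in the approximation parameters. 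The DiPerna--Lions renormalization theory and Bogovskii-type estimates upgrade the density bound to an improved integrability $\rho \in L^{\gamma+\theta}$ needed later.

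Third, the main analytic obstacle is compactness. Since the natural path spaces such as $C([0,T]; V_w)$ and $L^\infty(0,T;L^\gamma)$ equipped with the weak-$*$ topology are not Polish, I would invoke the Jakubowski--Skorokhod representation theorem on a carefully chosen quasi-Polish space. Tightness of the joint laws of $(\rho_n, u_n, B_n, W_1, W_2)$ is established by combining the energy bound, Aubin--Lions compactness, and a fractional-in-time H\"older estimate on the stochastic integrals obtained via BDG. The representation theorem delivers a new probability space with a.s.\ convergent copies; identifying the limit of $\int_0^t F(\rho_n,\rho_n u_n)\,dW_{1,n}$ then proceeds by the classical martingale method, verifying that the limiting process is a square-integrable martingale with the correct quadratic variation and cross-variation against the Brownian limits, whence Doob's representation yields genuine Brownian motions $\beta_k^1,\beta_k^2$ for the limit.

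Finally, passage to the limit in the nonlinear terms splits in difficulty. The convective MHD terms $\nabla\times(u\times B)$ and $(\nabla\times B)\times B$ are handled once $B_n \to B$ strongly in $L^2(0,T;L^2)$, which follows from parabolic regularity for the $B$-equation plus the compactness obtained above. The serious difficulty is the pressure: weak convergence alone gives $\rho_n^\gamma \rightharpoonup \overline{\rho^\gamma}$, and one must upgrade this to $\overline{\rho^\gamma} = \rho^\gamma$. The hard part will be implementing the Feireisl effective viscous flux identity
\[
\overline{p\,\rho} - \overline{p}\,\rho = (\lambda+2\mu)\bigl(\overline{\rho\,\Dv u} - \rho\,\Dv u\bigr)
\]
in a stochastic setting: testing the momentum equation with $\nabla \Delta^{-1}\rho_n$ generates It\^o corrections that must be shown to vanish in the limit, using the specific structure $f_k = f_{k,1}(x,\rho) + f_{k,2}(x)\rho u$. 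Combined with the renormalized continuity equation (property~(5)) and monotonicity of $z\mapsto z^\gamma$, one recovers strong $L^1$ convergence of $\rho_n$. The artificial viscosity $\varepsilon\to 0$ and artificial pressure $\delta \to 0$ limits are then taken in succession, each requiring its own effective viscous flux analysis; the assumption $\gamma>3/2$ is used crucially at the $\delta\to 0$ stage to ensure $\rho u\otimes u$ remains equi-integrable in $L^1$.
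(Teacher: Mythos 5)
Your proposal follows essentially the same multi-level strategy as the paper: a Faedo--Galerkin approximation with artificial viscosity $\varepsilon\Delta\rho$ and artificial pressure $\delta\rho^\beta$, It\^o energy estimates with stopping times to globalize, tightness plus the Jakubowski--Skorokhod representation theorem, a martingale-problem identification of the limiting stochastic integrals (the paper cites \cite{BO,OM} for this step), and the effective viscous flux argument together with renormalized continuity and $T_k$-truncations to obtain strong density convergence, with the limits $n\to\infty$, $\varepsilon\to 0$, $\delta\to 0$ taken in that order. One technical slip: the Galerkin basis should be eigenfunctions of the Dirichlet \emph{Laplacian} as in the paper, not the Stokes operator---in compressible flow $u$ is not solenoidal, and projecting onto a divergence-free basis would annihilate the pressure gradient $\nabla p$, so the Stokes eigenbasis is not admissible here.
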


The basic idea to prove Theorem \ref{theorem1.1} is to first construct the
approximate schemes to the problem \eqref{MHD}-\eqref{boundary}, and then establish tightness estimates of the approximate solutions and
pass to the limits in the spirit of Feireisl \cite{f2,FNP}. Of
course, the apparent challenge stems from the stochastic term and the It\^o
integral, which is the primary focus of the work. %, besides treating the deterministic part similar to Feireisl \cite{f2,FNP}.

More precisely, as a first step we use Faedo-Galerkin method to construct the the approximate system involving the artificial viscosity
and artificial pressure, and apply the standard fixed
point argument to obtain an $n$-dimensional truncated local solution
$(\hat{\beta}_k,\rho_n,u_n,B_n)$ in the time interval $[0,\tau_{n,N})$. The existence of solutions in the whole time $[0,T]$ will be achieved by the energy estimates. Additional challenges arise due to the probabilistic nature of the system. Our method relies on a delicate stopping time argument with the use of the Burkholder-Davis-Gundy inequality. In particular, this approach leads to the convergence of the approximate solutions on $[0,T]$.
%see Section \ref{section3} for more details.

%To that end, we will apply the stopping time and the
%Burkholder-Davis-Gundy inequality due
%to the stochastic integral, the approximate solutions may converge on $[0,T]$.
However the convergence is too weak to guarantee that the limit is a
solution on $[0,T]$. In the two-dimensional case, it can be shown by
using certain monotonicity principle that the nonlinear terms
converge to the right limit and hence a global strong solution can
be obtained \cite{MS-2002}. But when the space dimension is three
the monotonicity does not hold and to the best of our knowledge
there is no result on the global strong solutions. This is why we
pursue instead the Martingale solutions. As is explained, the main
issue is the convergence of the nonlinear terms. To this end, we
need to derive the tightness and show the convergence as
$n\to\infty$ of the approximate finite-dimensional solutions in the
Faedo-Galerkin method in three steps: (1) We apply the deterministic
compactness criterion and Arzela-Ascoli's Theorem to get the
tightness property of the approximate solution
$(\hat{\beta}_k,\rho_n,u_n,B_n)$. Moreover in order to analyze the
nonlinear term, we prove the tightness of $\rho_nu_n$ as well. Then
from the Jakubowski-Skorokhod Theorem,  there exist a probability
space $(\tilde{\Omega}, \tilde{\mathscr{F}},\tilde{\textrm{P}})$ and
random variables
$(\tilde{\beta}_{k,n},\tilde{\rho}_{n},\tilde{u}_{n},
\tilde{\rho}_{n}\tilde{u}_{n},\tilde{B}_{n})\to
(\beta_k,\rho,u,h,B)\ \ \tilde{\textrm{P}}-\text{a.s.}$,( Denote
$m_n=\rho_n u_n$, since they have same distribution, that is, for
$A\in\mathscr{F}$, $\tilde{\textrm{P}}(\tilde{m}_{n}\in
A)=\tilde{\textrm{P}}(\rho_n u_n\in
A)=\tilde{\textrm{P}}(\tilde{\rho}_{n}\tilde{u}_{n}\in A)$, we have
$\tilde{m}_{n}=\tilde{\rho}_{n}\tilde{u}_{n}$ almost surely) with
the the property that the probability distribution of
$(\tilde{\beta}_{k,n},\tilde{\rho}_{n},
\tilde{u}_{n},\tilde{\rho}_{n}\tilde{u}_{n},\tilde{B}_{n})$ is the
same as that of $(\hat{\beta}_k,\rho_n,u_n,\rho_n u_n,B_n)$. By
using a cut-off function we can also show that the random variable
$(\tilde{\beta}_{k,n},\tilde{\rho}_{n},\tilde{u}_{n},
\tilde{\rho}_{n}\tilde{u}_{n},\tilde{B}_{n})$ satisfy the
approximate equations in $(\tilde{\Omega},
\tilde{\mathscr{F}},\tilde{\textrm{P}})$; (2) In view of the
uniformly integrable criterion and Vitali's convergence Theorem,
together with the almost sure convergence on $(\tilde{\Omega},
\tilde{\mathscr{F}},\tilde{\textrm{P}})$, we can obtain the limit
process of the stochastic parts, it's quadratic variation and cross
variation are martingale; (3) Finally, by the new method in
\cite{BO, OM} on the martingale problem of Stroock and Varadhan, we
can prove that the system $((\tilde{\Omega},
\tilde{\mathscr{F}},\tilde{\textrm{P}}),\beta_k,\rho,u,B)$ is a
martingale solution of the approximation scheme. After obtaining the
foregoing solutions to the approximate scheme by the Faedo-Galerkin
method, we shall follow the idea of \cite{FNP, LLP} to deal with the
artificial viscosity and artificial pressure using the similar
convergence framework   for $n\to\infty$ in the Faedo-Galerkin
solutions. Due to insufficient integrability of the density, the
effective viscous flux and the cut-off function technique will be
used to obtain the strong convergence of the density. Then we pass
to the limits of vanishing artificial viscosity and vanishing
artificial pressure, and the limit of the solutions to the
approximate scheme is a martingale solution with finite energy to
the initial-boundary value problem of \eqref{MHD}-\eqref{boundary}.
Note that here we shall use the property of martingale.

 The rest of the paper is organized as follows.
  We shall recall  some basic theory of stochastic analysis in
Section \ref{section2}.  In Section \ref{section3}, we shall construct the
 solutions to an approximate scheme by the Faedo-Galerkin method.
In Section \ref{section4}, we shall pass to the limit as
 the artificial viscosity goes to zero,
and in Section \ref{section5}, we shall pass to the
 limit as the artificial pressure goes to zero.

%%%%%%%%%%%%%%%%%%%%%%%%%%%%%%%%%%%%%%%%%%%%%%%%%%%%%%%%%%%%%%
\section{Preliminaries}\label{section2}
%%%%%%%%%%%%%%%%%%%%%%%%%%%%%%%%%%%%%%%%%%%%%%%%%%%%%%%%%%%%%%

\setcounter{equation}{0} In this section, we first introduce some
function spaces. Let $D\subset \mathbb{R}^3$ be an open subset with
smooth boundary $\partial D$. Let $L^p(D) \, (1\le p<\infty)$ denote
the Banach space of Lebesgue measurable $\mathbb{R}^3$ valued
integrable functions on the set $D$ with the standard norm
$\|u\|_{L^p(D)}$, and  $L^{\infty}(D)$  the Banach space of Lebesgue
measurable essentially bounded $\mathbb{R}^3$ valued functions
defined on $D$ with the standard norm $\|u\|_{L^{\infty}(D)}$.
 If $p=2$, then $L^2(D)$ is a Hilbert space with the scalar product
denoted by
\begin{align*}
\langle u,v\rangle_{L^2(D)}=\int_{D}u(x)v(x)dx, \ \  \mbox{for} \ \
u, v\in L^2(D).
\end{align*}
Let $H^1(D)$ stand for the Sobolev space of all $u\in L^2(D)$ for
which there exist weak derivatives ${\partial u\over \partial
x_i}\in L^2(D), i=1,2,3$. It is a Hilbert space with the scalar
product denoted by
\begin{align*}
\langle u,v\rangle_{H^1(D)}=\langle
u,v\rangle_{L^2(D)}+\langle\nabla u,\nabla v\rangle_{L^2(D)}, \ \
\mbox{for} \ \ u,v\in H^1(D).
\end{align*}

For a probability space $(\Omega,\mathscr{F},\textrm{P})$ and a
Banach space $X$, denote by  $L^p(\Omega,L^q(0,T;X))\, (1\le
p,q<\infty)$ the space  of random functions defined on $\Omega$ with
value in $L^q(0,T; X)$,  endowed with the norm:
\begin{align*}
\|u\|_{L^p(\Omega,L^q(0,T;X))}= \big ({\rm E}\|u\|^p_{L^q(0,T;X)} \big
)^{1\over p}.
\end{align*}
If $q=\infty$, we write
\begin{align*}
\|u\|_{L^p(\Omega,L^\infty(0,T;L^q(X)))}= \big({\rm E}~\ess_{0\le t\le
T}\|u\|^p_{L^q(X)} \big)^{1\over p}.
\end{align*}

We now recall some preliminaries of stochastic analysis and useful
tools for the sake of convenience and completeness. For details, we
refer the reader to \cite{PZ-1992,KS-1991} and the references
therein.
\if false
\begin{definition}\label{Def2.1}
A stochastic process $W(t,\omega)$ is called a {\em Brownian motion}
if it satisfies the following conditions:

$(1)$ $P\{\omega: W(0,\omega)=0\}=1.$

$(2)$ For any $0\le s\le t$, the random variable $W(t)-W(s)$ is
normally distributed with mean 0 and variance $t-s$, i.e., for any
$a<b$,
\begin{equation*}
P\left\{a\le W(t)-W(s)\le b\right\}={1\over\sqrt{2\pi(t-s)}}\int_a^b
e^{-x^2\over2(t-s)}dx.
\end{equation*}

$(3)$ $W(t,\omega)$ has independent increments, i.e., for any $0\le
t_1<t_2<\ldots<t_n$, the random variables $W(t_1),
W(t_2)-W(t_1),\ldots, W(t_n)-W(t_{n-1})$ are independent.

$(4)$ Almost all simple paths of $W(t,\omega)$ are continuous
functions, i.e.,
\begin{equation*}
P\left\{\omega: W(\cdot,\omega) \mbox{ is continuous}\right\}=1.
\end{equation*}
\end{definition}
\fi
\if false
\begin{definition}\label{Cy-wienerprocess}
Let $H$ be a Hilbert space. A stochastic process $\{W(t)\}_{0\le
t\le T}$ is said to be an $H$-valued $\mathscr{F}_t$-adapted Wiener
process with covariance operator $Q$ if

(1) For each non-zero $h\in H$, $|Q^{1/2}h|^{-1}\langle
W(t),h\rangle$ is a standard one-dimensional Wiener process,

(2) For any $h\in H$, $\langle W(t),h\rangle$ is a martingale
adapted to $\mathscr{F}_t$.
\end{definition}

If $W$ is an $H$-valued Wiener processes with covariance operator
$Q$ with $TrQ<\infty$, then $W$ is a Gaussian process on $H$ and
$E(W(t))=0, Cov(W(t))=tQ, t\ge0$. Let $H_0=Q^{1/2}H$, then $H_0$ is
a Hilbert space equipped with the linear product
$\langle\cdot,\cdot\rangle_0$, $\langle u,v\rangle_0=\langle
Q^{-1/2}u,Q^{-1/2}v\rangle,\forall u,v\in H_0 $, where $Q^{-1/2}$ is
the pseudo-inverse of $Q^{1/2}$. Since $Q$ is a trace class
operator, the imbedding of $H_0$ in $H$ is Hilbert-Schmidt. Let
$L_Q$ denote the space of linear operators $S$ such that $SQ^{1/2}$
is a Hilbert-Schmidt operator from $H$ to $H$.
\begin{definition}\label{h-wienerprocess}
Let $H$ be a Hilbert space with inner product $\langle
\cdot,\cdot\rangle$. An \textcolor{red}{$H$-cylindrical Wiener
process} on $[0,T]$ is a family $(W_H(t))_{[0,T]}$ of mappings from
$H$ to $L^2(\Omega)$ with the following properties:

(1) $(W_H(t)h)_{[0,T]}$ is a Brownian motion for all $h\in H$;

(2) for all $0\le t_1, t_2\le T$ and $h_1, h_2\in H$,
$E(W_H(t_1)h_1\cdot W_H(t_2)h_2)=\min\{t_1,t_2\}\langle
h_1,h_2\rangle$.
\end{definition}
\fi

Let us first recall the It\^{o} formula for a local martingale.
\begin{lemma}\label{Ito-formula}
Suppose that $M_t=(M_t^1,M_t^2,\ldots,M_t^n)$ is a vector valued
continuous local martingale, that is $(M_t^i,\mathscr{F}_t)$ is a
local martingale for each $i=1,2,\ldots,n$ and $t\in\mathbb{R_+}$.
Let $A_t=(A_t^1,A_t^2,\ldots,A_t^n)$ be a vector valued continuous
processes adapted to the same filtration $\{\mathscr{F}_t\}$ such
that the total variation of $A_t^i$ on each finite interval is
bounded almost surely, and $A_0^i=0$ almost surely. Let
$X_t=(X_t^1,X_t^2,\ldots,X_t^n)$ be a vector valued adapted
processes such that $X_t=X_0+M_t+A_t$, and let $\Phi\in
C^{1,2}(\mathbb{R}_+\times\mathbb{R}^n)$. Then, for any $t\ge0$, the
equality
\begin{align*}
\Phi(t,X_t)&=\Phi(0,X_0)+\sum_{i=1}^n\int_0^t {\partial\over\partial
x_i}\Phi(s,X_s)dM_s^i+\sum_{i=1}^n\int_0^t {\partial\over\partial
x_i}\Phi(s,X_s)dA_s^i\\
&\quad+{1\over2}\sum_{i,j=1}^n\int_0^t {\partial^2\over\partial
x_ix_j}\Phi(s,X_s)d\langle
M^i,M^j\rangle_s+\int_0^t{\partial\over\partial s}\Phi(s,X_s)ds
 \end{align*}
holds almost surely. Here $\langle \cdot,\cdot\rangle_t$ is the
cross variation process defined by $\langle X,Y\rangle_t={1\over
4}\{\langle X+Y\rangle_t-\langle X-Y\rangle_t\}$, and $\langle
X\rangle_t$ denotes the quadratic variation of $X$ on $[0,t]$.
\end{lemma}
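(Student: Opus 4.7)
The plan is to establish this multidimensional Itô formula by the classical combination of localization and a second-order Taylor expansion along a refining sequence of partitions. First, I would reduce to a bounded setting by introducing stopping times
\[
\tau_k = \inf\Bigl\{t\ge 0 : |X_t| + \sum_i \langle M^i\rangle_t + \sum_i \mathrm{Var}(A^i)_t \ge k\Bigr\},
\]
so that on $[0,\tau_k]$ each $M^i$ is a bounded continuous martingale, $A^i$ has total variation bounded by a constant, and $X$ takes values in a compact set $K_k$. Since $\Phi\in C^{1,2}$, the partial derivatives $\partial_s\Phi$, $\partial_{x_i}\Phi$, $\partial^2_{x_ix_j}\Phi$ are bounded and uniformly continuous on $[0,T]\times K_k$. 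Proving the identity up to each $\tau_k$ and letting $k\to\infty$ gives the general statement, since both sides are continuous in the upper limit of integration. Thus I may assume throughout that everything is bounded.

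Next, fix $t>0$ and a sequence of partitions $0=t_0^N<t_1^N<\cdots<t_{k_N}^N=t$ with mesh $|\pi_N|\to 0$. Starting from the telescoping identity $\Phi(t,X_t)-\Phi(0,X_0)=\sum_k[\Phi(t_{k+1},X_{t_{k+1}})-\Phi(t_k,X_{t_k})]$, I would apply a first-order Taylor expansion in time and a second-order expansion in space to each increment. Writing $\Delta X_k^i = \Delta M_k^i + \Delta A_k^i$, the first-order time sum converges to $\int_0^t \partial_s\Phi(s,X_s)\,ds$ by continuity of $X$ and a standard Riemann sum argument. The first-order space sum splits: the martingale part converges in $L^2$ to $\sum_i \int_0^t \partial_{x_i}\Phi(s,X_s)\,dM_s^i$ by the very definition of the Itô integral as an $L^2$-limit of left-endpoint simple processes, while the bounded-variation part converges pathwise to $\sum_i \int_0^t \partial_{x_i}\Phi(s,X_s)\,dA_s^i$ as a Riemann--Stieltjes sum, using continuity of $X$ and $\partial_{x_i}\Phi$ together with finiteness of $\mathrm{Var}(A^i)_t$.

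The main obstacle is the second-order bilinear sum $\tfrac{1}{2}\sum_{k,i,j}\partial^2_{x_ix_j}\Phi(t_k,X_{t_k})\Delta X_k^i\Delta X_k^j$. Expanding $\Delta X_k^i\Delta X_k^j$ into four pieces, the three pieces involving at least one $\Delta A$ are controlled by $\sup_k|\Delta M_k| \cdot \mathrm{Var}(A)_t$ or $(\sup_k|\Delta A_k|)\cdot\mathrm{Var}(A)_t$, both of which vanish in probability as $|\pi_N|\to 0$ by uniform continuity of the sample paths of $M$ and $A$. For the genuine martingale-martingale term, I would decompose
\[
\sum_k \partial^2_{x_ix_j}\Phi(t_k,X_{t_k})\Delta M_k^i\Delta M_k^j
= \sum_k \partial^2_{x_ix_j}\Phi(t_k,X_{t_k})\Delta\langle M^i,M^j\rangle_k + E_N,
\]
where the first sum converges in probability to $\int_0^t \partial^2_{x_ix_j}\Phi(s,X_s)\,d\langle M^i,M^j\rangle_s$ by uniform continuity of $\partial^2\Phi$ composed with the continuous path $X$, and the error $E_N = \sum_k \partial^2_{x_ix_j}\Phi(t_k,X_{t_k})\bigl(\Delta M_k^i\Delta M_k^j-\Delta\langle M^i,M^j\rangle_k\bigr)$ has mean zero and an $L^2$-norm that I would control by orthogonality of martingale increments together with the polarization identity $\langle M^i,M^j\rangle = \tfrac{1}{4}(\langle M^i+M^j\rangle-\langle M^i-M^j\rangle)$ and the boundedness of $\partial^2\Phi$ and $\langle M^i\rangle_t$; the bound is of order $|\pi_N|$ times a constant and vanishes.

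Finally, the Taylor remainders $R_k$ are of order $o(|\Delta X_k|^2)$ uniformly on the compact range of $X$; their sum is bounded by the modulus of continuity of $\partial^2\Phi$ applied to the oscillation of $X$, times $\sum_k|\Delta X_k|^2$, which is itself bounded in probability thanks to finiteness of the quadratic variation and bounded variation of $A$, so the remainder vanishes. Putting the convergences together yields the identity on $[0,\tau_k]$, and sending $k\to\infty$ concludes the proof. The delicate step is clearly the quadratic-variation limit, where the specific fact that continuous local martingales have quadratic variation coinciding with their predictable bracket, and that cross terms with bounded variation processes contribute nothing, is essential.
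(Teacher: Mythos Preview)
Your proof outline is correct and follows the classical localization-plus-Taylor-expansion argument found in standard references. Note, however, that the paper does not actually prove this lemma: it is stated in the preliminaries section as a recalled result, with the reader referred to \cite{PZ-1992,KS-1991} for details. Your argument is essentially the textbook proof in Karatzas--Shreve, so there is nothing to compare against beyond observing that you have supplied what the paper deliberately omits.
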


\section{The Faedo-Galerkin approximation and a priori estimates}\label{section3}
%%%%%%%%%%%%%%%%%%%%%%%%%%%%%%%%%%%%%%%%%%%%%%%%%%%%%%%%%%%%%%%%%%%%%%%%%%%%%%%%%%%%%%%%%%

\subsection{Approximation scheme and a priori estimates}\label{s3.1}
For fixed $\varepsilon, \delta > 0$, we consider the following
approximation problem with $\beta>\max\{4,\gamma\}$:
\begin{align}\label{A-MHD}
\begin{cases}
d\rho+\Dv(\rho u)dt=\varepsilon\Delta\rho dt\\
d(\rho u)+[\Dv(\rho u\otimes
u)+a\nabla\rho^\gamma+\delta\nabla\rho^\beta+\varepsilon\nabla
u\cdot \nabla\rho-(\lambda+\mu)
\nabla\Dv u-\mu\Delta u]dt\\
\qquad\qquad\qquad\qquad=(\nabla\times B)\times Bdt+\sum_{k\ge1} f_k(\rho,\rho u,x)d\beta^1_k(t)\\
dB-[\nabla\times(u\times B)+\nu\Delta B]dt=\sum_{k\ge1}
g_k(B,x)d\beta_k^2(t),
\end{cases}
\end{align}
with the boundary conditions:
\begin{align}\label{A-boundary}
\nabla\rho\cdot \n|_{\partial D}=0, \quad u|_{\partial D}=0, \quad
B|_{\partial D}=0
\end{align}
and the initial data:
\begin{align}\label{A-initialdata}
& \rho|_{t=0}=\rho_{0,\delta}\in C^{2+\alpha}(\overline{D}),\quad
 \nabla \rho_{0,\delta}\cdot \n|_{\partial D}=0,\\
&\notag (\rho u)|_{t=0}=m_{0,\delta}\in C^2(\bar{D}),\quad
B|_{t=0}=B_{0,\delta}\in C^2(\bar{D}),\;\Dv B_{0,\delta}=0.
\end{align}
%where $\n$ is the unit outer normal of $\partial D$ and
Here we take the initial data that satisfies the following conditions:
\begin{equation*}\label{C-initialdata}
\begin{split}
& \rho_{0,\delta}\to\rho_0   \mbox{ in }   L^\gamma(D),\quad\quad\
B_{0,\delta}\to B_0 \mbox{ in } L^2(D)   \mbox{ as }
  \delta\to 0; \\
& 0<\delta\le\rho_{0,\delta}(x)\le \delta^{-{1\over\beta}},\quad
m_{0,\delta}=h_{\delta}\sqrt{\rho_{0,\delta}},
\end{split}
\end{equation*}
almost surely and $h_{\delta}$ is defined as follows. First, take
\begin{align*}
\tilde{m}_{0,\delta}(x)=
\begin{cases}
m_0(x)\sqrt{{\rho_{0,\delta}(x)\over \rho_{0}(x)}}, \ \ &\mbox{if} \
\
\rho_0(x)>0,\\
0, \ \ &\mbox{if} \ \ \rho_0(x)=0.
\end{cases}
\end{align*}
By using \eqref{G-initialdata}, we have
\begin{align*}
{|\tilde{m}_{0,\delta}|^2\over{\rho_{0,\delta}}} \ \ \mbox{is
bounded in} \ \ L^p(\Omega, L^1(D)) \ \ \mbox{independently of} \ \
\delta>0, \ \forall \ p\in[1,\infty).
\end{align*}
Since $C^2(\overline{D})$ is dense in $L^2(D)$, then we can find
$h_\delta\in C^2(\overline{D})$ such that
\begin{align*}
 \Big \|{\tilde{m}_{0,\delta}\over\sqrt{\rho_{0,\delta}}}-h_\delta \Big \|_{L^p(\Omega,L^2(D))}<\delta.
\end{align*}
%In this section we will fix $\varepsilon,\delta$.
On the probability
space $(\Omega,\mathscr{F},\textrm{P})$ with two given Brownian
motions $\hat{\beta}_k^1$ and $\hat{\beta}_k^2, \ k\ge1$, denote
$\hat{\beta}_{k}=(\hat{\beta}_{k}^1,\hat{\beta}_{k}^2)_{k\ge1}$.

In order to solve \eqref{A-MHD}-\eqref{A-initialdata}, we first consider
a suitable orthogonal system formed by a family of smooth functions
$\varphi_n$ vanishing on $\partial D$. One can take the
eigenfunctions of the Dirichlet problem for the Laplacian operator:
\begin{equation*}
-\Delta \varphi_n=\lambda_n\varphi_n \ \ \mbox{on} \ \ D,\quad
\varphi_n|_{\partial D}=0.
\end{equation*}
Now, we consider a sequence of finite dimensional spaces
\begin{equation*}
X_n=\mbox{span}\{\varphi_j\}^n_{j=1},\; n=1,2,\ldots.
\end{equation*}
First, we introduce a family of operators
\begin{align*}
\mathcal{M}[\rho]: X_n \to X_n; \quad  \langle \mathcal{M}[\rho] v,w
\rangle=\int_D \rho v\cdot wdx, \; \forall\; v, w\in X_n.
\end{align*}
If $\inf_{x\in D}\rho>0$, then these operators are invertible and we
have
\begin{align*}
\|\mathcal{M}^{-1}[\rho]\|_{\mathcal {L}(X_n,X_n)}\le
  \Big (\inf_{x\in D}\rho(x)  \Big )^{-1},
\end{align*}
where $\mathcal{M}^{-1}[\rho]$ is the inverse of $\mathcal{M}[\rho]$. Moreover, by using the following identity
\begin{align*}
\mathcal{M}^{-1}[\rho_1]-\mathcal{M}^{-1}[\rho_2]=\mathcal{M}^{-1}[\rho_2]
(\mathcal{M}[\rho_2]-\mathcal{M}[\rho_1])\mathcal{M}^{-1}[\rho_1],
\end{align*}
we can deduce that the map
\begin{align*}
\rho\mapsto\mathcal{M}^{-1}[\rho] \ \ \mbox{mapping}\ \ L^1(D) \ \
\mbox{into} \ \ \mathcal {L}(X_n,X_n)
\end{align*}
is well-defined and satisfies
\begin{align}\label{NS13}
\|\mathcal{M}^{-1}[\rho_1]-\mathcal{M}^{-1}[\rho_2]\|_{\mathcal
{L}(X_n,X_n)}\le C(n,\eta)\|\rho_1-\rho_2\|_{L^1(D)}
\end{align}
for any $\rho_1,\rho_2$ belonging to the set
\begin{align*}
N_\eta=  \Big \{\rho\in L^1:  \inf_{x\in D}\rho\ge\eta>0  \Big \}.
\end{align*}
%Note that we may take $\mathbb{P}_nf_k(\rho,\rho
%u_n,x),\mathbb{P}_ng_k(B_n,x)$ to replace $f_k(\rho,\rho u_n,x)$ and
%$g_k(B_n,x)$.

Now we will try to solve \eqref{A-MHD} on $X_n$. For this,
let $\mathbb{P}$ be the projection from $L^2(D)$ to $X_n$.
$\mathbb{P}$ is also a linear projection from $ H^{\ell}(D)$ to
$X_n$. In fact, $$\mathbb{P} h=\sum_{i=1}^{n}  \langle h , e_i\rangle_{{H^\ell-H^{-\ell}}} e_i, \quad h \in H^{\ell
}(D),e_i\in X_n.$$
%TODO:
%$\mathbb{P} h=\sum_{i=1}^{n}  {}_{W^{\ell,2
%}(D)}\langle h , e_i\rangle_{W^{-\ell,2}(D)} e_i, \; h \in W^{\ell,2
%}(D),e_i\in X_n$.
We shall look for the sequence of pairs
$(u_n,B_n)\in C([0,T]; X_n)$ satisfying the integral equations:
\begin{equation*}%\label{integral1}
\begin{split}
&\int_D  \rho(t)u_n(t)\cdot\varphi
dx\!+\int_0^t\!\!\int_D\left[\Dv(\rho u_n\otimes u_n)-\mu\Delta
u_n-(\nabla\times B_n)\times B_n
\right]\cdot\varphi dxds\\
&=\!\int_D m_0\cdot\varphi dx+\!\!\int_0^t\!\!\!\int_D\!\!
\left[\nabla\!\left((\lambda+\mu)\Dv
u_n\!\!-\!a\rho^\gamma\!-\!\delta\rho^\beta\right)\!-\!\varepsilon\nabla
u_n\cdot\nabla\rho\right] \cdot\varphi
dxds\!\\
&\quad+\!\!\int_0^t  \langle \sum_{k\ge1}f^n_k(\rho,\rho u_n,x), \ \varphi \rangle d\hat{\beta}_k^1,
%&\quad+\!\!\int_0^t\!\!\!\int_D\mathcal{M}^{\frac{1}{2}}[\rho]\mathbb{P}(\sum_{k\ge1}f_k(\rho,\rho
%u_n,x)/\sqrt{\rho}) \cdot\varphi dxd\hat{\beta}_k^1,
\end{split}
\end{equation*}
\begin{equation*}%\label{integral2}
\begin{split}
\int_D  B_n(t)\cdot\varphi dx-\int_D B_0\cdot\varphi
dx&=\int_0^t\int_D \left[\nabla\times(u_n\times B_n)+\nu\Delta B_n\right]\cdot\varphi dxds\\
&\quad+\int_0^t\int_D\sum_{k\ge1} g_k(B_n,x) \cdot\varphi
dxd\hat{\beta}_k^2,
\end{split}
\end{equation*}
for all $t\in[0,T]$ and any function $\varphi\in X_n$, where
$$ \sum_{k\ge1}f^n_k(\rho,\rho u_n,x) := \mathcal{M}^{\frac{1}{2}}[\rho]\mathbb{P}(\sum_{k\ge1}f_k(\rho,\rho
u_n,x)/\sqrt{\rho}), \quad \text{with }\  \langle \mathcal{M}^{{1\over2}}[\rho] v,w
\rangle=\int_D \sqrt\rho v\cdot wdx.$$
%by $\sum_{k\ge1}f^n_k(\rho,\rho u_n,x)$.
%$\mathcal{M}^{\frac{1}{2}}[\rho_n]\mathbb{P}_n\left(\frac{f_k(\rho_n,\rho_nu_n,x)}{\sqrt{\rho_n}}\right)$.
%Since $\frac{f_k(\rho_n,\rho_nu_n,x)}{\sqrt{\rho_n}}\in W^{-\ell,2}$
%\begin{equation}\label{M}
%\mathcal{M}[\rho]:X_n\rightarrow
%X^*_n,\quad\big\langle\mathcal{M}[\rho]
%v,w\big\rangle=\int_{D}\rho\,v\cdot w\,d x,\quad v,\,w\in X_n.
%\end{equation}
Then, we can rewrite the above integral equations as % \eqref{integral1} and \eqref{integral2} as
\begin{align}\label{r-integral}
\notag(u_n(t),B_n(t))= & \left(\mathcal{M}^{-1}[\rho(t)] \left[m_0^*+\!\int_0^t\mathscr{N}_1[\rho(s),u_n(s),B_n(s)]ds
+\!\int_0^t \sum_{k\ge1}f^n_k(\rho,\rho u_n,x)d\hat{\beta}_k^1 \right], \right.\\
 &\quad  \left. B_0^*+\int_0^t\mathscr{N}_2[u_n(s),B_n(s)]ds+\int_0^t\sum_{k\ge1}g_k(B_n,x)d\hat{\beta}_k^2 \right),
\end{align}
where
\begin{align*}
&\langle m_0^*,\varphi\rangle=\int_D m_0\cdot\varphi dx,\qquad \langle B_0^*,\varphi\rangle=\int_D B_0\cdot\varphi dx,\\
&\langle \mathscr{N}_1[\rho,u_n,B_n],\varphi\rangle=\int_D \big[\mu\Delta u_n-\Dv(\rho u_n\otimes u_n)\\
&\quad+\nabla\left((\lambda+\mu)\Dv
u_n-a\rho^\gamma-\delta\rho^\beta\right)-\varepsilon\nabla
u_n\cdot\nabla\rho+(\nabla\times B_n)\times B_n\big]
\cdot\varphi dx,\\
&\langle \mathscr{N}_2[u_n,B_n],\varphi\rangle=\int_D
\left[\nabla\times(u_n\times B_n)+\nu\Delta B_n\right]\cdot\varphi
dx.
\end{align*}

We will take the following steps to solve \eqref{r-integral}.

\medskip

\noindent{\bf Step 1:} Solve $\rho$ in terms of $u_n$ and derive a fixed-point problem.
Note that $\rho$ is determined as the solution of
the following Neumann initial-boundary value problem(for e.g., see Lemma
2.2 \cite{f2}):
\begin{equation}\label{Neumann problem}
\begin{cases}
\rho_t+\Dv(\rho u)=\varepsilon \Delta \rho,\\
\nabla  \rho\cdot\n|_{\partial D}=0,\\
\rho|_{t=0}=\rho_{0,\delta}(x).
\end{cases}
\end{equation}
From \cite[Lemma 2.1, Lemma 2.2]{FNP} we know
that there exists a mapping $\mathcal{S}: C([0,T];
C^2(\overline{D})) \to C([0,T]; C^{2+\alpha}(\overline{D}))$ such
that
\begin{enumerate}
\item[(1)] $\rho=\mathcal{S}[u]$ is the unique classical solution of
\eqref{Neumann problem};
\item[(2)] $\underline{\rho}\exp\left(-\int_0^t \|\Dv
u(s)\|_{L^\infty(D)}ds\right)\le \mathcal{S}[u](t,x)\le
\overline{\rho}\exp\left(-\int_0^t \|\Dv
u(s)\|_{L^\infty(D)}ds\right)$ for all $t\in[0,T]$, where $0 < \underline{\rho} \le \rho_{0, \delta} \le \overline{\rho}$;
%TODO: what are the \underline\rho and \overline\rho?
\item[(3)] $\|\mathcal{S}[u_1]-\mathcal{S}[u_2]\|_{C([0,T]; W^{1,2}(D))}\le
T C(K, T)\|u_1-u_2\|_{C([0,T]; W^{1,2}(D))}$ for any $u_1, u_2$
in  $M_K :=\left\{u\in C([0,T]; W^{1,2}(D)):
\|u(t)\|_{L^\infty(D)}\!+\!\|\nabla u(t)\|_{L^\infty(D)}\le K\ \
\!\! \text{for all $t$} \right\}$.
\end{enumerate}

Thus on $X_n$ we can write $\rho=\mathcal{S}[u_n]$, and construct the
approximate solutions for \eqref{A-MHD}-\eqref{A-initialdata} by
means of \eqref{r-integral}. That is, we can get the approximate
solutions by solving the following integral equations:
\begin{align*}%\label{r-integral}
&(u_n(t),B_n(t))\\
&=\left(\mathcal{M}^{-1}[\mathcal{S}[u_n](t)]\left[ m_0^*+\!\int_0^t\mathscr{N}_1[\mathcal{S}[u_n](s),u_n(s),B_n(s)]ds+\!\int_0^t\sum_{k\ge1}
f^n_k(\mathcal{S}[u_n],\mathcal{S}[u_n] u_n,x)d\hat{\beta}_k^1(s) \right], \right.\\
&\qquad \left. B_0^*+\int_0^t\mathscr{N}_2[u_n(s),B_n(s)]ds+\int_0^t
\sum_{k\ge1}g_k( B_n,x)d\hat{\beta}_k^2(s) \right).
\end{align*}

%The density $\rho=\mathcal{S}[u_n]$ is determined as the solution of
%the following Neumann initial-boundary value problem(eg, see Lemma
%2.2 \cite{f2}):
%\begin{equation}\label{Neumann problem}
%\begin{cases}
%\rho_t+\Dv(\rho u)=\varepsilon \Delta \rho,\\
%\nabla  \rho\cdot\n|_{\partial D}=0,\\
%\rho|_{t=0}=\rho_{0,\delta}(x).
%\end{cases}
%\end{equation}

\noindent{\bf Step 2.} Solve a cut-off problem. For $N>0$, choose a $C^\infty$ smooth cut-off function
~$\theta_N: [0,\infty)\to[0,1]$ such that
\begin{eqnarray*}
\theta_N(x) :=
\begin{cases}
1, \ \ \mbox{for}\ \ |x|\le N,\\
0, \ \ \mbox{for}\ \  |x|\ge N+1.
\end{cases}
\end{eqnarray*}

Note that we consider the following cut-off problem of
\eqref{r-integral} for a fixed $n $:
\begin{align}\label{integral3a}
\notag(u^N_n(t),B^N_n(t))&=\!\!\left(\mathcal{M}^{-1}[\mathcal{S}[u^N_n](t)]\left[ m_0^*\!+\!\!\int_0^t\!\!\!\theta^{u^N_n, B^N_n}_N(s)\mathscr{N}_1[\mathcal{S}[u^N_n](s),u^N_n(s),B^N_n(s)]ds \right.\right.\\
&\quad+ \left. \int_0^t\sum_{k\ge1}\theta^{u^N_n, B^N_n}_N(s)
f^n_k(\mathcal{S}[u^N_n],\mathcal{S}[u^N_n]
u^N_n,x)d\hat{\beta}_k^1(s) \right],\\
\notag&B_0^*+\int_0^t\theta^{u^N_n,
B^N_n}_N(s)\mathscr{N}_2[u^N_n(s),B^N_n(s)]ds+\int_0^t
\sum_{k\ge1}\theta^{u^N_n, B^N_n}_N(s) g_k(
B^N_n,x)d\hat{\beta}_k^2(s) \bigg),
\end{align}
%TODO: Why cut-off and how?
where $\theta^{u^N_n, B^N_n}_N(s) = \theta_N\left(\max\{
\|u^N_n(s)\|_{W^{1,\infty}}, \|B^N_n(s)\|_{W^{1,\infty}} \}\right)$.
Let $N$ and $n$ be fixed. By means of the standard fixed point
argument on the Banach space $C([0,T];X_n)$, we can solve the
integral equations \eqref{integral3a}, at least on a short time
interval $[0,T_{n,N}]$, $T_{n,N}\le T$. For simplicity, we denote
$(u^N,B^N):=(u^N_n, B^N_n)$.
%\iffalse  for a local solution $(\rho_n,u_n,B_n)$ of the
%problem \eqref{r-integral} and \eqref{Neumann problem} complemented
%by the initial condition \eqref{C-initialdata} for $\rho_n$\fi
\begin{proposition}\label{prop_extN}
Given a $T > 0$, for each fixed $n$ and $N$, there exists a $T_{n,N}
\in (0, T]$ such that the equation \eqref{integral3a} admits a
unique solution $(u^N, B^N) \in L^2(\Omega, C([0, T_{n,N}];
X_n))^2$.
\end{proposition}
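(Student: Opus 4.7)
The plan is to establish \eqref{integral3a} via a Banach fixed-point argument on the space $\mathcal{X}_{T^*} := L^2(\Omega, C([0,T^*]; X_n))^2$ endowed with the norm $\|(u,B)\|_{\mathcal{X}_{T^*}}^2 = \mathbb{E}\sup_{t\in[0,T^*]}\bigl(\|u(t)\|_{L^2(D)}^2 + \|B(t)\|_{L^2(D)}^2\bigr)$, where $T^* = T_{n,N}\in(0,T]$ is to be chosen small depending on $n$ and $N$. Let $\Phi(u,B) := (\Phi_1(u,B), \Phi_2(u,B))$ denote the map defined by the right-hand side of \eqref{integral3a}. Since $n,N$ are fixed, all norms on the finite-dimensional space $X_n$ are equivalent with constants $C(n)$; in particular, whenever $\theta_N^{u,B}(s)\neq 0$ we have $\|u(s)\|_{W^{1,\infty}}, \|B(s)\|_{W^{1,\infty}}\le N+1$.

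First I verify that $\Phi$ maps $\mathcal{X}_{T^*}$ into itself. Given $(u,B)\in\mathcal{X}_{T^*}$, set $\rho := \mathcal{S}[u]$; by properties (1)--(2) of $\mathcal{S}$, $\rho$ is a classical positive solution of \eqref{Neumann problem} with strictly positive lower and upper bounds $\underline\rho, \overline\rho$ depending on $T^*,n,N$. Hence $\mathcal{M}^{-1}[\rho(t)]$ is well-defined with operator norm at most $1/\underline\rho$. The cut-off bounds together with \eqref{assumption} ensure that $\mathscr{N}_1, \mathscr{N}_2, f_k^n(\rho,\rho u,\cdot), g_k(B,\cdot)$ are bounded in the relevant norms by constants depending only on $n,N$. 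The deterministic time integrals then contribute terms of order $T^* C(n,N)$, while the stochastic integrals, which are $X_n$-valued continuous martingales, satisfy by the Burkholder-Davis-Gundy inequality
\[
\mathbb{E}\sup_{t\le T^*}\Bigl\|\int_0^t \sum_{k\ge 1} \theta_N^{u,B}(s)\, f_k^n(\rho,\rho u,\cdot)\, d\hat{\beta}_k^1\Bigr\|_{L^2(D)}^2 \lesssim T^* C(n,N)^2,
\]
and analogously for the $g_k$ term. Combining these, $\Phi(u,B)\in\mathcal{X}_{T^*}$.

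Next I show that $\Phi$ is a contraction for $T^*$ small enough. For $(u_1,B_1),(u_2,B_2)\in\mathcal{X}_{T^*}$, write $\rho_i := \mathcal{S}[u_i]$. Property (3) of $\mathcal{S}$ gives $\|\rho_1-\rho_2\|_{C([0,T^*]; W^{1,2}(D))} \le T^* C(N,T)\|u_1-u_2\|_{C([0,T^*]; W^{1,2}(D))}$, and \eqref{NS13} combined with the norm-equivalence on $X_n$ yields a Lipschitz estimate of the form $\|\mathcal{M}^{-1}[\rho_1]-\mathcal{M}^{-1}[\rho_2]\|_{\mathcal{L}(X_n,X_n)} \le T^* C(n,N,T) \|u_1-u_2\|_{C([0,T^*]; L^2(D))}$. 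The cut-off $\theta_N$ is globally Lipschitz and the coefficients $f_k, g_k$ are $C^1$ in their arguments by assumption (A), so each factor appearing in $\Phi$ is Lipschitz continuous on $\mathcal{X}_{T^*}$. Invoking BDG once more for the stochastic-integral differences, I obtain an estimate of the form
\[
\|\Phi(u_1,B_1)-\Phi(u_2,B_2)\|_{\mathcal{X}_{T^*}}^2 \le C(n,N,T)\, T^*\, \|(u_1,B_1)-(u_2,B_2)\|_{\mathcal{X}_{T^*}}^2.
\]
Choosing $T_{n,N}$ so small that $C(n,N,T)\,T_{n,N} < 1/2$, the map $\Phi$ is a strict contraction, and Banach's fixed-point theorem delivers the unique fixed point $(u^N, B^N)\in\mathcal{X}_{T_{n,N}}$, which is the desired solution of \eqref{integral3a}.

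The main obstacle is the intricate Lipschitz dependence of every coefficient on $(u,B)$: the density enters nonlinearly through the parabolic solver $\mathcal{S}$, while $\mathcal{M}^{-1}$ involves an inverse that is only stable provided $\rho$ stays uniformly away from zero. The cut-off at level $N$ combined with properties (2)--(3) of $\mathcal{S}$ is exactly what keeps $\rho$ uniformly bounded above and below along the iteration, thereby rendering all nonlinear operators globally Lipschitz with constants depending only on $n, N$ and $T$. Without the cut-off one could not prevent $\rho$ from degenerating to zero, and the $\mathcal{M}^{-1}$ term would break down.
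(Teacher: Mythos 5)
Your overall strategy coincides with the paper's: a Banach fixed-point argument in $L^2(\Omega, C([0,T_{n,N}]; X_n))^2$, where the cut-off $\theta_N$, the properties of the parabolic solver $\mathcal{S}$, the estimate \eqref{NS13} for $\mathcal{M}^{-1}$, and the Burkholder--Davis--Gundy inequality combine to yield a small factor $T_{n,N}$ in front of the Lipschitz constant. The paper likewise splits the stochastic-integral difference into three pieces (change of $\mathcal{M}^{-1}$, change of $f_k^n$, change of $g_k$) and bounds each by $CT_{n,N}\,{\rm E}\|U-V\|_{X_n}^2$ before invoking the fixed-point theorem, so the approaches are essentially the same.

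There is, however, one step in your write-up that does not survive scrutiny as stated. You claim that for \emph{arbitrary} $(u,B)\in\mathcal{X}_{T^*}$ the density $\rho=\mathcal{S}[u]$ enjoys upper and lower bounds depending only on $T^*,n,N$, and then use this to bound $\|\mathcal{M}^{-1}[\rho(t)]\|_{\mathcal{L}(X_n,X_n)}$ uniformly. That is not correct. The cut-off $\theta_N$ only annihilates the \emph{integrands} when $\max\{\|u(s)\|_{W^{1,\infty}},\|B(s)\|_{W^{1,\infty}}\}\ge N+1$; it does not prevent $\|u(s)\|_{W^{1,\infty}}$ itself from becoming large, and the prefactor $\mathcal{M}^{-1}[\mathcal{S}[u](t)]$ in \eqref{integral3a} sits \emph{outside} the cut-off. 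By property (2) of $\mathcal{S}$ the lower bound on $\rho$ is $\underline{\rho}\,\exp(-\int_0^t\|\Dv u\|_{L^\infty}ds)$, which depends on the full path of $u$ and can be arbitrarily small on a set of $\omega$'s of positive probability, so for a generic element of $\mathcal{X}_{T^*}$ the image $\Phi(u,B)$ need not even lie in $\mathcal{X}_{T^*}$ (its second moment can be infinite), and property (3) of $\mathcal{S}$ (the Lipschitz estimate) is only available for paths in $M_K$. The paper addresses this precisely by introducing the closed, hence complete, set $B_{N,T_{n,N}}=\{(u,h):\ \|(u,h)\|_{C(I_{n,N};W^{1,\infty}(D))}\le N\}$ inside $L^2(\Omega,C(I_{n,N};X_n))^2$ and running the fixed-point argument there: on that ball $\inf\mathcal{S}(u)$ and the Lipschitz constant of $\mathcal{S}$ are controlled in terms of $N$ and $T_{n,N}$ alone, and one verifies both that $\mathscr{T}$ maps $B_{N,T_{n,N}}$ into itself (for $T_{n,N}$ small) and that $\mathscr{T}$ is a contraction on it. You should restrict your fixed-point space accordingly; with that modification your argument becomes identical in substance to the paper's.
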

\begin{proof}
Define
\begin{align*}
B_{N,T_{n,N}}=\left\{U^N:=(u^N,h^N) \in L^2(\Omega, C(I_{n,N};
X_n))^2: ~\|(u^N,h^N)\|_{C(I_{n,N};W^{1,\infty}(D))}\le N\right\},
\end{align*}
with the norm
$\|U^N\|^2_{B_{N,T_{n,N}}}\!\!={\rm E}\sup_{I_{n,N}}\|U^N\|^2_{X_n}$,
where $I_{n,N}=[0,T_{n,N}]$ and $\|U^N\|^2_{X_n} = \|u^N\|^2_{X_n} +
\|h^N\|^2_{X_n}$.
Introduce a map
\begin{align*}
\mathscr{T}: L^2(\Omega, C(I_{n,N}; X_n))^2 \to L^2(\Omega, C(I_{n,N}; X_n))^2,
\end{align*}
defined by
\begin{align*}
\mathscr{T}(U^N) & := q_0^*+\int_0^t\theta^{U^N}_N(s) \mathscr{N}\
ds +\int_0^t\sum_{k\ge1}\theta^{U^N}_N(s)F_k\ d\hat{\beta}_k(s),
\end{align*}
where
\begin{align*}
q_0^* & =(\mathcal{M}^{-1}[\mathcal{S}(u^N)(t)] m_0^*, \ B_0^*),\\
\mathscr{N} & =(\mathcal{M}^{-1}[\mathcal{S}(u^N)(t)]\mathscr{N}_1[\mathcal{S}(u^N),u^N,h^N], \ \mathscr{N}_2[u^N,h^N]),\\
F_k & = (\mathcal{M}^{-1}[\mathcal{S}(u^N)(t)]
f^n_k(\mathcal{S}(u^N), \ \mathcal{S}(u^N) u^N,x), g_k(h^N,x)).
\end{align*}
Similar to \cite{NS}, by using the properties of
$\mathcal{M}[\rho]$, $\mathcal{S}[u]$, H\"{o}lder's inequality and
the Burkholder-Davis-Gundy inequality, we can infer that
$\mathscr{T}$ maps $B_{N,T_{n,N}}$ into itself for some suitable
$T_{n,N}$.
%It follows that $\mathscr{T}$ maps $B_{N,T_n}$ into itself for any $0\le T_n\le T$.

We will prove that $\mathscr{T}$ is a contraction on $L^2(\Omega, C(I_{n,N}; X_n))^2$ for some (small) $T_{n,N} > 0$.
Since the deterministic parts are similar to \cite[Section 7]{NS},
we only need to consider the stochastic parts
\[
\mathscr{T}^s(U^N) := \int_0^t\sum_{k\ge1}\theta^{U^N}_N(s)F_k\
d\hat{\beta}_k(s).
\]

%We denote $U:=U^N=(u^N,h^N)$ for simplicity.
For $U= (u, h)$, $V= (v, \ell)$, one can write
\begin{align*}
\mathscr{T}^s(U) & -\mathscr{T}^s(V)=
(\mathcal{M}^{-1}[\mathcal{S}(u)(t)]-\mathcal{M}^{-1}[\mathcal{S}(v)(t)]
) \!
\int_0^t\sum_{k\ge1}\theta^U_N(s)f^n_k(\mathcal{S}(u),\mathcal{S}(u)u,x)d\hat{\beta}_k^1\\
&+\mathcal{M}^{-1}[\mathcal{S}(v)(t)]\int_0^t\sum_{k\ge1}
\left(\theta^U_N(s)f^n_k(\mathcal{S}(u),\mathcal{S}(u)u,x)
-\theta^V_N(s)f^n_k(\mathcal{S}(v),\mathcal{S}(v)v,x)\right)d\hat{\beta}_k^1\\
&+\int_0^t \sum_{k\ge1}\left(\theta^U_N(s)g_k(h,x)
-\theta^V_N(s)g_k(\ell,x)\right)d\hat{\beta}_k^2.
\end{align*}
Then
\begin{align*}
{\rm E}&\left\|\mathscr{T}^s(U)- \mathscr{T}^s(V)\right\|^2_{X_n}\\
&\le {\rm E}\sup_{0\le t\le T_{n,N}}\left\|\int_0^t(\mathcal{M}^{-1}
[\mathcal{S}(u)(t)]-\mathcal{M}^{-1}[\mathcal{S}(v)(t)])\sum_{k\ge1}
f^n_k(\mathcal{S}(u),\mathcal{S}(u)u,x)d\hat{\beta}_k^1\right\|^2_{X_n}\\
&\quad +{\rm E}\sup_{0\le t\le
T_{n,N}}\left\|\int_0^t\mathcal{M}^{-1}[\mathcal{S}(v)(t)]
\sum_{k\ge1}\left[f^n_k(\mathcal{S}(u),\mathcal{S}(u)u,x)
-f^n_k(\mathcal{S}(v),\mathcal{S}(v)v,x)\right]d\hat{\beta}_k^1\right\|^2_{X_n}\\
&\quad+{\rm E}\sup_{0\le t\le T_{n,N}}\left\|\int_0^t
\sum_{k\ge1}\left(g_k(h,x)
-g_k(\ell,x)\right)d\hat{\beta}_k^2\right\|^2_{X_n}\\
&=: I_1+I_2+I_3.
\end{align*}
For the term $I_1$, using the Burkholder-Davis-Gundy inequality,
\eqref{assumption}, \eqref{NS13} and the fact that $\mathcal{S}(u)$
is a contraction map,  we have
\begin{align*}
I_1&\le {\rm E} \Big (\int_0^{T_{n,N}}\Big\|(\mathcal{M}^{-1}
[\mathcal{S}(u)(t)]-\mathcal{M}^{-1}
[\mathcal{S}(v)(t)])\sum_{k\ge1}f^n_k(\mathcal{S}(u),\mathcal{S}(u)u,x)\Big\|^2_{X_n}ds \Big )\\
&\le {\rm E} \Big
(\|(\mathcal{M}^{-1}[\mathcal{S}(u)(t)]-\mathcal{M}^{-1}[\mathcal{S}(v)(t)])\|^2_{\mathcal
{L}(X_n,X_n)}\int_0^{T_{n,N}}\Big\|\sum_{k\ge1}f^n_k(\mathcal{S}(u),\mathcal{S}(u)u,x)\Big\|^2_{X_n}ds \Big )\\
&\le {\rm E} \Big
(\|\mathcal{S}(u)-\mathcal{S}(v)\|^2_{L^1(D)}\int_0^{T_{n,N}}
\Big\|\sum_{k\ge1}f^n_k(\mathcal{S}(u),\mathcal{S}(u)u,x)\Big\|^2_{X_n}ds \Big )\\
&\le CT_{n,N} {\rm E}\left(\|u-v\|^2_{X_n}\right)\le
CT_{n,N} {\rm E}\left(\|U-V\|^2_{X_n}\right).
\end{align*}
%where we have used \eqref{NS13}.
%the formula
%\begin{align*}
%\mathcal{M}^{-1}[\rho_1]-\mathcal{M}^{-1}[\rho_2]
%=(\mathcal{M}^{-1}[\rho_1]-\mathcal{M}^{-1}[\rho_2])+
%\mathcal{M}^{-1}[\rho_2](\rho_2-\rho_1).
%\end{align*}

Similarly, for the terms $I_2$ and $I_3$, by \eqref{assumption}, we
obtain
\begin{align*}
I_2&\le {\rm E} \Big
(\!\!\int_0^{T_{n,N}}\!\!\!\|\mathcal{M}^{-1}[\mathcal{S}(u)(t)]\|^2_{\mathcal
{L}(X_n,X_n)} \Big\|
\sum_{k\ge1}(f^n_k(\mathcal{S}(u),\mathcal{S}(u)u,x)\!
-\!f^n_k(\mathcal{S}(v),\mathcal{S}(v)v,x)) \Big\|^2_{X_n}ds \Big )\\
&\le {\rm E} \Big (\left[\inf\mathcal{S}(u)(t)\right]^{-1}\int_0^{T_{n,N}}\Big\|
\sum_{k\ge1}( f^n_k(\mathcal{S}(u),\mathcal{S}(u)u,x)\!
-\!f^n_k(\mathcal{S}(v),\mathcal{S}(v)v,x)) \Big\|^2_{X_n}ds \Big )\\
&\le \left[\inf\mathcal{S}(u)(t)\right]^{-1}{\rm E}
 \Big (\int_0^{T_{n,N}}
\Big\|\sum_{k\ge1}(f^n_k(\mathcal{S}(u),\mathcal{S}(u)u,x)\!
-\!f^n_k(\mathcal{S}(v),\mathcal{S}(v)v,x))\Big\|^2_{X_n}ds \Big )\\
&\le CT_{n,N} {\rm E}\left(\|\mathcal{S}(u)-\mathcal{S}(v)\|^2_{X_n}+\|u-v\|^2_{X_n}\right)\\
&\le CT_{n,N} {\rm E}\left(\|u-v\|^2_{X_n}+\|h-\ell\|^2_{X_n}\right) \le
CT_{n,N} {\rm E}\left(\|U-V\|^2_{X_n}\right),
\end{align*}
and
\begin{align*}
I_3\le {\rm E}\int_0^{T_{n,N}}\Big\|\sum_{k\ge1}(g_k(h,x)
-g_k(\ell,x))\Big\|^2_{X_n}dt\le CT_{n,N} {\rm E}\left(\|U-V\|^2_{X_n}\right).
\end{align*}
Here we have used the fact that
$\|\mathcal{M}^{\frac{1}{2}}[\mathcal{S}(u)]-\mathcal{M}^{\frac{1}{2}}[\mathcal{S}(v)]\|_{\mathcal{L}(X_n,X_n)}\le
C\|\mathcal{S}(u)-\mathcal{S}(v)\|_{L^2}$. This can be proved by
using the definition of $\mathcal{M}$ and the mean value theorem.
Thus, for  sufficiently small $T_{n,N}$ such that $\kappa=CT_{n,N} <1$,
\begin{align*}%\label{Contraction 1}
{\rm E}\left\|\mathscr{T}(U)-\mathscr{T}(V)\right\|^2_{X_n}\le\kappa
{\rm E}\left(\|U-V\|^2_{X_n}\right).
\end{align*}
Then $\mathscr{T}$ is a contraction. Similar arguments indicate that with a further refined choice of $T_{n,N} $, $\mathscr{T}$ maps $B_{N,T_{n,N}}$ into itself.
Therefore $\mathscr{T}$ has a unique fixed point $(u^N,B^N)$, proving the proposition.
\end{proof}
%TODO: the notation of U

\begin{remark}
The cut-off function $\theta_N(x)$ is introduced here to guarantee the global Lipschitz continuity of the map.
%We use a cut-off function $\theta_N(x)$ to cut off the nonlinear
%terms to guarantee the globally Lipschitz continuity.
%Here we can
%also use the time splitting method to obtain the existence of
%$U^N=(u^N,B^N)$.
\end{remark}

\medskip

\noindent{\bf Step 3.} Derive uniform a priori estimates to extend $\tau_{n, N}$ to $T$. Set $U^N_n=(u^N_n,B^N_n)$ be the solution to equation \eqref{r-integral} on $[0,\tau_{n,N})$ obtained from Proposition \ref{prop_extN}. Let us introduce the following stopping times:
\begin{equation*}
\tau_{n,N}=\begin{cases} \inf\left\{t\ge0:
\|U_n^N(t)\|_{L^2(D)}\ge N\right\}\wedge\inf\left\{t\ge0:
\left\|\int_0^tF^N_ndW\right\|_{L^2(D)}ds\ge N\right\},\\%\wedge\inf\left\{t\ge0:\norm{\rho^n}^{gamma}_{L^\gamma(D)}
%+\norm{\sqrt{\rho^n}u^n}^2_{L^2(D)} +\norm{(\nabla u,B)}^2_{L^2(0,T;L^2(D))}\ge N\right\}
T, \; \text{otherwise}.
\end{cases}
\end{equation*}
%TODO: modify the defn of stopping time: all norms are equivalent but there should be some constant factor!
We denote $(u_N,B_N):=(u^N_n,B^N_n)$. Let
$\rho_N:=\mathcal{S}(u^N)$.

Our next goal is to prove
\begin{lemma}\label{p-timelemma}
For any fixed $n$,
\begin{align}\label{p-time}
\lim_{N\to\infty}\textrm{P}(\tau_{n,N}=T)=1.
\end{align}
\end{lemma}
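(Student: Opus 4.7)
The strategy is to derive an $L^p$ estimate on the energy of $(u_N, B_N, \rho_N)$, uniform in $N$, on the stochastic interval $[0,\tau_{n,N}]$, together with an analogous uniform bound on the stochastic integral appearing in the stopping rule, and then to close the argument by Chebyshev's inequality: if
\[
{\rm E}\Bigl[\sup_{0\le t\le \tau_{n,N}}\bigl(\|U_n^N(t)\|_{L^2(D)}^{2p}+\bigl\|\textstyle\int_0^t F_n^N\,dW\bigr\|_{L^2(D)}^{2p}\bigr)\Bigr]\le C
\]
for some $C$ independent of $N$, then $\textrm{P}(\tau_{n,N}<T)\le 2C/N^{2p}\to0$ as $N\to\infty$.

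The first step is to apply the It\^o formula (Lemma \ref{Ito-formula}) to the physical energy
\[
\mathscr{E}_N(t)=\int_D\Bigl(\tfrac12\rho_N|u_N|^2+\tfrac{a}{\gamma-1}\rho_N^\gamma+\tfrac{\delta}{\beta-1}\rho_N^\beta+\tfrac12|B_N|^2\Bigr)dx
\]
along the solution of \eqref{integral3a} on $[0,t\wedge\tau_{n,N}]$. The deterministic drift yields the standard energy identity, producing the dissipations $\mu|\nabla u_N|^2$, $(\lambda+\mu)|\Dv u_N|^2$, $\nu|\nabla B_N|^2$, together with the positive parabolic terms generated by the $\varepsilon\Delta\rho_N$ regularization of $\eqref{A-MHD}_1$; the magnetohydrodynamic coupling terms $(\nabla\times B_N)\times B_N\cdot u_N$ and $-\nabla\times(u_N\times B_N)\cdot B_N$ cancel after integration by parts using $B_N|_{\partial D}=0$.

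The stochastic parts contribute a local martingale $M_N(t)$ plus the It\^o correction
\[
\tfrac12\sum_k\int_D \tfrac{|f_k^n(\rho_N,\rho_N u_N,x)|^2}{\rho_N}\,dx+\tfrac12\sum_k\int_D |g_k(B_N,x)|^2\,dx.
\]
Using the structural assumption \eqref{assumption}, the fact that $\mathbb{P}$ is an $L^2$-orthogonal projection on $X_n$, and the definition of $\mathcal{M}^{1/2}[\rho_N]$, these corrections are dominated by $C(1+\mathscr{E}_N)$. Raising to the $p$-th power, taking the supremum in time and invoking the Burkholder--Davis--Gundy inequality on $M_N$, I would obtain an inequality of the form
\[
{\rm E}\sup_{0\le s\le t\wedge\tau_{n,N}}\mathscr{E}_N(s)^p\le C+C\,{\rm E}\int_0^{t\wedge\tau_{n,N}}\mathscr{E}_N(s)^p\,ds,
\]
after absorbing a small multiple of the supremum coming from BDG into the left-hand side. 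Gronwall's inequality then yields ${\rm E}\sup_{t\le\tau_{n,N}}\mathscr{E}_N(t)^p\le C$ independent of $N$. By property (2) of the mapping $\mathcal{S}$, $\rho_N$ is bounded below on $[0,\tau_{n,N}]$ by a constant depending only on $n$ and $\delta$, so $\|u_N\|_{L^2}^2\lesssim\int_D \rho_N|u_N|^2\,dx$, yielding the desired $L^2$ bound on $u_N$ (and directly on $B_N$). The stochastic integral in the definition of $\tau_{n,N}$ is handled in the same way, via BDG and $\|F_n^N\|_{L_2(U,L^2(D))}^2\lesssim 1+\mathscr{E}_N$. Chebyshev's inequality concludes the proof.

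The main technical obstacle is the careful bookkeeping of the It\^o correction, namely verifying that the specific form $f_k(\rho,\rho u,x)=f_{k,1}(x,\rho)+f_{k,2}(x)\rho u$, when mapped through $\mathcal{M}^{1/2}[\rho_N]\mathbb{P}(\,\cdot\,/\sqrt{\rho_N})$, still obeys a growth condition compatible with the kinetic and potential energies, so that both the correction term and the BDG estimate on the martingale part can be reabsorbed into $\mathscr{E}_N$ via Gronwall. A secondary difficulty is the compatibility of the artificial pressure contribution $\delta\rho_N^\beta$ with the $\varepsilon\nabla u_N\cdot\nabla\rho_N$ term in the momentum equation, but these fit naturally into the energy identity thanks to the parabolic regularization in $\eqref{A-MHD}_1$ and the choice $\beta>\max\{4,\gamma\}$.
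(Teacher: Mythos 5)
Your plan is close in spirit to the paper's proof (energy estimate via It\^o + BDG + Gronwall, then Chebyshev), and the energy estimate itself, ${\rm E}\sup_{t\le\tau_{n,N}}\mathscr{E}_N^p\le C$ uniformly in $N$, is exactly Proposition \ref{prop_energy}. But there is a genuine gap in the step where you pass from $\sqrt{\rho_N}u_N$ to $u_N$.

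You claim that ``by property (2) of the mapping $\mathcal{S}$, $\rho_N$ is bounded below on $[0,\tau_{n,N}]$ by a constant depending only on $n$ and $\delta$.'' That is not what property (2) gives. It gives
\[
\rho_N(t,x)\ge\underline{\rho}\,\exp\Bigl(-\int_0^t\|\Dv u_N(s)\|_{L^\infty(D)}\,ds\Bigr),
\]
and on the stochastic interval $[0,\tau_{n,N}]$ the only pointwise information is $\|u_N(s)\|_{L^2(D)}\le N$, which by equivalence of norms on the finite-dimensional space $X_n$ yields only $\|\Dv u_N\|_{L^\infty}\lesssim_n N$. Hence the lower bound for $\rho_N$ degrades like $e^{-C_nNT}$: it depends on $N$, and in fact decays exponentially in $N$. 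Consequently the constant in the would-be inequality $\|u_N\|_{L^2}^2\lesssim\int_D\rho_N|u_N|^2\,dx$ blows up as $N\to\infty$, so ${\rm E}\sup_{t\le\tau_{n,N}}\|u_N(t)\|_{L^2}^{2p}$ is not bounded uniformly in $N$ by this route, and Chebyshev's inequality does not yield $\textrm{P}(\tau_{n,N}<T)\lesssim N^{-2p}$.

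The paper sidesteps this by not attempting a uniform $L^p$ bound on $\sup_t\|u_N\|_{L^2}$ at all. Instead it combines the energy estimate (which controls $\sup_t\|\sqrt{\rho_N}u_N\|_{L^2}$ and $\int_0^T\|\nabla u_N\|_{L^2}^2\,dt$ uniformly in $N$) with the trajectory-wise lower bound on $\rho_N$ in terms of $\int_0^T\|\nabla u_N\|_{L^2}^2\,dt$ to obtain the \emph{weighted} estimate \eqref{1.13a}:
\[
{\rm E}\Bigl[\exp\Bigl(-\int_0^T\|\nabla u_N\|_{L^2(D)}^2\,ds\Bigr)\sup_{t\in[0,T]}\|u_N(t)\|_{L^2(D)}\Bigr]\le C,
\]
with $C$ independent of $N$. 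To conclude, one splits the event $\{\sup_t\|u_N\|_{L^2}\ge N\}$ according to whether $\int_0^T\|\nabla u_N\|_{L^2}^2$ is large or small (say, compared to $\tfrac12\log N$); the ``large'' piece is controlled by Markov's inequality applied to $E\int_0^T\|\nabla u_N\|_{L^2}^2\le C$, and on the ``small'' piece the exponential weight in \eqref{1.13a} is at least $N^{-1/2}$, so the weighted estimate does give a bound that tends to zero. Your argument needs to be replaced by this two-step Chebyshev argument (or an equivalent one); as written, the uniformity in $N$ fails at the crucial moment.
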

To prove the above lemma, we need the following energy estimates.

\begin{proposition}\label{prop_energy}
For any $(u_N, B_N)$ solving equation \eqref{r-integral}, for $1\le
p<\infty$, we have
\begin{equation}\label{1.11_a}
\begin{split}
\!\!\!{\rm E}\bigg[&\sup_{0\le t\le T}\mathscr{E}_\delta+{\rm
E}\!\!\int_0^T\!\! \left( \mu\|\nabla
u_N(t)\|^2_{L^2(D)}+(\lambda+\mu)\|\Dv u_N(t)\|^2_{L^2(D)}+
\nu\|\nabla
B_N(t)\|^2_{L^2(D)} \right)dt\\
&+\varepsilon {\rm
E}\int_0^T\int_{D}\left(a\gamma\rho_N^{\gamma-2}+\delta\beta\rho_N^{\beta-2}\right)|\nabla
\rho_N(t)|^2dxdt\bigg]^p\le C{\rm
E}\left(\mathscr{E}_{0,\delta}\right)^p.
\end{split}
\end{equation}
\end{proposition}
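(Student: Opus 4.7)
The plan is to apply It\^o's formula to the total energy
\[
\mathscr{E}_\delta(t) = \int_D \Bigl(\tfrac{1}{2}\rho_N|u_N|^2 + \tfrac{a}{\gamma-1}\rho_N^\gamma + \tfrac{\delta}{\beta-1}\rho_N^\beta + \tfrac{1}{2}|B_N|^2\Bigr)dx,
\]
derive an energy identity on the localized interval $[0,t\wedge\tau_{n,N}]$, and close the resulting inequality by the Burkholder--Davis--Gundy inequality and Gronwall's lemma after taking $p$-th powers and expectations. The stopping time is used to localize every stochastic integral so that the manipulations are rigorous, and the resulting bound will be uniform in $N$, which is precisely what one needs in combination with Lemma \ref{p-timelemma} to transfer the estimate to the full interval $[0,T]$.

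For the deterministic part, I would test the approximate momentum equation in \eqref{A-MHD} with $u_N$ itself and simultaneously test the parabolic continuity equation \eqref{Neumann problem} against $\tfrac{1}{2}|u_N|^2$, so that the kinetic energy is produced in balanced form; equivalently, one applies It\^o to $|\rho_N u_N|^2/(2\rho_N)$. The convective terms cancel after integration by parts, the viscous terms generate the usual dissipation $\mu\|\nabla u_N\|_{L^2}^2+(\lambda+\mu)\|\Dv u_N\|_{L^2}^2$, and the extra term $\varepsilon\nabla u_N\cdot\nabla\rho_N$ in \eqref{A-MHD} cancels exactly with the $\varepsilon\Delta\rho_N$ contribution from the continuity equation -- the standard Feireisl identity. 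Testing the continuity equation additionally with $\tfrac{a\gamma}{\gamma-1}\rho_N^{\gamma-1}$ and $\tfrac{\delta\beta}{\beta-1}\rho_N^{\beta-1}$ matches the pressure terms in the momentum balance and produces the artificial-diffusion integrals $\varepsilon\int_D(a\gamma\rho_N^{\gamma-2}+\delta\beta\rho_N^{\beta-2})|\nabla\rho_N|^2 dx$ together with exact time derivatives of the internal-energy potentials. Finally, testing the magnetic equation with $B_N$ contributes the dissipation $\nu\|\nabla B_N\|_{L^2}^2$ and, via the vector identity $\int(\nabla\times(u_N\times B_N))\cdot B_N dx = -\int((\nabla\times B_N)\times B_N)\cdot u_N dx$, cancels the Lorentz force term from the momentum balance.

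The stochastic contributions split into a local martingale
\[
M_t = \sum_{k\ge1}\int_0^t\!\!\int_D f_k^n\cdot u_N\,dx\,d\hat\beta_k^1 + \sum_{k\ge1}\int_0^t\!\!\int_D g_k(B_N,x)\cdot B_N\,dx\,d\hat\beta_k^2
\]
and an It\^o correction $\tfrac{1}{2}\sum_k\int_D\bigl(|f_k^n|^2/\rho_N+|g_k(B_N,x)|^2\bigr)dx\,dt$ coming from the quadratic variation (the weight $1/\rho_N$ arises from It\^o applied to $|\rho_N u_N|^2/(2\rho_N)$). The structural choice $f_k^n=\mathcal{M}^{1/2}[\rho_N]\mathbb{P}(f_k/\sqrt{\rho_N})$ was designed so that this correction is controlled by $\int_D(\rho_N^\gamma+\rho_N|u_N|^2)dx$ via assumption \eqref{assumption}; similarly $\sum_k|g_k(B_N,x)|^2\lesssim|B_N|^2$. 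Both bounds are dominated by $\mathscr{E}_\delta$, so they are absorbed in the Gronwall argument. After raising to the $p$-th power, taking $\sup_{s\le t\wedge\tau_{n,N}}$ and expectation, the Burkholder--Davis--Gundy inequality and Cauchy--Schwarz give
\[
{\rm E}\sup_{s\le t}|M_s|^p \le C\,{\rm E}\Bigl(\int_0^t\mathscr{E}_\delta(s)^2\,ds\Bigr)^{p/2} \le \eta\,{\rm E}\sup_{s\le t}\mathscr{E}_\delta^p + C_\eta\,{\rm E}\!\int_0^t\mathscr{E}_\delta^p\,ds,
\]
and choosing $\eta$ small absorbs the first term into the left-hand side, so that Gronwall's lemma produces \eqref{1.11_a} with a constant independent of $N$.

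The main obstacle, in my view, is not any single step but the simultaneous bookkeeping of several delicate points: preserving all of Feireisl's $\varepsilon$-viscosity cancellations after It\^o is applied to the kinetic energy; verifying that the structural choice of $f_k^n$ (built out of $\mathcal{M}^{1/2}[\rho_N]$ and $\mathbb{P}$ with weight $\sqrt{\rho_N}$) is exactly what makes the It\^o correction controllable by $\mathscr{E}_\delta$ uniformly in $n$; and finally, ensuring that the estimate, being $N$-independent, combines cleanly with Lemma \ref{p-timelemma} to yield the bound on the full interval $[0,T]$.
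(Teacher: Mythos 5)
Your proposal is correct and follows essentially the same route as the paper: apply It\^o's formula to the kinetic-energy functional $\Phi(\rho,m)=\int_D|m|^2/\rho\,dx$ and to $\int_D|B|^2dx$, obtain the pressure and artificial-diffusion contributions from the deterministic parabolic continuity equation, localize with the stopping time $\tau_{n,N}$, estimate the martingale terms with Burkholder--Davis--Gundy plus Young's inequality, and close via Gronwall, first at $p=1$ and then for $p>2$ after taking $p$-th powers. The cancellations you highlight (convective terms, Feireisl's $\varepsilon$-viscosity identity, the Lorentz-force vector identity, and the $1/\rho_N$ weight in the It\^o correction being tamed by the structural choice of $f_k^n$) are precisely what make the paper's identity \eqref{1.1} hold, even though the paper writes the identity down without spelling them out.
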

\begin{proof}
 Define the function $\Phi(\rho, m):=\int_D {|m|^2\over\rho} dx$. Note that
$$
\nabla_m \Phi(\rho,m)=\int_D {2m\over\rho}dx,\quad \nabla^2_m
\Phi(\rho,m)=\int_D{2\over\rho}\mathbb{I}dx,\quad \partial_{\rho}
\Phi(\rho,m)=-\int_D \frac{|m|^2}{\rho^2}dx.
$$
 Here
$\mathbb{I}$ is the identity matrix.

Apply  It\^{o}'s formula  in Lemma \ref{Ito-formula} to the above
function $\Phi$ with $(\rho,m)=(\rho_n,\rho_Nu_N)$, and then apply
It\^{o}'s formula  again with $\Phi(B):=\int_D|B|^2dx$. From
equation \eqref{A-MHD}, one deduces that
\begin{align}\label{1.1}
\notag &d\int_D\!\left( {1\over2}|\sqrt{\rho_N}u_N|^2\!+\!{a\over
\gamma-1}\rho_N^\gamma\!+\!{\delta\over\beta-1}\rho_N^\beta\!+\!\frac{1}{2}|B_N|^2 \right)
dx\!+\!\int_D \!\!\left( \mu |\nabla u_N|^2+(\lambda+\mu)|\Dv u_N|^2 \right) dxds
\\&\qquad+\int_D\nu|\nabla B_N|^2 dxds
+\varepsilon\int_D\left(a\gamma\rho_N^{\gamma-2}+\delta\beta\rho_N^{\beta-2}\right)|\nabla
\rho_N|^2dxds\\
\notag &=\int_D
\sum_{k=1}f^n_k(\rho_N,\rho_N u_N,x)\cdot u_Ndxd\hat{\beta}_k^1+{1\over2}\int_D\sum_{k\ge1}|f_k(\rho_N,\rho_Nu_N,x)/\sqrt{\rho_N}|^2dxds\\
&\notag\quad +\int_D \sum_{k\ge1}g_k(B_N,x)\cdot
B_Ndxd\hat{\beta}_k^2+{1\over2}\int_D\sum_{k\ge1}|g_k(B_N,x)|^2dxds,
\end{align}
where $s\in[0,t\wedge\tau_{n,N}]$, $t\wedge\tau_{n,N}=\min\{t,\tau_{n,N}\}$ and
$t\in[0,T]$.
Let $$%\mathscr{E}_\delta[\rho_n,m_n]
\mathscr{E}_\delta=\int_D \left( {1\over
2}\rho_N|u_N|^2+{a\over
\gamma-1}\rho_N^\gamma+{\delta\over\beta-1}\rho_N^\beta+\frac{1}{2}|B_N|^2 \right)
dx,$$
$$\mathscr{E}_{0,\delta}=\int_D \left( {1\over
2}\rho_{0,\delta}|u_{0,\delta}|^2+{a\over
\gamma-1}\rho_{0,\delta}^\gamma+{\delta\over\beta-1}\rho_{0,\delta}^\beta+\frac{1}{2}|B_{0,\delta}|^2
\right) dx,$$  where $m_N=\rho_Nu_N$. Integrating \eqref{1.1} on
$[0,s]$ for all $s\in[0,t\wedge\tau_{n,N}]$ yields
\begin{align*}
&\mathscr{E}_\delta+ \int_0^s \left[ \mu\|\nabla
u_N(r)\|^2_{L^2(D)}+(\lambda+\mu)\|\Dv
u_N(r)\|^2_{L^2(D)}+\nu\|\nabla
B_N(r)\|^2_{L^2(D)} \right]dr\\
&\quad+\varepsilon\int_0^s
\int_{D}\left(a\gamma\rho_N^{\gamma-2}+\delta\beta\rho_N^{\beta-2}\right)|\nabla
\rho_N|^2dxdr\\\notag &\le\mathscr{E}_{0,\delta}+{1\over2}\int_0^s
\sum_{k\ge1} \left\| {f^n_k(\rho_N,\rho_Nu_N,x) \over \sqrt{\rho_N(r)}} \right\|^2_{L^2(D)}dr+\left|\int_0^s
\langle u_N(r),\sum_{k\ge1}f^n_k(\rho_N,\rho_Nu_N,x)\rangle d\hat{\beta}_k^1\right|\\
&\quad\notag+{1\over2}\int_0^s
\sum_{k\ge1}\|g_k(B_N,x)\|^2_{L^2(D)}dr+\left|\int_0^s \langle
B_N(r),\sum_{k\ge1}g_k(B_N,x)\rangle d\hat{\beta}_k^2\right|,
\end{align*}
where $\langle\cdot,\cdot\rangle$ is the inner product in $L^2(D)$.
Taking the sup  over $[0,t\wedge\tau_{n,N}]$ and the mathematical
expectation in the above inequality, one easily deduces that
\begin{align}\label{1.1a}
\notag&{\rm E}\sup_{0\le s\le t\wedge
\tau_{n,N}}\!\!\mathscr{E}_\delta+ {\rm E}\int_0^{t\wedge
\tau_{n,N}}\!\!\! \left( \mu\|\nabla
u_N(s)\|^2_{L^2(D)}+(\lambda+\mu)\|\Dv
u_N(s)\|^2_{L^2(D)}+\nu\|\nabla
B_N(s)\|^2_{L^2(D)} \right)ds\\
\notag&\quad+\varepsilon {\rm E}\int_0^{t\wedge
\tau_{n,N}}\!\!\int_{D}\left(a\gamma\rho_N^{\gamma-2}+\delta\beta\rho_N^{\beta-2}\right)|\nabla
\rho_N(s)|^2dxds\\
\notag&\le\! {\rm E}\mathscr{E}_{0,\delta}\!+\!{1\over2}{\rm
E}\!\int_0^{t\wedge \tau_{n,N}}\!\! \sum_{k\ge1} \left\|
{f_k^n(\rho_N,\rho_Nu_N,x) \over \sqrt{\rho_N(s)}}
\right\|^2_{L^2(D)} ds\!+\!{\rm E}\sup_{s}\left|\int_0^s
\!\!\langle u_N,\sum_{k\ge1}f_k^n(\rho_N,\rho_Nu_N,x)\rangle d\hat{\beta}_k^1\right|\\
\notag&\quad+{1\over2}{\rm E}\int_0^{t\wedge \tau_{n,N}}\!\!
\sum_{k\ge1}\|g_k(B_N,x)\|^2_{L^2(D)}+{\rm E}\sup_{s}\left|\int_0^s \langle B_N,\sum_{k\ge1}g_k(B_N,x)\rangle d\hat{\beta}_k^2\right|\\
&=:{\rm E}\mathscr{E}_{0,\delta}+I_1+I_2+I_3+I_4.
\end{align}
We now estimate the terms $I_i, i=1,2,3,4$.  For the first term
$I_1$, the assumptions \eqref{assumption} on $f$ and the
Cauchy-Schwarz inequality imply that
\begin{align*}%\label{1.2}
\begin{split}
%& \lesssim \int_D
%\frac{{\sum_{k\ge1}|f_k(\rho_n,\rho_nu_n,x)|^2}}{\rho_n} dx \lesssim
%\int_D \frac{|\rho_n|^{\gamma+1}+|\rho_n u_n|^2}{\rho_n}dx\\
I_1 &\lesssim {\rm E}\int_0^{t\wedge
\tau_{n,N}}\int_D \left(|\sqrt{\rho_N}u_N|^2+|\rho_N|^\gamma \right) dxds\\
&\lesssim  {\rm E}\int_0^{t\wedge
\tau_{n,N}}\left(\|\sqrt{\rho_N}u_N\|^2_{L^2(D)}+\|\rho_N\|^\gamma_{L^\gamma(D)}\right)
ds,
\end{split}
\end{align*}
For $I_2$, by the Burkholder-Davis-Gundy, H\"{o}lder  and Young
inequalities, and \eqref{assumption},  for small $\eta>0$, we have
\begin{align*}%\label{1.3}
\notag I_2&\lesssim {\rm E}\left[\int_0^{t\wedge\tau_{n,N}}
\sum_{k\ge1}\left\langle
f^n_k(\rho_N,\rho_Nu_N,x),u_N\right\rangle^2ds\right]^{1\over2}\\
&\lesssim
{\rm E}\left[\int_0^{t\wedge\tau_{n,N}}\|\sqrt{\rho_N}u_N\|_{L^2(D)}^2
\left(\|\sqrt{\rho_N}u_N\|^2_{L^2(D)}+\|\rho_N\|^\gamma_{L^\gamma(D)} \right)ds\right]^{1\over2}\\
&\notag\le \eta {\rm E}\sup_{0\le s\le t\wedge
\tau_{n,N}}\|\sqrt{\rho_N}u_N(s)\|^2_{L^2(D)}+C_{\eta}{\rm E}\int_0^{t\wedge\tau_{n,N}}\!\!\int_{D} \left( |\sqrt{\rho_N}u_N|^2+|\rho_N|^\gamma \right)
dxds.
\end{align*}
Similarly, from \eqref{assumption}, one has
\begin{equation*}%\label{1.4}
I_3\le C {\rm E}\int_0^{t\wedge \tau_{n,N}}\|B_N\|^2_{L^2(D)} ds,
\end{equation*}
and
\begin{equation*}%\label{1.5}
I_4\le \eta {\rm E}\sup_{0\le s\le t\wedge
\tau_{n,N}}\|B_N(s)\|^2_{L^2(D)}+C_\eta {\rm E}\int_0^{t\wedge
\tau_{n,N}}\|B_N\|^2_{L^2(D)}ds.
\end{equation*}

When $p>2$, we obtain as $I_2$ that
\begin{align*}%\label{1.6}
|I_2|^p&\le C{\rm E}\left[\int_0^{t\wedge\tau_{n,N}}
\sum_{k\ge1}\left\langle f_k(\rho_N,\rho_Nu_N,x),u_N\right\rangle^2ds\right]^{p\over2}\\
&\notag\le \eta {\rm E}\left(\sup_{0\le s\le t\wedge
\tau_{n,N}}\|\sqrt{\rho_N}u_N(s)\|^{2p}_{L^2(D)}\right)+C_{\eta}{\rm E}\int_0^{t\wedge\tau_{n,N}}
\left(\int_{D}|\sqrt{\rho_N}u_N|^2+|\rho_N|^\gamma dx\right)^pds,
\end{align*}
and
\begin{align*}%\label{1.7}
|I_4|^p&\le C{\rm E}\left[\int_0^{t\wedge\tau_{n,N}}
\sum_{k\ge1}\left\langle g_k(B_N,x),B_N\right\rangle^2ds\right]^{p\over2}\\
&\notag\le \eta {\rm E}\left(\sup_{0\le s\le t\wedge
\tau_{n,N}}\|B_N(s)\|^{2p}_{L^2(D)}\right)+C_{\eta}{\rm E}\int_0^{t\wedge\tau_{n,N}}
\left(\int_{D}|B_N|^2dx\right)^pds.
\end{align*}
Plugging the estimates on $I_1-I_4$ into \eqref{1.1a}, one has, for
small enough $\eta>0$,
\begin{align*}
& {\rm E}\sup_{0\le s\le t\wedge \tau_{n,N}}\!\mathscr{E}_\delta+
{\rm E}\int_0^{t\wedge \tau_{n,N}}\!\!\! \mu\|\nabla
u_N(s)\|^2_{L^2(D)}+(\lambda+\mu)\|\Dv u_N(s)\|^2_{L^2(D)}+
\nu\|\nabla
B_N(s)\|^2_{L^2(D)}ds\\
&\quad +\varepsilon {\rm E}\int_0^{t\wedge
\tau_{n,N}}\int_{D}\left(a\gamma\rho_N^{\gamma-2}+\delta\beta\rho_N^{\beta-2}\right)|\nabla
\rho_N(s)|^2dxds\\
&\le {\rm E}\mathscr{E}_{0,\delta}+C_{\eta}{\rm
E}\int_0^{t\wedge\tau_{n,N}}\int_{D}(|\sqrt{\rho_N}u_N|^2+|B_N|^2)dxds.
\end{align*}
Then by Gronwall's inequality, we have
\begin{align*}
&\notag {\rm E}\sup_{0\le s\le t\wedge \tau_{n,N}}\!\mathscr{E}_\delta\!+\!
{\rm E}\!\int_0^{t\wedge \tau_{n,N}}\!\!\!\left(\mu\|\nabla
u_N(s)\|^2_{L^2(D)}\!+\!(\lambda+\mu)\|\Dv u_N(s)\|^2_{L^2(D)}\!+\!
\nu\|\nabla
B_N(s)\|^2_{L^2(D)} \right)ds\\
&\label{1.9}\quad +\varepsilon {\rm E}\int_0^{t\wedge
\tau_{n,N}}\int_{D}\left(a\gamma\rho_N^{\gamma-2}+\delta\beta\rho_N^{\beta-2}\right)|\nabla
\rho_N(s)|^2dxds\le C{\rm E}\mathscr{E}_{0,\delta},
\end{align*}
from which we get the following $L^2$ energy estimates for any
$t\in[0,T]$:
\begin{equation}\label{1.10}
\begin{split}
\!\!\!\!{\rm E}&\sup_{0\le s\le t}\mathscr{E}_\delta+{\rm
E}\!\!\int_0^t\!\! \left(\mu\|\nabla
u_N(s)\|^2_{L^2(D)}+(\lambda+\mu)\|\Dv u_N(s)\|^2_{L^2(D)}+
\nu\|\nabla
B_N(s)\|^2_{L^2(D)} \right)ds\\
&+\varepsilon
{\rm E}\int_0^t\int_{D}\left(a\gamma\rho_N^{\gamma-2}+\delta\beta\rho_N^{\beta-2}\right)|\nabla
\rho_N(s)|^2dxds\le C{\rm E}\mathscr{E}_{0,\delta}.
\end{split}
\end{equation}

In the same way,  for $p>2$, by virtue of the estimates on $|I_2|^p$
and $|I_4|^p$, we obtain the $L^p$ energy estimates for $t\in[0,T]$
\begin{equation*}\label{1.11}
\begin{split}
{\rm E}\bigg[&\sup_{0\le s\le t}\mathscr{E}_\delta+{\rm E}\int_0^t \left( \mu\|\nabla
u_N(s)\|^2_{L^2(D)}+(\lambda+\mu)\|\Dv u_N(s)\|^2_{L^2(D)}+
\nu\|\nabla
B_N(s)\|^2_{L^2(D)} \right)ds\\
&+\varepsilon {\rm
E}\int_0^t\int_{D}\left(a\gamma\rho_N^{\gamma-2}+\delta\beta\rho_N^{\beta-2}\right)|\nabla
\rho_N(s)|^2dxds\bigg]^p\le C{\rm
E}\left(\mathscr{E}_{0,\delta}\right)^p,
\end{split}
\end{equation*}
which completes the proof of the proposition.
\end{proof}

Now, we are ready to prove Lemma \ref{p-timelemma}.
\begin{proof}[Proof of Lemma \ref{p-timelemma}] It follows from \eqref{1.10} that
\begin{align*}
{\rm E}\int_0^T\|\nabla u_N\|^2_{L^2(D)}dt\le C {\rm
E}\mathscr{E}_{0,\delta},\quad {\rm
E}\sup_{t\in[0,T]}\|\sqrt{\rho_N}u_N(t)\|^2_{L^2(D)}\le C {\rm
E}\mathscr{E}_{0,\delta}.
\end{align*}
Since dim$X_n$ is finite,  the $L^\infty, C^2$ and $L^2$ norms are
equivalent on $X_n$. Moreover,  $\rho=\mathcal{S}[u]$ is bounded. It
follows that
\begin{align}\label{1.12}
\underline{\rho}\exp
  \Big (-\!\int_0^T\!\! \|\nabla
u_N(s)\|^2_{L^2(D)}ds\Big )\lesssim \rho_N(t,x)\lesssim
\overline{\rho} \exp\Big(\int_0^T \|\nabla u_N(s)\|^2_{L^2(D)}ds\Big
),
\end{align}
which yields that
\begin{align}\label{1.13a}
{\rm E}\left[\exp\left(-\int_0^T \|\nabla
u_N(s)\|^2_{L^2(D)}ds\right)\sup_{t\in[0,T]}\|u_N\|_{L^2(D)}\right]\le
C,
\end{align}
where $C$ is independent of  $N$. It follows from \eqref{1.11_a},
\eqref{1.12} and \eqref{1.13a} that
$\lim\limits_{N\to\infty}\textrm{P}(\tau_{n,N}=T)=1$.
\end{proof}

Moreover, it follows from \eqref{1.11_a} that
\begin{align*}
\sqrt{\varepsilon\delta}\rho_N^{\beta\over2} \ \ \mbox{is bounded
in} \ \ L^{p}(\Omega, L^2(0,T; H^1(D))).
\end{align*}
The Sobolev embedding $H^1(D)\hookrightarrow L^6(D)$ yields that
\begin{align*}
\rho_N^{\beta\over2}\in L^{p}(\Omega,L^2(0,T; L^6(D))),
\end{align*}
that is,
\begin{align*}
{\rm E}\left(\|\rho_N^\beta\|_{L^1(0,T;L^3(D))}\right)^{2p}\le C, \ \
\mbox{where $C$ is independent of $N$}.
\end{align*}
In view of \eqref{1.11_a}, we deduce that
\begin{align*}%\label{1.13}
{\rm E}\left(\sup_{t\in[0,T]}\|\rho_N^\beta\|_{L^1(D)}\right)^p\le
C,
\end{align*}
this together with the interpolation inequality
$\|\rho_N^\beta\|_{L^2(D)}\lesssim
\|\rho_N^\beta\|^{1\over4}_{L^1(D)}\|\rho_N^\beta\|^{3\over4}_{L^3(D)}$
yields that
\begin{align*}
\notag{\rm E}\left(\int_0^T\!\!
\|\rho^\beta_N\|^{4\over3}_{L^2(D)}dt\right)^p&\lesssim
{\rm E}\left(\!\!\int_0^T\|\rho_N^\beta\|^{1\over3}_{L^1(D)}\|\rho_N^\beta\|_{L^3(D)}dt\right)^p\\
&\lesssim {\rm
E}\left(\sup_{t\in[0,T]}\|\rho_N^\beta(t)\|_{L^1(D)}\right)^{2p\over3}\!\!+{\rm
E}\left(\int_0^T\!\!\|\rho_N^\beta\|
_{L^3(D)}dt\right)^{2p} \\
&\notag\le C.
\end{align*}
By the H\"{o}lder inequality, we have
\begin{align*}
{\rm E}\left(\int_0^T\int_{D}\rho^{4\beta\over3}_Ndxdt\right)^p\le
{\rm E}\left(\int_0^T\left(\int_D
\rho_N^{2\beta}dx\right)^{2\over3}\left(\int_D
dx\right)^{1\over3}dt\right)\le C.
\end{align*}
Thus
\begin{align*}
{\rm E}\left(\|\rho_N\|_{L^{\beta+1}((0,T)\times D)}\right)^p\le C \ \
\mbox{if} \ \ \beta\ge3.
\end{align*}
If $\beta\ge4$, multiply $\eqref{A-MHD}_1$ by $\rho_N$ and integrate
by parts to get
\begin{align*}
{\rm E}\left(\varepsilon\int_0^T\|\nabla
\rho_N\|^2_{L^2(D)}\right)^p&\lesssim {\rm E}\left(\|
\rho_N(0)\|^2_{L^2(D)}+\int_0^T\int_D \rho_N^2{\rm div}
u_Ndxdt\right)^p\\
&\lesssim {\rm E}\left(\|
\rho_N(0)\|^2_{L^2(D)}+\sup_{t\in[0,T]}\|\rho_N\|^4_{L^4(D)}+\|\nabla
u_N\|^2_{L^2([0,T]\times D)}\right)^p\\
&\lesssim C.
\end{align*}

For each $t\in[0,T]$, we define
$(\rho_n,u_n,B_n):=\lim_{N\to\infty}(\rho_n^N,u^N_n,B^N_n)$. By the
same argument as above, we can infer that $(\rho_n,u_n,B_n)$
satisfies the corresponding a priori estimate uniformly in $n$. More
precisely, for any $1\le p<\infty$, we have the following lemma:

\begin{lemma}\label{lemma3.3}
Let $(\rho_n,u_n,B_n)$ be the solution of
\eqref{A-MHD}--\eqref{A-boundary} on $\Omega\times(0,T)\times D$
constructed above. Then,  for $\beta\ge 4$,  we have,
\begin{align}\label{energy}
\begin{split}
& {\rm E} \Big (\sup_{t\in[0,T]}\|\rho_n(t)\|^\gamma_{L^\gamma(D)}
\Big )^p\le C,\quad\quad {\rm E} \Big
(\delta\sup_{t\in[0,T]}\|\rho_n(t)\|^\beta_{L^\beta(D)} \Big )^p\le
C,
\\
& {\rm E} \Big
(\sup_{t\in[0,T]}\|\sqrt{\rho_n(t)}u_n(t)\|^2_{L^2(D)} \Big )^p\le
C,\quad {\rm E} \Big (\|u_n(t)\|^2_{L^2([0,T];H^1(D))}\Big )^p\le C,
\\
& {\rm E} \Big (\sup_{t\in[0,T]}\|B_n(t)\|^2_{L^2(D)}\Big )^p\le
C,\quad \ \  {\rm E} \Big (\|B_n(t)\|^2_{L^2([0,T];H^1(D))}\Big
)^p\le C,
\\
& {\rm E} \Big (\varepsilon \|\nabla \rho_n(t)\|^2_{L^2((0,T)\times
D)}\Big )^p\le C,\quad {\rm E} \Big
(\|\rho_n\|_{L^{\beta+1}((0,T)\times D)} \Big )^p\le C,
\end{split}
\end{align}
where the constant $C$ is independent of $n$.
\end{lemma}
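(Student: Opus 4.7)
The plan is to transfer the uniform-in-$N$ estimates of Proposition \ref{prop_energy} to the limit $(\rho_n,u_n,B_n):=\lim_{N\to\infty}(\rho_n^N,u_n^N,B_n^N)$ by means of Lemma \ref{p-timelemma}, and then to derive the extra $L^{\beta+1}$ and $\varepsilon\|\nabla\rho_n\|^2_{L^2}$ bounds using the same deterministic manipulations already sketched in the text for $\rho_N$.

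First, I would observe that on the event $\{\tau_{n,N}=T\}$ the cut-off $\theta_N\equiv 1$, so that $(u_N,B_N)$ coincides there with the uncut solution; the estimate \eqref{1.11_a}, together with $\textrm{P}(\tau_{n,N}=T)\to 1$ from Lemma \ref{p-timelemma}, then produces uniform-in-$N$ bounds that are inherited by the limit via Fatou's lemma (equivalently, weak lower semicontinuity of the $L^p(\Omega)$-norms). This yields at once all of the estimates in \eqref{energy} except the last two: the $L^\infty_t L^\gamma_x$ and $L^\infty_t L^\beta_x$ control of $\rho_n$, the $L^\infty_t L^2_x$ control of $\sqrt{\rho_n}u_n$ and $B_n$, the $L^2_t H^1_x$ control of $u_n$ and $B_n$, and the $\varepsilon$-weighted control of $\sqrt{\delta}\rho_n^{\beta/2}$ (hence also of $\rho_n^{\gamma/2}$) in $L^p(\Omega,L^2(0,T;H^1(D)))$. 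Throughout, the constants are those inherited from $\mathscr{E}_{0,\delta}$ and are therefore independent of both $N$ and $n$.

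Next, for the $L^{\beta+1}((0,T)\times D)$ control of $\rho_n$, I would reproduce the interpolation chain already spelt out in the text for $\rho_N$: from $\sqrt{\varepsilon\delta}\rho_n^{\beta/2}\in L^p(\Omega,L^2(0,T;H^1(D)))$ and the Sobolev embedding $H^1(D)\hookrightarrow L^6(D)$, one obtains $\rho_n^\beta\in L^p(\Omega,L^1(0,T;L^3(D)))$; coupling this with $\rho_n^\beta\in L^p(\Omega,L^\infty(0,T;L^1(D)))$ through the interpolation $\|\rho_n^\beta\|_{L^2}\lesssim \|\rho_n^\beta\|_{L^1}^{1/4}\|\rho_n^\beta\|_{L^3}^{3/4}$, a H\"older step yields $\rho_n\in L^p(\Omega,L^{4\beta/3}((0,T)\times D))$, and for $\beta\ge 3$ this dominates $L^{\beta+1}$. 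For the remaining bound, multiplying the regularised continuity equation $\eqref{A-MHD}_1$ by $\rho_n$ and integrating by parts gives
\[
\varepsilon\int_0^T\|\nabla\rho_n\|^2_{L^2(D)}\,dt \lesssim \|\rho_n(0)\|^2_{L^2(D)} + \int_0^T\!\!\int_D \rho_n^2\,\Dv u_n\,dx\,dt,
\]
and the right-hand side is controlled in $L^p(\Omega)$ by $\sup_t\|\rho_n\|^4_{L^4(D)}+\|\nabla u_n\|^2_{L^2((0,T)\times D)}$, both already uniform in $n$ when $\beta\ge 4$.

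The main obstacle is controlling the passage $N\to\infty$ so that the uniform stopped estimates upgrade to pathwise bounds on the whole of $[0,T]$. Specifically, one must check that the truncated process solves the uncut system on $\{\tau_{n,N}=T\}$, and then combine $\textrm{P}(\tau_{n,N}=T)\to 1$ with the uniform $L^p(\Omega)$ bounds to secure uniform integrability and conclude through a Vitali-type argument. Once this probabilistic step is in place, every remaining computation is a direct repetition of the deterministic arguments already displayed for $(u_N,B_N)$ and $\rho_N$.
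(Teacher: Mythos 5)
Your proposal matches the paper's own route: inherit the $L^p(\Omega)$ energy bounds from Proposition~\ref{prop_energy} and Lemma~\ref{p-timelemma}, and then reproduce the interpolation chain ($H^1\hookrightarrow L^6$, the $L^1$--$L^3$ interpolation of $\rho_n^\beta$, H\"older) for the $L^{\beta+1}$ bound and the multiplication of the regularised continuity equation by $\rho_n$ for the $\varepsilon\|\nabla\rho_n\|^2_{L^2}$ bound. The paper states the $N\to\infty$ passage only as ``by the same argument as above,'' so your spelling-out of that step --- that the cut-off is inactive up to $\tau_{n,N}$, that $\textrm{P}(\tau_{n,N}=T)\to 1$, and that Fatou (or Vitali plus the uniform $L^p(\Omega)$ control for $p$ arbitrarily large) transfers the bounds to the limit --- is a legitimate filling of that gap rather than a different approach.
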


%%%%%%%%%%%%%%%%%%%%%%%%%%%%%%%%%%%%%%%%%%%%%%%%%%%%%%%%%%%%%%%%%%%%%%%%%%%%%%%%%%%%%%%%%%%%%%%%%%%%%%%%%%%%%%%%%%%%%%%%%%%%%%%%%%%%%%%%%%%%%%
\subsection{Tightness Property}\label{s3.2}

 In this subsection, we shall show the tightness
property for the approximation solution in the following lemma.
\begin{lemma}\label{Lemma-Tight} Define
\begin{align*}
S : = & \ C(0,T;\mathbb{R})\times\left( C([0,T];L_w^{\beta}(D))\cap
L^2(0,T;L^2(D))\cap L^2(0,T; H_w^1(D))\right)\\
&\times L^2(0,T;H_w^1(D))\times C([0,T];
L_w^{2\beta\over\beta+1}(D))\times \left(L^2(0,T;H_w^1(D))\cap
L^2(0,T; L^2(D))\right)
\end{align*}
equipped with its Borel $\sigma$-algebra.  Let $\Pi_n$ be the
probability on $S$ which is the image of $\textrm{P}$ on $\Omega$ by
the map: $\omega\mapsto
(\hat{\beta}_k(\omega,\cdot),\rho_n(\omega,\cdot),u_n(\omega,\cdot),\rho_nu_n(\omega,\cdot),B_n(\omega,\cdot))$,
that is, for any $A\subseteq S$,
\begin{equation*}
\Pi_n(A)=\textrm{P}\left\{\omega\in\Omega:
(\hat{\beta}_k(\omega,\cdot),\rho_n(\omega,\cdot),u_n(\omega,\cdot),\rho_nu_n(\omega,\cdot),B_n(\omega,\cdot))\in
A \right\}.
\end{equation*}
Then the family $\Pi_n$ is tight.
\end{lemma}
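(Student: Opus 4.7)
The plan is to establish tightness of each marginal of $\Pi_n$ separately, from which tightness of the joint law on the product space $S$ will follow by a standard Tychonoff-type argument. Throughout, the uniform $p$-th moment bounds of Lemma~\ref{lemma3.3} will provide the a priori control, and Chebyshev's inequality will convert these moment bounds into estimates on the probability of escaping compact sets.

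First, tightness of the Brownian component $\hat\beta_k$ in $C(0,T;\mathbb{R})$ will follow from the classical Kolmogorov continuity criterion. For the factors $L^2(0,T; H^1_w(D))$ appearing in the $u_n$, $\rho_n$, and $B_n$ marginals, tightness is essentially immediate: bounded subsets of $L^2(0,T; H^1(D))$ are metrizable and relatively compact in the weak topology, so Chebyshev applied to the $L^2(0,T;H^1(D))$-moment bounds of Lemma~\ref{lemma3.3} furnishes the required compact sets.

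The less trivial marginals are the continuous-in-time weak spaces $C([0,T]; L^\beta_w(D))$ and $C([0,T]; L^{2\beta/(\beta+1)}_w(D))$, together with the strong $L^2(0,T; L^2(D))$ components. For the weakly continuous spaces, I would invoke the standard criterion that a family in $C([0,T]; V_w)$ is relatively compact provided it is bounded in $L^\infty(0,T;V)$ and the scalar maps $t\mapsto \langle \cdot(t),\varphi\rangle$ are equicontinuous for $\varphi$ ranging over a dense subset of $V^*$. The $L^\infty$-in-time bound comes directly from Lemma~\ref{lemma3.3} for $\rho_n$, and via Hölder's inequality (writing $\rho_n u_n = \sqrt{\rho_n}\cdot \sqrt{\rho_n}u_n$ with factors in $L^\infty(L^{2\beta})$ and $L^\infty(L^2)$) for $\rho_n u_n$. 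For the equicontinuity, the continuity equation is deterministic and yields an $L^2(0,T; H^{-1})$ bound on $\partial_t \rho_n$; for $\rho_n u_n$ I would use the momentum equation, bounding the deterministic terms by the energy estimates and the stochastic contribution in mean via the Burkholder--Davis--Gundy inequality together with the sublinear growth assumptions~\eqref{assumption}. A Markov-type argument would then convert these (possibly fractional Hölder) moment estimates into the required tightness.

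For the strong $L^2(0,T; L^2(D))$ compactness I plan to employ Aubin--Lions-type arguments. The case of $\rho_n$ is straightforward: the artificial viscosity yields $\rho_n$ bounded in $L^2(0,T;H^1(D))$, the deterministic continuity equation gives $\partial_t\rho_n$ bounded in $L^2(0,T;H^{-1}(D))$, and classical Aubin--Lions combined with Chebyshev delivers tightness. The harder case is $B_n$, whose evolution contains a stochastic integral, and this will be the principal technical obstacle. I would decompose $B_n$ into its drift and martingale parts, controlling the martingale contribution by a fractional time regularity estimate $\mathbb{E}\|\cdot\|^2_{W^{\alpha,2}(0,T; L^2(D))}<\infty$ for some $\alpha\in(0,1/2)$, obtained from the Burkholder--Davis--Gundy inequality together with the bound $\sum_k |g_k(B,x)|^2 \lesssim |B|^2$; combined with the $L^2(0,T;H^1(D))$ bound on $B_n$, a fractional Aubin--Lions--Simon criterion then produces relative $L^2(0,T; L^2(D))$ compactness outside an event of arbitrarily small probability, which is exactly the tightness required.
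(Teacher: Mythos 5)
Your proposal follows essentially the same route as the paper: tightness is established factor by factor using Chebyshev's inequality and the uniform moment bounds of Lemma~\ref{lemma3.3}, with suitable compactness criteria for each functional factor (weakly compact balls in $L^2(0,T;H^1)$, Arzel\`a--Ascoli/Aubin--Lions type criteria for the $C([0,T];V_w)$ and strong $L^2(0,T;L^2)$ components), and the stochastic contributions controlled via the Burkholder--Davis--Gundy inequality. The particular compactness criteria you invoke (Kolmogorov continuity for $\hat\beta_k$, a fractional Sobolev bound $W^{\alpha,2}(0,T;L^2)$ plus Aubin--Lions--Simon for $B_n$) differ slightly in presentation from the paper's explicit modulus-of-continuity and $H^{-1}$-increment estimates, but are equivalent in substance and lead to the same conclusion.
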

\begin{proof}
We want to check the tightness of the family of $\Pi_n$ in the
following five steps:

$1^\circ$ (the tightness of $\hat{\beta}_k$) First, we will check
the tightness of $\hat{\beta}_k$, that is, for $\varepsilon>0$, we
now need to find the compact subset $\Sigma_\varepsilon\subset
C(0,T;\mathbb{R})$ such that $\textrm{P}(\hat{\beta}_k\notin
\Sigma_\varepsilon)\le {\varepsilon\over5}$. For
$\Sigma_\varepsilon$ we rely on classical results concerning the
Brownian motion. For a constant $L_\varepsilon$ to be chosen later,
we consider the set
\begin{align*}
\Sigma_\varepsilon=\left\{\hat{\beta}_k(\cdot)\in C(0,T;
\mathbb{R}): \sup_{t_1,t_2\in[0,T],\atop |t_1-t_2|<{1\over
m^6}}m|\hat{\beta}_k(t_2)-\hat{\beta}_k(t_1)|\le L_\varepsilon,\;
\forall m\in\mathbb{N}\right\}.
\end{align*}
$\Sigma_\varepsilon$ is relatively compact in $C(0,T; \mathbb{R})$
by Arzela-Ascoli's Theorem. Furthermore $\Sigma_\varepsilon$ is
closed in $C(0,T; \mathbb{R})$. Therefore $\Sigma_\varepsilon$ is a
compact subset of $C(0,T; \mathbb{R})$. We can show that
$\textrm{P}(\hat{\beta}_k\notin \Sigma_\varepsilon)\le {C\over
L^4_\varepsilon}$. In fact, by Chebyshev's inequality
$\textrm{P}\{\omega: \xi(\omega)\ge r\}\le {1\over r^q}{\rm
E}[|\xi(\omega)|^q]$, one has
\begin{align*}
&\textrm{P}\{\omega: \hat{\beta}_k(\omega,\cdot)\notin \Sigma_\varepsilon\}\\
&\le \textrm{P}\left[\cup_{m=1}^\infty\left\{\omega:
\sup_{t_1,t_2\in[0,T],|t_1-t_2|<m^{-6}}|\hat{\beta}_k(t_1)-\hat{\beta}_k(t_2)|>{L_\varepsilon\over
m}\right\}\right]\\
&\le\sum_{m=1}^\infty\sum^{m^6-1}_{i=0}\left({m\over
L_\varepsilon}\right)^4 {\rm E}\left[\sup_{iTm^{-6}\le t\le
(i+1)Tm^{-6}}|\hat{\beta}_k(t)-\hat{\beta}_k(iTm^{-6})|^4\right]\\
&\le C\sum_{m=1}^\infty\left({m\over
L_\varepsilon}\right)^4(Tm^{-6})^2m^6={C\over
L^4_\varepsilon}\sum_{m=1}^\infty {1\over m^2}.
\end{align*}
We choose $L^4_\varepsilon={1\over
5C\varepsilon}\left(\sum_{m=1}^\infty {1\over m^2}\right)^{-1}$ to
obtain $\textrm{P}(\hat{\beta}_k\notin \Sigma_\varepsilon)\le
{\varepsilon\over5}$.

$2^\circ$ (the tightness of $\rho_n$) In this step, we want to find
$X_\varepsilon \subset C([0,T];L_w^{\beta}(D))\cap
L^2(0,T;L^2(D))\cap L^2(0,T; H_w^1(D))$ such that
$\textrm{P}(\rho_n\notin X_\varepsilon)\le {\varepsilon\over5}$. For
this, we define a function space $\mathcal{X}$ with the norm
\begin{align*}\|f\|_{\mathcal{X}}=&\sup_{0\le
t\le T}\|f(t)\|_{L^\beta(D)}+
\|f(t)\|_{L^2([0,T];H^1(D))}\\
&+\sup_{0\le t\le
T}\|\partial_t{f}\|_{W^{-1,{2\beta\over{\beta+1}}}(D)}+
\|\partial_tf\|_{L^2([0,T];H^{-1}(D))}.
\end{align*}

Choose $X_\varepsilon$ to be a closed ball of radius $r_\varepsilon$
centered at $0$ in $\mathcal{X}$. By Aubin-Lions Lemma, we know that
$X_\varepsilon$ is compact in $C(0,T;L_w^\beta(D))\cap
L^2(0,T;H_w^1(D))\cap L^2(0,T;L^2(D))$. It follows from
\eqref{energy} in Lemma \ref{lemma3.3} that
\begin{align*}
\textrm{P}(\rho_n\notin
X_\varepsilon)&=\textrm{P}(\|\rho_n\|_\mathcal{X}>r_\varepsilon)\le
{1\over r_\varepsilon}{\rm E}\left(\|\rho_n\|_\mathcal{X}\right)\le
{C\over r_\varepsilon}.
\end{align*}
Choosing $r_\varepsilon=5C\varepsilon^{-1}$, we have
$\textrm{P}(\rho_n\notin X_\varepsilon)\le {\varepsilon\over5}$.
Then $\textrm{P}\{\omega: \rho_n(\omega,\cdot)\in X_\varepsilon\}\ge
1-{\varepsilon\over 5}$.

$3^\circ$(the tightness of $u_n$) In this step, we find
$Y_\varepsilon\subset L^2(0,T; H_w^1(D))$ such that
$\textrm{P}(u_n\notin Y_\varepsilon)\le {\varepsilon\over5}$. To
this end, we choose $Y_\varepsilon$ as a closed ball of radius
$\tilde{r}_\varepsilon$ centered at $0$ in $L^2(0,T;H^1(D))$. Then
$Y_\varepsilon$ is compact in $L^2(0,T; H_w^1(D))$. \eqref{energy}
in Lemma \ref{lemma3.3} implies that
\begin{align*}
\textrm{P}(u_n\notin
Y_\varepsilon)=\textrm{P}(\|u_n\|_{L^2(0,T;H^1(D))}>\tilde{r}_\varepsilon)\le
{1\over
\tilde{r}_\varepsilon}{\rm E}\left(\|u_n\|_{L^2(0,T;H^1(D))}\right)\le
{C\over \tilde{r}_\varepsilon}.
\end{align*}
Choosing $\tilde{r}_\varepsilon=5C\varepsilon^{-1}$, we have
$\textrm{P}(u_n\notin Y_\varepsilon)\le {\varepsilon\over5}$. Then $
\textrm{P}\{\omega: u_n(\omega,\cdot)\in Y_\varepsilon\}\ge
1-{\varepsilon\over 5}.$

$4^\circ$(the tightness of $\rho_nu_n$) In this step, we find
$Z_\varepsilon \subset C([0,T]; L_w^{2\beta\over\beta+1}(D))$ such
that $\textrm{P}(\rho_nu_n\notin Z_\varepsilon)\le
{\varepsilon\over5}$. For this, define a function space $\mathcal
Z\!=\!L^\infty([0,T]; L^{2\beta\over\beta+1}(D))\cap
C^{0,\alpha}\left([0,T]; W^{-\ell,2}(D)\right),\ell>3,~
0<\alpha<{1\over2}$ with the norm $$\|f\|_{\mathcal Z}=\sup_{0\le
t\le
T}\|f(t)\|_{L^{2\beta\over\beta+1}(D)}\\+\|f(t)\|_{C^{0,\alpha}\left([0,T];W^{-\ell,2}(D)\right)}.$$

Because the Brownian motion is not differentiable in time, we can
not estimate $\partial_(\rho_nu_n)$ directly. To overcome the
difficulty, we consider the momentum equation directly. From
\eqref{A-MHD}, we have
\begin{align}\label{Tight0}
\begin{split}
\!\!\!\mathbb{P}[\rho_nu_n(t)]&=\mathbb{P}[\rho_0u_0]\!-\!\!\int_0^t
\!\mathbb{P}\left[\Dv(\rho_n u_n\otimes u_n)\!-\!\mu\Delta
u_n\!-\!(\lambda+\mu) \nabla \Dv u_n\!-\!a\nabla
\rho_n^\gamma\!-\!\delta\nabla\rho_n^\beta\right]ds\\
&\quad+\int_0^t\mathbb{P}\left[ (\nabla\times B_n)\times B_n-
\varepsilon\nabla
u_n\cdot\nabla\rho_nds\right]-\int_0^t\sum_{k\ge1}f^n_k(\rho_n,\rho_nu_n,x)d\hat{\beta}_k^1,
\end{split}
\end{align}
where $\mathbb{P}: L^2(D)\rightarrow X_n$ is the projection onto
$X_n$.
 First, from \eqref{energy}, we have
\begin{align}\label{Tight1}
\begin{split}
{\rm E}\int^{T}_{0} \left\|\mathbb{P}[\Dv(\rho_n u_n\otimes
u_n)]\right\|^2_{W^{-1,{6\beta\over 4\beta+3}}(D)}ds\le {\rm E}
\int^{T}_{0}\|\rho_n u_n\otimes u_n\|^2_{L^{6\beta\over
4\beta+3}(D)}ds\le C.
\end{split}
\end{align}
Similarly, from Lemma \ref{lemma3.3}, we can handle the other
deterministic terms as:
\begin{align}\label{Tight}
\begin{split}
&{\rm E}\int^{T}_{0}\left\|\mathbb{P}[\mu\Delta
u_n+(\lambda+\mu)\nabla \Dv u_n]\right\|^2_{W^{-1,2}(D)}ds\le C,\\
&{\rm E}\int^{T}_{0} \left\|\mathbb{P}\left(\varepsilon\nabla
u_n\cdot\nabla\rho_n\right)\right\|^{\frac{2q}{q+2}}_{W^{-\ell,2}(D)}ds \le C,\\
& {\rm
E}\int^{T}_{0}\left\|\mathbb{P}\left(a\nabla\rho_n^\gamma+\delta\nabla\rho_n^\beta
\right)\right\|^{{\beta+1}\over\beta}_{W^{-1,{\beta+1\over\beta}}(D)}ds\le
C,\\
&{\rm E}\int^{T}_{0} \left\|\mathbb{P}\left[(\nabla\times B_n)\times
B_n\right]\right\|_{W^{-\ell,2}(D)}^2ds \le C.
\end{split}
\end{align}
Here we have used $\nabla \rho_n\in L^q([0,T]; L^2(D)), q>2$ (see
\cite[Lemma 2.4]{FMN}). For the stochastic terms, by Lemma
\ref{lemma3.3}, one has for $\sigma>1$
\begin{align}\label{Tight8}
\notag {\rm E}&\left(\int^t_{s}
\sum_{k\ge1}\left\|f^n_k(\rho_n,\rho_nu_n,x)\right\|_{W^{-\ell,2}(D)}d\hat{\beta}_k^1(s)\right)^{2\sigma}\\
&\le
{\rm E}\left[\int^t_s\sum_{k=1}\|f^n_k(\rho_n,\rho_nu_n,x)\|^2_{W^{-\ell,2}(D)}ds\right]^\sigma\\
&\notag\le {\rm E}\left[
\int^{t}_{s}\sum_{k=1}\|\mathbb{P}(f_k(\rho_n,\rho_nu_n,x)/\sqrt{\rho_n})\|^2_{L^2(D)}\sup_{\|\varphi\|_{W^{\ell,2}(D)}=1}\|\mathcal{M}^{1\over2}[\rho_n]\varphi\|^2_{L^2(D)}ds\right]^\sigma\\
&\notag\le C(t-s)^\sigma.
\end{align}
Thanks to  the Kolmogorov continuity Theorem, we can infer that the
stochastic term is almost surely $\alpha$-H\"{o}lder continuous for
every $0<\alpha<\frac{1}{2}-\frac{1}{2\sigma}$. \if false Here we
also have used the fractional Sobolev space $ W^{\alpha,p}(0,T;H)$:
Assume that $H$ is a separable Hilbert space. Given $p>1,
\alpha\in(0,1)$. Let $ W^{\alpha,p}(0,T;H)$ be the Sobolev space of
all $u(t)\in L^P(0,T;H)$ such that $\int_0^T\int_0^T
\frac{\norm{u(t)-u(s)}^p_H}{|t-s|^{1+\alpha p}}dtds<\infty$, endowed
with the norm:
\begin{align*}
\norm{u}^p_{W^{\alpha,p}(0,T;H)}=\int_0^T
\norm{u(t)}_H^pdt+\int_0^T\int_0^T
\frac{\norm{u(t)-u(s)}^p_H}{|t-s|^{1+\alpha p}}dtds.
\end{align*}\fi
 Note that when $\ell>3$, we have
$W^{-1,\frac{6\beta}{4\beta+3}}(D)\hookrightarrow W^{-\ell,2}(D)$
and $W^{-1,\frac{\beta+1}{\beta}}(D)\hookrightarrow W^{-\ell,2}(D)$.
%Choosing $\alpha$ such that $\alpha p>1$,
\if false Then $W^{\alpha,p}(0,T;W^{-\ell,2}(D))\hookrightarrow
C^{0,\iota}\left([0,T]; W^{-\ell,2}(D)\right), 0<\iota<1/2$.\fi

Choose $Z_\varepsilon$ to be a closed ball of radius
$r^\prime_\varepsilon$ centered at 0 in $\mathcal Z$. By
\cite[Corollary B.2]{OM} and \cite{SJ}, we know that $Z_\epsilon$ is
compact in $C([0,T]; L_w^{2\beta\over\beta+1}(D))$. Moreover, from
\eqref{Tight0}-\eqref{Tight8}, by the Chebyshev inequality, we have
\begin{align}\label{Tight9}
\textrm{P}\left(\rho_nu_n\notin Z_\epsilon\right)=
\textrm{P}\left(\|\rho_nu_n\|_{\mathcal
Z}>r^\prime_\varepsilon\right)\le {1\over
r^\prime_\varepsilon}{\rm E}(\|\rho_n u_n\|_{\mathcal Z})\le {C\over
r^\prime_\varepsilon}.
\end{align}
Let $r^\prime_\varepsilon=5C\varepsilon^{-1}$ in \eqref{Tight9}, one deduces that
\begin{align*}%\label{Tight10}
\textrm{P}\left(\rho_nu_n\notin Z_\epsilon\right)\le
{\varepsilon\over5}.
\end{align*}
Then $\textrm{P}\{\omega: \rho_nu_n(\omega,\cdot)\in
Z_\varepsilon\}\ge 1-{\varepsilon\over5}$.
\begin{remark}
Here we can also use the following tightness criterion: Let
$\{f_n\}^\infty_{n=1}$ be a collection of $C([0,T],L^p_w(D))$-valued
random variables defined on a probability space $(\Omega_n,
\mathscr{F}_n,\textrm{P}_n)$ such that $\sup_n {\rm
E}\big(\|f_n\|_{L^\infty([0,T],L^p(D))}\big)<\infty$ and for any
$\varphi\in C_0^\infty(D)$, there exist an integer $k,\sigma>0$,
$q>{1\over\sigma}$ such that $\sup_n {\rm E}[|\langle
f_n(t)-f_n(s),\varphi \rangle|^q]\le
\|\varphi\|^q_{C^k}|t-s|^{\sigma q}$ for all $0\le s,t\le T$. Then
the sequence of the induced measures $\textrm{P} \circ f_n^{-1}$ are
tight on $C([0,T],L^p_w(D))$.

For example, by H\"{o}lder's inequality and Lemma \ref{lemma3.3}, we
have
\begin{align*}
{\rm E}&\left|\int_s^t\langle \Dv(\rho_nu_n\otimes u_n),\varphi\rangle dr\right|\\
&\le
{\rm E}\int_s^t \|\nabla\varphi\|_{L^\infty}\|\sqrt{\rho_n}u_n\|_{L^2(D)}\|\nabla u_n\|_{L^2(D)}\|\rho_n\|_{L^\gamma(D)}^{1\over 2}dr\\
&\le C(t-s)^{1\over2}{\rm E} \left(\|\sqrt{\rho_n}u_n\|_{L^\infty(0,T;L^2(D))}\|\nabla u_n\|_{L^2(0,T;L^2(D))}\|\rho_n\|_{L^\infty(0,T;L^\gamma(D))}^{1\over 2}\right)\\
&\le C(t-s)^{1\over2},\\
{\rm E}&\left|\int_s^t\langle (\nabla\times B_n)\times B_n,\varphi\rangle dr\right|\\
&\le C(t-s)^{1\over2}\|\varphi\|_{L^\infty}{\rm E}(\|\nabla B_n\|_{L^2([0,T],L^2(D))}\|B_n\|_{L^\infty([0,T],L^2(D))})\le C(t-s)^{1\over2},\\
{\rm E}&\left|\int_s^t\langle \mu \Delta u_n+(\lambda+\mu)\nabla \Dv u_n,\varphi\rangle dr\right|\\
&\le C(t-s)^{1\over2}\|\nabla\varphi\|_{L^\infty}{\rm E}(\|\nabla u_n\|_{L^2([0,T],L^2(D))})\le C(t-s)^{1\over2},\\
{\rm E}&\left|\int_s^t\langle a\nabla\rho_n^\gamma-\delta\nabla\rho_n^\beta,\varphi\rangle dr\right|\\
&\le C{\rm E}\int_s^t \|\nabla\varphi\|_{L^\infty}\|\rho_n\|^\gamma_{L^\gamma(D)}\|\rho_n\|^\beta_{L^\beta(D)}dr\le C(t-s)^{1\over2},\\
{\rm E}&\left|\int_s^t\langle \varepsilon\nabla u_n\cdot\nabla\rho_n,\varphi\rangle dr\right|\\
&\le {\rm E}\int_s^t \varepsilon\|\nabla u_n\|_{L^2(D)}\|\nabla \rho_n\|_{L^2(D)}\|\varphi\|_{L^\infty(D)}dr\\
&\le (t-s)^{{1\over 2}-{1\over q}}{\rm E}(\|\nabla u_n\|_{L^2([0,T],L^2(D))}\|\nabla \rho_n\|_{L^q([0,T],L^2(D))})\le C(t-s)^{{1\over 2}-{1\over q}}.
\end{align*}
Here we have used $\nabla \rho_n\in L^q([0,T]; L^2(D)), q>2$. Then we can get the tightness of $\rho_nu_n$.
\end{remark}

$5^\circ$ (the tightness of $B_n$) First, we will find
$R_\varepsilon\subset L^2(0,T; H_w^1(D))$ such that
$\textrm{P}(B_n\notin R_\varepsilon)\le {\varepsilon\over5}$. First,
we choose $R_\varepsilon$ as a closed ball of radius
$\mathring{r}_\varepsilon$ centered at $0$ in $L^2(0,T;H^1(D))$.
Then $R_\varepsilon$ is compact in $L^2(0,T; H_w^1(D))$. From
\eqref{energy}, we have
\begin{align*}
\textrm{P}(B_n\notin
R_\varepsilon)=\textrm{P}(\|B_n\|_{L^2(0,T;H^1(D))}>\mathring{r}_\varepsilon)\le
{1\over
\mathring{r}_\varepsilon}{\rm E}\left(\|B_n\|_{L^2(0,T;H^1(D))}\right)\le
{C\over \mathring{r}_\varepsilon}.
\end{align*}
Choosing $\mathring{r}_\varepsilon=5C\varepsilon^{-1}$, we have
$\textrm{P}(B_n\notin R_\varepsilon)\le {\varepsilon\over5}$. Then $
\textrm{P}\{\omega: B_n(\omega,\cdot)\in R_\varepsilon\}\ge
1-{\varepsilon\over 5}.$

On the other hand, it follows from \eqref{A-MHD} that
\begin{equation}\label{4.40a}
\begin{split}
{\rm E}\int_0^{T-\theta}\norm{B_n(t+\theta)-B_n(t)}^2_{H^{-1}(D)}dt&={\rm E}\int_0^{T-\theta}\left\|\int_t^{t+\theta}
dB_n(s)\right\|^2_{H^{-1}(D)}dt\\
&\le {\rm E}\int_0^{T-\theta}(J_1+J_2+J_3)dt,
\end{split}
\end{equation}
where
\begin{equation*}
J_1(t)=\left\|\int_t^{t+\theta}\nabla\times(u_n\times
B_n)ds\right\|^2_{H^{-1}(D)},\; J_2(t)=\left\|\int_t^{t+\theta}\nu
\Delta B_n ds\right\|^2_{H^{-1}(D)},
\end{equation*}
\begin{equation*}
J_3(t)=\left\|\int_t^{t+\theta} \sum_{k\ge1}g_k(B_n,x)
d\hat{\beta}_k^2\right\|^2_{H^{-1}(D)}.
\end{equation*}

For the term $J_1(t)$, one has
\begin{equation*}
\begin{split}
J_1^{1/2}&=\sup_{\varphi\in H_0^1(D),~
\|\varphi\|_{H^1(D)}=1}\left\{\int_D
\left(\int_t^{t+\theta}\nabla\times(u_n\times
B_n)ds\right)\varphi(x)dx
\right\}\\
&\le C\int_t^{t+\theta}\norm{u_n}_{L^4(D)}\norm{B_n}_{L^4(D)}ds.
\end{split}
\end{equation*}
By H\"{o}lder's inequality and Lemma \ref{lemma3.3}, then
\begin{equation}\label{4.41}
{\rm E}\int_0^{T-\theta}J_1(t)dt\lesssim \theta^{1\over4} {\rm E}\left(
\norm{u_n}^2_{L^2(0,T;H^1(D))}\norm{B_n}^2_{L^{8\over3}(0,T;L^4(D))}\right)\lesssim\theta^{1\over4}.
\end{equation}
For the term $J_2(t)$, similar to \eqref{4.41}, we have
\begin{equation}\label{4.42}
{\rm E}\int_0^{T-\theta}\!\!\!J_2(t)dt\lesssim {\rm
E}\int_0^{T-\theta}\!\!\left(\int_t^{t+\theta}\!\norm{\nabla
B_n}_{L^2(D)}ds\right)^2dt\lesssim \theta {\rm
E}\int_0^{T}\!\norm{\nabla B_n}^2_{L^2(D)}dt\lesssim\theta.
\end{equation}
For the term $J_3$, using the Burkholder-Davis-Gundy inequality,
H\"{o}lder's inequality and \eqref{assumption}, we obtain
\begin{align}\label{4.44}
\begin{split}
{\rm E}\int_0^{T-\theta}\!\!\!J_3(t)dt&\le \int_0^T {\rm
E}\left(\sup_{\varphi\in
H_0^1(D),~\|\varphi\|_{H^1(D)}=1}\int_t^{t+\theta}\int_D \sum_{k\ge1}g_k(B_n,x)\varphi dxd\hat{\beta}_k^2 \right)^2dt\\
&\le\int_0^T {\rm E}\left(\sup_{\varphi\in
H_0^1(D),~\|\varphi\|_{H^1(D)}=1}\int_t^{t+\theta}\left(\sum_{k\ge1}\int_Dg_k(B_n,x)\varphi dx\right)^2 ds \right)dt\\
&\le
\int_0^T\left({\rm E}\int_t^{t+\theta}\norm{\varphi}^2_{L^2(D)}\sum_{k=1}\norm{g_k(B_n,x)}^2_{L^2(D)}ds\right)dt\\
&\le\int_0^T\left({\rm E}\int_t^{t+\theta}\norm{B_n}^2_{L^2(D)}ds\right)dt\\
&\lesssim\theta.
\end{split}
\end{align}
Combining \eqref{4.40a}-\eqref{4.44}, and then by Aubin-Simon Lemma
in \cite{SJ}, we know that $B_n$ is compact in $L^2(0,T; L^2(D))$.

From the above argument, it follows from \cite[pp.16,
Corollary1.3]{KHJ} that the distribution of the joint processes
$(\hat{\beta}_k, \rho_n, u_n, \rho_n u_n, B_n)$ are tight. Hence,
the proof of the lemma is completed.
\end{proof}

%%%%%%%%%%%%%%%%%%%%%%%%%%%%%%%%%%%%%%%%%%%%%%%%%%%%%%%%%%%%%%%%%%%%%%%%%%%%%%%%%%%%%%%%%%%%%%%%%%%%%%%%%%%%%%%%%%%%%%%%%%%%%%%%%%%%%%%%%%%%%%
\subsection{Application of Jakubowski-Skorokhod Theorem}\label{s3.3}

From the tightness property and Jakubowski-Skorokhod's Theorem,
there exists a subsequence, still indexed by $n$, such that
$\Pi_{n}\to \Pi$ weakly, where $\Pi$ is a probability on $S$, a
probability space $(\tilde{\Omega},
\tilde{\mathscr{F}},\tilde{\textrm{P}})$; and there exist random
variables $
(\tilde{\beta}_{k,n},\tilde{\rho}_{n},\tilde{u}_{n},\tilde{\rho}_{n}\tilde{u}_{n},\tilde{B}_{n})$
with distribution $\Pi_{n}$, $ (\beta_k, \rho, u, h, B)$ with values
in $S$ such that
\begin{align}\label{tightness}
(\tilde{\beta}_{k,n},\tilde{\rho}_{n},\tilde{u}_{n},\tilde{\rho}_{n}\tilde{u}_{n},\tilde{B}_{n})\to
(\beta_k,\rho,u,h,B) \ \ \mbox{in} \ \ S \ \
\tilde{\textrm{P}}-\text{a.s.} .
\end{align}

We set
\begin{align*}
\tilde{\mathscr{F}_t}=\sigma\{\beta_k(s), \rho(s), u(s),
B(s)\}_{s\in[0,t]},
\end{align*}
which is the union $\sigma$-algebra generated by random variables
$\left(\beta_k(s), \rho(s), u(s), B(s)\right)$ for $s\in[0,t]$. It's
easy to see that $\beta_k(t)$ is a $\tilde{\mathscr{F}}_t$ standard
Brownian motion. %For this purpose, we need to prove that for $s\le

Now, we need to prove that
$(\tilde{\beta}_{k,n},\tilde{\rho}_{n},\tilde{u}_{n},\tilde{B}_{n})$
satisfies the equation \eqref{A-MHD}, that is,
\begin{align*}%\label{a-integral0}
\mathbb{P}[\tilde{\rho}_{n}(t)-\tilde{\rho}_{n}(0)]=\int_0^t\mathbb{P}\left[\varepsilon
\Delta\tilde{\rho}_{n}-\Dv(\tilde{\rho}_{n}\tilde{u}_{n})\right]ds,
\end{align*}
\begin{align*}%\label{a-integral1}
\notag&\mathbb{P}[\tilde{\rho}_{n}\tilde{u}_{n}(t)]\!+\!\int_0^t\mathbb{P}\left[\Dv(\tilde{\rho}_{n}\tilde{u}_{n}\otimes
\tilde{u}_{n})\!-\!\mu\Delta
\tilde{u}_{n}\!-\!(\lambda+\mu)\nabla\Dv
\tilde{u}_{n}\!+\!a\nabla\tilde{\rho}^\gamma_{n}\!+\!\delta\nabla\tilde{\rho}^\beta_{n}\right]ds\\
&=\mathbb{P}[\tilde{\rho}_{n}\tilde{u}_{n}(0)]\!+\!\int_0^t\!\mathbb{P}\!\left(\!(\nabla\times
\tilde{B}_{n})\times \tilde{B}_{n} \!\!- \!\varepsilon \nabla
\tilde{u}_{n}\!\cdot\!\nabla\tilde{\rho}_{n}\right)\!ds\!+\!\int_0^t\!\sum_{k\ge1}
f^{n}_k(\tilde{\rho}_{n},\tilde{\rho}_{n}\tilde{u}_{n},x)d\tilde{\beta}_{k,n}^1,
\end{align*}
and
\begin{align*}%\label{a-integral2}
\mathbb{P}[\tilde{B}_{n}(t)]\!-\!\int_0^t\!\mathbb{P}\!\left[\nabla\times(\tilde{u}_{n}\times
\tilde{B}_{n})+\nu\Delta
\tilde{B}_{n}\right]\!ds\!=\!\mathbb{P}[\tilde{B}_{n}(0)]
\!+\!\int_0^t\!\sum_{k\ge1}\mathbb{P}\!\left[
g_k(\tilde{B}_{n},x)\right]d\tilde{\beta}_{k,n}^2.
\end{align*}
Here $\mathbb{P}$ is a projection from $L^2$ to $X_n$. To this end,
we define
\begin{align*}
\varpi_n(t)=\mathbb{P}[\rho_n(t)-\rho(0)]-\int_0^t\mathbb{P}\left[\varepsilon\Delta\rho_n-\Dv(\rho_nu_n)\right]ds,
\end{align*}
\begin{align*}
\xi_n(t)&=\mathbb{P}[\rho_nu_n(t)]\!+\!\!\int_0^t\!\mathbb{P}\left[\Dv(\rho_nu_n\otimes
u_n)\!-\!\mu\Delta u_n\!-\!(\lambda+\mu)\nabla\Dv
u_n+a\nabla\rho^\gamma_n+\delta\nabla\rho^\beta_n\right]ds\\
&\quad
-\mathbb{P}[\rho_nu_n(0)]-\!\int_0^t\!\mathbb{P}\!\left((\nabla\times
B_n)\times B_n\!-\!\varepsilon\nabla u_n\cdot\nabla
\rho_n\right)\!ds+\!\int_0^t\!\sum_{k\ge1}f^n_k(\rho_n,\rho_nu_n)d\hat{\beta}_k^1,
\end{align*}
\begin{align*}
\zeta_n(t)=\mathbb{P}[B_n(t)-B_n(0)]\!-\!\!\int_0^t\!\!
\mathbb{P}\left[\nabla\times(u_n\times B_n)+\nu\Delta
B_n\right]dt\!\!-\!\!\int_0^t \!\sum_{k\ge1}\mathbb{P}\left[g_k(
B_n,x)\right]d\hat{\beta}_k^2,
\end{align*}
\begin{align*}
G_n\!=\!\int_0^T \norm{\varpi_n(t)}^2_{H^{-1}(D)}dt,\;
Z_n\!=\!\int_0^T \|\xi_n(t)\|^2_{W^{-\ell,2}(D)}dt,\;\ell\ge3,\;
H_n\!=\!\int_0^T \norm{\zeta_n(t)}^2_{H^{-1}(D)}dt.
\end{align*}
Since $(\rho_n, u_n, B_n)$ is a solution, we have
\begin{align}\label{GZH}
G_n=0,\; Z_n=0, \; H_n=0,\; \textrm{P}-\text{a.s.}.
\end{align}

Let
\begin{align*}
\tilde{\varpi}_{n}(t)=\mathbb{P}[\tilde{\rho}_{n}(t)-\tilde{\rho}_{n}(0)]-\int_0^t\mathbb{P}
\left[\varepsilon\Delta\tilde{\rho}_{n}-\Dv(\tilde{\rho}_{n}\tilde{u}_{n})\right]ds,
\end{align*}
\begin{align*}
&\tilde{\xi}_{n}(t)\!=\!\mathbb{P}[\tilde{\rho}_{n}\tilde{u}_{n}(t)]\!-\!\int_0^t\!\mathbb{P}\!
\left[\Dv(\tilde{\rho}_{n}\tilde{u}_{n}\!\otimes
\tilde{u}_{n})\!-\!\mu\Delta
\tilde{u}_{n}\!\!-\!(\lambda+\mu)\nabla\Dv
\tilde{u}_{n}\!+\!a\nabla\tilde{\rho}^\gamma_{n}\!+\!\delta\nabla\tilde{\rho}^\beta_{n}\right]\!ds\\
&-\mathbb{P}[\tilde{\rho}_{n}\tilde{u}_{n}(0)]\!+\!\int_0^t\!\mathbb{P}\!\left(\varepsilon\nabla
\tilde{u}_{n}\!\!\cdot\nabla \tilde{\rho}_{n}\!+\!(\nabla\times
\tilde{B}_{n})\times\tilde{B}_{n}\right)\!ds\!-\!\int_0^t\!\sum_{k\ge1}
f^{n}_k(\tilde{\rho}_{n},\tilde{\rho}_{n}\tilde{u}_{n},x)d\tilde{\beta}_{k,n}^1,
\end{align*}
\begin{align*}
\tilde{\zeta}_{n}(t)\!=\!\mathbb{P}[\tilde{B}_{n}(t)\!-\!\tilde{B}_{n}(0)]\!-\!\int_0^t\!
\mathbb{P}\!\left[\nabla\times(\tilde{u}_{n}\!\times\!
\tilde{B}_{n})\!+\!\nu\Delta \tilde{B}_{n}\right]\!dt\!
-\!\int_0^t\!\sum_{k\ge1}
\mathbb{P}\!\left[g_k(\tilde{B}_{n},x)\right]\!d\tilde{\beta}_{k,n}^2,
\end{align*}
and
\begin{align*}
\tilde{G}_{n}=\int_0^T\|\tilde{\varpi}_{n}(t)\|^2_{H^{-1}(D)}dt,\;\tilde{Z}_{n}
=\int_0^T\|\tilde{\xi}_{n}(t)\|^2_{W^{-\ell,2}(D)}dt,\;
\tilde{H}_{n}=\int_0^T\|\tilde{\zeta}_{n}(t)\|^2_{H^{-1}(D)}dt.
\end{align*}
We want to verify that
\begin{align*}
\tilde{{\rm E}}\tilde{G}_{n}=0,\; \tilde{{\rm E}}\tilde{Z}_{n}=0,
\;\tilde{{\rm E}}\tilde{H}_{n}=0 .
\end{align*} Here $\tilde{{\rm E}}$ denotes the mathematical expectation with respect
to the probability space $(\tilde{\Omega},
\tilde{\mathscr{F}},\tilde{\textrm{P}})$. To this end, we have the
following proposition:
\begin{proposition}\label{prop3.8}
$\tilde{G}_{n}=0, \tilde{Z}_{n}=0, \tilde{H}_{n}=0  \ \
\tilde{\textrm{P}}-\mbox{a.s.}$, that is, $(\tilde{\beta}_{k,n},
\tilde{\rho}_{n},\tilde{u}_{n},\tilde{B}_{n})$ solves the
equation \eqref{A-MHD}.
\end{proposition}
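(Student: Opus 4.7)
The strategy is to exploit the equality of joint laws guaranteed by the Skorokhod representation: since $(\tilde\beta_{k,n},\tilde\rho_{n},\tilde u_{n},\tilde\rho_{n}\tilde u_{n},\tilde B_{n})$ has the same distribution on $S$ as $(\hat\beta_k,\rho_n,u_n,\rho_n u_n,B_n)$, provided the maps $\omega\mapsto(G_n,Z_n,H_n)$ factor through Borel measurable functionals on $S$, we have the distributional identities $\tilde G_n\stackrel{d}{=}G_n$, $\tilde Z_n\stackrel{d}{=}Z_n$, $\tilde H_n\stackrel{d}{=}H_n$. By \eqref{GZH}, $G_n=Z_n=H_n=0$ $\textrm{P}$-a.s., so $\tilde{{\rm E}}\tilde G_n=\tilde{{\rm E}}\tilde Z_n=\tilde{{\rm E}}\tilde H_n=0$. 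Since all three are non-negative, they must vanish $\tilde{\textrm{P}}$-a.s., proving the proposition.

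The first task is to confirm Borel measurability of the deterministic ingredients of $\tilde\varpi_n$, $\tilde\xi_n$, $\tilde\zeta_n$ as functionals on $S$. The time integrals of the linear operators $\mu\Delta u$, $(\lambda+\mu)\nabla\Dv u$, $\nu\Delta B$ and of the nonlinearities $\Dv(\rho u\otimes u)$, $a\nabla\rho^\gamma+\delta\nabla\rho^\beta$, $\varepsilon\nabla u\cdot\nabla\rho$, $(\nabla\times B)\times B$ are continuous (or at least sequentially measurable) with respect to the topology of $S$, using the a priori bounds in Lemma \ref{lemma3.3} and the fact that $\mathbb{P}$ is the projection onto the finite-dimensional space $X_n$, so all norms on $X_n$ are equivalent. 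These checks are routine.

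The main obstacle is the Itô integral terms $\int_0^t\sum_k f^n_k(\rho,\rho u,x)\,d\beta^1_k$ and $\int_0^t\sum_k g_k(B,x)\,d\beta^2_k$, which depend on the underlying filtration and not merely on the sample paths. The standard remedy is an approximation argument: replace each Itô integral by the Itô–Riemann sum associated with a partition of $[0,t]$, where the integrand is evaluated at the left endpoint of each subinterval. Each such sum is a continuous (in fact polynomial) functional of $(\beta_k,\rho,u,B)$, so its law on the original space equals the law of the corresponding sum constructed from $(\tilde\beta_{k,n},\tilde\rho_n,\tilde u_n,\tilde B_n)$ on $(\tilde\Omega,\tilde{\mathscr F},\tilde{\textrm P})$. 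Using the Itô isometry together with the bounds in \eqref{assumption} and Lemma \ref{lemma3.3}, the Riemann sums converge in probability to the true Itô integrals on both probability spaces. Because $\tilde\beta_{k,n}$ is a Brownian motion with respect to the filtration generated by $(\tilde\beta_{k,n},\tilde\rho_n,\tilde u_n,\tilde B_n)$ (a property inherited from the equality of laws), the limit on the tilde side may be identified with the genuine Itô integral with respect to $\tilde\beta_{k,n}$.

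Combining the measurability of the deterministic parts with the measurable identification of the Itô integrals, one concludes that $\tilde G_n$, $\tilde Z_n$, $\tilde H_n$ are, up to $\tilde{\textrm P}$-null sets, obtained from $G_n$, $Z_n$, $H_n$ by the change of variables induced by the Skorokhod representation. Equality of expectations with zero then forces $\tilde G_n=\tilde Z_n=\tilde H_n=0$ $\tilde{\textrm P}$-a.s., which is exactly the statement that $(\tilde\beta_{k,n},\tilde\rho_n,\tilde u_n,\tilde B_n)$ solves the approximate system \eqref{A-MHD} on the new stochastic basis.
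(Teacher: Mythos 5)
Your proof is correct and follows essentially the same strategy as the paper's: transfer the almost-sure validity of the approximate equation from $(\Omega,\mathscr F,\textrm P)$ to $(\tilde\Omega,\tilde{\mathscr F},\tilde{\textrm P})$ by exploiting equality of joint laws, with the main obstacle being that the It\^o integrals are not pathwise Borel functionals of $(\beta_k,\rho_n,u_n,B_n)$. Where the paper invokes Theorem 2.4 and Corollary 2.5 of \cite{BH-2011} as a black box to obtain $\mathscr L(\hat\beta_k,\rho_n,u_n,\rho_nu_n,B_n,\xi_n,\zeta_n)=\mathscr L(\tilde\beta_{k,n},\tilde\rho_n,\tilde u_n,\tilde\rho_n\tilde u_n,\tilde B_n,\tilde\xi_n,\tilde\zeta_n)$, you sketch the underlying content of that lemma directly: approximate the stochastic integrals by left-endpoint Riemann sums, which are continuous functionals of the tuple, pass to the limit in probability on both sides using the It\^o isometry, and verify that $\tilde\beta_{k,n}$ stays a Brownian motion with respect to the filtration generated by the full tuple (a property that does transfer through equality of finite-dimensional laws) so the limit may be identified with the genuine It\^o integral on the new space. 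The paper's final step uses test functions $\phi_\varepsilon\in C_b(\mathbb R_+)$ to avoid any integrability discussion; you instead use non-negativity of $Z_n,H_n,G_n$ together with the fact that the expectation of a non-negative random variable is determined by its law, which is equally valid. One small imprecision: your phrase ``obtained from $G_n,Z_n,H_n$ by the change of variables induced by the Skorokhod representation'' suggests a measure-preserving map between $\Omega$ and $\tilde\Omega$, which is not available; only equality of distributions holds. That phrasing should be replaced by the distributional identities you already stated earlier, which is all the argument actually uses.
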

\begin{proof}The difficulty comes from the fact that $(Z_n, H_n)$ is not expressed as a
deterministic function of  $(\hat{\beta}_k, \rho_n, u_n, B_n)$ due
to the presence of the stochastic integrals. By Theorem 2.4 and
\cite[Corollary 2.5]{BH-2011}, we can infer that
\begin{equation}\label{distrbution1}
\mathscr{L}(\hat{\beta}_k,\rho_n, u_n,\rho_nu_n, B_n,\xi_n, \zeta_n
)=\mathscr{L}(\tilde{\beta}_{k,n},\tilde{\rho}_{n},\tilde{u}_{n},
\tilde{\rho}_{n}\tilde{u}_{n},\tilde{B}_{n},
\tilde{\xi}_{n},\tilde{\zeta}_{n}).
\end{equation}
Here $\mathscr{L}(f)$ is the probability distribution of $f$. Note
that $(\tilde{Z}_{n},\tilde{H}_{n})$ is continuous as a function of
$(\tilde{\xi}_{n},\tilde{\zeta}_{n})$. From \eqref{distrbution1},
one deduces that the distribution of $(\tilde{Z}_{n},\tilde{H}_{n})$
is equal to the distribution of $(Z_{n},H_{n})$ on $\mathbb{R}_+$,
which yields that
\begin{equation}\label{distrbution2}
\tilde{{\rm E}}\phi(\tilde{Z}_{n})={\rm E}\phi(Z_{n}),\; \tilde{{\rm
E}}\phi(\tilde{H}_{n})={\rm E}\phi(H_{n}),\; \text{ for any }
\phi\in C_b(\mathbb{R}_+),
\end{equation}
where $C_b(X)$ is the space of continuous bounded functions defined
on $X$. Now, define $\phi_\varepsilon\in C_b(\mathbb{R}_+)$ by
\begin{equation*}
\phi_\varepsilon=
\begin{cases}
{y\over\varepsilon},\; 0\le y<\varepsilon;\\
1,\; y\ge \varepsilon.
\end{cases}
\end{equation*}
One can check that
\begin{equation*}
\begin{split}
\tilde{\textrm{P}}(\tilde{Z}_{n}\ge
\varepsilon)&=\int_{\tilde{\Omega}}
1_{[\varepsilon,\infty]}\tilde{Z}_{n}d\tilde{\textrm{P}}\le
\int_{\tilde{\Omega}}
1_{[0,\varepsilon]}{\tilde{Z}_{n}\over\varepsilon}d\tilde{\textrm{P}}+\int_{\tilde{\Omega}}
1_{[\varepsilon,\infty]}\tilde{Z}_{n}d\tilde{\textrm{P}},\\
\tilde{\textrm{P}}(\tilde{H}_{n}\ge
\varepsilon)&=\int_{\tilde{\Omega}}
1_{[\varepsilon,\infty]}\tilde{H}_{n}d\tilde{\textrm{P}}\le
\int_{\tilde{\Omega}}
1_{[0,\varepsilon]}{\tilde{H}_{n}\over\varepsilon}d\tilde{\textrm{P}}+\int_{\tilde{\Omega}}
1_{[\varepsilon,\infty]}\tilde{H}_{n}d\tilde{\textrm{P}},
\end{split}
\end{equation*}
Hence by the definition of $\tilde{{\rm
E}}\phi_\varepsilon(\tilde{Z}_{n})$ and $\tilde{{\rm
E}}\phi_\varepsilon(\tilde{H}_{n})$, we can infer that
\begin{equation*}
\tilde{\textrm{P}}(\tilde{Z}_{n}\ge \varepsilon)\le \tilde{{\rm
E}}\phi_\varepsilon(\tilde{Z}_{n}),\;\tilde{\textrm{P}}(\tilde{H}_{n}\ge
\varepsilon)\le \tilde{{\rm E}}\phi_\varepsilon(\tilde{H}_{n}),
\end{equation*}
which together with \eqref{distrbution2} imply that
\begin{equation*}
\tilde{\textrm{P}}(\tilde{Z}_{n}\ge \varepsilon)\le {\rm
E}\phi_\varepsilon(Z_{n}),\; \tilde{\textrm{P}}(\tilde{H}_{n}\ge
\varepsilon)\le {\rm E}\phi_\varepsilon(H_{n}).
\end{equation*}
From \eqref{GZH}, it holds that
\begin{equation}\label{distrbution3}
\tilde{\textrm{P}}(\tilde{Z}_{n}\ge \varepsilon)\le {\rm
E}\phi_\varepsilon(Z_{n})=0,\; \tilde{\textrm{P}}(\tilde{H}_{n}\ge
\varepsilon)\le {\rm E}\phi_\varepsilon(H_{n})=0.
\end{equation}
Since $\varepsilon>0$ is arbitrary, from \eqref{distrbution3}, we
can infer that
\begin{equation*}\label{distrbution4}
\tilde{Z}_{n}=0,\; \tilde{H}_{n}=0  \ \
\tilde{\textrm{P}}-\mbox{a.s.}
\end{equation*}
Similarly, we can prove that $\tilde{G}_{n}=0,\ \
\tilde{\textrm{P}}-\mbox{a.s.}. $
\end{proof}

%%%%%%%%%%%%%%%%%%%%%%%%%%%%%%%%%%%%%%%%%%%%%%%%%%%%%%%%%%%%%%%%%%%%%%%%%%%%%%%%%%%%%%%%%%%%%%%%%%%%%%%%%%%%%%%%%%%%%%%%%%%%%%%%%%%%%%%%%%%
\subsection{Passage to the limit for $n\to\infty$}\label{s3.4}

By Proposition \ref{prop3.8}, we know that
$(\tilde{\rho}_{n},\tilde{u}_{n},\tilde{\rho}_{n}\tilde{u}_{n},\tilde{B}_{n})$
satisfies the same estimates as $(\rho_n,u_n,\rho_nu_n,B_n)$. In
this subsection, to simplify notations,  we will drop the tildes on
the random variables. In order to obtain a solution of the problem
\eqref{A-MHD}-\eqref{A-initialdata}, we will use the estimates
obtained in Lemma \ref{lemma3.3} to pass to the limit for
$n\to\infty$ in the sequence $(\rho_n, u_n, B_n)$ in the sample
space. For this, we need the following proposition and lemma in
\cite[Chapter 3]{KO}.

\begin{proposition}[Uniformly integrablility]\label{Propo2}
If there exists a nonnegative  measurable function $f$ in
$\mathbb{R}^+$, such that $\lim\limits_{x\to\infty} {f(x)\over
x}=\infty$ and $\sup_{t\in T}{\rm E}[f(|X_t|)]<\infty$. Then $X_t$ is a
set of uniformly integrable.
\end{proposition}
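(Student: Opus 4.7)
The plan is to verify the definition of uniform integrability directly, namely to show that
\[
\lim_{K\to\infty}\sup_{t\in T}{\rm E}\bigl[|X_t|\mathbf{1}_{\{|X_t|\ge K\}}\bigr]=0.
\]
The key observation is that the superlinear growth hypothesis $f(x)/x\to\infty$ allows one to dominate $|X_t|$ pointwise by an arbitrarily small multiple of $f(|X_t|)$ on the tail event $\{|X_t|\ge K\}$, while the assumption $C:=\sup_{t\in T}{\rm E}[f(|X_t|)]<\infty$ keeps that tail uniformly controlled in $t$.

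Concretely, I would fix $\varepsilon>0$, set $C:=\sup_{t\in T}{\rm E}[f(|X_t|)]$, which is finite by hypothesis, and use $\lim_{x\to\infty}f(x)/x=\infty$ to choose $K=K(\varepsilon)$ large enough that $x\le (\varepsilon/C)\,f(x)$ for all $x\ge K$. Then on the event $\{|X_t|\ge K\}$ the pointwise bound $|X_t|\le (\varepsilon/C)\,f(|X_t|)$ holds, so taking expectations yields
\[
{\rm E}\bigl[|X_t|\mathbf{1}_{\{|X_t|\ge K\}}\bigr]\le \frac{\varepsilon}{C}\,{\rm E}\bigl[f(|X_t|)\mathbf{1}_{\{|X_t|\ge K\}}\bigr]\le \frac{\varepsilon}{C}\,{\rm E}[f(|X_t|)]\le \varepsilon
\]
uniformly in $t\in T$. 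Letting $\varepsilon\to 0$ (with $K\to\infty$ correspondingly) gives the desired uniform integrability. There is essentially no obstacle: this is the standard de la Vall\'ee Poussin criterion, and the only point requiring a line of care is to choose $K$ after fixing the ratio $\varepsilon/C$ so that the pointwise comparison $x\le (\varepsilon/C)f(x)$ on $\{x\ge K\}$ is available before integrating.
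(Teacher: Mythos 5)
Your proof is correct and is the standard de la Vall\'ee Poussin argument; the paper itself gives no proof of this proposition, merely citing it from Kallenberg's book, so there is no ``paper proof'' to compare against. One small point of hygiene: if $C=0$ the division by $C$ is undefined, but then $f(|X_t|)=0$ a.s.\ for every $t$, and since $f(x)/x\to\infty$ forces $f(x)>0$ for all $x$ beyond some threshold $M$, one gets $|X_t|\le M$ a.s.\ uniformly in $t$, so uniform integrability is immediate; you may as well dispose of this degenerate case in a sentence before assuming $C>0$.
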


\begin{lemma}[Vitali's convergence Theorem]\label{Lemma2}
Suppose $p\in[1,\infty)$, $X_n\in L^p$ and $X_n$ converges to $X$ in
probability. Then the following are equivalent:

(1) $X_n\stackrel{L^p}{\to}X$;

(2) $|X_n|^p$ is uniformly integrable;

(3) ${\rm E}(|X_n|^p)\to {\rm E}(|X|^p)$.
\end{lemma}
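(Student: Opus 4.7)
The plan is to prove the equivalence of (1), (2), (3) by a short cycle, treating (1)$\Leftrightarrow$(2) via a truncation argument and (1)$\Leftrightarrow$(3) via a Fatou/Brezis--Lieb style estimate along a subsequence. Throughout I would use only the elementary inequality $|a+b|^p\le 2^{p-1}(|a|^p+|b|^p)$ and the fact that convergence in probability permits passage to an almost surely convergent subsequence.

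First I would show (1)$\Rightarrow$(2). From the bound $|X_n|^p\le 2^{p-1}(|X_n-X|^p+|X|^p)$, uniform integrability of $\{|X_n|^p\}$ reduces to uniform integrability of the single function $|X|^p\in L^1$ (automatic) plus a uniform tail bound on $|X_n-X|^p$. For any measurable $A$ one has $\int_A|X_n-X|^p\,d\textrm{P}\le \|X_n-X\|_{L^p}^p\to 0$, which together with the uniform integrability of $|X|^p$ yields the condition $\sup_n\int_A|X_n|^p\,d\textrm{P}\to 0$ as $\textrm{P}(A)\to 0$ uniformly in $n$. The implication (1)$\Rightarrow$(3) is immediate from the reverse triangle inequality $|\|X_n\|_{L^p}-\|X\|_{L^p}|\le\|X_n-X\|_{L^p}$.

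Next I would address the more substantial direction (2)$\Rightarrow$(1). Introduce the truncation $\phi_M(x)=x\mathbf{1}_{|x|\le M}+M\operatorname{sgn}(x)\mathbf{1}_{|x|>M}$ and split
\[
|X_n-X|^p\le 3^{p-1}\bigl(|X_n-\phi_M(X_n)|^p+|\phi_M(X_n)-\phi_M(X)|^p+|\phi_M(X)-X|^p\bigr).
\]
Uniform integrability of $\{|X_n|^p\}$ makes the first term small in $L^1$ uniformly in $n$ as $M\to\infty$; for the last term I would first establish $X\in L^p$ by combining the uniform $L^p$-bound (a consequence of uniform integrability) with Fatou's lemma along an a.s.\ convergent subsequence extracted from the assumed convergence in probability. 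For the middle term, $\phi_M$ is bounded and Lipschitz, hence $\phi_M(X_n)\to\phi_M(X)$ in probability and, being uniformly bounded by $M$, also in $L^p$ by bounded convergence. Choosing $M$ large first and then $n$ large completes (1).

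Finally, for (3)$\Rightarrow$(1) I would use Proposition \ref{Propo2} in the form of the Brezis--Lieb trick: set $f_n=2^{p-1}(|X_n|^p+|X|^p)-|X_n-X|^p\ge 0$, extract a subsequence along which $X_{n_k}\to X$ a.s., and apply Fatou's lemma to $f_{n_k}$ to obtain
\[
2^p\int|X|^p\,d\textrm{P}\le\liminf_{k\to\infty}\int f_{n_k}\,d\textrm{P}=2^p\int|X|^p\,d\textrm{P}-\limsup_{k\to\infty}\int|X_{n_k}-X|^p\,d\textrm{P},
\]
where I used $\|X_{n_k}\|_{L^p}^p\to\|X\|_{L^p}^p$. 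Thus $X_{n_k}\to X$ in $L^p$; the standard subsequence-of-subsequence principle promotes this to convergence of the full sequence.

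The principal technical obstacle is the implication (2)$\Rightarrow$(1), specifically ensuring that the limit $X$ actually lies in $L^p$ before the triangle-inequality splitting can be used to bound $\|X_n-X\|_{L^p}$; this is circumvented by first extracting an a.s.\ convergent subsequence and applying Fatou. Everything else reduces to bounded convergence and standard uniform integrability manipulations.
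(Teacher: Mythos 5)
The paper does not prove this lemma; it is stated as a background fact, imported from Kallenberg's \emph{Foundations of Modern Probability} (the reference \cite{KO} in the bibliography). There is therefore no in-paper proof to compare against, and the right question is simply whether your argument is correct.

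Your argument is sound, and its structure (establishing $(1)\Leftrightarrow(2)$ and $(1)\Leftrightarrow(3)$ separately) is a legitimate alternative to the usual cyclic proof $(2)\Rightarrow(1)\Rightarrow(3)\Rightarrow(2)$ found in Kallenberg. The truncation scheme for $(2)\Rightarrow(1)$ is the standard one, and you correctly flag that integrability of the limit $X$ must first be secured via Fatou along an a.s.\ convergent subsequence before the three-term split can be closed. Two small points deserve tightening. In $(1)\Rightarrow(2)$ the bound $\int_A|X_n-X|^p\,d\textrm{P}\le\|X_n-X\|^p_{L^p}\to 0$ gives smallness for large $n$ but says nothing about small $n$; you need to handle the finitely many indices $n\le N$ by the uniform integrability of the finite family $\{|X_1|^p,\dots,|X_N|^p,|X|^p\}$ before taking the supremum over all $n$. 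In the middle term of $(2)\Rightarrow(1)$ you invoke ``bounded convergence,'' but you only have convergence in probability, not a.s.; what saves you is the elementary estimate $\tilde{{\rm E}}|Y_n|^p\le\varepsilon^p+(2M)^p\,\textrm{P}(|Y_n|>\varepsilon)$ for $|Y_n|\le 2M$, which gives the $L^p$ convergence directly. Your Brezis--Lieb treatment of $(3)\Rightarrow(1)$, applying Fatou to $f_{n_k}=2^{p-1}(|X_{n_k}|^p+|X|^p)-|X_{n_k}-X|^p\ge0$ along an a.s.\ convergent subsequence and then promoting to the full sequence by the subsequence principle, is correct and slightly less standard than Kallenberg's route (which typically proves $(3)\Rightarrow(2)$ by comparing $\tilde{{\rm E}}|X_n|$ with $\tilde{{\rm E}}(|X_n|\wedge r)$), but both work.
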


The estimate \eqref{energy} along with the fact that
$\partial_t\rho_n$ satisfies \eqref{A-MHD} makes it possible to use
\eqref{tightness} to infer that there exists a function $\rho$ such
that
\begin{align}\label{convergence1}
\rho_n\to\rho \ \ \mbox{in}\ \ L^4((0,T)\times D) \ \
\tilde{\textrm{P}}-\text{a.s.}.
\end{align}
In fact, \eqref{tightness} implies that $\rho_n\to \rho$ in
$L^2((0,T)\times D)$ $\tilde{\textrm{P}}$--a.s.. Since $\rho_n\in
L^\infty(0,T; L^\beta(D))$, $\beta>4$, using the interpolation of
$L^2((0,T)\times D)$ and $L^\infty(0,T;L^\beta(D))$,   we have
\eqref{convergence1}.

 On the other hand, similar to
\eqref{convergence1}, in view of \eqref{energy}, and $\beta>\gamma$,
one has
\begin{align*}%\label{convergence3}
\rho_n^\gamma\to\rho^\gamma,\; \rho_n^\beta \to \rho^\beta \ \
\mbox{in}\ \ L^1((0,T)\times D)\;  \ \
\tilde{\textrm{P}}-\text{a.s.},
\end{align*}
which together with integration by parts yields that for $\phi\in
C_0^\infty([0,T]\times D)$
\begin{align}\label{convergence4}
\langle \nabla \rho_n^\gamma,\phi\rangle
\to\langle\nabla\rho^\gamma,\phi\rangle,\;\langle \nabla
\rho_n^\beta,\phi\rangle \to\langle\nabla\rho^\beta,\phi\rangle \ \
\tilde{\textrm{P}}-\text{a.s. as  $n\to \infty$}.
\end{align}

Now, we pass to the limit in the products $\rho_nu_n$ and
$\rho_nu_n\otimes u_n$. From \eqref{tightness}, we know that
%\begin{align}\label{P-a.s.1}
%\begin{split}
%&\rho_n\to\rho \ \ \text{in $L^2(0,T; L^2(D))$ $\tilde{\textrm{P}}$}-\text{a.s.},\\
%&u_n\rightharpoonup u \ \ \text{in $L^2(0,T; H^1(D))$ $\tilde{\textrm{P}}$}-\text{a.s.},\\
%\end{split}
%\end{align}
\begin{align*}%\label{P-a.s.3}
\rho_nu_n\rightharpoonup h \ \ \text{in $C([0,T];
L^{2\beta\over\beta+1}(D))$ $\tilde{\textrm{P}}$}-\text{a.s.}.
\end{align*}
%It follows from $H^1(D)\hookrightarrow\hookrightarrow L^2(D)$ and
%\eqref{P-a.s.2} that
%\begin{align}\label{P-a.s.4}
%u_n\to u \ \ \mbox{in}\ \ L^2(0,T;L^2(D)) \ \
%\tilde{\textrm{P}}-\text{a.s.}.
%\end{align}
Then, by \eqref{tightness}($\rho_n\to\rho \ \ \text{in $L^2(0,T;
L^2(D))$}$, $u_n\rightharpoonup u \ \ \text{in $L^2(0,T; H^1(D))$
$\tilde{\textrm{P}}$}-\text{a.s.}$), one has
\begin{align*}%\label{P-a.s.5}
\rho_n u_n\rightharpoonup \rho u \ \ \text{in $L^1(0,T; L^1(D))$
$\tilde{\textrm{P}}$}-\text{a.s.}.
\end{align*}
which together with \cite[Lemma 2.4]{FNP} yields that $h=\rho u$.
Then $\rho_nu_n\rightharpoonup \rho u$ in $C([0,T];
L^{2\beta\over\beta+1}(D))~ \tilde{\textrm{P}}$-a.s.. Since
$L^{2\beta\over \beta+1}(D)\hookrightarrow\hookrightarrow
H^{-1}(D)$, by Aubin-Lions Lemma, one has
\begin{align}\label{P-a.s.6}
\rho_nu_n\to \rho u \ \ \text{in $C([0,T]; H^{-1}(D))$
$\tilde{\textrm{P}}$}-\text{a.s.}.
\end{align}
Therefore, from \eqref{tightness}($u_n\rightharpoonup u \ \ \text{in
$L^2(0,T; H^1(D))$ $\tilde{\textrm{P}}$}-\text{a.s.}$) and
\eqref{P-a.s.6}, we can deduce that
\begin{align}\label{P-a.s.6a}
\rho_nu_n\otimes u_n\to \rho u\otimes u \ \ \text{in
$\mathcal{D}^\prime([0,T]\times D)\ \
\tilde{\textrm{P}}$}-\text{a.s.}.
\end{align}

Now, we turn to the magnetic terms. By \eqref{tightness}, we know
that
\begin{align}\label{P-a.s.7}
B_n\to B \ \ \mbox{in $L^2(0,T; L^2(D))$ and } B_n\rightharpoonup B
\ \ \mbox{in $L^2(0,T; H^1(D))$} \ \ \tilde{\textrm{P}}-\text{a.s.}.
\end{align}
Moreover, from $\eqref{A-MHD}_3$ and \eqref{energy}, by Aubin-Lions
Lemma, one has
\begin{align*}
B_n\to B \ \ \mbox{in} \ \ C([0,T]; H^{-1}(D))\ \
\tilde{\textrm{P}}-\text{a.s.}.
\end{align*}
Then we have
\begin{align}\label{P-a.s.8}
(\nabla\times B_n)\times B_n\to(\nabla\times B)\times B \ \ \mbox{in
$\mathcal{D}^\prime([0,T]\times D)$} \ \
\tilde{\textrm{P}}-\text{a.s.}.
\end{align}
Similarly, it follows from \eqref{tightness}($u_n\rightharpoonup u \
\ \text{in
$L^2(0,T; H^1(D))$ $\tilde{\textrm{P}}$}-\text{a.s.}$) %, \eqref{P-a.s.4}
and \eqref{P-a.s.7} that
\begin{align}\label{P-a.s.9}
\nabla\times(u_n\times B_n)\to\nabla\times (u\times B) \ \ \mbox{in
$\mathcal{D}^\prime([0,T]\times D)$} \ \
\tilde{\textrm{P}}-\text{a.s.}.
\end{align}
%\if false Moreover, as $n\to\infty$, it follows from
%\eqref{P-a.s.1}, \eqref{P-a.s.2} and \eqref{P-a.s.7} that
%\begin{align}\label{P-a.s.10}
%\sqrt{\rho_n}u_n\rightharpoonup \sqrt{\rho}u \ \ \mbox{in $L^2(0,T; L^2(D))$} \ \ \tilde{\textrm{P}}-\text{a.s.},\\
%\notag \sqrt{\rho_n}B_n\rightharpoonup \sqrt{\rho}B \ \ \mbox{in
%$L^2(0,T; L^2(D))$} \ \ \tilde{\textrm{P}}-\text{a.s.}.
%\end{align}
%\fi

Finally, we will treat the force terms. We claim that
\begin{equation}\label{P-a.s.11a}
\begin{split}
\langle f^n_k(\rho_n,\rho_nu_n,x),\proj\phi\rangle\to
\langle f_k(\rho,\rho u,x),\phi\rangle  \ \ \mbox{in} \ \ L^1([0,T])\ \ \tilde{\textrm{P}}-\text{a.s.},\\
\langle \sum_{k\ge1}f^n_k(\rho_n,\rho_nu_n,x),\proj\phi\rangle^2\to
\langle \sum_{k\ge1}f_k(\rho,\rho u,x),\phi\rangle^2 \ \ \mbox{in} \
\ L^1([0,T])\ \ \tilde{\textrm{P}}-\text{a.s.}.
\end{split}
\end{equation}
\begin{proof}[Proof of the claim] By definition and the symmetry of $\mathcal M[\rho_n]$ we
have %for $\varphi\in X_n$
\begin{align*}
\big\langle f_k^n(\rho_n,\rho_n
u_n,x),\phi\big\rangle=\bigg\langle\mathcal
M^{\frac{1}{2}}[\rho_n]\mathbb{P} \Big(\frac{f_k(\rho_n,\rho_n
u_n,x)}{\sqrt{\rho_n}}\Big),\proj\phi\bigg\rangle=\bigg\langle\mathbb{P}
\Big(\frac{f_k(\rho_n,\rho_n u_n,x)}{\sqrt{\rho_n}}\Big),\mathcal
M^{\frac{1}{2}}[\rho_n]\proj\phi\bigg\rangle.
\end{align*}
By using  \eqref{assumption}, \eqref{energy} and the strong
convergence in \eqref{tightness}, \eqref{convergence1},
\eqref{P-a.s.6},  we have
\begin{align}\label{ps13}
\frac{f_k(\rho_n,\rho_n u_n,x)}{\sqrt{\rho_n}}\rightarrow
\frac{f_k(\rho,\rho u,x)}{\sqrt{\rho}}\quad\text{in}\quad L^2(D)\ \
\tilde{\textrm{P}}\otimes\tilde{\textrm{L}}-\text{a.e.},
\end{align}
where $\tilde{\textrm{L}}$ is the Lebesgue measure in time.

Next we extend the operator $\mathcal{M}[\rho]$ to
\begin{equation*}
\tilde{\mathcal{M}}[\rho]: H^1(D) \to L^2(D), \quad \LA \tilde{\mathcal{M}[\rho]}v, w \RA = \LA \mathcal{M}[\rho]\proj v, \proj w \RA
\end{equation*}
for $v \in H^1(D), \ w \in L^2(D)$. It is easy to see that as $n\to \infty$,
\begin{align*}
\left| \LA \tilde{\mathcal{M}}[\rho_n]v, w \RA - \LA \rho v, w \RA \right| & \le \|\rho_n\|_{L^\infty} \| v - \proj v \|_{L^2} \|w - \proj w\|_{L^2} + \|\rho_n\|_{L^\infty}\| v \|_{L^2} \|w - \proj w\|_{L^2} \\
& \quad + \| \rho_n - \rho\|_{L^4} \|v\|_{L^4} \|w\|_{L^2} + \| \rho_n \|_{L^\infty} \| w \|_{L^2} \|v  - \proj v\|_{L^2} \to 0,
\end{align*}
which implies that
\begin{align*}
\LA \tilde{\mathcal{M}}[\rho_n]v, \cdot \RA \rightarrow \langle\rho v,\cdot\rangle_2\
\ \tilde{\textrm{P}}\otimes\tilde{\textrm{L}}-\text{a.e}.,
\end{align*}

% Setting
%\begin{align*}
%\mathcal M[\rho_n]&:W^{-\ell,2}(D)\rightarrow W^{-\ell,2}(D),\\
%\mathcal M[\rho_n]\Psi(v)&=\langle\mathcal
%M[\rho_n]\Phi,v\rangle_2,\,\Psi(w)=\langle\Phi,w\rangle_{\ell,2},
%\end{align*}
%where $w,v,\Phi\in W_0^{\ell,2}(D)$. We can prove
%\begin{align*}
%\mathcal M[\rho_n]\Psi\rightarrow \langle\rho\,\Phi,\cdot\rangle_2\
%\ \tilde{\textrm{P}}\otimes\tilde{\textrm{L}}-\text{a.e}.,
%\end{align*}
%pointwise as an operator from $W^{-\ell,2}(D)\rightarrow W^{-\ell,2}(D)$

%Next we show that
%\begin{align*}%\label{eq:gkN2a}
%\tilde {\mathcal M}^{\frac{1}{2}}[\rho_n]v \rightarrow
%\big\langle\sqrt{\rho}\,v,\cdot\big\rangle_2\ \
%\tilde{\textrm{P}}\otimes\tilde{\textrm{L}}-\text{a.e}.,
%\end{align*}

Next we look at $\tilde {\mathcal M}^{\frac{1}{2}}[\rho_n]$. Note that $\tilde{\mathcal{M}}{\rho_n}$ and $\rho$ are both symmetric positive definite operators with a lower bound (independent of $n$) strictly away from 0, and hence they are both invertible. Moreover we know that they also commute. This way we have
\[
\tilde{\mathcal M}^{\frac{1}{2}}[\rho_n] - \sqrt{\rho} = \left( \tilde{\mathcal M}^{\frac{1}{2}}[\rho_n] + \sqrt{\rho} \right)^{-1} \left( \tilde{\mathcal M}[\rho_n] - {\rho}  \right),
\]
and hence
%Notice that for every symmetric positive semidefinite
%operator $A$ on some  Hilbert space $\mathscr H$ with
%$\sup_{\|z\|_{\mathscr H}\leq1}\langle Az,z\rangle_{\mathscr H}\leq
%1$, it holds that
%\begin{align*}
%A^{\frac{1}{2}}=\sum_{k=0}^\infty c_k(\mathrm{I}-A)^k,\quad
%c_k\in\mathbb{R},\quad\sum_{k=0}^\infty|c_k|<\infty.
%\end{align*}
%Note that $\| \tilde{\mathcal{M}}[\rho_n] \| \le \|\rho_n\|_{L^\infty}$, and
%\[
%\LA \left( {\tilde{\mathcal M}[\rho_n] \over \|\rho_n\|_{L^\infty} }-{{\rho} \over \|\rho\|_{L^\infty} } \right) v, w \RA \to 0.
%\]
%So
%\begin{align*}
%{\tilde{\mathcal M}^{\frac{1}{2}}[\rho_n] \over \|\rho_n\|^{1/2}_{L^\infty} }-{\sqrt{\rho} \over \|\rho\|^{1/2}_{L^\infty} }&=\sum_{k=0}^\infty
%c_k\left(\mathrm{I}-{\tilde{\mathcal M}[\rho_n] \over \|\rho_n\|_{L^\infty} }\right)^k-\sum_{k=0}^\infty
%c_k\left(1-{\rho \over \|\rho\|_{L^\infty}} \right)^k\\
%&=\sum_{k=0}^\infty c_k[(\mathrm{I}-\tilde{\mathcal M}[\rho_n])^k-(1-\rho)^k]v\\
%&=\sum_{k=0}^\infty c_k[\tilde{\mathcal M}[\rho_n]v-\rho v]+
%o(1)\to0.
%\end{align*}
%Then, we have
\begin{align}\label{ps14}
\LA \tilde {\mathcal M}^{\frac{1}{2}}[\rho_n]v, \cdot \RA \rightarrow
\big\langle\sqrt{\rho}\,v,\cdot\big\rangle_2\ \
\tilde{\textrm{P}}\otimes\tilde{\textrm{L}}-\text{a.e}..
\end{align}
%pointwise as an operator from $W^{\ell,2}(D)\rightarrow L^2(D)$.
This way $\eqref{P-a.s.11a}_1$ follows from \eqref{ps13} and
\eqref{ps14}.

Next, we prove $\eqref{P-a.s.11a}_2$. Direct computation yields that
\begin{align*}
&\left|\sum_{k\ge1}\langle
f^n_k(\rho_n,\rho_nu_n,x),\proj\phi\rangle^2-\sum_{k\ge1}\langle
f_k(\rho_n,\rho_n
u_n,x),\phi\rangle^2\right|\\
\le & \sum_{k\ge1} \Big| \langle
f^n_k(\rho_n,\rho_nu_n,x),\proj\phi\rangle - \langle
f_k(\rho_n,\rho_n u_n,x),\phi\rangle \Big| \Big| \langle
f^n_k(\rho_n,\rho_nu_n,x),\proj\phi\rangle + \langle
f_k(\rho_n,\rho_n u_n,x),\phi\rangle \Big|.
\end{align*}

By \eqref{assumption} and Lemma \ref{lemma3.3}, we can bound
\begin{align*}
\sum_{k\ge1} \Big| \langle
f_k(\rho_n,\rho_n u_n,x),\phi\rangle \Big| \lesssim \sum_{k\ge1}\|f_k\|_{L^2} \lesssim \|\rho_n\|^{(\gamma+1)/2}_{L^{\gamma+1}} + \|\rho_n u_n\|_{L^2} \le C.
\end{align*}
Similarly we can bound $\sum_{k\ge1} \Big| \langle
f^n_k(\rho_n,\rho_n u_n,x),\proj\phi\rangle \Big|$. Then from
$\eqref{P-a.s.11a}_1$ we get $\eqref{P-a.s.11a}_2$.
\end{proof}

Thanks to \eqref{P-a.s.7} and \eqref{assumption}, we can similarly obtain
\begin{align}\label{P-a.s.12}
\begin{split}
\left\langle g_k(B_n,x),\phi\rangle\to \langle
g_k(B,x),\phi\right\rangle \ \ \mbox{in} \ \ L^1([0,T])\ \
\tilde{\textrm{P}}-\text{a.s.},\\
\langle \sum_{k\ge1}g_k(B_n,x),\phi\rangle^2\to \langle
\sum_{k\ge1}g_k(B,x),\phi\rangle^2 \ \ \mbox{in} \ \ L^1([0,T])\ \
\tilde{\textrm{P}}-\text{a.s.}.
\end{split}
\end{align}

Consequently, by \eqref{tightness},
\eqref{convergence1}-\eqref{P-a.s.9}, Lemma \ref{lemma3.3},
Proposition \ref{Propo2} and Lemma \ref{Lemma2}, we can pass to the
limit in the continuity equation $\eqref{A-MHD}_1$ and have the
following propositions.
\begin{proposition}\label{pro3.4}
$((\tilde{\Omega}, \tilde{\mathscr{F}},\tilde{\textrm{P}}), \beta_k,
\rho, u, B)$ is a weak solution of $\eqref{A-MHD}_1$. Moreover,
$\nabla \rho_n\cdot\nabla u_n\to \nabla\rho\cdot\nabla u$ in
$\mathscr{D}^\prime((0,T)\times D)$, $\tilde{\textrm{P}}$-a.s..
\end{proposition}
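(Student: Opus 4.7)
My plan is to pass to the limit $n\to\infty$ in the Galerkin weak form of the continuity equation provided by Proposition \ref{prop3.8}, and then handle the product $\nabla\rho_n\cdot\nabla u_n$ by upgrading the weak $L^2$-convergence of $\nabla\tilde\rho_n$ to strong convergence via an energy identity.

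First, since $\tilde G_n=0$ $\tilde{\textrm{P}}$-a.s., the equation
$$
\int_D(\tilde\rho_n(t)-\rho_{0,\delta})\varphi\,dx
=\int_0^t\!\!\int_D\bigl[\varepsilon\tilde\rho_n\Delta\varphi+\tilde\rho_n\tilde u_n\cdot\nabla\varphi\bigr]dx\,ds
$$
holds for every $\varphi\in X_n$. Fix $\varphi\in C_0^\infty(D)$ and let $\proj_n\varphi\in X_n$ be its Galerkin projection, which converges to $\varphi$ in $H^2$-norm; testing against $\proj_n\varphi$ and then passing to the limit, the diffusion term is handled by the strong convergence $\tilde\rho_n\to\rho$ in $L^2((0,T)\times D)$ from \eqref{tightness}, and the transport term by the distributional convergence $\tilde\rho_n\tilde u_n\to\rho u$ recorded in \eqref{P-a.s.6}. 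The initial trace is deterministic and fixed. This yields the weak form of $\eqref{A-MHD}_1$ for $(\rho,u)$ $\tilde{\textrm{P}}$-a.s.

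For the ``moreover'' statement, the main obstacle is that \eqref{energy} only provides weak $L^2$-convergence of $\nabla\tilde\rho_n$ and $\nabla\tilde u_n$, which is insufficient to pass to the limit in the product. To upgrade, I exploit the smoothness of $\tilde\rho_n=\mathcal{S}[\tilde u_n]$ (granted by Lemma 2.2 of \cite{FNP}): multiplying the continuity equation by $\tilde\rho_n$, integrating by parts using the Neumann boundary condition, and using $\int\tilde\rho_n\Dv(\tilde\rho_n\tilde u_n)dx=\tfrac12\int\tilde\rho_n^2\,\Dv\tilde u_n\,dx$ yields the energy identity
$$
\varepsilon\int_0^T\|\nabla\tilde\rho_n\|_{L^2(D)}^2dt
=\tfrac12\bigl(\|\rho_{0,\delta}\|_{L^2}^2-\|\tilde\rho_n(T)\|_{L^2}^2\bigr)
-\tfrac12\int_0^T\!\!\int_D\tilde\rho_n^2\,\Dv\tilde u_n\,dx\,dt.
$$
I pass to the limit on the right side using $\tilde\rho_n\to\rho$ strongly in $L^4((0,T)\times D)$ from \eqref{convergence1} (so $\tilde\rho_n^2\to\rho^2$ in $L^2$), the weak convergence $\Dv\tilde u_n\rightharpoonup\Dv u$ in $L^2$, and $\tilde\rho_n(T)\rightharpoonup\rho(T)$ weakly in $L^\beta_w$ from the tightness in $C([0,T];L^\beta_w)$.

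The limit $\rho$ satisfies the same identity (since $\rho\in L^\infty(L^\beta)\cap L^2(H^1)$ solves the weak continuity equation just derived, the identity can be recovered through the standard renormalization available when $\varepsilon>0$). Comparing the two identities gives $\|\nabla\tilde\rho_n\|_{L^2((0,T)\times D)}\to\|\nabla\rho\|_{L^2((0,T)\times D)}$ a.s., and combined with the weak convergence $\nabla\tilde\rho_n\rightharpoonup\nabla\rho$ in the Hilbert space $L^2((0,T)\times D)$, this forces strong convergence $\nabla\tilde\rho_n\to\nabla\rho$ in $L^2((0,T)\times D)$ $\tilde{\textrm{P}}$-a.s. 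Pairing this with the weak convergence $\nabla\tilde u_n\rightharpoonup\nabla u$ in $L^2$ delivers the weak $L^1$-convergence $\nabla\tilde\rho_n\cdot\nabla\tilde u_n\rightharpoonup\nabla\rho\cdot\nabla u$, and in particular the required $\mathscr{D}'((0,T)\times D)$ convergence. The crux of the argument is the energy identity, which relies on the artificial viscosity $\varepsilon>0$ to make one factor of the problematic product strongly convergent.
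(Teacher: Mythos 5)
Your proposal is correct and follows essentially the same route as the paper: for the continuity equation you pass to the limit in the Galerkin weak form using the strong $L^2$ (respectively distributional) convergences of $\tilde\rho_n$ and $\tilde\rho_n\tilde u_n$, and for the ``moreover'' part you derive the $L^2$-energy identity by pairing $\eqref{A-MHD}_1$ with $\tilde\rho_n$, pass to the limit on both sides, and use the norm convergence $\|\nabla\tilde\rho_n\|_{L^2}\to\|\nabla\rho\|_{L^2}$ together with the weak convergence $\nabla\tilde\rho_n\rightharpoonup\nabla\rho$ to upgrade to strong convergence, which then pairs with $\nabla\tilde u_n\rightharpoonup\nabla u$ to give the distributional limit of the product. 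The only cosmetic differences from the paper's argument are that you integrate the diffusion term by parts twice (onto $\Delta\varphi$) and pass to the limit $\omega$-wise rather than via the uniform integrability/Vitali step the paper uses to conclude $\tilde{\rm E}|m(t)|^2=0$; neither affects the substance.
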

\begin{proof} For all $t\in[0,T]$ and any $\varphi\in C_0^\infty(D)$, denote
\begin{align*}
m_n(t)=\langle \rho_n(t),\varphi\rangle- \langle
\rho_0,\varphi\rangle+\int_0^t\langle \rho_nu_n,\nabla\varphi\rangle
ds- \int_0^t\langle \varepsilon\nabla\rho_n,\nabla\varphi\rangle ds.
\end{align*}
By Lemma \ref{lemma3.3}, we know that $\sup_{0\le t\le T}\tilde{{\rm
E}}(|m_n(t)|^2)\le C$. Then $m_n(t)$ is uniformly integrable. By
\eqref{tightness}, \eqref{P-a.s.6}, and Lemma \ref{lemma3.3} we know
that $m_n(t) \to m(t)$ $\tilde{\textrm{P}}$-a.s.. Hence applying
Proposition \ref{Propo2} and Lemma \ref{Lemma2}, we have
$\tilde{{\rm E}}(|m(t)|^2)=\lim\limits_{n\to\infty} \tilde{{\rm
E}}(|m_n(t)|^2)=0$.

Next, we shall show the convergence of the term
$\nabla\rho_n\cdot\nabla u_n$. Multiplying $\eqref{A-MHD}_1$ by
$\rho_n$, integrating by parts, we obtain $\tilde{\textrm{P}}$-a.s.
\begin{equation}\label{mass1}
\|\rho_n(t)\|^2_{L^2(D)}+2\varepsilon\!\! \int_0^t\!\!
\|\nabla\rho_n\|^2_{L^2(D)}dt= -\!\!\int_0^t\!\!\int_D \rho_n^2\Dv
u_ndxdt+\|\rho_0\|^2_{L^2(D)}.
\end{equation}
Sending $n\to\infty$ in equation $\eqref{A-MHD}_1$ and then
multiplying the result by $\rho$ (since from \cite[Lemma 2.4]{FNP}
we know $\rho$ enjoys enough regularity due to parabolic smoothing),
one has $\tilde{\textrm{P}}$-a.s.,
\begin{equation}\label{mass2}
\|\rho(t)\|^2_{L^2(D)}+2\varepsilon \int_0^t
\|\nabla\rho\|^2_{L^2(D)}dt=-\int_0^t\int_D\rho^2\Dv
udxdt+\|\rho_0\|^2_{L^2(D)}.
\end{equation}
From \cite[Lemma 2.4]{FNP},  \eqref{tightness},
\eqref{convergence1}, \eqref{mass1}, and \eqref{mass2}, for any $t$,
we deduce that $\tilde{\textrm{P}}$-a.s.
\begin{align*}%\label{mass3}
\|\nabla \rho_n\|^2_{L^2((0,T)\times D)}\to
\|\nabla\rho\|^2_{L^2((0,T)\times D)},\quad
\|\rho_n(t)\|^2_{L^2(D)}\to \|\rho(t)\|^2_{L^2(D)},
\end{align*}
which together with \eqref{tightness} imply $\nabla \rho_n\to\nabla
\rho$ in $L^2(0,T; L^2(D))$\;$\tilde{\textrm{P}}$-a.s.. Then, by the
fact $u_n\rightharpoonup u \in{L^2(0,T;
H^1(D))}$\;$\tilde{\textrm{P}}$-a.s.(\eqref{tightness}), one has
\begin{align*}%\label{mass4}
\nabla \rho_n\cdot\nabla u_n^i\to \nabla\rho\cdot\nabla u^i\ \
\mbox{in}\ \ \mathcal{D}^\prime((0,T)\times D) \ \
\tilde{\textrm{P}}-\mbox{a.s.}, i=1,2,3.
\end{align*}
\end{proof}
\begin{proposition}
The system $((\tilde{\Omega},
\tilde{\mathscr{F}},\tilde{\textrm{P}}),\beta_k,\rho, u, B)$ is a
martingale solution of $\eqref{A-MHD}_2$ and $\eqref{A-MHD}_3$.
\end{proposition}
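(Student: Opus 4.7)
The proof proposal is the following. We will verify that the limit system $((\tilde{\Omega}, \tilde{\mathscr{F}},\tilde{\textrm{P}}),\beta_k,\rho, u, B)$ satisfies the momentum and magnetic equations in the sense of Definition \ref{martingale-solution} by a martingale representation argument in the spirit of Stroock-Varadhan, following the framework of \cite{BO,OM}. The deterministic passage to the limit has essentially been handled already in the preceding subsection via \eqref{convergence1}--\eqref{P-a.s.9} and Proposition \ref{pro3.4}; what remains is to identify the stochastic integrals in the limit and to verify the martingale property with the correct quadratic variation.

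First, for each test function $\varphi\in X_n$, introduce the process
\begin{align*}
M_n(t,\varphi) &= \langle \rho_n u_n(t),\varphi\rangle-\langle \rho_{0,\delta} u_{0,\delta},\varphi\rangle+\int_0^t\langle \Dv(\rho_n u_n\otimes u_n)-\mu\Delta u_n-(\lambda+\mu)\nabla\Dv u_n,\varphi\rangle ds \\
&\quad+\int_0^t\langle a\nabla\rho_n^\gamma+\delta\nabla\rho_n^\beta+\varepsilon\nabla u_n\cdot\nabla\rho_n-(\nabla\times B_n)\times B_n,\varphi\rangle ds,
\end{align*}
and the analogous process $N_n(t,\psi)$ for the magnetic equation with test function $\psi$. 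By Proposition \ref{prop3.8}, $M_n(t,\varphi)=\int_0^t\langle F(\rho_n,\rho_n u_n),\varphi\rangle dW_1$ and $N_n(t,\psi)=\int_0^t\langle G(B_n),\psi\rangle dW_2$, so each is a square-integrable $\tilde{\mathscr F}_t^n$-martingale with quadratic variations $\int_0^t\sum_{k\ge 1}\langle f_k^n(\rho_n,\rho_n u_n,x),\varphi\rangle^2\,ds$ and $\int_0^t\sum_{k\ge 1}\langle g_k(B_n,x),\psi\rangle^2\,ds$ respectively, and cross variations with $\beta_{k,n}^1,\beta_{k,n}^2$ equal to $\int_0^t\langle f_k^n(\rho_n,\rho_n u_n,x),\varphi\rangle ds$ and $\int_0^t\langle g_k(B_n,x),\psi\rangle ds$.

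Second, define analogous limit objects $M(t,\varphi), N(t,\psi)$ obtained by replacing $(\rho_n,u_n,B_n)$ by $(\rho,u,B)$. Using the almost sure convergences \eqref{convergence1}, \eqref{convergence4}, \eqref{P-a.s.6}, \eqref{P-a.s.6a}, \eqref{P-a.s.8}, Proposition \ref{pro3.4} (for the $\varepsilon\nabla u\cdot\nabla\rho$ term) and \eqref{P-a.s.11a}--\eqref{P-a.s.12}, the integrands of all deterministic and quadratic/cross-variation terms converge pointwise $\tilde{\textrm{P}}\otimes dt$-a.e. Combined with the uniform $L^p$ bounds of Lemma \ref{lemma3.3}, Proposition \ref{Propo2} gives uniform integrability, so Vitali's theorem (Lemma \ref{Lemma2}) upgrades the convergence to $L^1(\tilde\Omega)$:
\begin{align*}
\tilde{\rm E}\bigl[\phi(r_1,\ldots,r_j)(M_n(t,\varphi)-M_n(s,\varphi))\bigr]&\to \tilde{\rm E}\bigl[\phi(r_1,\ldots,r_j)(M(t,\varphi)-M(s,\varphi))\bigr],
\end{align*}
and similarly for $M_n^2-\langle M_n\rangle$ and $M_n\beta_{k,n}^1-\langle M_n,\beta_{k,n}^1\rangle$, for any bounded continuous $\phi$ on the history up to time $s$. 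Since the martingale identities hold on the $n$-level, they are preserved in the limit, proving that $M(t,\varphi)$ is a continuous square-integrable $\tilde{\mathscr F}_t$-martingale with quadratic variation $\int_0^t\sum_{k\ge1}\langle f_k(\rho,\rho u,x),\varphi\rangle^2 ds$ and cross variation $\int_0^t\langle f_k(\rho,\rho u,x),\varphi\rangle ds$ with $\beta_k^1$; the same applies to $N(t,\psi)$ with $g_k$ and $\beta_k^2$.

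Third, a standard martingale representation argument (as in \cite{BO,OM}) now identifies
\[
M(t,\varphi)=\sum_{k\ge 1}\int_0^t\langle f_k(\rho,\rho u,x),\varphi\rangle d\beta_k^1,\qquad N(t,\psi)=\sum_{k\ge 1}\int_0^t\langle g_k(B,x),\psi\rangle d\beta_k^2,
\]
which is precisely what is required for $(\beta_k,\rho,u,B)$ to solve $\eqref{A-MHD}_2$ and $\eqref{A-MHD}_3$ in the martingale sense. The main obstacle in this plan is the identification of the limit stochastic integrals from the martingale structure; this is delicate because the approximation $f_k^n$ involves the projection and the $\mathcal M^{1/2}[\rho_n]$ operator, so one must check carefully that the a.s.\ convergence in \eqref{ps13}--\eqref{ps14} together with the uniform bounds from \eqref{assumption} and Lemma \ref{lemma3.3} are enough to pass $L^p$-convergence through the quadratic-variation identity, and that the resulting martingale with the prescribed covariation structure is indeed the claimed It\^o integral with respect to $\beta_k^1,\beta_k^2$.
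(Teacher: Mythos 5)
Your proposal follows essentially the same route as the paper: set up the Stroock–Varadhan martingale identities (martingality of $M_n$, its quadratic variation, and its cross variation with $\beta_{k,n}$), pass these to the limit using the a.s.\ convergences from \eqref{tightness}--\eqref{P-a.s.12} together with uniform integrability (Proposition \ref{Propo2}) and Vitali's theorem (Lemma \ref{Lemma2}), and then invoke the representation result of \cite{BO,OM} to identify $\langle M(t),\phi\rangle$ and $\langle \tilde M(t),\phi\rangle$ as It\^{o} integrals. The only cosmetic slip is writing $M_n(t,\varphi)=\int_0^t\langle F(\rho_n,\rho_nu_n),\varphi\rangle dW_1$ when the approximate-level noise is really $\sum_k f^n_k$ (with the projection and $\mathcal M^{1/2}[\rho_n]$ baked in), but you correct this in the stated quadratic variation, so the argument is sound.
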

\begin{proof}It follows from
Proposition \ref{prop3.8} that
\begin{align*}
\langle
M_n(t),\phi\rangle&=\langle\rho_nu_n(t)-\rho_nu_n(0),\phi\rangle+\int_0^t\langle
\Dv(\rho_nu_n\otimes u_n)+
\delta\nabla\rho^\beta_n-\mu\Delta u_n ,\phi\rangle ds\\
&\quad +\int_0^t\langle(\nabla\times B_n)\times
B_n+\varepsilon\nabla \rho_n\cdot \nabla u_n-(\lambda+\mu)\nabla\Dv
u_n+a\nabla\rho^\gamma_n,\phi\rangle ds\\
&:=\langle\rho_nu_n(t)-\rho_nu_n(0),\phi\rangle+\int_0^t \langle \Lambda_n,\phi\rangle ds,\\
\langle \tilde{M}_n(t),\phi\rangle&=\langle B_n(t)-B_n(0),\phi\rangle+\int_0^t\langle \nabla\times(u_n\times B_n)
+\nu\Delta B_n,\phi\rangle ds\\
&:=\langle B_n(t)-B_n(0),\phi\rangle+\int_0^t\langle
\tilde{\Lambda}_n,\phi\rangle ds,
\end{align*}
where $\langle M_n(t),\phi\rangle=\int_0^t\sum_{k\ge1}\langle
f_k(\rho_n,\rho_nu_n,x), \phi\rangle d\beta_{k,n}^1$ and $\langle
\tilde{M}_n(t),\phi\rangle=\int_0^t\sum_{k\ge1}\langle
g_k(B_n,x),\phi\rangle d\beta_{k,n}^2$ are martingales under
$\tilde{\textrm{P}}$. Therefore, with $0\le s< t<\infty$ and a
bounded, continuous $\mathscr{F}_s$-measurable function
$\varphi_n=\varphi(\beta_{k,n},\rho_n,u_n,B_n)$, we have
\begin{align}\label{3.88c}
\begin{split}
&\tilde{{\rm E}}\left[\langle
M_n(t)-M_n(s),\phi\rangle\varphi_n\right]=0,\; \tilde{{\rm
E}}\left[\langle
\tilde{M}_n(t)-\tilde{M}_n(s),\phi\rangle\varphi_n\right]=0,\\
&\tilde{{\rm E}}\left[\left(\langle M_n(t)-M_n(s),\phi\rangle
\beta_{k,n}^1-\int_s^t\langle
f_k^n(\rho_n,\rho_nu_n,x),\phi\rangle d\tau\right)\varphi_n\right]=0,\\
&\tilde{{\rm E}}\left[\left(\langle
\tilde{M}_n(t)-\tilde{M}_n(s),\phi\rangle\beta_{k,n}^2-\int_s^t\langle
g_k(B_n,x),\phi\rangle d\tau\right)\varphi_n\right]=0,\\
&\tilde{{\rm E}}\left[\left(\langle M_n(t),\phi\rangle^2-\langle
M_n(s),\phi\rangle^2-\int_s^t\sum_{k\ge1}\langle
f_k^n(\rho_n,\rho_nu_n,x),\phi\rangle^2d\tau\right)\varphi_n\right]=0,\\
&\tilde{{\rm E}}\left[\left(\langle
\tilde{M}_n(t),\phi\rangle^2-\langle
\tilde{M}_n(s),\phi\rangle^2-\int_s^t\sum_{k\ge1}\langle
g_k(B_n,x),\phi\rangle^2d\tau\right)\varphi_n\right]=0.
\end{split}
\end{align}
By \eqref{tightness}, \eqref{convergence1}-\eqref{P-a.s.9}, one
deduces that
 $$\langle\rho_nu_n(t)-\rho_nu_n(s),\phi\rangle+
\int_s^t\langle\Lambda_n,\phi\rangle d\tau,\; \langle
B_n(t)-B_n(s),\phi\rangle+\int_s^t\langle
\tilde{\Lambda}_n,\phi\rangle d\tau$$ converges to
$$
\langle\rho u(t)-\rho u(s),\phi\rangle-
\int_s^t\langle\Lambda,\phi\rangle d\tau,\; \langle
B(t)-B(s),\phi\rangle+\int_s^t\langle \tilde{\Lambda},\phi\rangle
d\tau \ \ \mbox{almost surely on} \ \ \tilde{\Omega},\; $$ where
\begin{align*}
&\langle\Lambda,\phi\rangle=\langle \Dv(\rho u\otimes u)-\mu\Delta
u-(\lambda+\mu)\Dv\nabla
u+a\nabla\rho^\gamma+\delta\nabla\rho^\beta+\varepsilon\nabla
u\cdot\nabla\rho+(\nabla\times B)\times B,\phi\rangle,\\
&\langle \tilde{\Lambda},\phi\rangle=\langle \nabla\times(u\times
B)+\nu\Delta B,\phi\rangle.
\end{align*}
 Next, we prove that the functions
$\overline{f_n}:=\langle
M_n(t)-M_n(s),\phi\rangle\varphi(\beta_{k,n},\rho_n,u_n,B_n)$ are
uniformly integrable. Indeed, by the Schwarz inequality and the
Burkholder-Davis-Gundy inequality, for each $n$, we have
\begin{align}\label{I1}
\notag\tilde{{\rm E}}\left(|\overline{f_n}|^2\right)&\lesssim
\tilde{{\rm E}}\left(\|M_n(t)\|^2_{W^{-2,2}(D)}+\|M_n(s)\|^2_{W^{-2,2}(D)}\right)\\
&\lesssim
\tilde{{\rm E}}\left(\int_0^T\sum_{k=1}\|f^n_k(\rho_n,\rho_nu_n,x)\|^2_{L^1(D)}dt\right)\\
&\notag\lesssim
\tilde{{\rm E}}\left(\int_0^T\|\rho_n\|_{L^1(D)}(\|\rho_n\|^\gamma_{L^\gamma(D)}
+\|\sqrt{\rho_n}u_n\|^2_{L^2(D)})dt\right)\le
C.
\end{align}
Then the sequence $\overline{f_n}$ is uniformly integrable. For some
$r>1$, we obtain as \eqref{I1}
\begin{align}\label{I2}
\tilde{{\rm
E}}\left(\int_0^T\sum_{k=1}\|f^n_k(\rho_n,\rho_nu_n,x)\|^2_{L^1(D)}dt\right)^r
\lesssim C.
\end{align}
Similarly, we have
\begin{align}\label{I3}
\tilde{{\rm E}}\left(\int_0^T\sum_{k=1}\|g(B_n,x)\|^2_{H^{-1}(D)}dt\right)^r\le
C.
\end{align}
In view of \eqref{P-a.s.11a}, \eqref{P-a.s.12}, \eqref{3.88c},
\eqref{I2}-\eqref{I3},
 together with Proposition \ref{Propo2} and  Lemma
\ref{Lemma2}, one has
\begin{align*}
&\tilde{{\rm E}}\left[\langle
M(t)-M(s),\phi\rangle\varphi(\beta_k,\rho,u,B)\right]
=\lim_{n\to\infty}\tilde{{\rm E}}\left[\langle
M_n(t)-M_n(s),\phi\rangle\varphi_n\right]=0,\\
&\tilde{{\rm E}}\left[\langle
\tilde{M}(t)-\tilde{M}(s),\phi\rangle\varphi(\beta_k,\rho,u,B)\right]
=\lim_{n\to\infty}\tilde{{\rm E}}\left[\langle
\tilde{M}_n(t)-\tilde{M}_n(s),\phi\rangle\varphi_n\right]=0,\\
&\tilde{{\rm E}}\left[\left(\langle M(t)-M(s),\phi\rangle
\beta_k^1-\int_s^t\langle
 f_k(\rho,\rho u,x),\phi\rangle d\tau\right)\varphi(\beta_k,\rho,u,B)\right]\\
&=\lim_{n\to\infty}\tilde{{\rm E}}\left[\left(\langle
M_n(t)-M_n(s),\phi\rangle \beta_{k,n}^1-\int_s^t\langle
 f^n_k(\rho_n,\rho_nu_n,x),\phi\rangle d\tau\right)\varphi_n\right]=0,\\
&\tilde{{\rm E}}\left[\left(\langle \tilde{M}(t)-\tilde{M}(s),\phi\rangle
\beta_k^2-\int_s^t\langle
 g_k(B,x),\phi\rangle d\tau\right)\varphi(\beta_k,\rho,u,B)\right]\\
&=\lim_{n\to\infty}\tilde{{\rm E}}\left[\left(\langle
\tilde{M}_n(t)-\tilde{M}_n(s),\phi\rangle
\beta_{k,n}^2-\int_s^t\langle
 g_k(B_n,x),\phi\rangle d\tau\right)\varphi_n\right]=0,
\end{align*}
and
\begin{align*}
&\tilde{{\rm E}}\left[\left(\langle M(t),\phi\rangle^2-\langle
M(s),\phi\rangle^2-\int_s^t\sum_{k=1}\langle f_k(\rho,\rho u,x),
\phi\rangle^2d\tau\right
)\varphi(\beta_k,\rho,u,B)\right]\\
&=\lim_{n\to\infty}\tilde{{\rm E}}\left[\left(\langle
M_n(t),\phi\rangle^2-\langle
M_n(s),\phi\rangle^2-\int_s^t\sum_{k\ge1}\langle
f^n_k(\rho_n,\rho_nu_n,x),
\phi\rangle^2d\tau\right )\varphi_n\right]=0,\\
&\tilde{{\rm E}}\left[\left(\langle
\tilde{M}(t),\phi\rangle^2-\langle
\tilde{M}(s),\phi\rangle^2-\int_s^t\sum_{k\ge1}\langle g_k(B,x),
\phi\rangle^2d\tau\right
)\varphi(\beta_k,\rho,u,B)\right]\\
&=\lim_{n\to\infty}\tilde{{\rm E}}\left[\left(\langle
\tilde{M}_n(t),\phi\rangle^2-\langle
\tilde{M}_n(s),\phi\rangle^2-\int_s^t\sum_{k=1}\langle g_k(B_n,x),
\phi\rangle^2d\tau\right )\varphi_n\right]=0.
\end{align*}
Then we can deduce that $\langle M(t),\phi\rangle$, $\langle
M(t)-M(s),\phi\rangle \beta_k^1-\int_s^t\langle
 f_k(\rho,\rho u,x),\phi\rangle d\tau$, $\langle
M(t),\phi\rangle^2-\int_0^t\!\sum_{k=1}\!\langle f_k(\rho,\rho
u,x),\phi\rangle^2ds$; $\langle \tilde{M}(t),\phi\rangle$, $\langle
\tilde{M}(t)\!-\tilde{M}(s),\phi\rangle \beta_k^2-\int_s^t\langle
 g_k(B,x),\phi\rangle d\tau$ and $\langle
\tilde{M}(t),\phi\rangle^2-\int_0^t\sum_{k=1}\langle
g_k(B,x),\phi\rangle^2ds$ are martingales. %By Theorem 5.13 in
%\cite{KS-1991}, we know that there is a unique continuous process of
%bounded variation $\langle M,M\rangle_t$ such that $M^2_t-\langle
%M,M\rangle_t$ is a martingale, then $$\langle \langle
%M(t),\phi\rangle\rangle_t=\int_0^t\sum_{k=1}\langle f_k(\rho,\rho
%u,x), \phi \rangle^2 ds,\;\langle \langle
%\tilde{M}(t),\phi\rangle\rangle_t=\int_0^t\sum_{k=1}\langle g_k(
%B,x), \phi \rangle ds .$$

Applying the method in \cite{BO,OM}, we can infer that
$$\langle M(t),\phi\rangle=\int_0^t\langle f_k(\rho,\rho u,x),\phi \rangle d\beta_k^1,\;
\langle \tilde{M}(t),\phi\rangle=\int_0^t\langle g_k(B,x),\phi
\rangle d\beta_k^2 .$$ Therefore, $((\tilde{\Omega},
\tilde{\mathscr{F}},\tilde{\textrm{P}}), \beta_k,\rho,u,B)$ is a
martingale solution of \eqref{A-MHD}-\eqref{A-initialdata}.
\end{proof}
%%%%%%%%%%%%%%%%%%%%%%%%%%%%%%%%%%%%%%%%%%%%%%%%%%%%%%
\section{The Vanishing Viscosity Limit}\label{section4}
%%%%%%%%%%%%%%%%%%%%%%%%%%%%%%%%%%%%%%%%%%%%%%%%%%%%%%

In this section, we shall consider the limit of
\eqref{A-MHD}-\eqref{A-initialdata} as $\varepsilon\to 0$ following
the idea of \cite{f2,FNP,LLP}. Similar to Lemma \ref{lemma3.3}, we
can get the following estimates:
\begin{align}\label{4.1a}
\begin{split}
&\rho_\varepsilon\in L^p(\Omega,L^\infty(0,T;
L^\gamma(D)))\cap L^p(\Omega,L^\infty(0,T; L^\beta(D))),\\
& \sqrt{\rho_\varepsilon}u_\varepsilon\in L^p(\Omega,L^\infty(0,T;
L^2(D))),\;\rho_\varepsilon u_\varepsilon\in
L^p(\Omega,L^\infty(0,T; L^{2\beta\over \beta+1}(D))),\\
& u_\varepsilon\in L^p(\Omega,L^2(0,T; H^1(D))),\\
& \sqrt{\varepsilon}\nabla\rho_\varepsilon\in L^p(\Omega,L^2(0,T;
L^2(D))),\;\rho_\varepsilon u^i_\varepsilon u^j_\varepsilon\in
L^p(\Omega,L^2(0,T; L^{6\beta\over 4\beta+3}(D)))\\
& B_\varepsilon\in L^p(\Omega,L^2(0,T; H^1(D)))\cap
L^p(\Omega,L^\infty(0,T; L^2(D))),
\end{split}
\end{align}
for $1\le p < \infty$, $\beta > \max\{4, \gamma\}$.
 It follows from \eqref{4.1a} that $\rho_\varepsilon\in L^\infty(0,T;
L^\beta(D))$, i.e.,  $\rho_\varepsilon^\beta \in
L^\infty(0,T;L^1(D))$, and hence we can only conclude that
$\rho_\varepsilon^\beta$ converges to a Radon measure. To gain some
better convergence of this term we need to improve the integrability
of the density $\rho$ as follows.

%%%%%%%%%%%%%%%%%%%%%%%%%%%%%%%%%%%%%%%%%%%%%%%%%%%%%%%%%%%%%%%%%%%%%%%%%%%%%%%%%%%%%%%%%%%%%%%%%%%%%%%%%%%%%%%%%%%%%%%%%%%%%%%%%%%%%%%%%%%%%%
\subsection{On the equation $\Dv v=f$}\label{s4.1}
We introduce a linear operator
$$\mathcal{B}:\left\{f\in L^p(D): \int_D fdx=0\right\}\rightarrow [W_0^{1,p}(D)]^3,$$  which is a
bounded linear operator, that is,
\begin{align}\label{4.1}
\|\mathcal{B}[f]\|_{W^{1,p}(D)}\le C(p)\|f\|_{L^p(D)},\quad
1<p<\infty.
\end{align}
and $\mathcal{B}(f)$ solves
\begin{align*}%\label{4.2}
\Dv \mathcal{B}(f)=f \ \ \mbox{in}\ \ D,~ \mathcal{B}(f)|_{\partial
D}=0.
\end{align*}
Moreover, if $g\in L^r(D)$ and $g\cdot \n|_{\partial D}=0$, then
\begin{equation*}\label{4.3}
\|\mathcal{B}[\Dv g]\|_{L^r(D)}\le C(p)\|g\|_{L^p(D)}\; \text{for
$1<p<\infty$.}
\end{equation*}
The operator $\mathcal{B}$ was first constructed by Bogovskii
\cite{B-1980}. The existence and the above properties of the
operator $\mathcal{B}$ have been proved in many papers, and we refer
the reader to \cite{G-1994} and the references therein for details.

%%%%%%%%%%%%%%%%%%%%%%%%%%%%%%%%%%%%%%%%%%%%%%%%%%%%%%%%%%%%%%%%%%%%%%%%%%%%%%%%%%%%%%%%%%%%%%%%%%%%%%%%%%%%%%%%%%%%%%%%%%%%%%%%%%%%%%%%%%%%%%%
\subsection{Uniform (on viscosity) estimates of density}\label{s4.2}

Next, we shall use the operator $\mathcal{B}$ to improve the
integrability of the density. First, let $\psi\in\mathcal{D}(0,T),
0\le \psi\le 1$ and $\bar{m}={1\over |D|}\int_D \rho(t)dx$, we note
that $\tilde{m}$ is conserved. Taking
$\psi(t)\mathcal{B}[\rho_\varepsilon-\bar{m}]$ as a test function
for $\eqref{A-MHD}_2$, integrating over $\Omega\times (0,T)\times
D$, we arrive at
\begin{align*}%\label{4.10}
\notag &{\rm
E}\int_0^T\int_D\left[\psi(t)\mathcal{B}[\rho_\varepsilon-\bar{m}](\rho_\varepsilon
u_\varepsilon)_t+\psi(t)\mathcal{B}[\rho_\varepsilon-\bar{m}]\Dv(\rho_\varepsilon
u_\varepsilon\otimes u_\varepsilon)\right]dxdt\\
\notag &\quad+a{\rm
E}\int_0^T\int_D\psi(t)\mathcal{B}[\rho_\varepsilon-\bar{m}]\nabla\rho_\varepsilon^\gamma
dxdt+\delta {\rm
E}\int_0^T\!\!\int_D\psi(t)\mathcal{B}[\rho_\varepsilon-\bar{m}]\nabla\rho_\varepsilon^\beta
dxdt\\
 &\quad+{\rm E}\int_0^T\!\!\int_D
\psi(t)\mathcal{B}[\rho_\varepsilon-\bar{m}][\varepsilon\nabla
\rho_\varepsilon\cdot\nabla u_\varepsilon-(\nabla\times B_\varepsilon)\times B_\varepsilon] dxdt\\
\notag &={\rm E}\int_0^T\!\!\int_D\left[\mu
\psi(t)\mathcal{B}_i[\rho_\varepsilon-\bar{m}]\Delta u_\varepsilon
+(\lambda+\mu)
\psi(t)\mathcal{B}[\rho_\varepsilon-\bar{m}]\nabla\Dv u_\varepsilon\right] dxdt\\
\notag &\quad+{\rm E}\left(\int_0^T\!\!\int_D
\psi(s)\mathcal{B}[\rho_\varepsilon-\bar{m}]
\sum_{k=1}f_k(\rho_\varepsilon,\rho_\varepsilon
u_\varepsilon,x)dxd\beta_{k,\varepsilon}^1\right).
\end{align*}
For convenience, we write the above equation as
\begin{equation*}
I_1+I_2+I_3+I_4+I_5+I_6=J_1+J_2+J_3.
\end{equation*}
We will just discuss the term $I_6$. Other terms can be handled as
in \cite{SSA,WW}.
For the term $I_6$, using H\"{o}lder's inequality and the fact that
$\beta>4$, from \eqref{4.1a} and \eqref{4.1}, we obtain
\begin{align*}
I_6&=-{\rm E}\int_0^T\psi\int_D \mathcal{B}[\rho_\varepsilon-\bar{m}](\nabla\times B_\varepsilon)\times B_\varepsilon dxdt\\
&\lesssim {\rm E}\left(\|B_\varepsilon\|_{L^2(0,T;L^2(D))}\|\nabla
B_\varepsilon\|_{L^2(0,T;L^2(D))}\|\mathcal{B}[\rho_\varepsilon-\bar{m}]\|_{L^\infty(0,T;L^\infty(D))}\right)\\
&\lesssim {\rm E}\left(\|\nabla
B_\varepsilon\|^2_{L^2(0,T;L^2(D))}\norm{\rho_\varepsilon}_{L^\infty(0,T;L^\beta(D))}\right)\\
&\lesssim \left({\rm E}\|\nabla
B_\varepsilon\|^4_{L^2(0,T;L^2(D))}\right)^{\frac{1}{2}} \left({\rm
E}\norm{\rho_\varepsilon}^2_{L^\infty(0,T;L^\beta(D))}\right)^{\frac{1}{2}}\le
C.
\end{align*}
Summing up the estimates for $I_1, I_2, \ldots,I_6$ and
$J_1,J_2,J_3$, we obtain the following results:
\begin{lemma}\label{lemma4.1}
Let $(\beta_{k,\varepsilon},\rho_\varepsilon, u_\varepsilon,
B_\varepsilon)$ be a weak solution of the problem
\eqref{A-MHD}-\eqref{A-initialdata}. Then
\begin{align}\label{4.6a}
{\rm E}\left(\|\rho_\varepsilon\|_{L^{\gamma+1}((0,T)\times
D)}+\|\rho_\varepsilon\|_{L^{\beta+1}((0,T)\times D)}\right)\le C,
\end{align}
where $C=C(\delta,\rho_{0,\delta},m_{0,\delta},B_{0,\delta})$ is a
constant independent of $\varepsilon$.
\end{lemma}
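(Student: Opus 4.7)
The plan is to follow the classical Bogovskii operator test, adapted to the stochastic setting, in the spirit of what is already being set up in the excerpt and in references \cite{SSA,WW}. Concretely, I would take the time-dependent, adapted test field $\varphi(t,x) = \psi(t)\mathcal{B}[\rho_\varepsilon(t) - \bar m]$ with $\psi\in C_c^\infty(0,T)$ and $\bar m = |D|^{-1}\int_D \rho_\varepsilon(t)\,dx$, noting that $\bar m$ is conserved in time thanks to $\eqref{A-MHD}_1$ together with the Neumann condition $\nabla\rho_\varepsilon\cdot\n|_{\partial D}=0$. Since $\mathcal{B}$ produces $W^{1,p}_0$ vector fields and $\Dv\mathcal{B}[\rho_\varepsilon - \bar m] = \rho_\varepsilon - \bar m$, this is the right object to pair with the momentum equation in order to extract an $L^{\gamma+1}$ and an $L^{\beta+1}$ bound on $\rho_\varepsilon$. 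Because the test function is itself time-dependent (and random), the formal product should be justified rigorously by applying It\^o's formula to the real-valued process $t\mapsto \int_D \rho_\varepsilon u_\varepsilon\cdot\mathcal{B}[\rho_\varepsilon - \bar m]\,dx$; the cross variation vanishes because the continuity equation $\eqref{A-MHD}_1$ is purely deterministic, so only the stochastic increment of $\rho_\varepsilon u_\varepsilon$ contributes.

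After this expansion, multiplying by $\psi$ and taking expectation yields the decomposition $I_1+\cdots+I_6 = J_1+J_2+J_3$ recorded in the excerpt. The crucial point is that integration by parts on the pressure terms gives
\[
I_3 + I_4 = -a\,{\rm E}\!\int_0^T\!\!\psi\int_D \rho_\varepsilon^{\gamma+1}\,dx\,dt - \delta\,{\rm E}\!\int_0^T\!\!\psi\int_D \rho_\varepsilon^{\beta+1}\,dx\,dt + \bar m\cdot(\text{lower order}),
\]
so rearranging the identity produces exactly the quantities we want to control on the left-hand side. All remaining terms are then estimated from above in terms of the uniform bounds listed in \eqref{4.1a}, together with the boundedness $\|\mathcal{B}[\rho_\varepsilon-\bar m]\|_{W^{1,p}}\lesssim \|\rho_\varepsilon\|_{L^p}$ from \eqref{4.1} applied with a suitable exponent (typically $p=\beta$, so that $\mathcal{B}[\rho_\varepsilon-\bar m]\in L^\infty(0,T;L^\infty(D))$ via Sobolev embedding when $\beta>3$). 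The terms $I_1,I_2,I_5,J_1,J_2$ are treated exactly as in \cite{SSA,WW}, and the new magnetic term $I_6$ is controlled by H\"older's inequality and the energy estimate on $\nabla B_\varepsilon$, as displayed in the excerpt.

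The stochastic term $J_3$ is the one requiring the most care, and is the main place where the probabilistic setting differs from the deterministic proof. My plan is to observe that, after the It\^o expansion, the stochastic contribution is an honest It\^o integral with progressively measurable integrand (the test field $\psi\mathcal{B}[\rho_\varepsilon-\bar m]$ is adapted since $\rho_\varepsilon$ is), and whose integrand satisfies
\[
{\rm E}\!\int_0^T\!\!\sum_{k}\bigl|\langle f_k(\rho_\varepsilon,\rho_\varepsilon u_\varepsilon,\cdot),\psi\mathcal{B}[\rho_\varepsilon-\bar m]\rangle\bigr|^2 ds < \infty
\]
by \eqref{assumption} and the a priori bounds in \eqref{4.1a}, hence it is a true martingale and $J_3=0$. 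Combining this vanishing of $J_3$ with the upper bounds on $I_1,I_2,I_5,I_6,J_1,J_2$ and the favorable sign of $I_3+I_4$ gives, after absorbing borderline terms by Young's inequality, the desired estimate \eqref{4.6a} with a constant depending on $\delta$ and the initial data but not on $\varepsilon$.

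The principal obstacle, and the reason the lemma is not merely a restatement of the deterministic Bogovskii trick, lies in justifying the It\^o-formula step rigorously: one needs enough regularity of $\rho_\varepsilon u_\varepsilon$ and of the auxiliary process $\mathcal{B}[\rho_\varepsilon-\bar m]$ to apply an infinite-dimensional It\^o formula, and one needs the integrability in $J_3$ to guarantee that the stochastic integral really is a martingale (so that its expectation vanishes) rather than merely a local martingale. Once these technicalities are handled, essentially as in \cite{SSA,WW}, the argument reduces to the bookkeeping sketched above.
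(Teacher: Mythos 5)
Your proposal is correct and is essentially the paper's own argument: the paper likewise tests the momentum equation with $\psi(t)\mathcal{B}[\rho_\varepsilon-\bar m]$, arrives at the decomposition $I_1+\cdots+I_6 = J_1+J_2+J_3$, estimates the new magnetic term $I_6$ via the $W^{1,\beta}\hookrightarrow L^\infty$ embedding exactly as you describe, and refers to \cite{SSA,WW} for the remaining deterministic terms and the vanishing of the stochastic term $J_3$. You have simply made explicit the It\^o product-rule justification (including the vanishing cross-variation, since $\rho_\varepsilon$ satisfies a pathwise parabolic equation and hence $\mathcal{B}[\rho_\varepsilon-\bar m]$ has finite-variation paths) and the true-martingale property underlying ${\rm E}\,J_3 = 0$, which the paper leaves implicit.
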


%%%%%%%%%%%%%%%%%%%%%%%%%%%%%%%%%%%%%%%%%%%%%%%%%%%%%%%%%%%%%%%%%%%%%%%%%%%%%%%%%%%%%%%%%%%%%%%%%%%%%%%%%%%%%%%%%%%%%%%%%%%%%%%%%%%%%%%%%%%%%
\subsection{Tightness property}\label{s4.3}
\begin{lemma}\label{Lemma-Tight1} Define
\begin{align*}
S&=C(0,T;\mathbb{R})\times C([0,T];L_w^{\beta}(D))\times
L^2(0,T;H_w^1(D))\\
&\quad\times C([0,T]; L_w^{2\beta\over\beta+1}(D))\times
\left(L^2(0,T;H_w^1(D))\cap L^2(0,T; L^2(D))\right)
\end{align*}
equipped with its Borel $\sigma$-algebra.  Let $\Pi_\varepsilon$ be
the probability on $S$ which is the image of $\textrm{P}$ on
$\Omega$ by the map: $\omega\mapsto
(\beta_{k,\varepsilon}(\omega,\cdot),\rho_\varepsilon(\omega,\cdot),u_\varepsilon(\omega,\cdot),\rho_\varepsilon
u_\varepsilon(\omega,\cdot),B_\varepsilon(\omega,\cdot))$, that is,
for any $A\subseteq S$,
\begin{equation*}
\Pi_\varepsilon(A)=\textrm{P}\left\{\omega\in\Omega:
(\beta_{k,\varepsilon}(\omega,\cdot),\rho_\varepsilon(\omega,\cdot),u_\varepsilon(\omega,\cdot),\rho_\varepsilon
u_\varepsilon(\omega,\cdot),B_\varepsilon(\omega,\cdot))\in A
\right\}.
\end{equation*}
Then the family $\Pi_\varepsilon$ is tight.
\end{lemma}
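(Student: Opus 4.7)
The proof will mirror Lemma \ref{Lemma-Tight}, verifying tightness of each of the five marginals separately on the corresponding factor of $S$ and then invoking \cite[Corollary 1.3]{KHJ} to obtain tightness of the joint distribution. Given $\varepsilon_0>0$, the plan is to construct, for each marginal, a compact set carrying mass at least $1-\varepsilon_0/5$, so that the intersection carries mass at least $1-\varepsilon_0$.

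For $\beta_{k,\varepsilon}$, I will use the classical fourth-moment bound $\mathbb{E}|\beta_k(t)-\beta_k(s)|^4\lesssim|t-s|^2$ and Arzel\`a-Ascoli exactly as in step $1^\circ$ of Lemma \ref{Lemma-Tight}; nothing changes. For $u_\varepsilon$ in $L^2(0,T;H^1_w(D))$ and for the weak $L^2(0,T;H^1_w(D))$ component of $B_\varepsilon$, I will take closed balls in $L^2(0,T;H^1(D))$ of a radius diverging with $\varepsilon_0^{-1}$; Chebyshev together with the uniform bound $\mathbb{E}\|u_\varepsilon\|^2_{L^2(0,T;H^1(D))}+\mathbb{E}\|B_\varepsilon\|^2_{L^2(0,T;H^1(D))}\le C$ from \eqref{4.1a} immediately controls the complement. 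For the strong $L^2(0,T;L^2(D))$ component of $B_\varepsilon$, I will repeat the computation of step $5^\circ$: using $\eqref{A-MHD}_3$, the Burkholder-Davis-Gundy inequality on the stochastic piece, and the uniform bounds in \eqref{4.1a}, I will show ${\rm E}\int_0^{T-\theta}\|B_\varepsilon(t+\theta)-B_\varepsilon(t)\|^2_{H^{-1}(D)}dt\lesssim \theta^{1/4}$, and then apply the Aubin-Simon lemma \cite{SJ} to conclude compactness in $L^2(0,T;L^2(D))$.

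The two subtle marginals are $\rho_\varepsilon$ and $\rho_\varepsilon u_\varepsilon$. For the density, I cannot use $L^2(0,T;H^1(D))$ control (only $\sqrt{\varepsilon}\nabla\rho_\varepsilon\in L^2$ survives), so I will instead work directly with $C([0,T];L^\beta_w(D))$. Setting
\[
\|f\|_{\mathcal{X}}=\sup_{0\le t\le T}\|f(t)\|_{L^\beta(D)}+\|\partial_t f\|_{L^2(0,T;W^{-1,q}(D))},
\]
for a suitable $q$ dictated by the continuity equation $\rho_t+\Dv(\rho u)=\varepsilon\Delta\rho$ (so that each term $\Dv(\rho_\varepsilon u_\varepsilon)$ and $\varepsilon\Delta\rho_\varepsilon$ is uniformly bounded in $L^2(0,T;W^{-1,q}(D))$ by the estimates \eqref{4.1a} and Lemma \ref{lemma4.1}), a closed ball in $\mathcal{X}$ is compactly embedded in $C([0,T];L^\beta_w(D))$ by a weak-in-space version of Arzel\`a-Ascoli, and the tightness conclusion follows by Chebyshev.

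For $\rho_\varepsilon u_\varepsilon$, I will follow step $4^\circ$ verbatim, using the momentum equation $\eqref{A-MHD}_2$ to express $\rho_\varepsilon u_\varepsilon$ as initial datum plus deterministic integrals plus a stochastic integral, bounding each deterministic integrand in an appropriate negative Sobolev norm via \eqref{4.1a} and \eqref{4.6a}, and handling the stochastic part with the Burkholder-Davis-Gundy inequality followed by the Kolmogorov continuity theorem to get $\alpha$-H\"older continuity in $W^{-\ell,2}(D)$ for $\ell>3$ and some $\alpha\in(0,1/2)$. The resulting uniform estimate in $L^\infty(0,T;L^{2\beta/(\beta+1)}(D))\cap C^{0,\alpha}([0,T];W^{-\ell,2}(D))$ compactly embeds into $C([0,T];L^{2\beta/(\beta+1)}_w(D))$ by \cite[Corollary B.2]{OM}, and Chebyshev finishes the argument. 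The principal obstacle, relative to Lemma \ref{Lemma-Tight}, is precisely that the $\varepsilon$-independent $H^1$ regularization of the density is no longer available; the improved integrability \eqref{4.6a} obtained via the Bogovskii operator is what allows the handling of the pressure terms $a\nabla\rho_\varepsilon^\gamma+\delta\nabla\rho_\varepsilon^\beta$ uniformly in $\varepsilon$ when bounding $\partial_t(\rho_\varepsilon u_\varepsilon)$.
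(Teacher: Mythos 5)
Your overall strategy -- verify tightness marginal by marginal and then invoke \cite[Corollary 1.3]{KHJ} for the joint law -- matches the paper's, and your handling of $\hat\beta_k$, $u_\varepsilon$, $B_\varepsilon$ and $\rho_\varepsilon$ is essentially correct (the paper is terse about $\rho_\varepsilon$, merely saying ``similar to Section \ref{s3.2},'' but your adaptation -- drop the $L^2(0,T;H^1)$ component from $\mathcal X$ since it is no longer available uniformly in $\varepsilon$, and observe that $\varepsilon\Delta\rho_\varepsilon=\sqrt\varepsilon\,\Dv(\sqrt\varepsilon\nabla\rho_\varepsilon)$ is still uniformly bounded in $L^2(0,T;H^{-1})$ -- is precisely what is needed). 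The gap is in the $\rho_\varepsilon u_\varepsilon$ marginal, where you say you will ``follow step $4^\circ$ verbatim'' and bound ``each deterministic integrand'' via \eqref{4.1a} and \eqref{4.6a}. The integrand $\varepsilon\nabla u_\varepsilon\cdot\nabla\rho_\varepsilon$ cannot be treated that way. In step $4^\circ$ of Lemma \ref{Lemma-Tight} the estimate
\[
{\rm E}\int_0^T\bigl\|\mathbb{P}\bigl(\varepsilon\nabla u_n\cdot\nabla\rho_n\bigr)\bigr\|^{\frac{2q}{q+2}}_{W^{-\ell,2}(D)}ds\le C
\]
rests on $\nabla\rho_n\in L^q(0,T;L^2(D))$ with $q>2$, a fact coming from the parabolic smoothing at fixed $\varepsilon$. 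Uniformly in $\varepsilon$ one only has $\sqrt\varepsilon\nabla\rho_\varepsilon\in L^2(0,T;L^2(D))$, whence $\varepsilon\nabla u_\varepsilon\cdot\nabla\rho_\varepsilon$ is controlled in $L^1(0,T;L^1(D))$ (indeed by $C\sqrt\varepsilon$) but in nothing better in time. An $L^1_t$ bound on the integrand gives no modulus of continuity for the time integral that would feed into a Kolmogorov/H\"older estimate or \cite[Corollary B.2]{OM}: a fixed $\alpha$-H\"older ball in $W^{-\ell,2}$ cannot be reached this way. Your invocation of \eqref{4.6a} does help bound the pressure terms uniformly, but it is silent on $\varepsilon\nabla u_\varepsilon\cdot\nabla\rho_\varepsilon$, which is the term you (correctly) identified as the new obstacle and then did not actually handle.

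The paper's resolution is a genuinely different move that your proposal is missing: define the corrected quantity $\Gamma_\varepsilon:=\langle\rho_\varepsilon u_\varepsilon,\phi\rangle+\varepsilon\int_0^t\langle\nabla u_\varepsilon\cdot\nabla\rho_\varepsilon,\phi\rangle\,ds$, so that $\partial_t\Gamma_\varepsilon$ (modulo the stochastic integral, handled as usual) contains only integrands uniformly bounded in $L^\alpha(0,T)$ for some $\alpha>1$, giving equicontinuity of the residual part; and then show separately that the subtracted process $V_\varepsilon(t)=\int_0^t\varepsilon\nabla u_\varepsilon\cdot\nabla\rho_\varepsilon\,ds$ converges to $0$ in $C([0,T];L^1(D))$ $\textrm P$-a.s.\ (using $\sup_t\|V_\varepsilon(t)\|_{L^1}\lesssim\sqrt\varepsilon$), hence is tight on $C([0,T];H^{-\ell}(D))$ by the compact embedding $L^1(D)\hookrightarrow H^{-\ell}(D)$, $\ell\ge3$, and \cite[Lemma 3.9]{KO}. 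The tightness set for $\rho_\varepsilon u_\varepsilon$ is then taken as a closed ball in $\mathcal H=L^\infty(0,T;L^{2\beta/(\beta+1)}(D))\cap C^{0,\alpha'}([0,T];H^{-\ell}(D))$ intersected with the compact $K$ carrying the perturbation, and the two failure probabilities are added via \eqref{T2}. Without some version of this decomposition (or another device to neutralize the $L^1_t$-only control on $\varepsilon\nabla u_\varepsilon\cdot\nabla\rho_\varepsilon$), your step for $\rho_\varepsilon u_\varepsilon$ does not close.
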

\begin{proof} Let $\Pi^1_\varepsilon$ be the probability on $$S_1=C(0,T;\mathbb{R})\times
C([0,T];L_w^{\beta}(D))\times L^2(0,T;H_w^1(D))\times
\left(L^2(0,T;H_w^1(D))\cap L^2(0,T; L^2(D))\right)$$ which is the
image of $\textrm{P}$ on $\Omega$ by the map $\omega\mapsto
(\beta_{k,\varepsilon}(\omega,\cdot),
\rho_\varepsilon(\omega,\cdot),u_\varepsilon(\omega,\cdot),
B_\varepsilon(\omega,\cdot) )$. Similar to Section \ref{s3.2}, we
can obtain the tightness of the family of $\Pi^1_\varepsilon$:
\begin{equation}\label{T1}
\Pi^1_\varepsilon(\Sigma_\eta\times X_\eta\times Y_\eta\times
R_\eta)\ge \left(1-{\eta\over5}\right)^4.
\end{equation}
Hence we only need to check the tightness of $\Pi^2_\varepsilon$,
which is defined as: for any $A\subseteq S_2=C([0,T];
L_w^{2\beta\over\beta+1}(D))$,
\begin{equation*}
\Pi^2_\varepsilon(A)=\textrm{P}\left(\omega\in\Omega:
\rho_\varepsilon u_\varepsilon(\omega,\cdot)\in A \right).
\end{equation*}  From \eqref{A-MHD}, we have
\begin{equation*}\label{Tight11}
\begin{split}
\rho_\varepsilon u_\varepsilon (t)=&\rho_0u_0-\int_0^T
\left[\Dv(\rho_\varepsilon u_\varepsilon \otimes u_\varepsilon
)-\mu\Delta u_\varepsilon -(\lambda+\mu)\Dv \nabla u_\varepsilon
+a\nabla
\rho_\varepsilon ^\gamma+\delta\nabla\rho_\varepsilon ^\beta\right]ds\\
&-\int_0^t[\varepsilon\nabla
u_\varepsilon\cdot\nabla\rho_\varepsilon- (\nabla\times
B_\varepsilon)\times B_\varepsilon]ds+\int_0^t
f_k(\rho_\varepsilon,\rho_\varepsilon
u_\varepsilon,x)d\beta^1_{k,\varepsilon}.
\end{split}
\end{equation*}
For $\phi\in C_0^\infty(D)$, denote $\Gamma_\varepsilon:=\langle
\rho_\varepsilon u_\varepsilon,
\phi\rangle+\varepsilon\int_0^t\langle \nabla u_\epsilon\cdot \nabla
\rho_\epsilon, \phi\rangle ds$. By virtue of \eqref{4.1a}, for some
$\alpha>1$, one has $\textrm{P}$-a.s.
\begin{align*}\label{e-continuous}
\notag\partial_t{\Gamma_\varepsilon}&=\langle \partial_t
(\rho_\varepsilon u_\varepsilon)+\varepsilon\nabla u_\epsilon\cdot
\nabla \rho_\epsilon, \phi\rangle\\
&=\langle -\Dv(\rho_\varepsilon u_\varepsilon\otimes
u_\varepsilon)-a\nabla \rho_\varepsilon^\gamma-\delta\nabla
\rho_\varepsilon^\beta
+\mu \Delta u_\varepsilon+(\lambda+\mu)\nabla\Dv u_\varepsilon, \phi\rangle\\
\notag&\quad+\langle(\nabla\times B_\varepsilon)\times
B_\varepsilon, \phi\rangle\in L^\alpha(0,T).
\end{align*}
On the other hand, $\Gamma_\varepsilon$ is uniformly bounded in
$L^\alpha(0,T)$. Hence $\Gamma_\varepsilon$ is equi-continuous in
$C[0,T]$.
 We can infer that
$\int_0^T\varepsilon\nabla u_\varepsilon\cdot\nabla\rho_\varepsilon
dt\to 0$ in $C([0,T];L^1(D))$ almost surely on $\Omega$. Since
$L^1(D)\hookrightarrow H^{-\ell}(D) (\ell\ge3)$ is compact, by
\cite[Lemma 3.9]{KO}, the distribution of $\int_0^T\varepsilon\nabla
u_\varepsilon \cdot\nabla\rho_\varepsilon dt$ is tight on
$H^{-\ell}(D)$. That is, for any $\eta>0$, there exists a compact
set $K\subset C([0,T]; H^{-\ell}(D))$ such that
$\textrm{P}\big(\omega\in\Omega: \int_0^T\varepsilon\nabla
u_\varepsilon\cdot\nabla\rho_\varepsilon dt \notin K\big)\le
{\eta\over10}$. Set
$$\mathcal{H}=L^\infty([0,T];L^{2\beta\over\beta+1}(D))\cap
C^{0,\alpha^\prime}([0,T];H^{-\ell}(D)), \; \ell\ge3, \;
0<\alpha^\prime<{1\over2}-{1\over 2\sigma},$$ with the norm
$\|f\|_{\mathcal{H}}=\sup_{0\le t\le
T}\|f(t)\|_{L^{2\beta\over\beta+1}(D)}+\|f(t)\|_{C^{0,\alpha^\prime}([0,T];H^{-\ell}(D))}$.

We choose $Z_\eta$ as a closed ball of radius $r_\eta$ centered at 0
in $\mathcal{H}\cap K$ . Similar to Section \ref{s3.2}, choosing
$r_\eta=10C\eta^{-1}$, we have
\begin{equation}\label{T2}
\begin{split}
\Pi^2_\varepsilon(Z_\eta)&=1-\Pi^2_\varepsilon\left(Z_\eta^c\right)
=1-\textrm{P}\left(\omega\in\Omega: \rho_\varepsilon
u_\varepsilon(\omega,\cdot)\in Z_\eta^c\right)\\
&=1-\textrm{P}\left(\omega\in\Omega:
\rho_\varepsilon u_\varepsilon(\omega,\cdot)\in \mathcal{H}^c\cup K^c\right)\\
&\ge 1-\textrm{P}\left(\omega\in\Omega: \rho_\varepsilon
u_\varepsilon(\omega,\cdot)\notin
\mathcal{H}\right)-\textrm{P}\left(\omega\in\Omega:
\int_0^T\varepsilon\nabla\rho_\varepsilon\cdot\nabla
u_\varepsilon(\omega,\cdot)dt\notin
K\right)\\
&\ge 1-{1\over r_\eta}{\rm E}\left(\|\rho_\varepsilon u_\varepsilon\|_{\mathcal{H}}\right)-{\eta\over10}\\
&\ge 1-{C\over r_\eta}-{\eta\over10}=1-{\eta\over5},
\end{split}
\end{equation}
where $Z^c$ is the complement of $Z$ in $\mathcal{H}\cap K$.
Therefore, \eqref{T1} and \eqref{T2} imply that
$$\Pi_\varepsilon(\Sigma_\eta\times X_\eta\times Y_\eta\times
Z_\eta\times R_\eta)=\Pi^1_\varepsilon(\Sigma_\eta\times
X_\eta\times Y_\eta\times R_\eta)\times\Pi^2_\varepsilon(Z_\eta)\ge
1-\eta.$$ Then $\Pi_\varepsilon$ is tight.\end{proof}

According to Jakubowski-Skorohod Theorem, there exists a subsequence
such that $\Pi_{\varepsilon}\to \Pi$ weakly, where $\Pi$ is a
probability on $S$. Moreover, there exist a probability space
$(\tilde{\Omega}, \tilde{\mathscr{F}},\tilde{\textrm{P}})$ and two
random variables
$(\tilde{\beta}_{k,\varepsilon},\tilde{\rho}_\varepsilon,\tilde{u}_\varepsilon,\tilde{\rho}_\varepsilon
\tilde{u}_\varepsilon,\tilde{B}_\varepsilon)$ with distribution
$\Pi_\varepsilon$, $\left(\beta_k, \rho, u, h, B\right)$ with values
in $S$ such that as $\varepsilon\to0$
\begin{equation}\label{2-tightness}
(\tilde{\beta}_{k,\varepsilon},\tilde{\rho}_\varepsilon,\tilde{u}_\varepsilon,\tilde{\rho}_\varepsilon
\tilde{u}_\varepsilon,\tilde{B}_\varepsilon)\to (\beta_k,\rho,u,h,B)
\ \ \mbox{in} \ \ S\ \ \tilde{\textrm{P}}-\text{a.s.}.
\end{equation}

%%%%%%%%%%%%%%%%%%%%%%%%%%%%%%%%%%%%%%%%%%%%%%%%%%%%%%%%%%%%%%%%%%%%%%%%%%%%%%%%%%%%%%%%%%%%%%%%%%%%%%%%%%%%%%%%%%%%%%%%%%%%%%%%%%%%%%%%%%%%%
\subsection{The vanishing viscosity limit}\label{s4.4}

In this subsection, we shall pass to the limit as $\varepsilon\to 0$
in \eqref{A-MHD}-\eqref{A-initialdata}. Note that the parameter
$\delta$ is kept fixed throughout this section and then we can use
the previously derived estimates. Similarly, from Proposition
\ref{prop3.8}, we can deduce that
$(\tilde{\rho}_\varepsilon,\tilde{u}_\varepsilon,\tilde{\rho}_\varepsilon\tilde{u}_\varepsilon,\tilde{B}_\varepsilon)$
satisfies the same estimates as
$(\rho_\varepsilon,u_\varepsilon,\rho_\varepsilon u_\varepsilon,
B_\varepsilon)$. For simplicity of notations, we will again drop the
tildes on the random variables.
%the random variable
%$(\tilde{\beta}_{k,\varepsilon},\tilde{\rho}_\varepsilon,\tilde{u}_\varepsilon,\tilde{\rho}_\varepsilon\tilde{u}_\varepsilon,\tilde{B}_\varepsilon
%)$ is denoted by $(\beta_{k,\varepsilon},\rho_\varepsilon,
%u_\varepsilon, \rho_\varepsilon u_\varepsilon, B_\varepsilon)$.
It follows from \eqref{4.1a} that
\begin{equation}\label{4.20}
\varepsilon\nabla u_\varepsilon\cdot\nabla\rho_\varepsilon\to 0\ \
\mbox{in} \ \ L^1(\tilde{\Omega}\times(0,T)\times D).
\end{equation}
Similarly, we have $\varepsilon\Delta\rho_\varepsilon \to 0$ in
$L^2(\tilde{\Omega},L^2(0,T; H^{-1}(D))$. From \eqref{4.6a}, we have
\begin{equation}\label{4.6b}
a\rho_\varepsilon^\gamma+\delta\rho_\varepsilon^\beta \to
\overline{P} \ \ \mbox{weakly in} \ \
L^{\beta+1\over\beta}(\tilde{\Omega}\times(0,T)\times D).
\end{equation}
We can obtain as in Section \ref{s3.4}
\begin{align}\label{4.6c1}
\rho_\varepsilon\to\rho \ \ \mbox{in}\ \ C(0,T;H^{-1}(D)) \ \
\tilde{\textrm{P}}-\text{a.s.}.
\end{align}
Furthermore, \eqref{2-tightness} implies that
\begin{equation}\label{4.6c}
u_\varepsilon\rightharpoonup u \ \ \mbox{in}\ \ L^2(0,T;H^1(D)) \ \
\tilde{\textrm{P}}-\text{a.s.}.
\end{equation}
It follow from \eqref{4.6c} and \eqref{4.6c1} that
\begin{align*}%\label{4.6c2}
\rho_\varepsilon u_\varepsilon\to\rho u \ \ \mbox{in}\ \
\mathcal{D}^\prime((0,T)\times D) \ \
\tilde{\textrm{P}}-\text{a.s.}.
\end{align*}
%
%From \eqref{2-tightness}, we know that $\rho_\varepsilon
%u_\varepsilon\to h \ \ \mbox{in}\ \ L^\infty(0,T; L^{2\gamma\over
%\gamma+1}(D)) \ \ \tilde{\textrm{P}}-\text{a.s.}$. Then $h=\rho u$.
%Since $$\sup_{0\le t\le T}\Gamma_\varepsilon\lesssim \sup_{0\le t\le
%T}\norm{\rho_\varepsilon u_\varepsilon}_{L^{2\gamma\over
%\gamma+1}(D)} +\varepsilon\norm{\nabla u_\varepsilon} _{L^2(0,T;
%L^2(D))}\norm{\nabla \rho_\varepsilon} _{L^2(0,T; L^2(D))}\le C,$$
%by Arzela-Ascoli Theorem, we can deduce that $\Gamma_\varepsilon\to\Gamma$ in
%$C([0,T])$. By using \eqref{4.6c2}, for $\psi\in C_0^\infty(D)$, one has $\int_0^T
%\Gamma\psi dt\leftarrow\int_0^T \Gamma_\varepsilon \psi dt\to
%\int_0^T \langle\rho u,\phi\rangle \psi dt$. Then $\sup_{0\le t\le
%T}|\Gamma_\varepsilon-\langle\rho u,\phi\rangle \psi|\to 0\ \
%\text{as $\varepsilon\to 0$}$. We obtain $\tilde{\textrm{P}}$-a.s.
%\begin{align*}\sup_{0\le t\le T}|\langle\rho_\varepsilon u_\varepsilon,\phi\rangle-\langle\rho u,\phi\rangle|&\le\sup_{0\le t\le T}
%|\Gamma_\varepsilon-\langle\rho u,\phi\rangle \psi|
%+\varepsilon \int_0^T |\langle \nabla u_\varepsilon\cdot \nabla\rho_\varepsilon,\phi \rangle|dt\\
%&\le \sup_{0\le t\le T}|\Gamma_\varepsilon-\langle\rho u,\phi\rangle
%\psi|+C\varepsilon\to 0 \ \text{as $\varepsilon\to 0$}.
%\end{align*}
%From this, we know that $\rho_\varepsilon u_\varepsilon\to \rho u$
%in $C([0,T]; \mathcal{D}^\prime(D))$ $\tilde{\textrm{P}}$-a.s..
%
Since $C_0^\infty(D)$ is density in $L^{2\gamma\over \gamma-1}(D)$,
we know that
\begin{equation*}\label{4.6d}
\rho_\varepsilon u_\varepsilon \to \rho u \ \ \mbox{in} \ \ C([0,T];
L_w^{2\gamma\over \gamma+1}(D))\ \ \tilde{\textrm{P}}-\text{a.s.},
\end{equation*}
which together with the fact that ${2\gamma\over \gamma+1}>{6\over
5}$, $L^{2\gamma\over \gamma+1}(D))\hookrightarrow\hookrightarrow
H^{-1}(D)$ and Aubin-Lions Lemma implies that
\begin{equation}\label{4.6d1}
\rho_\varepsilon u_\varepsilon\to\rho u \ \ \mbox{in} \ \ C([0,T];
H^{-1}(D))\ \ \tilde{\textrm{P}}-\text{a.s.}.
\end{equation}
Therefore, by \eqref{4.6c} and \eqref{4.6d1} one has
\begin{equation}\label{4.6e}
\rho_\varepsilon u^i_\varepsilon u^j_\varepsilon \to \rho u^i u^j \
\ \mbox{in} \ \ \mathcal{D}^\prime((0,T)\times D), i,j=1,2,3 \ \
\tilde{\textrm{P}}-\text{a.s.}.
\end{equation}
By \eqref{2-tightness}, we know that
\begin{align}\label{4.6e1}
B_\varepsilon\to B \ \ \mbox{in $L^2(0,T; L^2(D))$ and }
B_\varepsilon\rightharpoonup B \ \ \mbox{in $L^2(0,T; H^1(D))$} \ \
\tilde{\textrm{P}}-\text{a.s.}.
\end{align}
Moreover, from $\eqref{A-MHD}_3$ and \eqref{4.1}, by Aubin-Lions
Lemma, one has
\begin{align*}%\label{4.6e4}
B_\varepsilon\to B \ \ \mbox{in} \ \ C([0,T]; H^{-1}(D))\ \
\tilde{\textrm{P}}-\text{a.s.}.
\end{align*}
which together with  \eqref{4.6e1} yields
\begin{align}\label{4.6e2}
(\nabla\times B_\varepsilon)\times B_\varepsilon\to(\nabla\times
B)\times B \ \ \mbox{in $\mathcal{D}^\prime([0,T]\times D)$} \ \
\tilde{\textrm{P}}-\text{a.s.}.
\end{align}
Similarly, it follows from \eqref{4.6c} and \eqref{4.6e1} that
\begin{align}\label{4.6e3}
\nabla\times(u_\varepsilon\times B_\varepsilon)\to\nabla\times
(u\times B) \ \ \mbox{in $\mathcal{D}^\prime([0,T]\times D)$} \ \
\tilde{\textrm{P}}-\text{a.s.}.
\end{align}
By \eqref{assumption}, H\"{o}lder's inequality, \eqref{2-tightness},
\eqref{4.6d1}, \eqref{4.6e1}, we have
\begin{align}\label{4.26}
\begin{split}
\langle f_k(\rho_\varepsilon,\rho_\varepsilon
u_\varepsilon,x),\phi\rangle\to\langle f_k(\rho,\rho
u,x),\phi\rangle\ \ \mbox{in} \ \ L^1([0,T]) \ \
\tilde{\textrm{P}}-\text{a.s.},\\
 \langle
g_k(B_\varepsilon,x),\phi\rangle\to\langle g_k(B,x),\phi\rangle\ \
\mbox{in} \ \ L^1([0,T]) \ \ \tilde{\textrm{P}}-\text{a.s.}.
\end{split}
\end{align}

For $\phi\in C^\infty_0(D)$, we set $\langle
M_\varepsilon(t),\phi\rangle :=\int_0^t\sum_{k\ge1}\langle
f_k(\rho_\varepsilon,\rho_\varepsilon u_\varepsilon,x), \phi \rangle
d\beta^1_{k,\varepsilon}$ and $\langle
\tilde{M}_\varepsilon(t),\phi\rangle :=\int_0^t\sum_{k\ge1}\langle
g_k( B_\varepsilon,x),\phi\rangle d\beta^2_{k,\varepsilon}$, which
are martingales under $\tilde{\textrm{P}}$. It follows from
Proposition \ref{prop3.8} that
\begin{equation*}\label{3.88b}
\begin{split}
\langle M_\varepsilon(t),\phi\rangle&=\langle\rho_\varepsilon
u_\varepsilon(t)-\rho_\varepsilon
u_\varepsilon(0),\phi\rangle+\int_0^t\langle \Dv(\rho_\varepsilon
u_\varepsilon\otimes u_\varepsilon)+
\delta\nabla\rho^\beta_\varepsilon-\mu\Delta u_\varepsilon ,\phi\rangle ds\\
&\quad +\int_0^t\langle(\nabla\times B_\varepsilon)\times
B_\varepsilon+\varepsilon\nabla \rho_\varepsilon\cdot \nabla
u_\varepsilon-(\lambda+\mu)\nabla\Dv
u_\varepsilon+a\nabla\rho^\gamma_\varepsilon,\phi\rangle ds\\
&:=\langle\rho_\varepsilon u_\varepsilon(t)-\rho_\varepsilon
u_\varepsilon(0),\phi\rangle+\int_0^t \langle
\Lambda_\varepsilon,\phi\rangle ds,\\
\langle \tilde{M}_\varepsilon(t),\phi\rangle&=\langle
B_\varepsilon(t)-B_\varepsilon(0),\phi\rangle+\int_0^t\langle
\nabla\times(u_\varepsilon\times B_\varepsilon)+\nu\Delta B_\varepsilon,\phi\rangle ds\\
&:=\langle
B_\varepsilon(t)-B_\varepsilon(0),\phi\rangle+\int_0^t\langle
\tilde{\Lambda}_\varepsilon,\phi\rangle ds,
\end{split}
\end{equation*}
We define $\LA M(t), \phi \RA$ and $\LA \tilde M(t), \phi \RA$ by
dropping the subscript $\varepsilon$ on the right-hand side of the
above two equalities.

\begin{proposition}
The system $((\tilde{\Omega},
\tilde{\mathscr{F}},\tilde{\textrm{P}}),\beta_k, \rho, u, B)$ is a
martingale solution of the following equations:
\begin{align}\label{4.6}
\begin{cases}
d\rho+\Dv(\rho u)dt=0,\\
d(\rho u)+[\Dv(\rho u\otimes u)+\nabla \overline{P}-\mu \Delta
u+(\lambda+\mu)\nabla \Dv u-(\nabla\times B)\times B]dt=dM,\\
dB-[\nabla\times(u\times B)+\nu\Delta B]dt=d\tilde{M},
\end{cases}
\end{align}
in $\mathcal{D}^{\prime}((0,T)\times D)$ $\tilde{\textrm{P}}$-a.s..
\end{proposition}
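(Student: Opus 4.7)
The plan is to follow the strategy used in Section~\ref{s3.4} for the Galerkin limit, adapted to the vanishing viscosity limit. At the approximation level, $\LA M_\varepsilon(t),\phi\RA$ and $\LA\tilde M_\varepsilon(t),\phi\RA$ are martingales under $\tilde{\textrm{P}}$, and, testing against any bounded continuous $\mathscr{F}_s$-measurable functional $\varphi_\varepsilon=\varphi(\beta_{k,\varepsilon},\rho_\varepsilon,u_\varepsilon,B_\varepsilon)$, the five identities analogous to \eqref{3.88c} hold (martingality, cross-variation with each Brownian component, and quadratic variation). I would write these five identities in terms of $(\rho_\varepsilon,u_\varepsilon,B_\varepsilon)$ at level $\varepsilon$, and then pass to the limit $\varepsilon\to 0$ inside the expectations.

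First I would verify the $\tilde{\textrm{P}}$-almost sure convergence of the deterministic integrand $\int_s^t\LA\Lambda_\varepsilon,\phi\RA d\tau$ to $\int_s^t\LA\Lambda,\phi\RA d\tau$, where here $\Lambda$ features the Radon-measure-type limit $\overline{P}$ in place of $a\rho^\gamma+\delta\rho^\beta$; this uses \eqref{4.20} for the artificial viscous term, \eqref{4.6b} for the pressure, \eqref{4.6e} for the convective term, \eqref{4.6e2} for the Lorentz force, and \eqref{4.6c} for the viscous terms. The magnetic side is handled by \eqref{4.6e3} and \eqref{4.6e1}. The terms $\LA\rho_\varepsilon u_\varepsilon(t)-\rho_\varepsilon u_\varepsilon(0),\phi\RA$ and $\LA B_\varepsilon(t)-B_\varepsilon(0),\phi\RA$ converge pointwise on $\tilde\Omega$ by \eqref{4.6d1} and \eqref{2-tightness}. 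Finally, \eqref{4.26} together with the Brownian component convergence in \eqref{2-tightness} handles the cross-variation integrals $\int_s^t\LA f_k(\rho_\varepsilon,\rho_\varepsilon u_\varepsilon,x),\phi\RA\,d\tau$ and the quadratic variation integrals $\int_s^t\sum_{k\ge 1}\LA f_k(\rho_\varepsilon,\rho_\varepsilon u_\varepsilon,x),\phi\RA^2 d\tau$, and similarly for $g_k$.

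To upgrade this almost sure convergence to convergence of expectations, I would invoke the uniform integrability criterion of Proposition~\ref{Propo2} combined with Vitali's theorem (Lemma~\ref{Lemma2}), exactly as in \eqref{I1}--\eqref{I3}. The required higher-moment bounds are supplied by \eqref{4.1a} and \eqref{4.6a}: the moment estimates give, for some $r>1$,
\begin{equation*}
\tilde{\rm E}\Big(\int_0^T\sum_{k\ge 1}\|f_k(\rho_\varepsilon,\rho_\varepsilon u_\varepsilon,x)\|^2_{L^1(D)}dt\Big)^r+\tilde{\rm E}\Big(\int_0^T\sum_{k\ge 1}\|g_k(B_\varepsilon,x)\|^2_{H^{-1}(D)}dt\Big)^r\le C,
\end{equation*}
with $C$ independent of $\varepsilon$, together with the Burkholder--Davis--Gundy inequality applied to $M_\varepsilon$ and $\tilde M_\varepsilon$. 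This yields that each of the five $\varphi_\varepsilon$-tested expressions is uniformly integrable, so passing to the limit in the five identities gives that the same identities hold with $\varphi(\beta_k,\rho,u,B)$ in place of $\varphi_\varepsilon$. Consequently $\LA M(t),\phi\RA$ and $\LA\tilde M(t),\phi\RA$ are square-integrable martingales with quadratic variations $\int_0^t\sum_{k\ge 1}\LA f_k(\rho,\rho u,x),\phi\RA^2 ds$ and $\int_0^t\sum_{k\ge 1}\LA g_k(B,x),\phi\RA^2 ds$, and with cross variations against $\beta_k^1$, $\beta_k^2$ consistent with the claimed stochastic integrals.

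The last step is the identification of the limit martingales as It\^{o} integrals: applying the Stroock--Varadhan-type martingale problem argument of \cite{BO,OM} exactly as at the end of Section~\ref{s3.4} yields $\LA M(t),\phi\RA=\int_0^t\LA f_k(\rho,\rho u,x),\phi\RA d\beta_k^1$ and $\LA\tilde M(t),\phi\RA=\int_0^t\LA g_k(B,x),\phi\RA d\beta_k^2$. Combined with the vanishing of $\varepsilon\Delta\rho_\varepsilon$ and $\varepsilon\nabla u_\varepsilon\cdot\nabla\rho_\varepsilon$ in distributional sense, this shows $(\rho,u,B)$ solves \eqref{4.6} in $\mathcal D'((0,T)\times D)$ $\tilde{\textrm{P}}$-a.s. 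The main obstacle I anticipate is the treatment of the pressure term, since \eqref{4.6b} gives only weak convergence to the unidentified limit $\overline P$ rather than $a\rho^\gamma+\delta\rho^\beta$; this is acceptable at this stage because $\overline P$ enters \eqref{4.6} linearly through $\nabla\overline P$, and the identification $\overline P=a\rho^\gamma+\delta\rho^\beta$ is deferred to the effective viscous flux analysis of Section~\ref{section5}.
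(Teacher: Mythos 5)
Your overall strategy — passing the martingale identities in \eqref{4a} to the limit by combining $\tilde{\textrm{P}}$-a.s.\ convergence of the tested expressions with uniform integrability (Proposition \ref{Propo2} and Vitali's Lemma \ref{Lemma2}) — is exactly what the paper does, and you have correctly identified the relevant a.s.\ convergences \eqref{4.20}, \eqref{4.6b}, \eqref{4.6e}, \eqref{4.6e2}, \eqref{4.6e3}, \eqref{4.6d1}, \eqref{4.26}. You also correctly observe that $\overline{P}$ may be carried along unidentified since it enters linearly.

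However, there is a genuine over-claim that constitutes a gap. You assert that ``\eqref{4.26} \ldots handles \ldots the quadratic variation integrals $\int_s^t\sum_{k\ge 1}\langle f_k(\rho_\varepsilon,\rho_\varepsilon u_\varepsilon,x),\phi\rangle^2 d\tau$'' and then conclude by running the Stroock--Varadhan identification of \cite{BO,OM} to produce $\langle M(t),\phi\rangle = \int_0^t\langle f_k(\rho,\rho u,x),\phi\rangle d\beta_k^1$. This cannot be done at this stage. Estimate \eqref{4.26} gives only the convergence of $\langle f_k(\rho_\varepsilon,\rho_\varepsilon u_\varepsilon,x),\phi\rangle$ to $\langle f_k(\rho,\rho u,x),\phi\rangle$ in $L^1([0,T])$ termwise; the squared-and-summed quantity requires the strong convergence $\rho_\varepsilon\to\rho$ in $L^s$ (as in \eqref{5a}), because the quadratic variation involves, through \eqref{assumption}, quantities controlled by $\|\rho_\varepsilon\|_{L^{\gamma+1}}^{\gamma+1}$, and the weak convergence in \eqref{4.6b} is not enough to pass to the limit there. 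The paper deliberately does \emph{not} include a quadratic-variation identity in \eqref{4a}: those identities appear later, as \eqref{3.88d}, only \emph{after} the effective viscous flux argument gives $\overline{P}=a\rho^\gamma+\delta\rho^\beta$ and hence the strong convergence \eqref{5a} and the quadratic-variation convergence \eqref{P-a.s.11b}. Consequently, the $\cite{BO,OM}$ identification of $M$ and $\tilde M$ as stochastic integrals is also deferred to that point; the present proposition only establishes that $M$ and $\tilde M$ are martingales whose cross variations with $\beta_k^1$, $\beta_k^2$ are $\int_s^t\langle f_k(\rho,\rho u,x),\phi\rangle\,d\tau$ and $\int_s^t\langle g_k(B,x),\phi\rangle\,d\tau$, respectively. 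A secondary slip: the effective viscous flux identification of $\overline{P}$ takes place within Section \ref{section4} (Subsections \ref{s4.5}--\ref{s4.6}), not Section \ref{section5}.
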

\begin{proof}
Similar to Proposition \ref{pro3.4}, we can easily prove that
$((\tilde{\Omega}, \tilde{\mathscr{F}},\tilde{\textrm{P}}),\beta_k,
\rho, u, B)$ satisfies the equation $\eqref{4.6}_1$. Now, we only
need to prove that $((\tilde{\Omega},
\tilde{\mathscr{F}},\tilde{\textrm{P}}),\beta_k,\rho,u,B)$ satisfies
the equations $\eqref{4.6}_2$ and $\eqref{4.6}_3$.

For $0\le s< t<\infty$ and a bounded, continuous
$\mathscr{F}_s$-measurable function
$\varphi_\varepsilon=\varphi(\beta_{k,\varepsilon},\rho_\varepsilon,u_\varepsilon,B_\varepsilon)$,
we have
\begin{align}\label{4a}
\begin{split} &\tilde{{\rm E}}\left[\langle
M_\varepsilon(t)-M_\varepsilon(s),\phi\rangle\varphi_\varepsilon\right]=0,\quad
\tilde{{\rm E}}\left[\langle
\tilde{M}_\varepsilon(t)-\tilde{M}_\varepsilon(s),\phi\rangle\varphi_\varepsilon\right]=0,\\
&\tilde{{\rm E}}\left[\left(\langle
M_\varepsilon(t)-M_\varepsilon(s),\phi\rangle
\beta^1_{k,\varepsilon}-\int_s^t\langle
f_k(\rho_\varepsilon,\rho_\varepsilon u_\varepsilon,x),\phi\rangle d\tau\right)\varphi_\varepsilon\right]=0,\\
& \tilde{{\rm E}}\left[\left(\langle
\tilde{M}_\varepsilon(t)-\tilde{M}_\varepsilon(s),\phi\rangle
\beta^2_{k,\varepsilon}-\int_s^t\langle
 g_k(B_\varepsilon,x),\phi\rangle
 d\tau\right)\varphi_\varepsilon\right]=0.
\end{split}
\end{align}

By \eqref{2-tightness}-\eqref{4.6b}, \eqref{4.6c}-\eqref{4.26}, we
can infer that
 $$\notag
\langle\rho_\varepsilon u_\varepsilon(t)-\rho_\varepsilon
u_\varepsilon(s),\phi\rangle+
\int_s^t\langle\Lambda_\varepsilon,\phi\rangle d\tau,\; \langle
B_\varepsilon(t)-B_\varepsilon(s),\phi\rangle+\int_s^t\langle
\tilde{\Lambda}_\varepsilon,\phi\rangle d\tau$$ converges to
$$\notag \langle\rho u(t)-\rho u(s),\phi\rangle-
\int_s^t\langle\Lambda,\phi\rangle d\tau,\; \langle
B(t)-B(s),\phi\rangle+\int_s^t\langle \tilde{\Lambda},\phi\rangle
d\tau \ \ \mbox{almost surely on} \ \ \tilde{\Omega},$$ where
\begin{align*}
&\langle\Lambda,\phi\rangle=\langle \Dv(\rho u\otimes
u)-\mu\Delta u-(\lambda+\mu)\Dv\nabla u+(\nabla\times B)\times B+\overline{P},\phi\rangle\\
&\langle \tilde{\Lambda},\phi\rangle=\langle \nabla\times(u\times
B)+\nu\Delta B,\phi\rangle.
\end{align*}  We can prove that the functions
$f_\varepsilon:=\langle
M_\varepsilon(t)-M_\varepsilon(s),\phi\rangle\varphi(\beta_{k,\varepsilon},\rho_\varepsilon,u_\varepsilon,B_\varepsilon)$
are uniformly integrable as \eqref{I1}, \eqref{I2} and \eqref{I3}.
%\if false Indeed, by the Schwarz inequality and the
%Burkholder-Davis-Gundy inequality, for each $\varepsilon$, we have
%\begin{equation}\label{I11}
%\begin{split}
%\tilde{{\rm E}}\left(|f_\varepsilon|^2\right)&\le C
%\tilde{{\rm E}}\left(\|M_\varepsilon(t)\|^2_{W^{-2,2}(D)}+\|M_\varepsilon(s)\|^2_{W^{-2,2}(D)}\right)\\
%&\le C
%\tilde{{\rm E}}\left(\int_0^T\sum_{k=1}\|f_k(\rho_\varepsilon,\rho_\varepsilon u_\varepsilon,x)\|^2_{L^1(D)}dt\right)\\
%&\le C
%\tilde{{\rm E}}\left(\int_0^T\|\rho_\varepsilon\|_{L^1(D)}(\|\rho_\varepsilon\|^\gamma_{L^\gamma(D)}+\|\sqrt{\rho_\varepsilon}u_\varepsilon\|^2_{L^2(D)})dt\right)\le
%C.
%\end{split}
%\end{equation}
%Then the sequence $f_n$ is uniformly integrable. For some $r>1$, we
%obtain as \eqref{I11}
%\begin{equation}\label{I21}
%\tilde{{\rm E}}\left(\int_0^T\sum_{k=1}\|f_k(\rho_\varepsilon,\rho_\varepsilon
%u_\varepsilon,x)\|^2_{L^1(D)}dt\right)^r\le C.
%\end{equation}
%Similarly, denote $g_\varepsilon:=\langle
%\tilde{M}_\varepsilon(t)-\tilde{M}_\varepsilon(s),\phi\rangle\varphi(W_\varepsilon,\rho_\varepsilon,u_\varepsilon,B_\varepsilon)$,
%we have
%\begin{align}\label{I31}
%\tilde{{\rm E}}\left(\int_0^T\|g(B_\varepsilon)\|^2_{H^{-1}(D)}dt\right)^r\le
%C.
%\end{align}
%\fi
By the uniform integrability, \eqref{4a},
%\eqref{I11}, \eqref{I21} and \eqref{I31},
and Lemma \ref{Lemma2}, we have
\begin{align*}
&\tilde{{\rm E}}\left[\langle
M(t)-M(s),\phi\rangle\varphi(\beta_k,\rho,u,B)\right]=\lim_{\varepsilon\to0}\tilde{{\rm
E}}\left[\langle
M_\varepsilon(t)-M_\varepsilon(s),\phi\rangle\varphi_\varepsilon\right]=0,\\
&\tilde{{\rm E}}\left[\langle
\tilde{M}(t)-\tilde{M}(s),\phi\rangle\varphi(\beta_k,\rho,u,B)\right]
=\lim_{\varepsilon\to 0}\tilde{{\rm E}}\left[\langle
\tilde{M}_\varepsilon(t)-\tilde{M}_\varepsilon(s),\phi\rangle\varphi_\varepsilon\right]=0,\\
&\tilde{{\rm E}}\left[\left(\langle M(t)-M(s),\phi\rangle
\beta_k^1-\int_s^t\langle
 f_k(\rho,\rho u,x),\phi\rangle d\tau\right)\varphi(\beta_k,\rho,u,B)\right]\\
&=\lim_{\varepsilon\to 0}\tilde{{\rm E}}\left[\left(\langle
M_\varepsilon(t)-M_\varepsilon(s),\phi\rangle
\beta^1_{k,\varepsilon}-\int_s^t\langle
f_k(\rho_\varepsilon,\rho_\varepsilon u_\varepsilon,x),\phi\rangle d\tau\right)\varphi_\varepsilon\right]=0,\\
&\tilde{{\rm E}}\left[\left(\langle
\tilde{M}(t)-\tilde{M}(s),\phi\rangle \beta_k^2-\int_s^t\langle
 g_k( B,x),\phi\rangle d\tau\right)\varphi(\beta_k,\rho,u,B)\right]\\
&=\lim_{\varepsilon\to 0}\tilde{{\rm E}}\left[\left(\langle
\tilde{M}_\varepsilon(t)-\tilde{M}_\varepsilon(s),\phi\rangle
\beta^2_{k,\varepsilon}-\int_s^t\langle
 g_k( B_\varepsilon,x),\phi\rangle
 d\tau\right)\varphi_\varepsilon\right]=0.
\end{align*}
\end{proof}
Our next goal is  to prove
\begin{equation*}\label{4.9}
\overline{P}=\alpha\rho^\gamma+\delta\rho^\beta \quad
\text{($\overline{P}$ appears in the limit in \eqref{4.6b})}
\end{equation*}
which is equivalent to the strong convergence of $\rho_\varepsilon$
in $L^1(\tilde{\Omega}\times(0,T)\times D)$.

%%%%%%%%%%%%%%%%%%%%%%%%%%%%%%%%%%%%%%%%%%%%%%%%%%%%%%%%%%%%%%%%%%%%%%%%%%%%%%%%%%%%%%%%%%%%%%%%%%%%%%%%%%%%%%%%%%%%%%%%%%%%%%%%%%%%%%%%%%%%%%
\subsection{The effective viscous flux}\label{s4.5}
We introduce the quantity
$a\rho^\gamma+\delta\rho^\beta-(\lambda+2\mu)\Dv u$ that is usually
called the effective viscous flux. This quantity satisfies many
properties for which we refer to \cite{FNP, LLP} for details. As in
\cite{SSA,WW}, we can obtain the following lemma:

\begin{lemma}\label{lemma4.2}
Assume that $\beta>\max\{4,\gamma,{4\gamma\over 2\gamma-3}\}$. Let
$(\rho_\varepsilon,u_\varepsilon,B_\varepsilon)$ be the sequence of
approximate solutions obtained in Lemma \ref{lemma3.3}, and let
$\rho, u$, $\overline{P}$ be the limits appearing in \eqref{4.6c1},
\eqref{4.6c},  and \eqref{4.6b} respectively. Then we have
\begin{equation*}
\begin{split}
\lim_{\varepsilon\to 0+}&\tilde{{\rm E}}\int_0^T\psi\int_D
\phi\left[a\rho_\varepsilon^\gamma+\delta\rho_\varepsilon^\beta-(\lambda+2\mu)\Dv
u_\varepsilon\right]\rho_\varepsilon dxdt\\
&=\tilde{{\rm
E}}\int_0^T\psi\int_D\phi\left[\overline{P}-(\lambda+2\mu)\Dv
u\right]\rho dxdt,
\end{split}
\end{equation*}
for any $\psi\in\mathcal{D}(0,T)$ and $\phi\in\mathcal{D}(D)$.
\end{lemma}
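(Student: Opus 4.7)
The plan is to adapt the classical Feireisl--Lions effective viscous flux identity to the stochastic MHD setting. The central idea is to use, as a test function in the approximate momentum equation, the vector field
\[
\varphi_\varepsilon(t,x)=\psi(t)\phi(x)\,\mathcal{A}[\phi\rho_\varepsilon](t,x),\qquad \mathcal{A}:=\nabla\Delta^{-1},
\]
and, in the limiting momentum equation, the analogous field $\varphi(t,x)=\psi(t)\phi(x)\,\mathcal{A}[\phi\rho]$. Because the continuity equation $\partial_t\rho_\varepsilon+\Dv(\rho_\varepsilon u_\varepsilon)=\varepsilon\Delta\rho_\varepsilon$ holds pointwise (no noise), $\partial_t\varphi_\varepsilon$ can be computed classically, which keeps the It\^o calculus clean: applying It\^o's formula to the scalar process $t\mapsto\int_D\rho_\varepsilon u_\varepsilon\cdot\varphi_\varepsilon\,dx$ and using $\eqref{A-MHD}_2$, I rearrange to isolate the quantity
\[
\int_0^T\!\!\psi\!\int_D\phi\bigl[a\rho_\varepsilon^\gamma+\delta\rho_\varepsilon^\beta-(\lambda+2\mu)\Dv u_\varepsilon\bigr]\rho_\varepsilon\,dxdt.
\]
After taking $\tilde{\mathbb E}$, the It\^o stochastic integral $\int_0^T\langle F(\rho_\varepsilon,\rho_\varepsilon u_\varepsilon)\,dW_1,\varphi_\varepsilon\rangle$ vanishes as a martingale expectation, and the It\^o correction $\tfrac12\sum_k\langle f_k,\partial_{\rho u}\varphi_\varepsilon\,f_k\rangle$ is in fact zero because $\varphi_\varepsilon$ depends on $\rho_\varepsilon$ but not on $\rho_\varepsilon u_\varepsilon$. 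The same procedure applied to the limiting momentum equation $\eqref{4.6}_2$ yields the analogous identity with $\rho,u,\overline P,B,M$ in place of $\rho_\varepsilon,u_\varepsilon,a\rho_\varepsilon^\gamma+\delta\rho_\varepsilon^\beta,B_\varepsilon,M_\varepsilon$.

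Subtracting the two identities and checking term by term, the strong convergences $\rho_\varepsilon\to\rho$ in $C([0,T];H^{-1}(D))$ (hence, after interpolation with $L^\infty_tL^\beta_x$, in $L^p((0,T)\times D)$ for any $p<\beta$), $\rho_\varepsilon u_\varepsilon\to\rho u$ in $C([0,T];H^{-1}(D))$, and $B_\varepsilon\to B$ in $L^2(0,T;L^2(D))\cap C([0,T];H^{-1}(D))$ together with the bounds in \eqref{4.1a}--\eqref{4.6a} allow me to pass to the limit in: the time-boundary contribution, the pressure--$\Dv$ coupling that furnishes the desired left-hand side, the viscous terms, the artificial viscosity term $\varepsilon\nabla u_\varepsilon\cdot\nabla\rho_\varepsilon$ (which vanishes by \eqref{4.20}), the Lorentz force term $(\nabla\times B_\varepsilon)\times B_\varepsilon\cdot\varphi_\varepsilon$ (strong $L^2_{t,x}$ convergence of $B_\varepsilon$ versus weak $H^1$ is enough after one integration by parts), and the stochastic It\^o corrections (which converge by \eqref{assumption} together with the strong convergence of $\rho_\varepsilon,\rho_\varepsilon u_\varepsilon,B_\varepsilon$). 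The continuity of $\mathcal{A}$ as a map $L^q\to W^{1,q}$ for $1<q<\infty$ is used throughout to control $\varphi_\varepsilon$ in the required spaces.

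The only term that resists a naive limit is the convective contribution
\[
\tilde{\mathbb E}\int_0^T\!\!\psi\!\int_D(\rho_\varepsilon u_\varepsilon\otimes u_\varepsilon):\nabla\varphi_\varepsilon\,dxdt\ \longrightarrow\ \tilde{\mathbb E}\int_0^T\!\!\psi\!\int_D(\rho u\otimes u):\nabla\varphi\,dxdt.
\]
This is the main obstacle. To handle it I rewrite $(\rho_\varepsilon u_\varepsilon\otimes u_\varepsilon):\nabla\varphi_\varepsilon$ so that the Riesz-type commutator
\[
\bigl[u_\varepsilon^i,\,\mathcal{R}_{ij}\bigr]\!\bigl(\phi\rho_\varepsilon u_\varepsilon^j\bigr),\qquad \mathcal{R}_{ij}=\partial_i\partial_j(-\Delta)^{-1},
\]
appears. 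Coifman--Rochberg--Weiss / Div--Curl type compensated-compactness (equivalently Feireisl's formulation in \cite{FNP}) gives that these commutators are compact from $L^2_{t,x}\times L^{2\beta/(\beta+1)}_{t,x}$ into a space which, combined with $u_\varepsilon\rightharpoonup u$ in $L^2(0,T;H^1)$ and $\rho_\varepsilon u_\varepsilon\to\rho u$ in $C([0,T];H^{-1})$, yields the required convergence. Here the assumption $\beta>\max\{4,\gamma,4\gamma/(2\gamma-3)\}$ is precisely what is needed to guarantee that $\rho_\varepsilon u_\varepsilon\otimes u_\varepsilon$ lies in a reflexive Lebesgue space dual to the one in which $\nabla\varphi_\varepsilon$ is bounded, so the bilinear limit is legitimate. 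Combining the limits of all terms produces exactly the claimed identity.
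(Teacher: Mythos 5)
The proposal pursues essentially the same strategy as the paper: test the momentum equation against the vector field $\psi\phi\,\mathcal{A}[\cdot]$ built from the Riesz-type operator $\mathcal{A}=\nabla\Delta^{-1}$, observe that the It\^o correction vanishes because the test function is of bounded variation in time, kill the stochastic integral under expectation by the martingale property, and then invoke commutator / Div--Curl compensated compactness for the convective term. (Using $\mathcal{A}[\phi\rho_\varepsilon]$ instead of $\mathcal{A}[\rho_\varepsilon]$ with $\rho_\varepsilon$ extended by zero, as the paper does, is a cosmetic variant.) The paper additionally spells out, via a Riemann-sum telescoping argument, why $\tilde{\rm E}\int_0^t\psi\phi\,\mathcal{A}_i[\rho]\,dM=0$ in the limit equation; your ``analogous identity'' covers this in spirit.

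However, there is a genuine gap in the passage-to-the-limit step. You write that ``$\rho_\varepsilon\to\rho$ in $C([0,T];H^{-1}(D))$ (hence, after interpolation with $L^\infty_t L^\beta_x$, in $L^p((0,T)\times D)$ for any $p<\beta$).'' This is false: strong $H^{-1}$ convergence combined with an $L^\beta$ bound does \emph{not} upgrade to strong $L^p$ convergence for any $p\ge 1$. (The sequence $f_n(x)=\sin(nx)$ converges to $0$ in $H^{-1}$ and is bounded in every $L^\beta$, yet does not converge in $L^2$.) More to the point, if $\rho_\varepsilon\to\rho$ strongly in $L^p_{t,x}$ were available at this stage, the entire effective-viscous-flux machinery of Sections 4.5--4.6 --- whose purpose is precisely to deduce strong convergence of the density --- would be unnecessary, so the claim is circular. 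What is actually available, and what the argument must use, is strong compactness of the \emph{regularized} quantity $\mathcal{A}_i[\rho_\varepsilon]$: since $\mathcal{A}_i$ gains one spatial derivative, $\mathcal{A}_i[\rho_\varepsilon]$ is bounded in $W^{1,\beta}_x$ which embeds compactly, and combined with the equicontinuity in time coming from the continuity equation one obtains strong $L^q_{t,x}$ convergence of $\mathcal{A}_i[\rho_\varepsilon]$ --- not of $\rho_\varepsilon$ itself. This, paired with the \emph{weak} convergence $a\rho_\varepsilon^\gamma+\delta\rho_\varepsilon^\beta\rightharpoonup\overline P$ in $L^{(\beta+1)/\beta}$ and the weak-$L^\beta$ continuity of $\rho_\varepsilon$ in time, is how the limit of terms such as $\tilde{\rm E}\int\psi\int_D(a\rho_\varepsilon^\gamma+\delta\rho_\varepsilon^\beta)\partial_i\phi\,\mathcal{A}_i[\rho_\varepsilon]\,dx\,dt$ is obtained, and likewise for the terms where $\rho_\varepsilon$ appears only through $\mathcal{A}[\rho_\varepsilon]$ or $\nabla\mathcal{A}[\rho_\varepsilon]$. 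You should delete the parenthetical and replace each use of ``strong convergence of $\rho_\varepsilon$'' by the correct statement about $\mathcal{A}[\rho_\varepsilon]$.
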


The idea of the proof of the above lemma is based on the Div-Curl
Lemma. First, we introduce the operator $\mathcal{A}$ by
\begin{equation*}
\mathcal{A}_i v=\Delta^{-1}[\partial_{x_i} v], \quad i=1,2,3.
\end{equation*}
Here $\Delta^{-1}$ stands for the inverse of the Laplace operator in
$\mathbb{R}^3$. Furthermore
\begin{equation*}
\mathcal{A}_j(\xi)={-i\xi_j\over|\xi|^2},\quad
\partial_{x_i}\mathcal{A}_i[v]=v.
\end{equation*}
%In addition, the operator $\mathcal{A}$ have some good properties as follows.
The classical Mikhlin multiplier theorem implies that
\begin{equation*}\label{4.11}
\|\mathcal{A}_i v\|_{W^{1,s}(D)}\le
C(s,D)\|v\|_{L^s(\mathbb{R}^3)},\quad 1<s<\infty,
\end{equation*}
\begin{equation*}\label{4.12}
\|\mathcal{A}_i v\|_{L^q(D)}\le C(q,s,D) \|\mathcal{A}_i
v\|_{W^{1,s}(D)}\le C(q,s,D)\|v\|_{L^s(\mathbb{R}^3)}, \quad {1\over
q}\ge {1\over s}-{1\over 3},
\end{equation*}
and
\begin{equation*}\label{4.13}
\|\mathcal{A}_i v\|_{L^\infty(D)}\le
C(s,D)\|v\|_{L^s(\mathbb{R}^3)}, \quad s>3.
\end{equation*}

Now, extending $\rho_\varepsilon$ to be zero outside $D$. Consider
\begin{equation*}
\varphi(t,x)=\psi(t)\phi(x)\mathcal{A}[\rho_\varepsilon],\quad
\psi\in\mathcal{D}(0,T), \quad \phi\in\mathcal{D}(D).
\end{equation*}
Note that \eqref{4.1a} ensures that $\varphi$ can be used as a test
function for \eqref{A-MHD}. Note that
$\Dv\mathcal{A}[\rho_\varepsilon]=\rho_\varepsilon$ and $\Delta
\mathcal{A}_j=\partial_j$. Similar to Section \ref{s4.2}, we have
\begin{equation}\label{4.14}
\begin{split}
&\tilde{{\rm E}}\int_0^T\psi\int_D
\phi\left[a\rho_\varepsilon^\gamma+\delta\rho_\varepsilon^\beta-(\lambda+2\mu)\Dv
u_\varepsilon\right]\rho_\varepsilon dxdt\\
=&(\lambda+\mu)\tilde{{\rm E}}\int_0^T\psi\int_D \Dv u_\varepsilon
\partial_{x_i}\phi\mathcal{A}_i[\rho_\varepsilon]
dxdt-\tilde{{\rm
E}}\int_0^T\psi\int_D(a\rho_\varepsilon^\gamma+\delta\rho_\varepsilon^\beta)\partial_{x_i}\phi
\mathcal{A}_i[\rho_\varepsilon]dxdt\\
&+\mu \tilde{{\rm E}}\int_0^T\psi\int_D
\partial_{x_j}u_\varepsilon^i\partial_{x_j}\phi\mathcal{A}_i[\rho_\varepsilon]dxdt-\tilde{{\rm E}}\int_0^T\psi\int_D\rho_\varepsilon
u_\varepsilon^iu_\varepsilon^j\partial_{x_j}\phi\mathcal{A}_i[\rho_\varepsilon]dxdt\\
&-\tilde{{\rm E}}\int_0^T\psi_t\int_D\phi\rho_\varepsilon
u_\varepsilon^i \mathcal{A}_i[\rho_\varepsilon]dxdt-\varepsilon
\tilde{{\rm E}}\int_0^T\psi\int_D \phi\rho_\varepsilon
u_\varepsilon\mathcal{A}_i[\Dv(1_D\nabla\rho_\varepsilon)]dxdt\\
&+\varepsilon \tilde{{\rm E}}\int_0^T\psi\int_D
\phi\partial_{x_j}\rho_\varepsilon\partial_{x_j}u^i_\varepsilon\mathcal{A}_i[\rho_\varepsilon]dxdt+
\mu \tilde{{\rm E}}\int_0^T\psi\int_D
u_\varepsilon\partial_{x_i}\phi\rho_\varepsilon
dxdt\\
&-\mu \tilde{{\rm E}}\int_0^T\psi\int_D
u_\varepsilon^i\partial_{x_j}\phi\partial_{x_j}\mathcal{A}_i[\rho_\varepsilon]dxdt
+ \tilde{{\rm E}} \int_0^T\psi\int_D
\phi\mathcal{A}[\rho_\varepsilon](\nabla\times B_\varepsilon)\times
B_\varepsilon dxdt\\
&+\tilde{{\rm E}}\int_0^T\psi\int_D \phi
u^i_\varepsilon\left(\rho_\varepsilon\mathcal{R}_{ij}[\rho_\varepsilon
u^j_\varepsilon]-\rho_\varepsilon
u^j_\varepsilon\mathcal{R}_{ij}[\rho_\varepsilon]\right)dxdt,
\end{split}
\end{equation}
where $\mathcal{R}_{ij}$ is the Rize operators, which is defined by
\begin{equation*}\label{4.15}
\mathcal{R}_{i,j}[v]=\partial_{x_j}\mathcal{A}_i[v]\ \ \mbox{or} \ \
\hat{\mathcal{R}}_{i,j}(\xi)={\xi_i\xi_j\over|\xi|^2}.
\end{equation*}
Here we can extend the function $\rho_\varepsilon$ by zero outside
$D$ and %admits the partial derivative with respect to $t$ such that
\begin{equation*}
\partial_t\rho_\varepsilon=
\begin{cases}
\varepsilon\Delta\rho_\varepsilon-\Dv(\rho_\varepsilon
u_\varepsilon) \ \
&\mbox{in} \ \ D,\\
0 \ \ &\mbox{in}\ \ \mathbb{R}^3\setminus D.
\end{cases}
\end{equation*}
Since $u_\varepsilon|_{\partial D}=0$, then we have
\begin{equation*}
\Dv(\rho_\varepsilon u_\varepsilon)=0 \ \ \mbox{on} \ \
\mathbb{R}^3\setminus D.
\end{equation*}
Similarly, for $\nabla\rho_\varepsilon$, we also have
\begin{equation*}
\Dv(1_{D}\nabla\rho_\varepsilon)=
\begin{cases}
\Delta \rho_\varepsilon \ \ &\mbox{in}\ \ D,\\
~~0 \ \ &\mbox{on} \ \ \mathbb{R}^3\setminus D.
\end{cases}
\end{equation*}
We have the following lemma:
\begin{lemma}\label{lemma4.3}
Assume that $\rho\in L^p(\tilde{\Omega},L^2((0,T)\times D))$ and
$u\in L^p(\tilde{\Omega},L^2(0,T; H_0^1(D)))$ solve \eqref{4.6} in
$\mathcal{D}^{\prime}((0,T)\times D)$ almost surely. Then we can
extend $(\rho, u)$ to be zero on $\mathbb{R}^3\setminus D$.
Moreover, the equation \eqref{4.6} holds in
$\mathcal{D}^{\prime}((0,T)\times\mathbb{R}^3)$ almost surely.
\end{lemma}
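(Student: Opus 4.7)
The plan is as follows. Since $u(t,\cdot)\in H_0^1(D)$ a.e. in $t$, its zero extension $\tilde u$ belongs to $H^1(\mathbb{R}^3)$ with $\tilde u\equiv 0$ on $\mathbb{R}^3\setminus D$; we analogously extend $\rho$ and $B$ by zero to obtain $\tilde\rho$ and $\tilde B$. The key structural observation is that because $u$ (and similarly $B$) vanishes on $\partial D$, no surface measure is generated when one distributionally differentiates these zero extensions, so each term of \eqref{4.6} extends naturally from $(0,T)\times D$ to $(0,T)\times\mathbb{R}^3$. The verification itself is a cutoff-and-pass-to-the-limit argument.

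The crux is the continuity equation $\eqref{4.6}_1$, which is the case actually required for the DiPerna--Lions renormalization step in the remainder of the section. Fix $\varphi\in C_0^\infty((0,T)\times\mathbb{R}^3)$ and choose a cutoff sequence $\chi_\eta\in C_0^\infty(D)$ with $0\le\chi_\eta\le 1$, $\chi_\eta\to 1$ pointwise on $D$, and $|\nabla\chi_\eta|\le C/d$ supported in the strip $\{x\in D:d(x,\partial D)<\eta\}$, where $d(\cdot,\partial D)$ denotes distance to the boundary. Since $\chi_\eta\varphi\in C_0^\infty((0,T)\times D)$ is an admissible test function, the assumed equation on $D$ yields
\begin{align*}
0=\int_0^T\!\!\int_D \chi_\eta \rho\,\partial_t\varphi\,dxdt+\int_0^T\!\!\int_D \chi_\eta \rho u\cdot\nabla\varphi\,dxdt+\int_0^T\!\!\int_D \rho u\cdot\varphi\,\nabla\chi_\eta\,dxdt.
\end{align*}
The first two integrals converge as $\eta\to 0$ to the corresponding integrals of $\tilde\rho\,\partial_t\varphi$ and $\tilde\rho\tilde u\cdot\nabla\varphi$ over $(0,T)\times\mathbb{R}^3$ by dominated convergence.

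The main obstacle, and the only place where the boundary condition $u|_{\partial D}=0$ is used essentially, is showing that the last integral vanishes as $\eta\to 0$. The natural tool here is Hardy's inequality: since $u(t,\cdot)\in H_0^1(D)$, the quotient $u(t,\cdot)/d$ belongs to $L^2(D)$ with $\|u/d\|_{L^2}\lesssim\|\nabla u\|_{L^2}$, and $|\nabla\chi_\eta|\le (C/d)\,\mathbf{1}_{\{d<\eta\}}$. Combined with $\rho\in L^\infty(0,T;L^\gamma(D))$ and $\gamma>3/2$, H\"older's inequality gives
\begin{align*}
\left|\int_0^T\!\!\int_D \rho u\cdot\varphi\,\nabla\chi_\eta\,dxdt\right| \lesssim \|\varphi\|_{L^\infty}\int_0^T \|\rho\|_{L^{\gamma}(\{d<\eta\})}\Big\|\tfrac{u}{d}\Big\|_{L^2(D)}\,dt,
\end{align*}
and the right side tends to $0$ by absolute continuity of the Lebesgue integral on the shrinking set $\{d<\eta\}$. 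This establishes $\eqref{4.6}_1$ on $\mathbb{R}^3$.

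The remaining equations $\eqref{4.6}_2$ and $\eqref{4.6}_3$ are treated in the same spirit with the same cutoff $\chi_\eta$. The nonlinear and flux terms ($\rho u\otimes u$, $(\nabla\times B)\times B$, $\overline{P}$, and the stochastic integrands $f_k(\rho,\rho u,x)$, $g_k(B,x)$) vanish identically outside $D$ and extend by zero without difficulty. The viscous terms $-\mu\Delta\tilde u$, $(\lambda+\mu)\nabla\Dv\tilde u$, $\nu\Delta\tilde B$ coincide with the zero extensions of the interior Laplacians because $\tilde u,\tilde B\in H^1(\mathbb{R}^3)$ have zero trace on $\partial D$, so Green's identity against a compactly supported test function produces no boundary concentration. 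The martingale terms $dM$, $d\tilde M$ are linear in the test function and present no new difficulty. Every boundary remainder generated by $\nabla\chi_\eta$ is controlled by the same Hardy-type estimate applied to the appropriate component, completing the extension.
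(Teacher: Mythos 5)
Your cutoff-and-Hardy argument for the continuity equation $\eqref{4.6}_1$ is correct and is exactly the standard Lions--Feireisl extension argument: test against $\chi_\eta\varphi$, pass the first two integrals by dominated convergence, and kill the remainder $\int\rho\,u\cdot\varphi\,\nabla\chi_\eta$ using $|\nabla\chi_\eta|\lesssim d^{-1}\mathbf{1}_{\{d<\eta\}}$, Hardy's inequality $\|u/d\|_{L^2(D)}\lesssim\|\nabla u\|_{L^2(D)}$, and absolute continuity of the integral of $\rho^2$ (the hypothesis $\rho\in L^2((0,T)\times D)$ lets you pair $L^2$ against $L^2$, which is cleaner than the $L^\gamma$--$L^2$ pairing you wrote). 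This is also all the paper actually needs: Subsection 4.6 invokes Lemma \ref{lemma4.3} precisely and only to conclude that the zero extensions of $(\rho,u)$ solve the continuity equation $\eqref{4.6}_1$ on $(0,T)\times\mathbb{R}^3$, which then feeds the renormalization step.

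Your final paragraph, where you claim $\eqref{4.6}_2$ and $\eqref{4.6}_3$ also extend by zero, is incorrect, and the justification offered is the wrong criterion. For $v\in H^1_0(D)$ with zero extension $\tilde v$, the distribution $\Delta\tilde v$ on $\mathbb{R}^3$ is \emph{not} the zero extension of the interior Laplacian: computing $\langle\Delta\tilde v,\psi\rangle=-\int_D\nabla v\cdot\nabla\psi$ and integrating by parts once more leaves the surface term $-\int_{\partial D}(\nabla v\cdot\n)\psi\,dS$, which vanishes only when the normal derivative vanishes on $\partial D$ --- zero \emph{trace}, which is all $H^1_0$ gives you, is not sufficient. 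Running your own cutoff argument on $\eqref{4.6}_2$ makes the failure explicit: the remainders $\int\nabla u:\psi\otimes\nabla\chi_\eta$, $(\lambda+\mu)\int(\Dv u)\,\psi\cdot\nabla\chi_\eta$, and $\int\overline{P}\,\psi\cdot\nabla\chi_\eta$ are not of Hardy type, since there is no Hardy bound for $\nabla u/d$ and $\overline{P}$ need not vanish at $\partial D$; for a density bounded below near the boundary the pressure remainder is of order $\eta^{-1}\int_{\{d<\eta\}}\overline{P}|\psi|$, which has a strictly positive limit. These terms concentrate on $\partial D$ as $\eta\to 0$, so the extended momentum and magnetic equations on $\mathbb{R}^3$ would acquire a surface-supported defect distribution. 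The zero-extension result is true, and is used in the paper, only for the mass equation; you should restrict the claim of your proof to $\eqref{4.6}_1$ and delete the final paragraph.
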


Next, we consider the test functions
\begin{equation*}
\varphi_i(t,x)=\psi(t)\phi(x)\mathcal{A}_i[\rho], \quad i=1,2,3,
\end{equation*}
where $\rho$ is zero outside $D$. Similar to \eqref{4.14}, taking
$\varphi$ as test functions for $\eqref{4.6}_2$, integrating over
$\tilde{\Omega}\times(0,T)\times D$, we obtain
\begin{equation}\label{4.16}
\begin{split}
 &\tilde{{\rm E}}\int_0^T\psi\int_D\phi\left[\overline{P}-(\lambda+2\mu)\Dv u\right]\rho
dxdt\\
= &(\lambda+\mu)\tilde{{\rm E}}\int_0^T\psi\int_D \Dv
u\partial_{x_i}\phi\mathcal{A}_i[\rho]dxdt-\tilde{{\rm
E}}\int\psi\int_D
\overline{P}\partial_{x_i}\phi\mathcal{A}_i[\rho]dxdt\\
&+\mu \tilde{{\rm
E}}\int_0^T\psi\int_D\partial_{x_j}u_i\partial_{x_j}\phi\mathcal{A}_i[\rho]dxdt-\tilde{{\rm
E}}\int_0^T\psi\int_D
\rho u_iu_j\partial_{x_j}\phi\mathcal{A}_i[\rho]dxdt\\
&-\tilde{{\rm E}}\int_0^T\psi_t\int_D\phi\rho
u_i\mathcal{A}_i[\rho]dxdt+\mu \tilde{{\rm E}}\int_0^T\psi\int_D
u_i\partial_{x_i}\phi\rho dxdt\\
&-\mu \tilde{{\rm E}}\int_0^T \psi\int_D
u_i\partial_{x_j}\phi\partial_{x_j}\mathcal{A}_i[\rho]dxdt+\tilde{{\rm
E}}\int_0^T \psi\int_D \phi
\mathcal{A}[\rho](\nabla\times B)\times B dxdt\\
&+\tilde{{\rm E}}\int_0^T \psi\int_D \phi
u_i\left(\rho\mathcal{R}_{i,j}[\rho u_j]-\rho
u_j\mathcal{R}_{i,j}[\rho]\right)dxdt.
\end{split}
\end{equation}
\noindent Here we have used the following property: Recall that
$M(t)=\rho u(t)-\rho u(0)+\int_0^t[ \Dv(\rho u\otimes
u)+\nabla\overline{P}-\mu\Delta u-(\lambda+\mu)\nabla \Dv
u-(\nabla\times B)\times B]ds$ and $M$ is a martingale. So we can
prove that $\int_0^t \psi(t)\phi(x)\mathcal{A}_i[\rho]dM=0$, where
$\psi\in C_0^\infty([0,T]), \phi(x)\in C_0^\infty(D)$. In fact,
integrating by parts, we have
$$\int_0^t \psi(t)\phi(x)\mathcal{A}_i[\rho]dM=\psi(t)\phi(x)\mathcal{A}_i[\rho]M-
\int_0^tM d(\psi(s)\phi(x)\mathcal{A}_i[\rho]).$$ We denote the term
$\int_0^tM d(\psi(s)\phi(x)\mathcal{A}_i[\rho])$ by $\int_0^t Md
N(s)$. Let $\Delta_n:=\{0=t_0\le t_1\le t_2\le \ldots\le t_{k-1} \le
t_k\le\ldots\le t_n=t\}$ be a subdivision of the time interval and
denote $\norm{\Delta_n}:=\max_{1\le k\le n}|t_k-t_{k-1}|$. Then we
have
\begin{align*}&\int_0^tM d(\psi(s)\phi(x)\mathcal{A}_i[\rho])=\sum^n_{k=1}M(t_k)[N(t_k)-N(t_{k-1})]\\
&=\sum^n_{k=1} N(t_k)M(t_k)-\sum^{n-1}_{k=1} N(t_k)M(t_k)+\sum^{n-1}_{k=0} N(t_k)M(t_k)-\sum^{n-1}_{k=0} M(t_{k+1})N(t_k)\\
&=N(t)M(t)-\sum^{n-1}_{k=0}N(t_k)[M(t_{k+1})-M(t_k)].
\end{align*}
Let $\norm{\Delta_n}\to 0$, since $N(t_k)$ is independent on
$M(t_{k+1})-M(t_k)$ and $M$ is a martingale, we have $\tilde{{\rm
E}}\int_0^tM d(\psi(s)\phi(x)\mathcal{A}_i[\rho])=\tilde{{\rm
E}}N(t)M(t)$. Then $\tilde{{\rm E}}\int_0^t
\psi(t)\phi(x)\mathcal{A}_i[\rho]dM=0$.

Now following \cite{SSA,WW} we can prove that the right-hand side of
\eqref{4.14} converges to the right-hand side of \eqref{4.16} as
$\varepsilon\to 0$, which proves Lemma \ref{lemma4.2}.
\subsection{Strong convergence of the density}\label{s4.6}

In this subsection, we will follow the idea of \cite{FNP,LLP} to
prove the strong convergence of the sequence $\rho_\varepsilon$. Our
goal is to show $\overline{P}=a\rho^\gamma+\delta\rho^\beta$.

Lemma \ref{lemma4.3} implies that we can extend $(\rho, u)$ to be
zero on $\mathbb{R}^3\setminus D$, and $$\rho\in
L^p(\tilde{\Omega},L^2(0,T; L^2(D))), \quad u\in L^p(\tilde{\Omega},
L^2(0,T; H_0^1(D)))$$ solve the continuity equation $\eqref{4.6}_1$
in $\mathcal{D}^\prime((0,T)\times\mathbb{R}^3)$ almost surely.
Taking a regularizing sequence $\vartheta_m=\vartheta_m(x)$ and then
using $S_m$ to test $\eqref{4.6}_1$, we have
\begin{equation}\label{4.31}
\partial_t S_m[\rho]+\Dv\left[S_m(\rho)u\right]=r_m \ \ \mbox{on} \ \
(0,T)\times\mathbb{R}^3.
\end{equation}
Here $S_m[v]=\vartheta_m*v$ is the standard smoothing operator. We
note that \cite[Lemma 2.3]{LLP1} implies that the remainder term
$r_m\to 0$ in $L^1(\tilde{\Omega}\times(0,T)\times\mathbb{R}^3)$ as
$m\to\infty$.

For any $b$ satisfying the conditions in Definition
\ref{martingale-solution}, multiplying \eqref{4.31} by
$b^\prime(S_m[\rho])$ and passing to the limit as $m\to\infty$, we
can infer that $(\rho, u)$ solves $\eqref{4.6}_1$ in the sense of
renormalized solutions. Furthermore, taking $b(z)=z\ln z$ and then
integrating \eqref{r-equation} over $\tilde{\Omega}\times(0,T)\times
D$, one has
\begin{equation}\label{4.32}
\tilde{{\rm E}}\int_0^T\int_D \rho\Dv udxdt=\tilde{{\rm E}}\int_D
\rho_0\ln \rho_0dx-\tilde{{\rm E}}\int_0^T\rho(T)\ln\rho(T)dx.
\end{equation}
Since $(\rho_\varepsilon, u_\varepsilon)$ satisfies the equation
$\eqref{A-MHD}_1$ a.e. on $\tilde{\Omega}\times(0,T)\times D$,
multiplying $\eqref{A-MHD}_1$ with $b^\prime(\rho_\varepsilon)$,
then for any $b$ convex and globally Lipschitz on $\mathbb{R}^+$, we
have
\begin{equation*}
\partial_t b(\rho_\varepsilon)+\Dv(b(\rho_\varepsilon)u_\varepsilon)+\left[b^\prime(\rho_\varepsilon)\rho_\varepsilon-b(\rho_\varepsilon)\right]\Dv
u_\varepsilon-\varepsilon\Delta b(\rho_\varepsilon)=-\varepsilon
b^{\prime\prime}(\rho_\varepsilon)|\nabla\rho_\varepsilon|^2\le 0.
\end{equation*}
Integrating the above on $\tilde{\Omega}\times(0,T)\times D$, the
boundary condition yields
\begin{equation*}
\tilde{{\rm
E}}\int_0^T\int_D\left[b^\prime(\rho_\varepsilon)\rho_\varepsilon-b(\rho_\varepsilon)\right]\Dv
u_\varepsilon dxdt\le \tilde{{\rm E}}\int_D b(\rho_0)dx-\tilde{{\rm
E}}\int_D b(\rho_\varepsilon(T))dx.
\end{equation*}
Let $b(z)=z\ln z$ in the above inequality, then
\begin{equation}\label{4.33}
\begin{split}
\varlimsup_{\varepsilon\to 0}\tilde{{\rm E}}\int_0^T\int_D
\rho_\varepsilon\Dv u_\varepsilon dxdt&\le \tilde{{\rm E}}\int_D
\rho_0\ln \rho_0dx-\varliminf_{\varepsilon\to 0}\tilde{{\rm
E}}\int_D
\rho_\varepsilon(T)\ln\rho_\varepsilon(T)dx\\
&\le \tilde{{\rm E}}\int_0^T\int_D\rho\Dv u\,dxdt.
\end{split}
\end{equation}

Taking two nondecreasing sequences $0\le\psi_n
\in\mathcal{D}(0,T),\; 0\le\phi_n\in\mathcal{D}(D)$ satisfying
\begin{align*}
&\psi_n\to 1, \quad \phi_n \to 1\ \ \mbox{as} \ \ n\to\infty;\;
\psi_n(t)=1 \ \ \mbox{for} \ \ t\ge{1\over n} \ \ \mbox{or}\ \ t \le
T-{1\over n}; \\
&\qquad\qquad\quad\phi_n(x)=1 \ \ \mbox{for} \ \ x\in D, \;
\mbox{dist}(x,\partial D)\ge{1\over n}.
\end{align*}
Combining Lemma \ref{lemma4.2}, \eqref{4.32} and \eqref{4.33}, for
all $m\le n$, we obtain
\begin{align*}
&\varlimsup_{\varepsilon\to 0}\tilde{{\rm E}}\int_0^T
\psi_m\int_D\phi_m\left(a\rho_\varepsilon^\gamma+\delta\rho_\varepsilon^\beta\right)\rho_\varepsilon
dxdt\le\varlimsup_{\varepsilon\to 0+}\tilde{{\rm
E}}\int_0^T\psi_n\int_D\phi_n\left(a\rho_\varepsilon^\gamma+\delta\rho_\varepsilon^\beta\right)\rho_\varepsilon
dxdt\\
& \le\varlimsup_{\varepsilon\to 0}\tilde{{\rm
E}}\int_0^T\psi_n\int_D\phi_n\left[a\rho_\varepsilon^\gamma+\delta\rho_\varepsilon^\beta-(\lambda+2\mu)\Dv
u_\varepsilon\right]\rho_\varepsilon
dxdt\\
&\quad +(\lambda+2\mu)\varlimsup_{\varepsilon\to 0}\tilde{{\rm
E}}\int_0^T \psi_n\int_D\phi_n\rho_\varepsilon\Dv u_\varepsilon
dxdt\\
&\le \tilde{{\rm
E}}\int_0^T\psi_n\int_D\phi_n[\overline{P}-(\lambda+2\mu)\Dv u]\rho
dxdt+(\lambda+2\mu)\varlimsup_{\varepsilon\to 0}\tilde{{\rm
E}}\int_0^T\int_D\rho_\varepsilon|1-\psi_n\phi_n|\;|\Dv
u_\varepsilon|dxdt\\
&\quad +(\lambda+2\mu)\varlimsup_{\varepsilon\to 0}\tilde{{\rm E}}\int_0^T\int_D\rho_\varepsilon\Dv u_\varepsilon dxdt\\
&\le \tilde{{\rm E}}\int_0^T\int_D \overline{P}\rho
dxdt+(\lambda+2\mu)\tilde{{\rm E}}\int_0^T\int_D \rho\Dv udxdt-(\lambda+2\mu)\tilde{{\rm E}}\int_0^T\int_D \rho\Dv udxdt+\eta(n)\\
&\le \tilde{{\rm E}}\int_0^T\int_D \overline{P}\rho dxdt+\eta(n),
\end{align*}
where $\eta(n)\!=\!\tilde{{\rm E}}\!\int_0^T\!\!\int_D
(\psi_n\phi_n-1)\!\!\left[\overline{P}\!-\!(\lambda+2\mu)\Dv
u\right]\!\rho dxdt+(\lambda+2\mu)\tilde{{\rm
E}}\!\int_0^T\!\!\int_D\! \rho_\varepsilon|1-\psi_n\phi_n|\;|\Dv
u_\varepsilon|dxdt\to 0$ as $n\to\infty$. Passing to the limit in
the above inequality as $n\to\infty$, one deduces that
\begin{equation*}
\varlimsup_{\varepsilon\to 0+}\tilde{{\rm E}}\int_0^T
\psi_m\int_D\phi_m\left(a\rho_\varepsilon^\gamma+\delta\rho_\varepsilon^\beta\right)\rho_\varepsilon
dxdt\le \tilde{{\rm E}}\int_0^T\int_D \overline{P}\rho dxdt \ \
\mbox{for all} \ \ m=1,2,\ldots
\end{equation*}
Since $P(z)=az^\gamma+\delta z^\beta$ is monotone, then we have
\begin{equation*}
\tilde{{\rm E}}\int_0^T \psi_m\int_D
\phi_m\left[P(\rho_\varepsilon)-P(v)\right](\rho_\varepsilon-v)dxdt\ge
0.
\end{equation*}
Furthermore, the facts $\rho_\varepsilon\rightharpoonup\rho$ and
$P(\rho_\varepsilon)\rightharpoonup\overline{P}$ imply that
\begin{equation*}
\tilde{{\rm E}}\int_0^T\psi_m\int_D\phi_m \overline{P}\rho
dxdt+\tilde{{\rm E}}\int_0^T\psi_m\int_D\phi_mP(v)vdxdt-\tilde{{\rm
E}}\int_0^T\psi_m\int_D\phi_m\left[\overline{P}v+P(v)\rho\right]
dxdt\ge 0.
\end{equation*}
Now, letting $m\to\infty$ in the above inequality, one has
\begin{equation*}
\tilde{{\rm
E}}\int_0^T\int_D\left[\overline{P}-P(v)\right](\rho-v)dxdt\ge 0.
\end{equation*}
Choosing $v=\rho+\alpha\varphi$, for an arbitrary $\varphi$,  and
then letting $\alpha\to 0$, we have
\begin{equation*}
\overline{P}=a\rho^\gamma+\delta\rho^\beta.
\end{equation*}
That is,
\begin{equation*}
a\rho_\varepsilon^\gamma+\delta\rho_\varepsilon^\beta\rightharpoonup
a\rho^\gamma+\delta\rho^\beta \ \ \mbox{in}\ \
L^{\beta+1\over\beta}(\tilde{\Omega}\times(0,T)\times D).
\end{equation*}
Then we have
\begin{equation}\label{5a}
\rho_\varepsilon\to\rho \ \ \mbox{in} \ \
L^s(\tilde{\Omega}\times(0,T)\times D),\; \forall \ 1\le s<\beta+1.
\end{equation}
Similar to the proof of \eqref{P-a.s.12}, by \eqref{assumption},
H\"{o}lder's inequality, \eqref{4.6d1}, \eqref{4.6e1}, \eqref{5a},
we can deduce that
\begin{align}\label{P-a.s.11b}
\begin{split}
\left\langle \sum_{k\ge1}f_k(\rho_\varepsilon,\rho_\varepsilon
u_\varepsilon,x),\phi\right\rangle^2\to \left\langle
\sum_{k\ge1}f_k(\rho,\rho u,x),\phi\right\rangle^2 \ \ \mbox{in} \ \ L^1([0,T])\ \ \tilde{\textrm{P}}-\text{a.s.},\\
\left\langle \sum_{k\ge1}g_k(B_\varepsilon,x),\phi\right\rangle^2\to
\left\langle \sum_{k\ge1}g_k(B,x),\phi\right\rangle^2 \ \ \mbox{in}
\ \ L^1([0,T])\ \ \tilde{\textrm{P}}-\text{a.s.}.
\end{split}
\end{align}
Note that
\begin{equation}\label{3.88d}
\begin{split}
&\tilde{{\rm E}}\left[\left(\langle
M_\varepsilon(t),\phi\rangle^2-\langle
M_\varepsilon(s),\phi\rangle^2-\int_s^t\sum_{k\ge1}\langle
f_k(\rho_\varepsilon,\rho_\varepsilon u_\varepsilon,x),
\phi\rangle^2d\tau\right
)\varphi_\varepsilon\right]=0,\\
&\tilde{{\rm E}}\left[\left(\langle
\tilde{M}_\varepsilon(t),\phi\rangle^2-\langle
\tilde{M}_\varepsilon(s),\phi\rangle^2-\int_s^t\sum_{k\ge1}\langle
g_k(
B_\varepsilon,x),\phi\rangle^2d\tau\right)\varphi_\varepsilon\right]=0.
\end{split}
\end{equation}
Then, it follows from \eqref{P-a.s.11b}, \eqref{3.88d}, the uniform
integrability, Proposition \ref{Propo2} and Lemma \ref{Lemma2} that
\begin{equation*}\label{3.88f}
\begin{split}
&\tilde{{\rm E}}\left[\left(\langle M(t),\phi\rangle^2-\langle
M(s),\phi\rangle^2-\int_s^t\sum_{k\ge1}\langle f_k(\rho,\rho
u,x),\phi\rangle^2d\tau\right
)\varphi(\beta_k,\rho,u,B)\right]\\
&=\lim_{\varepsilon\to 0}\tilde{{\rm E}}\left[\left(\langle
M_\varepsilon(t),\phi\rangle^2-\langle
M_\varepsilon(s),\phi\rangle^2-\int_s^t\sum_{k\ge1}\langle
f_k(\rho_\varepsilon,\rho_\varepsilon
u_\varepsilon,x),\phi\rangle^2d\tau\right
)\varphi_\varepsilon\right]=0,\\
&\tilde{{\rm E}}\left[\left(\langle
\tilde{M}(t),\phi\rangle^2-\langle
\tilde{M}(s),\phi\rangle^2-\int_s^t\sum_{k\ge1}\langle g_k(B,x),
\phi\rangle^2d\tau\right
)\varphi(\beta_k,\rho,u,B)\right]\\
&=\lim_{\varepsilon\to 0}\tilde{{\rm E}}\left[\left(\langle
\tilde{M}_\varepsilon(t),\phi\rangle^2-\langle
\tilde{M}_\varepsilon(s),\phi\rangle^2-\int_s^t\sum_{k\ge1}\langle
g_k(B_\varepsilon,x), \phi\rangle^2d\tau\right
)\varphi_\varepsilon\right]=0.
\end{split}
\end{equation*}
Hence we deduce that $\langle
M(t),\phi\rangle^2-\int_0^t\sum_{k\ge1}\langle f_k(\rho,\rho
u,x),\phi\rangle^2ds$; $\langle \tilde{M}(t),\phi\rangle$ and
$\langle \tilde{M}(t),\phi\rangle^2-\int_0^t\sum_{k\ge1}\langle
g_k(B,x),\phi\rangle^2ds$ are continuous martingales.
%By Theorem 5.13
%in \cite{KS-1991}, we know that there is a unique continuous process
%of bounded variation $\langle M,M\rangle_t$ such that $M^2_t-\langle
%M,M\rangle_t$ is a martingale, then $$\langle \langle
%M(t),\phi\rangle\rangle_t=\int_0^t\sum_{k=1}\langle f_k(\rho,\rho
%u,x),\phi\rangle^2ds,\; \langle \langle
%\tilde{M}(t),\phi\rangle\rangle_t=\int_0^t\sum_{k=1}\langle
%g_k(B,x),\phi\rangle^2ds.$$
%
Using again the method in \cite{BO,OM} we can also infer that
$$\langle M(t),\phi\rangle=\int_0^t\langle \sum_{k\ge 1} f_k(\rho,\rho u,x),\phi\rangle d\beta_k^1,\;\langle
\tilde{M}(t),\phi\rangle=\int_0^t\langle \sum_{k\ge 1} g_k(B,x),\phi
\rangle d\beta_k^2.$$

Summing up the above results, we have the following proposition:

\begin{proposition}\label{proposition4.1}
Assume that $D\in C^{2+\alpha}$ is a bounded domain. If
$\beta>\max\{{6\gamma\over 2\gamma-3},\gamma,4\}$, then there exists
a finite energy martingale solution
$((\tilde{\Omega},\tilde{\mathscr{F}},\tilde{\textrm{P}}),\beta_k,\rho,
u,B)$ of the problem:
\begin{align}\label{4.21}
\begin{cases}
d\rho+\Dv(\rho u)dt=0,\\
d(\rho u)+\left[\Dv(\rho u\otimes
u)+a\nabla\rho^\gamma+\delta\nabla\rho^\beta-\mu\Delta
u+(\lambda+\mu)\nabla\Dv u\right]dt\\
\quad=(\nabla\times B)\times B dt+\sum_{k\ge1}f_k(\rho,\rho u,x)d\beta_k^1,\\
dB-[\nabla\times(u\times B)+\nu\Delta
B]dt=\sum_{k\ge1}g_k(B,x)d\beta_k^2,
\end{cases}
\end{align}
$\tilde{\textrm{P}}$-a.s. and $(\rho, u, B)$ satisfies the
initial-boundary valued conditions \eqref{A-boundary},
\eqref{A-initialdata}.
Moreover, if $(\rho, u)$ is prolonged to be zero on
$\mathbb{R}^3\setminus D$, then the equation $\eqref{4.21}_1$ holds
in sense of renormalized solutions on
$\mathcal{D}^\prime((0,T)\times\mathbb{R}^3)$
$\tilde{\textrm{P}}$-a.s.. Finally, $(\rho, u, B)$ satisfies the
following estimates:
\begin{align*}
&\tilde{{\rm E}} \Big
(\sup_{t\in[0,T]}\|\rho(t)\|^\gamma_{L^\gamma(D)} \Big )^p\le C,\;
\tilde{{\rm E}} \Big
(\delta\sup_{t\in[0,T]}\|\rho(t)\|^\beta_{L^\beta(D)} \Big )^p\le
C,\\
&\tilde{{\rm E}}\Big (\sup_{t\in [0,T]}\|\sqrt{\rho} u\|^2_{L^2(D)}
\Big )^p\le C,\; \tilde{{\rm E}}\Big
(\|\rho\|_{L^{\beta+1}((0,T)\times D)} \Big )^p\le C,
\\
&\tilde{{\rm E}} \Big (\|u\|^2_{L^2(0,T; H_0^1(D))} \Big )^p\le C,\;
\tilde{{\rm E}} \Big (\norm{B}_{L^\infty(0,T;
L^2(D))}+\|B\|^2_{L^2(0,T; H_0^1(D))} \Big )^p\le C,
\end{align*}
where the constant $C$ is independent of $\delta>0$.
\end{proposition}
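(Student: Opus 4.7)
The plan is to start from the martingale solutions $(\rho_\varepsilon, u_\varepsilon, B_\varepsilon)$ of the fully regularized system \eqref{A-MHD}--\eqref{A-initialdata} constructed in Section \ref{section3}, and then pass to the limit $\varepsilon \to 0$ while keeping $\delta$ fixed. The energy estimates \eqref{4.1a}, inherited from Lemma \ref{lemma3.3}, only give $\rho_\varepsilon \in L^\infty(0,T;L^\beta(D))$, so the pressure sequence $a\rho_\varepsilon^\gamma + \delta \rho_\varepsilon^\beta$ is only bounded in $L^\infty(0,T;L^1(D))$, which is insufficient to identify its weak limit. The first step is therefore to upgrade the spatial integrability of the density via the Bogovskii operator $\mathcal{B}$ from Section \ref{s4.1}, testing the momentum equation against $\psi(t)\mathcal{B}[\rho_\varepsilon - \bar{m}]$ and controlling every term (including the stochastic one by Burkholder--Davis--Gundy combined with \eqref{assumption}) to obtain the uniform bound \eqref{4.6a}, i.e. $\rho_\varepsilon \in L^{\beta+1}(\tilde{\Omega}\times(0,T)\times D)$.

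Once the sharper density estimate is in hand, I would use Lemma \ref{Lemma-Tight1} together with the estimates on the momentum equation to prove tightness of the joint laws of $(\beta_{k,\varepsilon}, \rho_\varepsilon, u_\varepsilon, \rho_\varepsilon u_\varepsilon, B_\varepsilon)$ on the product space $S$, the main new point being that the artificial viscosity term $\varepsilon \nabla u_\varepsilon \cdot \nabla \rho_\varepsilon$ vanishes in $L^1$ by \eqref{4.1a}, so the momentum equation remains equi-continuous in a suitable $C^{0,\alpha'}([0,T]; H^{-\ell})$ norm. Jakubowski--Skorokhod then yields a new probability space with random variables $(\tilde\beta_{k,\varepsilon}, \tilde\rho_\varepsilon, \tilde u_\varepsilon, \tilde\rho_\varepsilon \tilde u_\varepsilon, \tilde B_\varepsilon)$ converging almost surely to $(\beta_k, \rho, u, h, B)$. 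Dropping the tildes, I would identify $h = \rho u$ as in Section \ref{s3.4}, pass to the limit in all linear and quadratic terms using \eqref{4.6c1}--\eqref{4.6e3}, show $\varepsilon \Delta \rho_\varepsilon \to 0$ and $\varepsilon \nabla u_\varepsilon \cdot \nabla \rho_\varepsilon \to 0$, and extract a weak limit $\overline{P} \in L^{(\beta+1)/\beta}$ of $a\rho_\varepsilon^\gamma + \delta \rho_\varepsilon^\beta$; the stochastic terms pass to the limit by the martingale-problem/Stroock--Varadhan-type argument already used at the end of Section \ref{section3}, based on the uniform integrability of the bracket processes.

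The heart of the proof, and the genuine obstacle, is to identify $\overline{P}=a\rho^\gamma+\delta\rho^\beta$, which is equivalent to the strong convergence $\rho_\varepsilon \to \rho$ in $L^1$. For this I would follow the Feireisl--Lions strategy: derive the effective viscous flux identity (Lemma \ref{lemma4.2}) by testing both the approximate and limit momentum equations against $\psi(t)\phi(x)\mathcal{A}_i[\rho_\varepsilon]$ and $\psi(t)\phi(x)\mathcal{A}_i[\rho]$ respectively, where $\mathcal{A}=\nabla \Delta^{-1}$. The key compensated-compactness ingredient is the commutator identity
\[
\phi u^i_\varepsilon\bigl(\rho_\varepsilon \mathcal{R}_{ij}[\rho_\varepsilon u^j_\varepsilon] - \rho_\varepsilon u^j_\varepsilon \mathcal{R}_{ij}[\rho_\varepsilon]\bigr) \to \phi u^i\bigl(\rho \mathcal{R}_{ij}[\rho u^j] - \rho u^j \mathcal{R}_{ij}[\rho]\bigr),
\]
whose justification uses $\beta>6\gamma/(2\gamma-3)$ to place all terms in compatible Lebesgue spaces. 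The martingale property of the stochastic integral in the momentum equation implies that its contribution vanishes after testing against $\psi\phi\mathcal{A}_i[\rho]$ and taking expectation, as detailed in the Riemann-sum computation in Section \ref{s4.5}. The stochastic terms pose the main new difficulty compared to the deterministic case and require Proposition \ref{Propo2} and Vitali's theorem (Lemma \ref{Lemma2}) at every limit step.

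Finally, combining the effective viscous flux identity with the renormalized continuity equation (obtained by regularizing \eqref{4.6}$_1$ and applying DiPerna--Lions' commutator lemma, then choosing $b(z)=z\ln z$) yields the crucial inequality
\[
\varlimsup_{\varepsilon\to 0}\tilde{\mathrm E}\!\int_0^T\!\!\int_D (a\rho_\varepsilon^\gamma+\delta\rho_\varepsilon^\beta)\rho_\varepsilon\,dx\,dt \le \tilde{\mathrm E}\!\int_0^T\!\!\int_D \overline{P}\,\rho\,dx\,dt,
\]
and monotonicity of $z\mapsto az^\gamma+\delta z^\beta$ then forces $\overline{P}=a\rho^\gamma+\delta\rho^\beta$ and $\rho_\varepsilon\to\rho$ strongly in $L^s$ for every $s<\beta+1$. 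With strong density convergence in hand, the identification of the stochastic limits in \eqref{4.21}$_2$ via the sequence of martingale identities \eqref{3.88d} closes the argument; the final energy estimates follow directly by passing to the limit in \eqref{4.1a} and \eqref{4.6a} via Fatou's lemma.
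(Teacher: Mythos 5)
Your proposal is correct and follows essentially the same route as the paper: Bogovskii estimate for $L^{\beta+1}$ integrability, tightness and Jakubowski--Skorokhod, the effective viscous flux identity via $\mathcal{A}=\nabla\Delta^{-1}$ and the Riesz commutator, the renormalized continuity equation with $b(z)=z\ln z$, monotonicity of the pressure to identify $\overline{P}$, and the martingale-identity argument to pass to the limit in the stochastic integrals. This is the structure of Section~\ref{section4} in the paper, and your sketch hits all the key lemmas in the correct order.
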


%%%%%%%%%%%%%%%%%%%%%%%%%%%%%%%%%%%%%%%%%%%%%%%%%%%%%%%%%%%%%%%%%%%
\section{Passing to the Limit in the Artificial Pressure Term}\label{section5}
%%%%%%%%%%%%%%%%%%%%%%%%%%%%%%%%%%%%%%%%%%%%%%%%%%%%%%%%%%%%%%%%%%%

In this section,  we shall follow the idea of \cite{FNP, LLP} to
pass to the limit in \eqref{4.21} as $\delta\to 0$ for the
artificial pressure term and relax the hypotheses on the initial
data $(\rho_{0,\delta}, m_{0,\delta}, B_{0,\delta})$, that is,
\eqref{A-boundary} and \eqref{A-initialdata}. First, from
\eqref{C-initialdata}, we have
\begin{align*}
\rho_{0,\delta}\to\rho_0 \ \ \text{in $L^\gamma(D)$}, \ \
m_{0,\delta}\to m_0 \ \ \text{in $L^1(D)$}, \ \ B_{0,\delta}\to B_0
\ \ \text{in $L^2(D)\ \  \tilde{\textrm{P}}$}-\text{a.s.}  \ \
\mbox{as} \ \ \delta\to 0.
\end{align*}
Next, we consider the problem \eqref{4.21} with the initial data
$(\rho_{0,\delta}, m_{0,\delta}, B_{0,\delta})$. From Proposition
\ref{proposition4.1}, we obtain the existence of martingale
solution, denoted by $(\beta_{k,\delta},\rho_\delta,u_\delta,
B_\delta)$. From \eqref{C-initialdata}, $\mathscr{E}_{0,\delta}$ is
bounded uniformly in $\delta$. Hence, the estimates in Proposition
\ref{proposition4.1} hold independently of $\delta$.
%%%%%%%%%%%%%%%%%%%%%%%%%%%%%%%%%%%%%%%%%%%%%%%%%%%%%%%%%%%%%%%%%%%%%%%%%%%%%%%%%%%%%
\subsection{On integrability of the density}\label{s5.1}
First, we shall estimate the density $\rho_\delta$, uniformly in %independently of
$\delta>0$. Recall
\begin{align}\label{5.5a}
\partial_t S_m[b(\rho_\delta)]+\Dv(S_m[b(\rho_\delta)]u_\delta)+S_m[(b^\prime(\rho_\delta)\rho_\delta-b(\rho_\delta))\Dv
u_\delta]=r_m,
\end{align}
where
\begin{align}\label{5.6a}
r_m\to 0\ \ \mbox{in}\ \ L^2(\Omega, L^2(0,T;L^2(\mathbb{R}^3))) \ \
\mbox{ as} \ \ m\to\infty,
\end{align}
and $b$ is uniformly bounded.

Denote $\dashint_D fdx={1\over |D|}\int_D fdx$.
Similar to Section \ref{s4.2}, %define $\fint_D fdx={1\over |D|}\int_D fdx$,
we use the operator $\mathcal{B}$ introduced in Section \ref{s4.1} to
construct the following test functions:
\begin{equation*}
\varphi_i(t,x)=\psi(t)\mathcal{B}_i\left[S_m[b(\rho_\delta)]-\dashint_D
S_m[b(\rho_\delta)]dx\right], \; i=1,2,3, \quad
\psi\in\mathcal{D}(0,T).
\end{equation*}
Note that $\varphi_i|_{\partial D}=0$, $\varphi_i\in
L^\infty(0,T;H_0^1(D))$, $\partial_t\varphi_i\in L^2(0,T;
H^1_0(D))$. Thus, we can use $\varphi_i$ as test functions for the
equation $\eqref{4.21}_2$. Using \eqref{5.5a}, we obtain
\begin{align}\label{5.6b}
\notag {\rm E}&\int_0^T\psi\int_D
\left(a\rho^\gamma_\delta+\delta\rho_\delta^\beta\right)S_m[b(\rho_\delta)]dxdt\\
\notag=&{\rm E}\int_0^T\psi\int_D
\left(a\rho^\gamma_\delta+\delta\rho_\delta^\beta\right)dx\dashint_D
S_m[b(\rho_\delta)]dx dt+(\lambda+\mu){\rm E}\int_0^T\psi\int_D
S_m[b(\rho_\delta)]\Dv u_\delta dxdt\\
\notag&-{\rm E}\int_0^T\psi_t\int_D \rho_\delta
u^i_\delta\mathcal{B}_i\left[S_m[b(\rho_\delta)]-\dashint_D
S_m[b(\rho_\delta)]dx\right] dxdt\\
\notag&+{\rm E}\int_0^T\psi\int_D
\left(\mu\partial_{x_j}u_\delta^i-\rho_\delta
u_\delta^iu_\delta^j\right)\partial_{x_j}\mathcal{B}_i\left[S_m[b(\rho_\delta)]-\dashint_D
S_m[b(\rho_\delta)]dx\right] dxdt\\
\notag&+{\rm E}\int_0^T\!\psi\!\int_D\rho_\delta
u_\delta^i\mathcal{B}_i\left[S_m[(b(\rho_\delta)-b^\prime(\rho_\delta)\rho_\delta)\Dv
u_\delta]\!-\dashint_D
S_m[(b(\rho_\delta)-b^\prime(\rho_\delta)\rho_\delta)\Dv
u_\delta]dx\right]dxdt\\
\notag&-{\rm E}\int_0^T\psi\int_D\rho_\delta
u_\delta^i\mathcal{B}_i\left[r_m-\dashint_D
r_mdx\right]+{\rm E}\int_0^T\psi\int_D \rho_\delta
u_\delta^i\mathcal{B}_i\left[\Dv\left(S_m[b(\rho_\delta)]u_\delta\right)\right]
dxdt\\
&+{\rm E}\int_0^T\psi\int_D\mathcal{B}_i\left[S_m[b(\rho_\delta)]-\dashint_D
S_m[b(\rho_\delta)]dx\right](\nabla\times B_\delta)\times B_\delta
dxdt.
\end{align}

Using \eqref{5.6a}, we can pass to the limit  in \eqref{5.6b} for
$m\to\infty$. Moreover, we can apply the function $z^\theta\approx
b(z)$ to obtain
\begin{equation*}
{\rm E}\int_0^T\psi\int_D
\left(a\rho_\delta^{\gamma+\theta}+\delta\rho_\delta^{\beta+\theta}\right)dxdt:=
\sum_{i=1}^8J_i,
\end{equation*}
for some $\theta>0$ to be determined below. Now, we just estimate
the term $J_8$ on the right hand side of the above identity. Other
terms can be bounded as in \cite{WW}. For the term $J_8$, if
$\theta<{\gamma\over3}$, using Proposition \ref{proposition4.1} and
\eqref{4.1}, together with the embedding $W^{1,p}(D)\subset
L^\infty(D)$ for $p>3$, we have
\begin{align*}
J_8&\le {\rm E}\left|\int_0^T \int_D
\psi\mathcal{B}_i\left[\rho_\delta^\theta-\dashint_D
\rho_\delta^\theta
dx\right](\nabla\times B_\delta)\times B_\delta dxdt\right|\\
&\lesssim {\rm E}\int_0^T |\psi| \|\nabla
B_\delta\|_{L^2(D)}\|B_\delta\|_{L^2(D)}\left\|\mathcal{B}_i\left[\rho_\delta^\theta
-\dashint_D\rho_\delta^\theta dx\right]\right\|_{L^\infty(D)}dt\\
&\lesssim {\rm E}\int_0^T |\psi|\|\nabla
B_\delta\|_{L^2(D)}\|B_\delta\|_{L^2(D)}\left\|\mathcal{B}_i\left[\rho_\delta^\theta-\dashint_D
\rho_\delta^\theta dx\right]\right\|_{W^{1,{\gamma\over\theta}}(D)}dt\\
&\lesssim {\rm E}\int_0^T
|\psi|\|\nabla B_\delta\|_{L^2(D)}\|B_\delta\|_{L^2(D)}\left\|\rho_\delta^\theta\right\|_{L^{\gamma\over\theta}(D)}dt\\
&\lesssim {\rm E}\left(\|\nabla B_\delta\|_{L^2(0,T;L^2(D))}\|B_\delta\|_{L^2(0,T;L^2(D))}\|\rho_\delta\|^\theta_{L^{\gamma}(D)}\right)\\
&\lesssim \left({\rm E}\|\nabla B_\delta\|_{L^2(0,T;L^2(D))}^3
\right)^{1\over3} \left({\rm E}
\|B_\delta\|^3_{L^2(0,T;L^2(D))}\right)^{1\over3}\left({\rm E}
\|\rho_\delta\|^{3\theta}_{L^{\gamma}(D)}\right)^{1\over3}\\
&\le C.
\end{align*}
Here the constant $C$ is independent of $\delta$.
\if false For the term
$J_1$, if $\theta\le\gamma$, it follows from \eqref{4.37} that
\begin{align*}
J_1&=E\int_0^T\psi\int_D\left(a\rho^\gamma_\delta+\delta\rho^\beta_\delta
\right)dx\dashint_D \rho_\delta^\theta dxdt\\
&\lesssim
E\left(\|\rho_\delta\|^{\gamma+\theta}_{L^\infty(0,T,L^\gamma(D))}+
\|\rho_\delta\|^{\theta}_{L^\infty(0,T,L^\gamma(D))}\|\delta\rho_\delta\|^{\beta}_{L^\infty(0,T,L^\beta(D))}\right)\\
&\lesssim
E\left(\|\rho_\delta\|^{2(\gamma+\theta)}_{L^\infty(0,T,L^\gamma(D))}\right)+E\left(\|\rho_\delta\|^{2\theta}_{L^\infty(0,T,L^\gamma(D))}\right)
+E\left(\|\delta\rho_\delta\|^{2\beta}_{L^\infty(0,T,L^\beta(D))}\right)\\
&\le C.
\end{align*}
The constant $C$ is independent of $\delta$ in the above estimate as
well as all the estimates below. Similarly, for the term $J_2$, if
$\theta\le {\gamma\over 2}$, by using \eqref{4.37} and \eqref{4.40},
we have
\begin{align*}
J_2&\lesssim E\left|\int_0^T\int_D\psi\rho_\delta^\theta\Dv u_\delta
dxdt\right|\lesssim E\left(\int_0^T \psi\|\nabla
u_\delta\|_{L^2(D)}\|\rho_\delta^\theta\|_{L^2(D)}dt\right)\\
&\lesssim
E\left(\|\rho_\delta\|^\theta_{L^\infty(0,T;L^\gamma(D))}\|\nabla
u_\delta\|_{L^2(0,T;L^2(D))}\right)\\
&\lesssim
E\left(\|\rho_\delta\|^{2\theta}_{L^\infty(0,T;L^\gamma(D))}\right)+CE\left(\|\nabla
u_\delta\|^2_{L^2(0,T;L^2(D))}\right)\\
&\le C.
\end{align*}
For the term $J_3$, if $\theta<{\gamma\over3}$, using \eqref{4.37},
\eqref{4.39} and \eqref{4.1}, together with the embedding
$W^{1,p}(D)\subset L^\infty(D)$ for $p>3$, one has
\begin{align*}
J_3&\le E\left|\int_0^T \int_D \psi_t\rho_\delta
u^i_\delta\mathcal{B}_i\left[\rho_\delta^\theta-\dashint_D
\rho_\delta^\theta
dx\right]dxdt\right|\\
&\lesssim E\int_0^T
|\psi_t|\|\sqrt{\rho_\delta}\|_{L^2(D)}\|\sqrt{\rho_\delta}u_\delta\|_{L^2(D)}\left\|\mathcal{B}_i\left[\rho_\delta^\theta-
\dashint_D\rho_\delta^\theta dx\right]\right\|_{L^\infty(D)}dt\\
&\lesssim E\int_0^T
|\psi_t|\|\sqrt{\rho_\delta}\|_{L^2(D)}\|\sqrt{\rho_\delta}u_\delta\|_{L^2(D)}\left\|\mathcal{B}_i\left[\rho_\delta^\theta-
\dashint_D\rho_\delta^\theta dx\right]\right\|_{W^{1,{\gamma\over\theta}}(D)}dt\\
&\lesssim E\int_0^T
|\psi_t|\|\sqrt{\rho_\delta}\|_{L^2(D)}\|\sqrt{\rho_\delta}u_\delta\|_{L^2(D)}\left\|\rho_\delta^\theta\right\|_{L^{\gamma\over\theta}(D)}dt\\
&\lesssim E\left(\|\sqrt{\rho_\delta}\|_{L^2(D)}\|\sqrt{\rho_\delta}u_\delta\|_{L^2(D)}\left\|\rho_\delta\right\|^\theta_{L^{\gamma}(D)}\right)\\
&\lesssim \left(E\|\sqrt{\rho_\delta}\|^3_{L^2(D)} \right)^{1\over3}
\left(E\|\sqrt{\rho_\delta}u_\delta\|^3_{L^2(D)}
\right)^{1\over3}\left(E
\|\rho_\delta\|^{3\theta}_{L^{\gamma}(D)}\right)^{1\over3}\\
&\le C.
\end{align*}
Furthermore, for the term $J_4$, if $\theta\le {\gamma\over2}$, we
can obtain as $J_2$
\begin{align*}
J_4&\le E\left|\int_0^T\int_D
\psi\partial_{x_j}u_\delta^i\partial_{x_j}\mathcal{B}_i\left[\rho_\delta^\theta-\dashint_D
\rho_\delta^\theta dx\right]dxdt\right|\\
&\lesssim E\int_0^T \|\nabla
u_\delta\|_{L^2(D)}\left\|\mathcal{B}_i\left[\rho_\delta^\theta-\dashint_D
\rho_\delta^\theta dx\right]\right\|_{H^1(D)}dt\\
&\lesssim E\int_0^T \|\nabla
u_\delta\|_{L^2(D)}\|\rho_\delta^\theta\|_{L^2(D)}dt\le C.
\end{align*}
Similarly, for the term $J_5$, if $\theta\le {2\over3}\gamma-1$, one
has
\begin{align*}
J_5&\le E\left|\int_0^T\int_D \psi\rho_\delta u_\delta^iu_\delta^j
\partial_{x_j}\mathcal{B}_i\left[\rho_\delta^\theta-\dashint_D
\rho_\delta^\theta dx\right]dxdt\right|\\
&\lesssim E \int_0^T \|\rho_\delta\|_{L^\gamma(D)}\|\nabla
u_\delta\|^2_{L^2(D)}\left\|\mathcal{B}_i\left[\rho_\delta^\theta-\dashint_D
\rho_\delta^\theta dx\right]\right\|_{W^{1,{3\gamma\over
2\gamma-3}}(D)}dt\\
&\lesssim E\int_0^T \|\rho_\delta\|_{L^\gamma(D)}\|\nabla
u_\delta\|^2_{L^2(D)}\left\|\rho_\delta^\theta\right\|_{L^{3\gamma\over
2\gamma-3}(D)}dt\\
&\lesssim  E\left(\left\|\rho_\delta\right\|_{L^\infty(0,T;
L^\gamma(D))}\left\|\rho^\theta_\delta\right\|_{L^\infty(0,T;
L^{3\gamma\over
2\gamma-3}(D))}\int_0^T \|\nabla u_\delta\|^2_{L^2(D)}dt\right)\\
&\lesssim E\left(\|\rho_\delta\|^{\theta+1}_{L^\infty(0,T;
L^\gamma(D))}\|\nabla u_\delta\|^2_{L^2(0,T;L^2(D))}\right)\\
&\lesssim E\left(\|\rho_\delta\|^{\theta+1}_{L^\infty(0,T;
L^\gamma(D))}\right)^2+E\left(\|\nabla
u_\delta\|^2_{L^2(0,T;L^2(D))}\right)^2\\
&\le C.
\end{align*}

Next, we shall estimate the term $J_6$.  If $\gamma<6$ and
$\theta<{2\over3}\gamma-1$, using \eqref{4.37}, \eqref{4.39},
\eqref{4.1} and by  H\"{o}lder's inequality, we have
\begin{align*}
J_6&\le E\left|(1-\theta)\int_0^T\int_D \psi\rho_\delta
u^i_\delta\mathcal{B}_i\left[\rho_\delta\Dv u_\delta-\dashint_D
\rho_\delta^\theta\Dv u_\delta dx\right]dxdt \right|\\
&\lesssim E\int_0^T
\|\rho_\delta\|_{L^\gamma(D)}\|u_\delta\|_{L^6(D)}\|\mathcal{B}[\cdot]\|_{L^p(D)}dt\\
&\lesssim E\int_0^T
\|\rho_\delta\|_{L^\gamma(D)}\|u_\delta\|_{L^6(D)}\|\mathcal{B}[\cdot]\|_{W^{1,q}(D)}dt\\
&\lesssim E\int_0^T
\|\rho_\delta\|_{L^\gamma(D)}\|u_\delta\|_{L^6(D)}\left\|\rho_\delta^\theta\Dv
u_\delta\right\|_{L^q(D)}dt\\
&\lesssim E\left(\int_0^T \|\rho_\delta\|_{L^\gamma(D)}\|\nabla
u_\delta\|^2_{L^2(D)}\left\|\rho_\delta^\theta\right\|_{L^{3\gamma\over
2\gamma-3}(D)}dt\right)\\
&\lesssim
E\left(\|\rho_\delta\|_{L^\infty(0,T;L^\gamma(D))}^{\theta+1}\right)^2+E\left(\|\nabla
u_\delta\|^2_{L^2(0,T;L^2(D))}\right)^2\\
&\le C,
\end{align*}
where $ p={\small 6\gamma\over 5\gamma-6},\; q={\small 6\gamma\over
7\gamma-6}. $ If $\gamma\ge 6$ and $\theta\le 1$, we have
\begin{align*}
J_6&\lesssim E\int_0^T
\|\rho_\delta\|_{L^\gamma(D)}\|u_\delta\|_{L^6(D)}\|\mathcal{B}[\cdot]\|_{L^p(D)}dt\\
&\lesssim E\int_0^T
\|\rho_\delta\|_{L^\gamma(D)}\|u_\delta\|_{L^6(D)}\|\mathcal{B}[\cdot]\|_{W^{1,{3\over2}}(D)}dt\\
&\lesssim E\left(\int_0^T \|\rho_\delta\|_{L^\gamma(D)}\|\nabla
u_\delta\|^2_{L^2(D)}\left\|\rho_\delta^\theta\right\|_{L^6(D)}dt\right)\\
&\lesssim
E\left(\|\rho_\delta\|_{L^\infty(0,T;L^\gamma(D))}^{\theta+1}\right)^2+CE\left(\|\nabla
u_\delta\|^2_{L^2(0,T;L^2(D))}\right)^2\\
&\le C.
\end{align*}
Summing up the above results, if $\theta\le {2\over3}\gamma-1$ and
$\theta\le 1$, then we have proved that $J_6$ is bounded uniformly
in $\delta$. Similarly, if $\theta\le {2\over3}\gamma-1$, using
\eqref{4.2}, \eqref{4.37} and \eqref{4.39},  one has
\begin{align*}
J_7&\le E\left|\int_0^T\int_D\psi\rho_\delta
u_\delta^i\mathcal{B}_i\left[\Dv\left(S_m[b(\rho_\delta)]u_\delta\right)\right]
dxdt \right|\\
&\lesssim
E\int_0^T\|\rho_\delta\|_{L^\gamma(D)}\|u_\delta\|_{L^6(D)}
\|S_m[b(\rho_\delta)]u_\delta\|_{L^p(D)}dt\\
&\lesssim
E\int_0^T\|\rho_\delta\|_{L^\gamma(D)}\|u_\delta\|^2_{L^6(D)}\|\rho_\delta^\theta\|_{L^{3\gamma\over
2\gamma-3}(D)}dt\\
&\le C.
\end{align*}

Finally, \fi

Summing up the estimates for \eqref{5.6b}, we have
\begin{equation}\label{5.6c}
{\rm E}\int_0^T\psi\int_D
\left(a\rho_\delta^{\gamma+\theta}+\delta\rho_\delta^{\beta+\theta}\right)dxdt\le
C.
\end{equation}
Taking $\psi_n\in\mathcal{D}(0,T), 0\le \psi_n\le 1$ and $\int_0^T
|\partial_t \psi_n|dt\le C$. By approximation and Lebesgue
convergence theorem, we can take $\psi=1$ in \eqref{5.6c}. Then we have proved the
following result:

\begin{lemma}\label{lemma5.1}
For $\gamma>{3\over 2}$ and
$0<\theta<\min\left\{1,{\gamma\over3},{2\over3}\gamma-1\right\}$,
then there exists a constant $C$ independently of $\delta>0$, such
that
\begin{equation*}
{\rm E}\int_0^T\int_D\left(
a\rho_\delta^{\gamma+\theta}+\delta\rho_\delta^{\beta+\theta}\right)dxdt\le
C.
\end{equation*}
\end{lemma}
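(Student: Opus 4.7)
The plan is to follow the standard Feireisl-Novotn\'y-Petzeltov\'a strategy of using the Bogovskii operator $\mathcal{B}$ from Section \ref{s4.1} to produce test functions whose divergence is essentially $\rho_\delta^\theta$, thereby obtaining an $L^1$ bound on the pressure times $\rho_\delta^\theta$. Concretely, for a small parameter $\theta>0$ to be fixed and a smooth bounded truncation $b(z)\approx z^\theta$, I would take
\begin{equation*}
\varphi_i(t,x)=\psi(t)\,\mathcal{B}_i\!\left[S_m[b(\rho_\delta)]-\dashint_D S_m[b(\rho_\delta)]\,dx\right],\quad i=1,2,3,
\end{equation*}
with $\psi\in\mathcal{D}(0,T)$. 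Since $\varphi_i|_{\partial D}=0$ and $\varphi_i\in L^\infty(0,T;H^1_0(D))$ with $\partial_t\varphi_i\in L^2(0,T;H^1_0(D))$, this is an admissible test function for the momentum equation $\eqref{4.21}_2$.

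Inserting $\varphi_i$ into $\eqref{4.21}_2$ and integrating by parts in time produces, after expanding $\partial_t S_m[b(\rho_\delta)]$ via the regularized renormalized continuity equation \eqref{5.5a}, exactly the identity \eqref{5.6b} with eight terms $J_1,\ldots,J_8$ on the right-hand side. The key algebraic observation is $\operatorname{div}\mathcal{B}_i[\cdot]=S_m[b(\rho_\delta)]-\dashint_D S_m[b(\rho_\delta)]\,dx$, so that the pressure-times-test-function contribution on the left yields $\int_D(a\rho_\delta^\gamma+\delta\rho_\delta^\beta)S_m[b(\rho_\delta)]\,dx$. Passing to the limit $m\to\infty$ using the convergence \eqref{5.6a} of the commutator remainder $r_m$ and choosing $b(z)\approx z^\theta$ yields the prospective bound
\begin{equation*}
{\rm E}\int_0^T\psi\int_D\!\left(a\rho_\delta^{\gamma+\theta}+\delta\rho_\delta^{\beta+\theta}\right)dx\,dt=\sum_{i=1}^8 J_i.
\end{equation*}

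The remaining task is to bound each $J_i$ uniformly in $\delta$ using the estimates from Proposition \ref{proposition4.1} together with the Bogovskii bound \eqref{4.1}. For the purely hydrodynamic terms $J_1,\ldots,J_7$, the argument goes through exactly as in \cite{WW,FNP,LLP}: $J_1$ and $J_2$ only need $\theta\le\gamma/2$; $J_3$ requires $\theta<\gamma/3$ via the embedding $W^{1,\gamma/\theta}(D)\hookrightarrow L^\infty(D)$; $J_4$ again only needs $\theta\le\gamma/2$; the convective term $J_5$ is the one forcing the binding restriction $\theta\le\tfrac{2}{3}\gamma-1$ through $W^{1,3\gamma/(2\gamma-3)}(D)\hookrightarrow L^\infty(D)$; and $J_6,J_7$ give $\theta\le 1$ and $\theta\le\tfrac{2}{3}\gamma-1$.

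The main obstacle, and the only genuinely new term compared to the compressible Navier--Stokes case, is the magnetic term $J_8=\mathbf{E}\!\int_0^T\!\psi\!\int_D\mathcal{B}_i[S_m[b(\rho_\delta)]-\overline{\cdots}](\nabla\times B_\delta)\times B_\delta\,dx\,dt$. As shown explicitly in the excerpt, I would place $(\nabla\times B_\delta)\times B_\delta$ in $L^1(D)$ via $\|\nabla B_\delta\|_{L^2}\|B_\delta\|_{L^2}$ and the Bogovskii image in $L^\infty(D)$ via the chain $\|\mathcal{B}_i[\rho_\delta^\theta-\overline{\rho_\delta^\theta}]\|_{L^\infty}\lesssim\|\mathcal{B}_i[\cdots]\|_{W^{1,\gamma/\theta}}\lesssim\|\rho_\delta^\theta\|_{L^{\gamma/\theta}}=\|\rho_\delta\|_{L^\gamma}^\theta$, which forces $\gamma/\theta>3$, i.e., $\theta<\gamma/3$. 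Then H\"older in $\omega$ and $t$ combined with the uniform bounds $\mathbf{E}\|\nabla B_\delta\|_{L^2(0,T;L^2)}^p$, $\mathbf{E}\|B_\delta\|_{L^\infty(0,T;L^2)}^p$ and $\mathbf{E}\|\rho_\delta\|_{L^\infty(0,T;L^\gamma)}^p$ from Proposition \ref{proposition4.1} yields $|J_8|\le C$ with $C$ independent of $\delta$. Collecting the restrictions $\theta<\gamma/3$, $\theta\le\tfrac{2}{3}\gamma-1$, $\theta\le 1$ and observing that all three become positive precisely when $\gamma>3/2$, we finally remove $\psi$ by monotone approximation with $\psi_n\to 1$ and apply Lebesgue's convergence theorem to obtain the claimed bound.
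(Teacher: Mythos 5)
Your proposal reproduces the paper's argument essentially verbatim: same Bogovskii test function $\psi(t)\mathcal{B}_i[S_m[b(\rho_\delta)]-\dashint_D S_m[b(\rho_\delta)]\,dx]$, same use of the regularized renormalized continuity equation to generate \eqref{5.6b}, same $m\to\infty$ passage via \eqref{5.6a}, same decomposition into $J_1,\ldots,J_8$ with $J_1,\ldots,J_7$ deferred to the Navier--Stokes literature and the magnetic term $J_8$ bounded by placing $(\nabla\times B_\delta)\times B_\delta\in L^1$ and $\mathcal{B}_i[\cdot]\in L^\infty$ through $W^{1,\gamma/\theta}\hookrightarrow L^\infty$, and the same final removal of $\psi$ by approximation. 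The constraints on $\theta$ and the threshold $\gamma>3/2$ are also identified correctly, so the proof is correct and follows the paper's route.
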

\subsection{Tightness Property}\label{s5.2}
Similar to Section \ref{s4.3}, we can prove the following lemma:
\begin{lemma}\label{Lemma-Tight2} Define
\begin{align*}
S=&C(0,T;\mathbb{R})\!\times\! C([0,T];L_w^{\gamma}(D))\!\times\!
L^2(0,T;H_w^1(D))\!\times\! C([0,T]; L_w^{2\gamma\over\gamma+1}(D))\\
&\times
\left(L^2(0,T;H_w^1(D))\cap L^2(0,T; L^2(D))\right)
\end{align*}
equipped with its Borel $\sigma$-algebra.  Let $\Pi_\delta$ be the
probability on $S$ which is the image of $\textrm{P}$ on $\Omega$ by
the map: $\omega\mapsto (\beta_{k,\delta}(\omega,\cdot),
\rho_\delta(\omega,\cdot),u_\delta(\omega,\cdot),\rho_\delta
u_\delta(\omega,\cdot), B_\delta(\omega,\cdot))$, that is, for any
$A\subseteq S$,
\begin{equation*}
\Pi_\delta(A)=\textrm{P}\left\{\omega\in\Omega:
(\beta_{k,\delta}(\omega,\cdot),\rho_\delta(\omega,\cdot),u_\delta(\omega,\cdot),\rho_\delta
u_\delta(\omega,\cdot),B_\delta(\omega,\cdot))\in A \right\}.
\end{equation*}
Then the family $\Pi_\delta$ is tight. \end{lemma}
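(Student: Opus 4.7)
The proof will essentially parallel the five-step scheme used in Lemma \ref{Lemma-Tight} and Lemma \ref{Lemma-Tight1}, with the key modification that we now rely on the uniform-in-$\delta$ estimates from Proposition \ref{proposition4.1} together with the improved integrability $\rho_\delta\in L^{\gamma+\theta}(\tilde\Omega\times(0,T)\times D)$ from Lemma \ref{lemma5.1}, and we must absorb the loss of the artificial viscosity and artificial pressure bounds. For the Brownian motions, tightness on $C(0,T;\mathbb{R})$ follows exactly as in step $1^\circ$ of Lemma \ref{Lemma-Tight} via the Chebyshev inequality and the Kolmogorov–Arzel\`a–Ascoli criterion. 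For $\rho_\delta$, I choose $X_\eta$ as a ball in the space with norm $\sup_{t}\|\cdot\|_{L^\gamma(D)}+\sup_t\|\partial_t(\cdot)\|_{W^{-1,{2\gamma/(\gamma+1)}}(D)}$; since $\eqref{4.21}_1$ reads $\partial_t\rho_\delta=-\Dv(\rho_\delta u_\delta)$ and $\rho_\delta u_\delta$ is bounded in $L^2(\Omega,L^\infty(0,T;L^{2\gamma/(\gamma+1)}(D)))$, Chebyshev together with an Aubin–Lions embedding into $C([0,T];L^\gamma_w(D))$ gives $\Pi_\delta(\rho_\delta\in X_\eta)\ge 1-\eta/5$. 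Tightness of $u_\delta$ in $L^2(0,T;H^1_w(D))$ is immediate from the uniform bound in $L^p(\Omega,L^2(0,T;H^1_0(D)))$ as in step $3^\circ$.

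For the magnetic field $B_\delta$, I reuse verbatim the argument of step $5^\circ$ of Lemma \ref{Lemma-Tight}: a ball in $L^2(0,T;H^1(D))$ handles the weak topology, and the $H^{-1}$-valued time-increment estimate of \eqref{4.40a}–\eqref{4.44}, controlled by the Burkholder–Davis–Gundy inequality and the quadratic growth of $g_k$ in $B$, combined with the Aubin–Simon lemma, yields compactness in $L^2(0,T;L^2(D))$. For the momentum $\rho_\delta u_\delta$, I introduce the space $\mathcal Z=L^\infty(0,T;L^{2\gamma/(\gamma+1)}(D))\cap C^{0,\alpha'}([0,T];W^{-\ell,2}(D))$ with $\ell>3$ and $0<\alpha'<1/2-1/(2\sigma)$, mirroring step $4^\circ$ of Lemma \ref{Lemma-Tight}. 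From the momentum equation $\eqref{4.21}_2$ I write
\begin{align*}
\mathbb{P}[\rho_\delta u_\delta(t)]=\mathbb{P}[\rho_0u_0]-\!\int_0^t\!\mathbb{P}\bigl[\Dv(\rho_\delta u_\delta\otimes u_\delta)-\mu\Delta u_\delta-(\lambda+\mu)\nabla\Dv u_\delta+a\nabla\rho_\delta^\gamma\\
+\delta\nabla\rho_\delta^\beta-(\nabla\times B_\delta)\times B_\delta\bigr]ds+\!\int_0^t\!\sum_{k\ge1}f_k(\rho_\delta,\rho_\delta u_\delta,x)d\beta_{k,\delta}^1,
\end{align*}
and bound each piece in $W^{-\ell,2}(D)$ (for some $\ell\ge 3$): the convective term by H\"older as in \eqref{Tight1}; the dissipative terms by the $L^2$-norm of $\nabla u_\delta$; the Lorentz term by $\|B_\delta\|_{L^2}\|\nabla B_\delta\|_{L^2}$; the pressure term $a\nabla\rho_\delta^\gamma$ in $W^{-1,(\gamma+\theta)/\gamma}(D)\hookrightarrow W^{-\ell,2}(D)$ by Lemma \ref{lemma5.1}; and the artificial pressure $\delta\nabla\rho_\delta^\beta$ similarly in $W^{-1,(\beta+\theta)/\beta}(D)$, again by Lemma \ref{lemma5.1} (here the uniform $\delta$-bound is crucial).

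The stochastic part is treated as in \eqref{Tight8}: using the bound $\sum_k\|f_k(\rho_\delta,\rho_\delta u_\delta,x)\|^2_{L^1(D)}\lesssim \|\rho_\delta\|_{L^1}(\|\rho_\delta\|^\gamma_{L^\gamma}+\|\sqrt{\rho_\delta}u_\delta\|^2_{L^2})$, the BDG inequality yields
\[
\tilde{{\rm E}}\Big\|\int_s^t\!\!\sum_{k\ge1}f_k(\rho_\delta,\rho_\delta u_\delta,x)d\beta_{k,\delta}^1\Big\|_{W^{-\ell,2}(D)}^{2\sigma}\lesssim (t-s)^\sigma
\]
for any $\sigma>1$, and the Kolmogorov continuity theorem delivers $\alpha'$-H\"older continuity in $W^{-\ell,2}(D)$. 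Taking $Z_\eta$ a closed ball in $\mathcal Z$ of radius $5C\eta^{-1}$ gives $\tilde{\textrm{P}}(\rho_\delta u_\delta\in Z_\eta)\ge 1-\eta/5$ via the compactness criterion of \cite[Corollary B.2]{OM}, \cite{SJ}. Combining all five steps via Corollary 1.3 in \cite{KHJ} yields $\Pi_\delta$ tightness on $S$. The main obstacle is the control of the artificial pressure $\delta\nabla\rho_\delta^\beta$ uniformly in $\delta$; this is precisely where the improved integrability of Lemma \ref{lemma5.1} is essential, since the naive bound from Proposition \ref{proposition4.1} would only give $\delta\rho_\delta^\beta\in L^p(\Omega,L^\infty(0,T;L^1(D)))$, which is too weak to extract $\delta$-independent H\"older continuity.
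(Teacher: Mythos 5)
Your proof is correct and follows essentially the same five-step tightness scheme to which the paper defers (it merely says ``Similar to Section~\ref{s4.3}'' and gives no details), with the simplification that the artificial viscosity correction $\Gamma_\varepsilon$ from Lemma~\ref{Lemma-Tight1} is no longer needed. One remark: your closing claim that the $L^p(\Omega,L^\infty(0,T;L^1(D)))$ bound for $\delta\rho_\delta^\beta$ from Proposition~\ref{proposition4.1} is ``too weak to extract $\delta$-independent H\"older continuity,'' so that Lemma~\ref{lemma5.1} is \emph{essential}, is a misconception: since $L^1(D)\hookrightarrow W^{-2,2}(D)$ in dimension three, $\delta\rho_\delta^\beta\in L^\infty(0,T;L^1(D))$ already yields $\delta\nabla\rho_\delta^\beta\in L^\infty(0,T;W^{-\ell,2}(D))$ for $\ell\ge3$ with a $\delta$-independent constant, and hence $\int_s^t\delta\nabla\rho_\delta^\beta\,dr$ is Lipschitz (a fortiori $\alpha'$-H\"older) in $W^{-\ell,2}(D)$. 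Using Lemma~\ref{lemma5.1} for this term is permissible but unnecessary; the real point of Lemma~\ref{lemma5.1} in the paper is the later identification $\overline{P}=a\rho^\gamma$, not the tightness step.
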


%%%%%%%%%%%%%%%%%%%%%%%%%%%%%%%%%%%%%%%%%%%%%%%%%%%%%%%%%%%%%%%%%%%%%%%%%%%%%%%%%%%%%%%%%%%%%%%%%%%%%%%%%%%%%%%%%%%%%%%%%%%%%%%%%%%%%%%%%%%%%
\subsection{The limit passage}\label{s5.3}

In this subsection, we shall pass to the limit for $\delta\to 0$
following the idea of \cite{FNP, LLP}. According to Jakubowski-Skorohod Theorem, there exists a subsequence such that
$\Pi_{\delta}\to \Pi$ weakly, where $\Pi$ is a probability on $S$. Moreover,
 there exist a probability space
$(\tilde{\Omega}, \tilde{\mathscr{F}}, \tilde{\textrm{P}})$ and
random variables
$(\tilde{\beta}_{k,\delta},\tilde{\rho}_\delta,\tilde{u}_\delta,\tilde{\rho}_\delta
\tilde{u}_\delta,\tilde{B}_\delta)$ with distribution $\Pi_\delta$,
$(\beta_k, \rho, u, h, B)$ with values in $S$ such that
\begin{align}\label{3-tightness}
(\tilde{\beta}_{k,\delta},\tilde{\rho}_\delta,\tilde{u}_\delta,\tilde{\rho}_\delta
\tilde{u}_\delta,\tilde{B}_\delta)\to (\beta_k,\rho,u,h,B) \ \
\mbox{in} \ \ S\ \ \tilde{\textrm{P}}-\text{a.s.}.\end{align} Since
the distribution of
$(\tilde{\beta}_{k,\delta},\tilde{\rho}_\delta,\tilde{u}_\delta,\tilde{\rho}_\delta
\tilde{u}_\delta,\tilde{B}_\delta)$ is $\Pi_\delta$, we can deduce
that $(\tilde{\rho}_\delta,
\tilde{u}_\delta,\tilde{\rho}_\delta\tilde{u}_\delta,\tilde{B}_\delta)$
satisfies the same estimate as $(\rho_\delta, u_\delta,\rho_\delta
u_\delta ,B_\delta)$. To simplify notations, we drop the tilde on
the random variables. First, we can apply Lemma
\ref{lemma5.1} %and \textcolor{red}{Proposition 2.1 in \cite{f2}}
to
obtain
\begin{equation}\label{6a}
\begin{split}
&\delta\rho_\delta^\beta\to 0 \ \ \mbox{in} \ \
L^{\beta+\theta\over\beta}(\tilde{\Omega}\times(0,T)\times
D),
%\;\textcolor{red}{\delta\rho_\delta^\beta\to 0 \ \ \mbox{in} \ \
%L^1(\tilde{\Omega}\times(0,T)\times D)},
\\
&\rho_\delta^\gamma\to\overline{\rho^\gamma} \ \ \mbox{weakly in} \
\ L^{\gamma+\theta\over\gamma}(\tilde{\Omega}\times(0,T)\times
D)
%,\;\textcolor{red}{\rho_\delta^\gamma\to\overline{\rho^\gamma} \ \
%\mbox{weakly in} \ \ L^1(\tilde{\Omega}\times(0,T)\times D)}
.
\end{split}
\end{equation}
Similar to Section \ref{s4.4}, \eqref{3-tightness} implies
that
\begin{align}\label{5.7b}
\begin{split}
& \rho_\delta\to \rho \ \ \mbox{in} \ \
C([0,T];L_w^\gamma(D))\cap C([0,T];H^{-1}(D)) \ \ \tilde{\textrm{P}}-\text{a.s.},\\
& u_\delta\rightharpoonup u \ \ \mbox{in}\ \
L^2(0,T; H^1(D)))\ \ \tilde{\textrm{P}}-\text{a.s.},  \\
& \rho_\delta u_\delta \to \rho u \ \ \mbox{in} \ \
C([0,T];L_w^{2\gamma\over \gamma+1}(D))\ \
\tilde{\textrm{P}}-\text{a.s.}.
\end{split}
\end{align}
Since $\gamma>{3\over2}$, then ${2\gamma\over \gamma+1}>{6\over5}$.
By \eqref{5.7b}, one has
\begin{align}\label{5.8e0}
\rho_\delta u_\delta^i u_\delta^j\to \rho u^i u^j \ \ \mbox{in} \ \
\mathcal{D}^\prime((0,T)\times D)\ \ \tilde{\textrm{P}}-\text{a.s.}.
\end{align}
By \eqref{3-tightness}, we know that
\begin{align}\label{5.8e1}
B_\delta\to B \ \ \mbox{in $L^2(0,T; L^2(D))$ and }
B_\delta\rightharpoonup B \ \ \mbox{in $L^2(0,T; H^1(D))$} \ \
\tilde{\textrm{P}}-\text{a.s.}.
\end{align}
Moreover, from $\eqref{A-MHD}_3$ and Proposition
\ref{proposition4.1}, by Aubin-Lions Lemma, one has
\begin{align*}
B_\delta\to B \ \ \mbox{in} \ \ C([0,T]; H^{-1}(D))\ \
\tilde{\textrm{P}}-\text{a.s.}.
\end{align*}
Then we have
\begin{align}\label{5.8e2}
(\nabla\times B_\delta)\times B_\delta\to(\nabla\times B)\times B \
\ \mbox{in $\mathcal{D}^\prime([0,T]\times D)$} \ \
\tilde{\textrm{P}}-\text{a.s.}.
\end{align}
Similarly, it follows from \eqref{5.7b}, and \eqref{5.8e1} that
\begin{align}\label{5.8e3}
\nabla\times(u_\delta\times B_\delta)\to\nabla\times (u\times B) \ \
\mbox{in $\mathcal{D}^\prime([0,T]\times D)$} \ \
\tilde{\textrm{P}}-\text{a.s.}.
\end{align}
Now, it remains to show the strong convergence of $\rho_\delta$ in
$L^1(\tilde{\Omega}\times(0,T)\times D)$, i.e.,
$\overline{\rho^\gamma}=\rho^\gamma$ for proving Theorem
\ref{theorem1.1}. First, we introduce the cut off functions:
\begin{align*}
T_k(z)=kT\left({z\over k}\right), k=1,2,3,\ldots,
\end{align*}
where
\begin{align*}
T(z)=
\begin{cases}
z, \ \ z\le 1\\
2, \ \ z\ge 3\\
\end{cases}
\in C^\infty(\mathbb{R}^3), \text{ concave in } z\in\mathbb{R}.
\end{align*}
 Since
$(\rho_\delta,u_\delta)$ is a renormalized solution of
$\eqref{4.21}_1$ in $\mathcal{D}^\prime((0,T)\times\mathbb{R}^3)$
$\tilde{\textrm{P}}$-a.s., choosing $b(z)=T_k(z)$, one deduces that
in $\mathcal{D}^\prime((0,T)\times\mathbb{R}^3)$
$\tilde{\textrm{P}}$-a.s.
\begin{align}\label{5.7}
\partial_tT_k(\rho_\delta)+\Dv(T_k(\rho_\delta)u_\delta)+[T_k^\prime(\rho_\delta)\rho_\delta-T_k(\rho_\delta)]\Dv
u_\delta=0.
\end{align}
Noting that for any fixed $k$, $T_k(\rho_\delta)\in
L^p(\tilde{\Omega},L^\infty((0,T)\times D))$ and
$T_k(\rho_\delta)\to \overline{T_k(\rho)}$ weakly star in
$L^p(\tilde{\Omega},L^\infty((0,T)\times D))$ and
$\partial_tT_k(\rho_\delta)$ satisfies the equation \eqref{5.7}, by
Aubin-Lions Lemma, as $\delta\to0$, for all $1\le p<\infty$, we
can infer that $\tilde{\textrm{P}}$-a.s.
\begin{align}\label{5.7a1}
T_k(\rho_\delta)\to\overline{T_k(\rho)}\ \ \mbox{in} \ \ C_w([0,T];
L^p(D))\cap C([0,T];H^{-1}(D)).
\end{align}
Letting $\delta\to 0+$ in \eqref{5.7} and by using \eqref{5.7a1}, we have
$\tilde{\textrm{P}}$-a.s.
\begin{align}\label{5.7a}
\partial_t\overline{T_k(\rho)}+\Dv\left(\overline{T_k(\rho)}
u\right)+\overline{[T_k^\prime(\rho)\rho-T_k(\rho)]\Dv u}=0 \ ,
\end{align}
in $\mathcal{D}^\prime((0,T)\times\mathbb{R}^3)$, where
\begin{align*}
[T^\prime_k(\rho_\delta)\rho_\delta-T_k(\rho_\delta)]\Dv
u_\delta\rightharpoonup\overline{[T_k^\prime(\rho)\rho-T_k(\rho)]\Dv
u} \ \ \mbox{in} \ \ L^2(\tilde{\Omega},L^2((0,T)\times D)).
\end{align*}
Here $\overline{h(\rho)}$ is the weak limit of $h(\rho_\delta)$.

%%%%%%%%%%%%%%%%%%%%%%%%%%%%%%%%%%%%%%%%%%%%%%%%%%%%%%%%%%%%%%%%%%%%%%%%%%%%%%%%%%%%%%%%%%%%%%%%%%%%%%%%%%%%%%%%%%%%%%%%%%%%%%%%%%%%%%%
\subsection{The effective viscous flux}\label{s5.4}
Similar to the Section \ref{s4.5},  we have the following lemma:
\begin{lemma}\label{lemma5.2}
Let $(\rho_\delta,u_\delta, B_\delta)$ be the sequence of
approximate solutions constructed in Proposition
\ref{proposition4.1}. Then for any $\psi\in\mathcal{D}(0,T),\;
\phi\in\mathcal{D}(D)$,
\begin{align*}
&\lim_{\delta\to0+}\tilde{{\rm E}}\int_0^T \psi\int_D
\phi\left[a\rho_\delta^\gamma-(\lambda+2\mu)\Dv
u_\delta\right]T_k(\rho_\delta)dxdt\\
&=\tilde{{\rm E}}\int_0^T \psi\int_D
\phi\left[a\overline{\rho^\gamma}-(\lambda+2\mu)\Dv
u\right]\overline{T_k(\rho)}dxdt.
\end{align*}
\end{lemma}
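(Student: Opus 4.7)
The plan is to follow the same strategy as Lemma \ref{lemma4.2}, replacing the role of $\rho_\varepsilon$ by the truncated density $T_k(\rho_\delta)$. At the $\delta$-level, the key observation is that $T_k$ is bounded and globally Lipschitz, so $T_k(\rho_\delta) \in L^\infty(\tilde{\Omega}\times(0,T)\times D)$, and by \eqref{5.7} the time derivative $\partial_t T_k(\rho_\delta)$ has a natural representation in terms of $u_\delta$. This allows one to use
\[
\varphi_i(t,x) = \psi(t)\phi(x)\mathcal{A}_i\bigl[T_k(\rho_\delta)\bigr], \qquad i=1,2,3,
\]
(with $T_k(\rho_\delta)$ extended by zero outside $D$) as test functions in the momentum equation $\eqref{4.21}_2$. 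The Calder\'on–Zygmund bounds on $\mathcal{A}$ together with the $L^\infty$-bound on $T_k(\rho_\delta)$ make $\varphi_i$ admissible. Analogously, using \eqref{5.7a}, we test the limit momentum equation (obtained by passing to the limit in $\eqref{4.21}_2$) with $\varphi_i = \psi(t)\phi(x)\mathcal{A}_i[\overline{T_k(\rho)}]$.

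Expanding both identities one obtains, exactly as in \eqref{4.14} and \eqref{4.16}, two expressions that, after cancellation, yield
\begin{align*}
&\tilde{\rm E}\int_0^T\!\!\psi\!\int_D\!\phi\bigl[a\rho_\delta^\gamma+\delta\rho_\delta^\beta-(\lambda+2\mu)\Dv u_\delta\bigr]T_k(\rho_\delta)\,dxdt \\
&\quad - \tilde{\rm E}\int_0^T\!\!\psi\!\int_D\!\phi\bigl[a\overline{\rho^\gamma}-(\lambda+2\mu)\Dv u\bigr]\overline{T_k(\rho)}\,dxdt
\end{align*}
equals the sum of (i) ``lower order'' terms (involving $\partial_{x_j}\phi$, $\psi_t$, the magnetic force, etc.) which converge to zero using \eqref{3-tightness}, \eqref{6a}, \eqref{5.7b}--\eqref{5.8e3}, Proposition \ref{proposition4.1} and Lemma \ref{lemma5.1}; (ii) the stochastic contribution; and (iii) a crucial commutator term involving the Riesz operators $\mathcal{R}_{ij}$. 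For (ii), exactly as in Section \ref{s4.5}, the test function $\psi(t)\phi(x)\mathcal{A}_i[T_k(\rho_\delta)]$ is progressively measurable and the It\^o integrals involving $f_k$ and $g_k$ are martingales, so their expectation against these test functions vanishes (the integration-by-parts argument used to prove $\tilde{\rm E}\int_0^t\psi\phi\mathcal{A}_i[\rho]\,dM=0$ applies verbatim, with $T_k(\rho_\delta)$ in place of $\rho$).

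The heart of the argument, and the main obstacle, is the commutator identity of Feireisl type: one must show that for a.e.\ $(\omega,t,x)$
\[
u_\delta^i\bigl(\rho_\delta\,\mathcal{R}_{ij}[T_k(\rho_\delta)u_\delta^j] - T_k(\rho_\delta) u_\delta^j\,\mathcal{R}_{ij}[\rho_\delta]\bigr) \ \rightharpoonup \ u^i\bigl(\rho\,\mathcal{R}_{ij}[\overline{T_k(\rho)}\,u^j] - \overline{T_k(\rho)}\,u^j\,\mathcal{R}_{ij}[\rho]\bigr)
\]
in the sense of distributions. The proof rests on the Div–Curl lemma applied pathwise on $\tilde{\Omega}$: the vector field $T_k(\rho_\delta)u_\delta$ has a uniformly bounded divergence (from \eqref{5.7}), and $\rho_\delta \to \rho$ in $C([0,T];H^{-1}(D))$ $\tilde{\textrm{P}}$-a.s., giving compactness of the curl-free (gradient) field $\mathcal{A}_i[\rho_\delta]$; then the commutator estimate of Feireisl--Novotn\'y--Petzeltov\'a applies. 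Technically, one uses $\gamma > 3/2$ to guarantee enough integrability of $\rho_\delta u_\delta \otimes u_\delta$ (via Lemma \ref{lemma5.1}), which is needed for the pairing with $\mathcal{R}_{ij}[T_k(\rho_\delta)]$.

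Finally, once the commutator term is identified, one collects the remaining contributions. By \eqref{6a} the artificial pressure $\delta\rho_\delta^\beta T_k(\rho_\delta)$ drops out as $\delta\to 0$ since $\delta\rho_\delta^\beta \to 0$ in $L^{(\beta+\theta)/\beta}$ and $T_k(\rho_\delta)$ is uniformly bounded; the term $\rho_\delta u_\delta^i u_\delta^j$ passes to its limit using \eqref{5.8e0}; the magnetic term $(\nabla\times B_\delta)\times B_\delta$ converges by \eqref{5.8e2}; and the convergence of all remaining deterministic integrals is an application of Vitali's theorem (Lemma \ref{Lemma2}) based on the uniform-in-$\delta$ estimates from Proposition \ref{proposition4.1} and Lemma \ref{lemma5.1}. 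Combining these gives the claimed identity.
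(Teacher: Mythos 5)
Your proposal follows essentially the same route the paper takes, whose proof of Lemma \ref{lemma5.2} is a one-line reference back to Section \ref{s4.5} using the renormalized equation \eqref{5.7a} and the martingale integration-by-parts trick under \eqref{4.16}, exactly as you describe. One small correction: the commutator your test function $\psi\phi\mathcal{A}_i[T_k(\rho_\delta)]$ actually produces, via \eqref{5.7} and the convective term, is $\rho_\delta u_\delta^i\left(\mathcal{R}_{ij}[T_k(\rho_\delta)u_\delta^j]-u_\delta^j\mathcal{R}_{ij}[T_k(\rho_\delta)]\right)$ --- the direct analogue of the term in \eqref{4.14} with $T_k(\rho_\delta)$ replacing $\rho_\varepsilon$ in the test-function slot --- rather than the expression with $\mathcal{R}_{ij}[\rho_\delta]$ in the second factor that you wrote, and the Div--Curl/Feireisl--Novotn\'y--Petzeltov\'a commutator estimate should be applied to this form (optionally after a rearrangement using the self-adjointness of $\mathcal{R}_{ij}$).
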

\noindent Here we have used \eqref{5.7a} and the property under
\eqref{4.16}.

\subsection{The amplitude of oscillations}\label{s5.5}
In this subsection, we have the following lemma in the sense of
expectation as in \cite{WW}:
\begin{lemma}\label{lemma5.3}
There exists a constant $C$ independent of $k\ge1$ such that
\begin{align*}
\varlimsup_{\delta\to 0+}
\tilde{{\rm E}}\|T_k(\rho_\delta)-T_k(\rho)\|^{\gamma+1}_{L^{\gamma+1}((0,T)\times
D)}\le C.
\end{align*}
\end{lemma}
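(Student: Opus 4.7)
The plan is to reduce the lemma to the effective viscous flux identity (Lemma \ref{lemma5.2}) by means of the algebraic inequality
\[
|T_k(a)-T_k(b)|^{\gamma+1} \le C_\gamma (a^\gamma-b^\gamma)(T_k(a)-T_k(b)),\quad a,b\ge 0,\ \gamma\ge 1.
\]
This bound will follow from two elementary facts: the classical inequality $(c-d)(c^\gamma-d^\gamma)\ge c_\gamma|c-d|^{\gamma+1}$ applied with $c=T_k(a)$, $d=T_k(b)$, and the monotonicity $a^\gamma-b^\gamma\ge T_k(a)^\gamma-T_k(b)^\gamma$ for $a\ge b\ge 0$ (obtained by verifying that $z\mapsto z^\gamma-T_k(z)^\gamma$ is nondecreasing using $T_k'\le 1$ and $T_k(z)\le z$). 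Integrating this pointwise inequality over $\tilde\Omega\times(0,T)\times D$ reduces the target estimate to a $k$-uniform control of $\limsup_\delta \tilde{{\rm E}}\int (\rho_\delta^\gamma-\rho^\gamma)(T_k(\rho_\delta)-T_k(\rho))\,dxdt$.

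To pass to the limit in this quantity, I will use the available weak convergences: $\rho_\delta^\gamma\rightharpoonup \overline{\rho^\gamma}$ in $L^{(\gamma+\theta)/\gamma}$ (Lemma \ref{lemma5.1}), $T_k(\rho_\delta)\to \overline{T_k(\rho)}$ in $C([0,T];L_w^p(D))$, and the strong convergence $\rho_\delta\to\rho$ in $C([0,T];H^{-1}(D))$. Expanding the product splits the limsup into
\[
\tilde{{\rm E}}\!\int[\overline{\rho^\gamma T_k(\rho)}-\overline{\rho^\gamma}\,\overline{T_k(\rho)}]\,dxdt + \tilde{{\rm E}}\!\int(\overline{\rho^\gamma}-\rho^\gamma)(\overline{T_k(\rho)}-T_k(\rho))\,dxdt.
\]
The second integral is nonpositive, since convexity of $z\mapsto z^\gamma$ forces $\overline{\rho^\gamma}\ge\rho^\gamma$ a.s.\ and a.e., while concavity of $T_k$ yields $\overline{T_k(\rho)}\le T_k(\rho)$ a.s.\ and a.e. For the first integral I will invoke Lemma \ref{lemma5.2} with approximating cut-offs $\psi_n,\phi_n\nearrow 1$ and pass to $\psi\equiv\phi\equiv 1$ by dominated convergence (justified by the improved integrability of Lemma \ref{lemma5.1}); this rewrites it as $\frac{\lambda+2\mu}{a}\lim_\delta \tilde{{\rm E}}\!\int[\Dv u_\delta\,T_k(\rho_\delta)-\Dv u\,\overline{T_k(\rho)}]\,dxdt$.

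The closing step is to bound this last expression by a small multiple of the sought limsup plus a $k$-independent constant. I will decompose $\Dv u_\delta T_k(\rho_\delta)-\Dv u\,\overline{T_k(\rho)}$ as $\Dv u_\delta[T_k(\rho_\delta)-T_k(\rho)] + [\Dv u_\delta-\Dv u]T_k(\rho) + \Dv u[T_k(\rho)-\overline{T_k(\rho)}]$, noting that the middle term vanishes in the limit by weak convergence of $\Dv u_\delta$ against the fixed $T_k(\rho)$. Applying Cauchy-Schwarz together with the uniform energy bound on $\Dv u_\delta$ reduces the matter to estimating $L^2$-norms of $T_k(\rho_\delta)-T_k(\rho)$ and $T_k(\rho)-\overline{T_k(\rho)}$. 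These in turn will be controlled via the interpolation $\|f\|_{L^2}^2\le \|f\|_{L^{\gamma+1}}^{(\gamma+1)/\gamma}\|f\|_{L^1}^{(\gamma-1)/\gamma}$ followed by H\"older in probability, exploiting that the $L^1$ factors are uniformly bounded by $\|\rho_\delta\|_{L^1}+\|\rho\|_{L^1}$. Writing $A_*:=\limsup_\delta \tilde{{\rm E}}\|T_k(\rho_\delta)-T_k(\rho)\|_{L^{\gamma+1}}^{\gamma+1}$, and using weak lower semicontinuity to dominate $\tilde{{\rm E}}\|T_k(\rho)-\overline{T_k(\rho)}\|_{L^{\gamma+1}}^{\gamma+1}$ by $A_*$ as well, the chain of estimates yields a self-bounding inequality of the form $A_*\le C A_*^{1/(2\gamma)}+C$.

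The main obstacle is exactly this closing step: the very quantity we seek to bound reappears on the right-hand side through the $L^2$-norm of the density difference, and only the interpolation gain $1/(2\gamma)<1$ (valid because $\gamma>1$) permits its absorption into the left via Young's inequality. A secondary technical point is the passage from the localized form of Lemma \ref{lemma5.2} to the global one with $\psi\equiv\phi\equiv 1$, for which the improved integrability from Lemma \ref{lemma5.1} is essential.
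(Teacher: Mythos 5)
Your argument is correct and is precisely the standard Feireisl--Lions amplitude-of-oscillations estimate adapted to the stochastic setting, which is exactly what the paper invokes (the paper itself omits the proof and refers to \cite{WW}, where the same pointwise inequality $|T_k(a)-T_k(b)|^{\gamma+1}\le(a^\gamma-b^\gamma)(T_k(a)-T_k(b))$, the effective viscous flux identity of Lemma \ref{lemma5.2}, the $L^1$--$L^{\gamma+1}$ interpolation of the $L^2$-norm, and the Young-inequality absorption are the key steps). Your identification of the self-bounding inequality $A_*\le CA_*^{1/(2\gamma)}+C$ as the crux, together with the finiteness of $A_*$ for fixed $k$ (since $|T_k|\le 2k$) and the $k$-uniformity of all the constants entering, is exactly how the absorption is justified.
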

\subsection{The renormalized solutions}\label{5.6}
In this subsection, we shall use Lemma \ref{lemma5.3} and the
cut-off function technique
 to show the following crucial lemma as in \cite{WW}:

\begin{lemma}\label{lemma5.4}
Suppose $(\rho, u)$ be zero outside $D$, then $(\rho, u)$ solves the
continuity equation in the sense of renormalized solutions, that is,
\begin{align*}
\partial_t b(\rho)+\Dv(b(\rho)u)+\left[b^\prime(\rho)\rho-b(\rho)\right]\Dv u=0
\ \ \mbox{in} \ \ \mathcal{D}^\prime((0,T)\times\mathbb{R}^3),
\end{align*}
$\tilde{\textrm{P}}$-a.s. for any $b$ satisfying the conditions on
$b$.
\end{lemma}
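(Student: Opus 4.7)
The plan is to first derive a renormalized equation for the weak limit $\overline{T_k(\rho)}$ at the level of the truncation $T_k$, then use the oscillation defect bound from Lemma \ref{lemma5.3} to justify passing $k\to\infty$, recovering the renormalized identity for the genuine limit $\rho$.

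First, I will start from \eqref{5.7a}, which states that $\overline{T_k(\rho)}$ satisfies
\begin{align*}
\partial_t\overline{T_k(\rho)}+\Dv\left(\overline{T_k(\rho)}\,u\right)+\overline{[T_k^\prime(\rho)\rho-T_k(\rho)]\Dv u}=0
\end{align*}
in $\mathcal{D}'((0,T)\times\mathbb{R}^3)$ $\tilde{\mathrm{P}}$-a.s., after extending everything by zero outside $D$ (which is legitimate by Lemma \ref{lemma4.3} adapted to the current setting). Because $\overline{T_k(\rho)}\in L^\infty(0,T;L^p(D))$ for every $p<\infty$ and $u\in L^2(0,T;H_0^1(D))$, I may apply the classical DiPerna--Lions regularization: convolving with a spatial mollifier $\vartheta_m$ and using the Friedrichs commutator lemma (as already exploited in \eqref{5.5a}--\eqref{5.6a}) produces, for any $b\in C^1(\mathbb{R})$ with $b'\equiv 0$ for large argument,
\begin{align*}
\partial_t b(\overline{T_k(\rho)})+\Dv\big(b(\overline{T_k(\rho)})u\big)+\big[b'(\overline{T_k(\rho)})\overline{T_k(\rho)}-b(\overline{T_k(\rho)})\big]\Dv u=-b'(\overline{T_k(\rho)})\,\overline{[T_k'(\rho)\rho-T_k(\rho)]\Dv u}
\end{align*}
in $\mathcal{D}'((0,T)\times\mathbb{R}^3)$ $\tilde{\mathrm{P}}$-a.s., with the commutator remainder tending to zero as $m\to\infty$ thanks to the $L^2$-integrability of $\overline{T_k(\rho)}\,u$.

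Next, I will let $k\to\infty$. By \eqref{5.7a1} and the fact that $T_k(\rho_\delta)\to \rho_\delta$ monotonically, we have $\overline{T_k(\rho)}\to \rho$ a.e.\ and in $L^p(\tilde\Omega\times(0,T)\times D)$ for any $p<\gamma+\theta$ (using Lemma \ref{lemma5.1}). Thus the deterministic transport part of the equation passes to the limit, yielding the corresponding identity with $b(\rho)$ and $b'(\rho)\rho-b(\rho)$ in place of their $\overline{T_k(\rho)}$ counterparts. The decisive point is to show that the defect term
\begin{align*}
b'(\overline{T_k(\rho)})\,\overline{[T_k'(\rho)\rho-T_k(\rho)]\Dv u}\longrightarrow 0
\end{align*}
strongly in $L^1$. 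Writing $b'$ bounded with support in $\{z\le M\}$, H\"older's inequality gives the bound
\begin{align*}
\big\|b'(\overline{T_k(\rho)})\,\overline{[T_k'(\rho)\rho-T_k(\rho)]\Dv u}\big\|_{L^1}\lesssim \|\Dv u\|_{L^2}\,\big\|T_k'(\rho)\rho-T_k(\rho)\big\|_{L^2(\{\overline{T_k(\rho)}\le M\})},
\end{align*}
and on the set where $\overline{T_k(\rho)}\le M$ the quantity $T_k'(\rho)\rho-T_k(\rho)$ is controlled by the oscillation $|T_k(\rho_\delta)-T_k(\rho)|$ up to lower-order terms; Lemma \ref{lemma5.3} provides a uniform bound $\tilde{\mathrm{E}}\|T_k(\rho_\delta)-T_k(\rho)\|_{L^{\gamma+1}}^{\gamma+1}\le C$, and an interpolation with the trivial $L^\infty$-bound on $T_k'(\rho)\rho-T_k(\rho)$ by $2k$ then forces this term to vanish as $k\to\infty$ (the standard FNP argument, since $\gamma+1>2$).

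The main obstacle will be the last step: controlling the oscillation defect uniformly in $k$ and extracting the strong $L^2$-convergence on the truncated set. The needed estimate relies critically on Lemma \ref{lemma5.3}, itself obtained via the effective-viscous-flux identity of Lemma \ref{lemma5.2}; one must be careful to exploit the monotonicity/concavity of $T_k$ and the compactness of $\overline{T_k(\rho)}$ in $C([0,T];H^{-1}(D))$ when taking expectations, so that no stochastic integral pollutes the deterministic renormalization argument. Once the defect term is shown to vanish, the remaining limit identity is exactly the renormalized continuity equation in $\mathcal{D}'((0,T)\times\mathbb{R}^3)$, $\tilde{\mathrm{P}}$-a.s., for any admissible $b$, completing the lemma.
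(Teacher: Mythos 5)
Your overall strategy is the right one and matches the Feireisl–Novotn\'y–Petzeltov\'a framework that the paper itself defers to (the paper gives no proof of Lemma \ref{lemma5.4}, stating only that it follows ``as in \cite{WW}''). The two structural steps --- renormalize the equation \eqref{5.7a} for $\overline{T_k(\rho)}$ via DiPerna--Lions mollification (legitimate because $\overline{T_k(\rho)}\in L^\infty$ and $u\in L^2(0,T;H^1_0)$), then let $k\to\infty$ using equi-integrability from Lemma \ref{lemma5.1} to show $\overline{T_k(\rho)}\to\rho$ in $L^1$ --- are exactly as intended.

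The gap is in how you close the estimate for the defect term $b'(\overline{T_k(\rho)})\,\overline{[T_k'(\rho)\rho-T_k(\rho)]\Dv u}$. You propose to interpolate with ``the trivial $L^\infty$-bound by $2k$.'' That only works when $\gamma>2$: one has $\|T_k'(\rho_\delta)\rho_\delta-T_k(\rho_\delta)\|_{L^1}\lesssim k^{1-\gamma}$ (since it is supported where $\rho_\delta\ge k$ and bounded by $Ck$), and interpolating $\|f\|_{L^2}\le \|f\|_{L^1}^{1/2}\|f\|_{L^\infty}^{1/2}\lesssim k^{(1-\gamma)/2}\cdot k^{1/2}=k^{1-\gamma/2}$ decays only if $\gamma>2$. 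But Theorem \ref{theorem1.1} covers $\gamma>3/2$, so the interval $3/2<\gamma\le 2$ is precisely where your endpoint fails. The correct endpoint is $L^{\gamma+1}$, not $L^\infty$: on the set $Q_{k,M}=\{\overline{T_k(\rho)}\le M\}$ one bounds
\begin{align*}
\|T_k'(\rho_\delta)\rho_\delta-T_k(\rho_\delta)\|_{L^{\gamma+1}(Q_{k,M})}\lesssim\|T_k(\rho_\delta)\|_{L^{\gamma+1}(Q_{k,M})}\le \|T_k(\rho_\delta)-T_k(\rho)\|_{L^{\gamma+1}}+\|T_k(\rho)-\overline{T_k(\rho)}\|_{L^{\gamma+1}}+CM,
\end{align*}
which Lemma \ref{lemma5.3} (and weak lower semicontinuity) keeps bounded uniformly in $k$; then the $L^1$--$L^{\gamma+1}$ interpolation with exponent $\theta=\frac{\gamma-1}{2\gamma}$ yields $L^2$ smallness, and $\gamma+1>2$ (i.e.\ $\gamma>1$) is exactly what makes this work. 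Your statement that ``$T_k'(\rho)\rho-T_k(\rho)$ is controlled by the oscillation'' is also not quite the right phrasing --- what one controls is $T_k(\rho_\delta)$ itself on $Q_{k,M}$, via the triangle inequality against $\overline{T_k(\rho)}\le M$, and it is that quantity that Lemma \ref{lemma5.3} governs. You quote the right threshold ``$\gamma+1>2$,'' so you likely had the $L^{\gamma+1}$ argument in mind, but the proposal as written pairs that threshold with an $L^\infty$ endpoint that does not furnish it.
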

\subsection{Strong convergence of the density}\label{s5.7}
This subsection is devoted to completing the proof of Theorem
\ref{theorem1.1}. First, we introduce a family of functions $L_k$ as
\begin{align*}
L_k(z)=
\begin{cases}
z\ln z,\ \ &\mbox{for} \ \ 0\le z<k,\\
z\ln k+z\int_k^z{T_k(s)\over s^2} ds, \ \ &\mbox{for} \ \ z\ge k,
\end{cases}
\in C^1(\mathbb{R}_+)\cap C^0[0,\infty).
\end{align*}
Then, for large enough $z$, $L_k(z)$ is a linear function. More
precisely, for $z\ge 3k$, $L_k(z)=\beta_k z-2k$. Here
$$\beta_k=\ln k+\int_k^{3k}{T_k(s)\over s^2} ds+{2\over3}.$$
 Denote
$b_k(z)=L_k(z)-\beta_kz$. Then $b_k(z)\in C^1(\mathbb{R}_+)\cap
C^0[0,\infty)$, and $b^\prime_k(z)=0$ when $z$ is large enough.
Meanwhile, $b^\prime_k(z)-b_k(z)=T_k(z)$. Taking $b(z)=b_k(z)$, the
fact that $(\rho_\delta,u_\delta)$ is renormalized solution of
$\eqref{4.21}_1$ yields that
\begin{align}\label{5.16}
\partial_t
L_k(\rho_\delta)+\Dv(L_k(\rho_\delta)u_\delta)+T_k(\rho_\delta)\Dv
u_\delta=0.
\end{align}
As in \eqref{5.16}, it follows from the continuity equation and
Lemma \ref{lemma5.4} that
\begin{align}\label{5.17}
\partial_t
L_k(\rho)+\Dv(L_k(\rho)u)+T_k(\rho)\Dv u=0\ \ \mbox{in} \ \
\mathcal{D}^\prime((0,T)\times D).
\end{align}
Since $L_k(z)$ is a linear function when $z$ is large enough, by
\eqref{5.16} and Aubin-Lions Lemma,  then it holds that
$\tilde{\textrm{P}}$-a.s.
\begin{align}\label{5.17a}
L_k(\rho_\delta)\to\overline{L_k(\rho)}\ \ \mbox{in} \ \ C_w([0,T];
L^\gamma(D))\cap C^0([0,T]; H^{-1}(D)),
\end{align}
as $\delta \to 0$. Taking the difference of \eqref{5.16} and
\eqref{5.17} and then taking $\phi\in\mathcal{D}(D)$ as a test
function, we have
\begin{align}
\label{5.18} &\tilde{{\rm E}}\int_D (L_k(\rho_\delta)-L_k(\rho))(t)\phi dx
 =\tilde{{\rm E}}\int_D
(L_k(\rho_{0,\delta})-L_k(\rho_0))(t)\phi dx\\
\notag &\quad
+\tilde{{\rm E}}\int_0^t\int_D(L_k(\rho_\delta)u_\delta-L_k(\rho)u)\cdot\nabla\phi+(T_k(\rho)\Dv
u-T_k(\rho_\delta)\Dv u_\delta)\phi dxdt.
\end{align}
Letting $\delta\to 0$ in \eqref{5.18}, noting that
$L_k\left(\rho_{0,\delta}\right)-L_k(\rho_0)\to 0$ as $\delta\to0$,
by \eqref{5.17a}, one has
\begin{align}\label{5.19}
&\tilde{{\rm E}}\int_D (\overline{L_k(\rho)}-L_k(\rho))(t)\phi dx
 =\tilde{{\rm E}}\int_0^t\int_D(\overline{L_k(\rho)}u-L_k(\rho)u)\cdot\nabla\phi dxdt\\
&\notag\quad +\lim_{\delta\to0+}\tilde{{\rm E}}\int_0^t\int_D[T_k(\rho)\Dv
u-T_k(\rho_\delta)\Dv u_\delta]\phi dxds,\quad
\forall\phi\in\mathcal{D}(D) .
\end{align}
By approximating $\phi$ in the above inequality, similar to Lemma
\ref{lemma4.3}, taking $\phi=1$ in \eqref{5.19}, $u|_{\partial D}=0$, Lemma
\ref{lemma5.2} and Lemma \ref{lemma5.3} imply
\begin{align}\label{5.20}
\begin{split}
&\tilde{{\rm E}}\int_D (\overline{L_k(\rho)}-L_k(\rho))(t)dx\\
&=\tilde{{\rm E}}\int_0^t\int_D T_k(\rho)\Dv u dxdt
-\lim_{\delta\to0}\tilde{{\rm E}}\int_0^T\int_D T_k(\rho_\delta)\Dv
u_\delta
dxdt\\
&\le \tilde{{\rm E}}\int_0^T\int_D ( T_k(\rho)-\overline{T_k(\rho)})\Dv u dxdt\\
&\lesssim \tilde{{\rm E}}\left(\|\Dv u\|_{L^2(\{\rho\ge
k\})}+\|T_k(\rho)-\overline{T_k(\rho)}\|_{L^2(\{\rho\le
k\})}\right).
\end{split}
\end{align}
On the one hand, since $T_k(\rho)\ge \overline{T_k(\rho)}$ and
$T_k(\rho)\le \rho$, by (5.20) in \cite{WW}, we have
\begin{align*}
\tilde{{\rm E}}\|T_k(\rho)-\overline{T_k(\rho)}\|_{L^1(\{\rho\le k\})}
&=\tilde{{\rm E}}\|\rho-\overline{T_k(\rho)}\|_{L^1(\{\rho\le k\})}\\
&\le \tilde{{\rm E}}\|\rho-\overline{T_k(\rho)}\|_{L^1(0,T;D)}\le
2Ck^{-(\gamma-1)}\to0,
\end{align*}
 as $k\to \infty$. By interpolation and
Lemma \ref{lemma5.3}, we have
\begin{align*}&\tilde{{\rm E}}\|T_k(\rho)-\overline{T_k(\rho)}\|_{L^2(\{\rho\le k\})}\\
&\le
\left(\tilde{{\rm E}}\|T_k(\rho)-\overline{T_k(\rho)}\|_{L^1(\{\rho\le
k\})}\right)^{\frac{\gamma-1}{2\gamma}}\left(\tilde{{\rm E}}\norm{T_k(\rho)-\overline{T_k(\rho)}}^{\gamma+1}_{L^{\gamma+1}(\{\rho\le
k\})}\right)^{\frac{1}{2\gamma}}\to 0.
\end{align*}
 This and $u\in L^2(\tilde{\Omega},L^2(0,T;H_0^1(D))$ imply that
the right-hand side of \eqref{5.20} goes to zero as $k\to\infty$.
Therefore,
\begin{align}\label{5.36}
\lim_{k\to\infty}\tilde{{\rm E}}\int_D\left(\overline{L_k(\rho)}-L_k(\rho)\right)dx\le
0, \; t\in[0,T].
\end{align}
On the other hand, since $L_k(\rho)$ is linear  when $\rho$ is large
enough,   we have
\begin{align}\label{5.37}
\begin{split}
& \tilde{{\rm E}}\|L_k(\rho)-\rho\ln \rho\|_{L^1(0,T;L^1(D))}
\le \tilde{{\rm E}}\int_0^T\int_{\{\rho\ge k\}}|L_k(\rho)-\rho\ln \rho|dxdt\\
&\le \tilde{{\rm E}}\int_0^T\int_{\{\rho\ge
k\}}(|L_k(\rho)|+|\rho\ln \rho|)dxdt \lesssim \tilde{{\rm
E}}\int_0^T\int_{\{\rho\ge k\}}|\rho\ln \rho|\to 0,
\end{split}
\end{align}
 as $ k\to\infty.$

Similarly, for small enough $\varepsilon$, one has
\begin{align*}
& \tilde{{\rm E}}\|L_k(\rho_\delta)-\rho_\delta\ln
\rho_\delta\|_{L^1(0,T;L^1(D))}
 \le \tilde{{\rm E}}\int_0^T\int_{\{\rho_\delta\ge k\}}|L_k(\rho_\delta)
 -\rho_\delta\ln \rho_\delta|dxdt\\
\notag &\le \tilde{{\rm E}}\int_0^T\int_{\{\rho_\delta\ge
k\}}{|L_k(\rho_\delta)|+|\rho_\delta\ln \rho_\delta|\over
\rho_\delta^\gamma}\rho_\delta^\gamma dxdt
 \le C(\varepsilon) \tilde{{\rm E}}\int_0^T\!\!\!\!\int_{\{\rho_\delta\ge
k\}}{\rho_\delta^\gamma\over \rho_\delta^{\gamma-1-\varepsilon}}dxdt\\
\notag &\le
C(\varepsilon)k^{1-\gamma+\varepsilon}\tilde{{\rm E}}\int_0^T\!\!\!\int_{\{\rho_\delta\ge
k\}}\rho_\delta^\gamma dxdt
 \le C(\varepsilon)k^{1-\gamma+\varepsilon} \to 0 \ \ \mbox{as} \ \
k\to\infty,
\end{align*}
uniformly in $\delta$. Let $k\to\infty$ in the above inequality,
then the weak lower semi-continuity of norm implies that
\begin{align}\label{5.39}
\tilde{{\rm E}}\|\overline{L_k(\rho)}-\overline{\rho\ln
\rho}\|_{L^1(0,T;L^1(D))}\to 0, \ \ \mbox{uniformly in} \ \ \delta.
\end{align}
%%%%%%%%%%%%%%%%%%%%%%%%%%%%%%%%%%%%%%%%%%%%%%%%%%%%%%%%%%%%%%%%%%%%%%%%%%%%%%-another method
\if false \textcolor{red}{Since $u\in L^2(\tilde{\Omega},L^2([0,T]; H^1_0
(D)))$, by Theorem 4.2 in \cite{f2}, we have $
|u|{\textrm{dist}[x,\partial D]}^{-1}\in
L^2(\tilde{\Omega},L^2([0,T]; L^2(D))).$ Let us consider a sequence
of functions $\phi_m\in \mathcal{D}(D)$ as follows:
\begin{equation*}\label{aa1}
\begin{split}
&0\leq\phi_m\leq 1;\;\phi_m(x)=1 \;\textrm{for}\; x\in D,\;
\textrm{dist}[x,\partial D]\geq\frac{1}{m},\\
&\qquad\phi_m\to 1;\; |\nabla\phi_m(x)|\leq 2m \textrm{ for all }
x\in D.
\end{split}
\end{equation*}
Taking  $\phi=\phi_m$  in \eqref{5.19},
 and letting $m\to \infty$, one has
\begin{equation}\label{m52}
\int_D(\overline{L_k(\rho)}-L_k(\rho))\,dx= \int_0^t\!\!\!\int_D
T_k(\rho)\Dv u \,dxds-\lim_{\delta\to 0+}\int_0^t\!\!\!\int_D
T_k(\rho_\delta)\Dv u_\delta dxds.
\end{equation}
Note that $\overline{L_k(\rho)}-L_k(\rho)$ is bounded by the
Definition of $L_k$. By virtue of \eqref{4.37}, we can assume
\begin{equation*}
\rho_\delta \ln(\rho_\delta)\rightarrow\overline{\rho \ln\rho}
\textrm{ weakly star in }
L^p(\tilde{\Omega},L^{\infty}([0,T];L^p(D))) \textrm{ for all }
1\leq p<\gamma.
\end{equation*}
On the other hand, denote $r(k):= \text{meas}\{(x,t)\in D\times
(0,T) |~\rho_\delta(x,t)\geq k\}$. By \eqref{4.37}, one has
$\lim_{k\rightarrow\infty}r(k)=0.$ Similar to \eqref{n3}, by the
fact $L_k(z)\leq z\ln z$, we obtain
\begin{equation*}
\begin{split}
&\tilde{{\rm E}}\|\overline{L_k(\rho)}-\overline{\rho\ln\rho}\|_{L^\infty([0,T]; L^p(D))}\\
&\leq\sup_{t\in[0,T]}\liminf_{\delta\to 0+}\tilde{{\rm E}}\|
L_k(\rho_\delta)-\rho_\delta
\ln\rho_\delta\|_{L^\infty([0,T];L^p(D))}\\
&\leq 2q(k)\sup_{\delta}\sup_{t\in[0,T]}\max\left\{1,
\tilde{{\rm E}}\int_D \tilde{A}(\rho_\delta^p
|\ln\rho_\delta|^p)\,dx\right\}\\
&\leq 2q(k)\sup_{\delta}\sup_{t\in[0,T]}\max\left\{1,
2\tilde{{\rm E}}\int_D(1+\rho_\delta^p
|\ln\rho_\delta|^p)\ln(1+\rho_\delta^p
|\ln\rho_\delta|^p)\,dx\right\}\\
&\leq 2q(k)\sup_{\delta}\sup_{t\in[0,T]}\max\left\{1,
C(p)\textrm{meas}\{D\}+C(p)\tilde{{\rm E}}\int_{D\cap\{\rho_\delta\geq
k\}}\rho_\delta^p
|\ln\rho_\delta|^{p+1}\,dx\right\}\\
&\leq 2q(k)\sup_{\delta}\sup_{t\in[0,T]}\max\left\{1,
C(p)\textrm{meas}\{D\}+C(p, \gamma)\tilde{{\rm E}}\int_D\rho_\delta^\gamma
\,dx\right\}\\
&\leq Cq(k)\rightarrow 0, \; \textrm{ as } k\rightarrow\infty,
\end{split}
\end{equation*}
where $C$ is a constant independent of $\delta$, $\tilde{A}$ is
defined in \eqref{n1}, and
\begin{equation*}
q(k):=\tilde{{\rm E}}\|\chi_{[\rho_\delta\geq k]}\|_{L_N(D)}\leq
\left(A^{-1}\left(\frac{1}{r(k)}\right)\right)^{-1}.
\end{equation*}
Then
\begin{equation}\label{m59}
\overline{L_k(\rho)}\rightarrow \overline{\rho \ln\rho} \textrm{ in
} L^1(\tilde{\Omega},L^\infty([0,T]; L^p(D))) \textrm{ as }
k\rightarrow\infty\textrm{ for all } 1\leq p<\gamma.
\end{equation}
Similarly, we have
\begin{equation}\label{m60}
L_k(\rho)\rightarrow \rho \ln\rho\textrm{ in }
L^1(\tilde{\Omega},L^\infty([0,T]; L^p(D))) \textrm{ as }
k\rightarrow\infty, \; \textrm{ for all } 1\leq p<\gamma,
\end{equation}
and, by Lemma \ref{lemma5.3},
\begin{equation}\label{m61}
T_k(\rho)\rightarrow \overline{T_k(\rho)}\textrm{ in }
L^p(\tilde{\Omega},L^p([0,T]; L^p(D))) \textrm{ as }
k\rightarrow\infty, \; \textrm{ for all } 1\leq p<\gamma+1.
\end{equation}
Finally, by Lemma \ref{lemma5.2}, we obtain as \eqref{5.19a}
\begin{equation}\label{m62}
\begin{split}
&\int_0^t\int_D T_k(\rho)\Dv u \,dxdt-\lim_{\delta\rightarrow
0+}\int_0^t\int_D T_k(\rho_\delta)\Dv
u_\delta\,dxdt \\
&\leq \int_0^t\int_D(T_k(\rho)-\overline{T_k(\rho)})\Dv u\,dxdt.
\end{split}
\end{equation}}\fi
%%%%%%%%%%%%%%%%%%%%%%%%%%%%%%%%%%%%%%%%%%%%%%%%%%%%%%%%%%%%%%%%%%%%%%
Combining \eqref{5.36}-\eqref{5.39}\if false(Combining
\eqref{m52}-\eqref{m62})\fi, we have
\begin{align*}
\tilde{{\rm
E}}\int_D\left(\overline{\rho\ln\rho}-\rho\ln\rho\right)(x,t)dx\le
0,
\end{align*}
this together with $\rho\ln \rho\le \overline{\rho\ln\rho}$ imply
that
\begin{align*}
\overline{\rho\ln\rho}(t)=\rho\ln\rho(t) \ \ \mbox{for all $
t\in[0,T]$ a.e..}
\end{align*}
From this, by \cite[Theorem 2.11]{f2}, we can infer that
$$\rho_\delta\to \rho \ \ \text{almost everywhere in }\tilde{\Omega}\times[0,T]\times D.$$
It follows from Proposition \ref{proposition4.1} and
\cite[Proposition 2.1]{f2} that
$$\rho_\delta\to \rho \ \ \text{weakly in $L^1(\tilde{\Omega}\times(0,T)\times D)$},$$
By \cite[Theorem 2.10]{f2}, for any $\eta>0$ and all $\delta>0$,
there exists $\sigma>0$ such that $\int_F
\rho_\delta(t,x)dxdtd\tilde{\textrm{P}}<\eta$ for any measurable set
$F\subset \tilde{\Omega}\times(0,T)\times D$ with
$\text{meas}\{F\}<\sigma$. On the other hand, by Egorov's Theorem,
there exists a measurable set $F_\sigma\subset
\tilde{\Omega}\times(0,T)\times D$ such that
$\text{meas}\{F_\sigma\}<\sigma$ and $\rho_\delta\to\rho$ uniformly
in $\tilde{\Omega}\times(0,T)\times D-F_\sigma$. Note that
\begin{align}\label{s-convergence}
\tilde{{\rm E}}\int_0^T \int_D |\rho_\delta-\rho|dxdt&\le
\iiint_{F_\sigma}|\rho_\delta-\rho|dxdtd\tilde{\textrm{P}}
+\iiint_{\tilde{\Omega}\times(0,T)\times D-F_\sigma}|\rho_\delta-\rho|dxdtd\tilde{\textrm{P}}\\
&\notag\le 2\eta+T|D|\sup_{\tilde{\Omega}\times(0,T)\times
D-F_\sigma}|\rho_\delta-\rho|.
\end{align}
Then \eqref{s-convergence} tends to 0 as $\delta\to 0$ and $\eta\to
0$. This implies the strong convergence of the sequence
$\rho_\delta$ in $L^1(\tilde{\Omega}\times(0,T)\times D)$, that is,
\begin{align}\label{5.24a}
\overline{\rho^\gamma}=\rho^\gamma.
\end{align}
Similar to the proof of \eqref{P-a.s.11a} and \eqref{P-a.s.12} in
Section \ref{s3.4}, by using \eqref{assumption}, \eqref{5.7b},
\eqref{5.8e1}, \eqref{5.24a} and H\"{o}lder's inequality,  we have
\begin{equation}\label{5.24}
\begin{split}
\langle f_k(\rho_\varepsilon,\rho_\varepsilon
u_\varepsilon,x),\phi\rangle\to\langle f_k(\rho,\rho
u,x),\phi\rangle\ \ \mbox{in} \ \ L^1([0,T]) \ \
\tilde{\textrm{P}}-\text{a.s.},\\
\langle g_k(B_\varepsilon,x),\phi\rangle\to\langle
g_k(B,x),\phi\rangle\ \ \mbox{in} \ \ L^1([0,T]) \ \
\tilde{\textrm{P}}-\text{a.s.},
\end{split}
\end{equation}
and
\begin{equation}\label{5.25}
\begin{split}
& \sum_{k\ge1}\langle f_k(\rho_\delta,\rho_\delta
u_\delta,x),\phi\rangle^2\to\sum_{k\ge1}\langle
f_k(\rho,\rho u,x),\phi\rangle^2\ \ \mbox{in} \ \ L^1([0,T]) \ \ \tilde{\textrm{P}}-\text{a.s.},\\
&\sum_{k\ge1}\langle
g_k(B_\delta,x),\phi\rangle^2\to\sum_{k\ge1}\langle
g(B,x),\phi\rangle^2\ \ \mbox{in} \ \ L^1([0,T]) \ \
\tilde{\textrm{P}}-\text{a.s.}.
\end{split}
\end{equation}
As in Section \ref{section4}, by using \eqref{6a}-\eqref{5.8e3}, \eqref{5.24a}-\eqref{5.25}, we deduce that $((\tilde{\Omega}, \tilde{\mathscr{F}},
\tilde{\textrm{P}}), \beta_k, \rho, u, B)$ satisfies the following
equations:
\begin{align*}
\begin{cases}
d\rho+\Dv(\rho u)dt=0, \\
(\rho u)_t\!+\!\Dv(\rho u\otimes u)+a\nabla
\rho^\gamma\!-\!\mu\Delta u\!-\!(\lambda+\mu)\nabla\Dv
u=(\nabla\times B)\times B+\sum_{k\ge1}
f_k(\rho,\rho u,x)\dot{\beta_k^1},\\
B_t-(\nabla\times u)\times B-\nu\Delta
B=\sum_{k\ge1}g_k(B,x)\dot{\beta_k^2},
\end{cases}
\end{align*}
in $\mathcal{D}^\prime((0,T)\times D)$ $\tilde{\textrm{P}}$-a.s..
Here we formally denote $\dot{W}:=dW/dt$. The proof of Theorem
\ref{theorem1.1} is thus completed.

\if false Since $\rho\ln\rho$ is a convex function, by the Egorov
Theorem and $\rho_\delta\in
L^{\gamma+\theta}(\Omega\times(0,T)\times D)$, one obtains
$\rho_\delta\to\rho$ strongly in $L^s(\Omega\times(0,T)\times D)$,
$1\le s<\gamma+\theta$. This implies the strong convergence of the
sequence $\rho_\delta$ in $L^1(\Omega\times(0,T)\times D)$, that is
$\overline{\rho^\gamma}=\rho^\gamma$. Similarly, it follows from
\eqref{P-a.s.2}, \eqref{P-a.s.4} and \eqref{P-a.s.7} that
\begin{align}\label{5.8e3}
\nabla\times(u_\delta\times B_\delta)\to\nabla\times (u\times B) \ \
\mbox{in $\mathcal{D}^\prime([0,T]\times D)$} \ \
\tilde{P}-\text{a.s.}.
\end{align}
The proof of Theorem \ref{theorem1.1} is completed.\fi

\smallskip

\section*{Acknowledgement}

H. Wang is supported by the National Natural Science Foundation of China (No.~11901066), the
Natural Science Foundation of Chongqing (No.~cstc2019jcyj-msxmX0167) and Project No.~2019CDXYST0015 and No.~2020CDJQY-A040
supported by the Fundamental Research Funds for the Central Universities.

%-----------------------------------------------------------------------------bibliography


\begin{thebibliography}{00}


\bibitem{BD-2006} V. Barbu and G. Da Prato, \textit{Existence and ergodicity for the 2D stochastic MHD equations}, Preprint di
Matematica.


\bibitem{BH-2011} Z. Brze\'{z}niak and E. Hausenblas,  \textit{Uniqueness in law of the It\^{o}
integral with respect to L\'{e}vy Noise}, In: Seminar on Stochastic
Analysis, Random Fields and Applications VI Progress in Probability,
 63 (2011), 37-57.

\bibitem{BO} Z. Brze\'{z}niak and M. Ondrej\'{a}t, \textit{Stochastic geometric wave equations with values in compact Riemannian homogeneous space},
The Annals of Probability, 41 (2013), 1938-1977.


\bibitem{BT-1973} A. Bensoussan and R. Temam,  \textit{\'{E}quations stochastiques du type
Navier-Stokes}, J. Functional Analysis,  13 (1973), 195-222.


\bibitem{B-1980} M.E. Bogovskii, \textit{ Solution of some vector analysis problems connected
with operators div and grad,} (in Russian),  Trudy Sem. S. L.
Sobolev, 80 (1980), 5-40.


\bibitem{BFH} D. Breit, E. Feireisl and M. Hofmanova, \textit{Incompressible limit for compressible fluids with stochastic forcing},
Arch. Ration. Mech. Anal., 222(2) (2016), 895-926.


\bibitem{BH-2014} D. Breit and M. Hofmanov\'{a}, \textit{Stochastic
Navier-Stokes equations for compressible fluids}, Indiana Univ. Math. J., 65(4) (2016), 1183-1250.



\bibitem{CT} Q. Chen and Z. Tan,  \textit{Global existence and convergence rates of smooth solutions for
the compressible magnetohydrodynamical equations},  72 (2010),
4438-4451.


\bibitem{PZ-1992} G. Da Prato and J. Zabczyk, \textit{Stochastic Equations in Infinite
Dimensions},  Cambridge University Press, Cambridge, 1992.


\bibitem{DF} B. Ducomet and E. Feireisl,  \textit{The equations of magnetohydrodynamics: On the interaction
between matter and radiation in the evolution of gaseous stars},
Commun. Math. Phys, 266 (2006), 595-629.


\bibitem{FY} J. Fan and W. Yu,  \textit{Strong solution to the compressible magnetohydrodynamic equations
with vacuum}, Nonlinear Analysis: Real World Applications, 10 (2009), 392-409.


\bibitem{FY1}J. Fan and W. Yu,  \textit{Global variational solutions to the compressible magnetohydrodynamic
equations}, Nonlinear Anal., 69 (2008), 3637-3660.

\bibitem{f2} E. Feireisl,  \textit{Dynamics of viscous compressible fluids},
Oxford University Press, Oxford, 2004.



\bibitem{FMN} E. Feireisl, B. Maslowski and A. Novotn\'{y}, \textit{Compressible fluid flows driven by stochastic forcing}, J.
Differential Equations, 254 (2013), 1342-1358.


\bibitem{FNP} E. Feireisl, A. Novotn\'{y}, and H. Petzeltov\'{a}, \textit{On
the existence of globally defined weak solutions to the
Navier-Stokes equations,}  J. Math. Fluid Mech., 3 (2001), 358-392.

\bibitem{G-1994} G.P. Galdi, \textit{An introduction to the mathematical theory of the
Navier-Stokes equations}, Springer-Verlag, New York, 1994.


\bibitem{HW} X. Hu and D. Wang,  \textit{Global solutions to the three-dimensional full compressible magnetohydrodynamic
flows}, Commun. Math. Phys., 283 (2008), 255-284.


\bibitem{HW2} X. Hu and D. Wang,  \textit{Global existence and large-time behavior of solutions to the
three-dimensional equations of compressible magnetohydrodynamic
flows}, Arch. Ration. Mech. Anal., 197 (2010), 203-238.


\bibitem{HW3} X. Hu and D. Wang,  \textit{Low Mach number limit of viscous compressible magnetohydrodynamic flows},  SIAM J. Math. Anal.,  41 (2009), 1272-1294.


\bibitem{JA} A. Jakubowski, \textit{ The a.s. Skorokhod representation for subsequences
in nonmetric spaces},  Teor. Veroyatnost. i Primenen., 42 (1997),
209-216.


\bibitem{JJL} S. Jiang, Q. Ju and F. Li,  \textit{Incompressible limit of the compressible magnetohydrodynamic
equations with periodic boundary conditions}, Commun. Math. Phys,
297 (2010), 371-400.

\bibitem{KO} O. Kallenberg, \textit{Foundations of Modern Probability, in: Probability
and its Applications}, Springer-Verlag, New York, 1997.


\bibitem{KS-1991} I. Karatzas and S.E. Shreve, \textit{Brownian Motion and
Stochastic Calculus}, Springer-Verlag, New York, 1991.


\bibitem{KHJ} H.J. Kushner, \textit{Numerical Methods for Controlled Stochastic Delay
Systems}, Birkh\"{a}ser, Boston, 2008.


\bibitem{LY} F. Li and H. Yu,  \textit{Optimal decay rate of classical solutions to the compressible magnetohydrodynamic equations},
 Proc. Royal Soc. Edinburgh, 141A (2011), 109-126.


\bibitem{MS-2002} J.L. Menaldi and S.S. Sritharan, \textit{Stochastic 2-D Navier-Stokes
Equation}, Appl Math Optim, 46 (2002), 31-53.


\bibitem{NS} A. Novotn\'{y} and  I. Stra\v{s}kraba, \textit{Introduction to
the Mathematical Theory of Compressible Flow},  Oxford University
Press Inc., New York, 2004.


\bibitem{OM} Martin Ondrej\'{a}t, \textit{Stochastic nonlinear wave equations in local Sobolev
spaces}, Electronic Journal of Probability, 15 (2010), 1041-1091.

\bibitem{LLP1} P.L. Lions, \textit{Mathematical topics in fluid dynamics, Vol. 1,
Incompressible models}, Oxford Science Publication, Oxford, 1996.

\bibitem{LLP} P.L. Lions, \textit{ Mathematical topics in fluid dynamics, Vol. 2,
Compressible models}, Oxford Science Publication, Oxford, 1998.


\bibitem{PG} X. Pu and B. Guo,  \textit{Global existence and convergence rates of smooth solutions for
the full compressible MHD equations}, Z. Angew. Math. Phys.,  64
(2013), 519-538.


\bibitem{SM} M. Sango, \textit{ Magnetohydrodynamic turbulent flows:
Existence results}, Physica D, 239 (2010), 912-923.


\bibitem{SJ} J. Simon, \textit{Compact sets in the space $L^p(0,T; B)$},  Ann. Mat. Pura
Appl.,  146 (1987), 65-96.


\bibitem{SSA} S. A. Smith, \textit{Random Pertubations of Viscous, Compressible Fluids:
Global Existence of Weak Solutions}, SIAM J. Math. Anal. 49 (2017), 4521-4578.


\bibitem{SP} P. Sundar, \textit{Stochastic Magnetohydrodynamic system
perturbed by general noise}, Communications on Stochastic Analysis,
365 (2016), 1219-1256.


\bibitem{ZW} Z. Tan, H.Q. Wang, \textit{Optimal decay rates of the compressible MHD
equations}, Nonlinear Analysis: Real World Applications, 14 (2013),
188-201.

\bibitem{ZXW} Z. Tan, Q.J. Xu and H.Q. Wang, \textit{Global existence and convergence rates
for the compressible magnetohydrodynamic equations without heat
conductivity}, Discrete Continuous Dynam. Systems-A, 35(10) (2015),
5083-5105.

\bibitem{WW} D.H. Wang and H.Q. Wang, \textit{Global existence of martingale solutions to the three-dimensional stochastic
compressible Navier-Stokes equations}, Differential and
Integral Equations, 28(11-12) (2015), 1105-1154.


%%%%%%%%%%%%%%%%%%%%%%%%%%%%%%%%%%%%%%%%%%%%%%%%%%%%%%%%%%%%%%%%%%%%%%%%%%%%%%%%%%%%%%%%%%%%%%%%%


\end{thebibliography}
\end{document}